\newtheorem{theorem}{Theorem}[section]
\newtheorem{cor}[theorem]{Corollary}
\newtheorem{prop}[theorem]{Proposition}
\newtheorem{lem}[theorem]{Lemma}
\theoremstyle{definition}
\theoremstyle{remark}
\newtheorem{rem}[theorem]{Remark}
\numberwithin{equation}{section}
\newcommand{\capa}{\mathrm{Cap}}
\newcommand{\De}{\mathrm{d}}
\newcommand{\homo}{\mathrm{hom}}
\newcommand{\cA}{\ensuremath{\mathcal A}}
\newcommand{\cB}{\ensuremath{\mathcal B}}
\newcommand{\cC}{\ensuremath{\mathcal C}}
\newcommand{\cD}{\ensuremath{\mathcal D}}
\newcommand{\cE}{\ensuremath{\mathcal E}}
\newcommand{\cF}{\ensuremath{\mathcal F}}
\newcommand{\cG}{\ensuremath{\mathcal G}}
\newcommand{\cH}{\ensuremath{\mathcal H}}
\newcommand{\cK}{\ensuremath{\mathcal K}}
\newcommand{\cL}{\ensuremath{\mathcal L}}
\newcommand{\cM}{\ensuremath{\mathcal M}}
\newcommand{\cR}{\ensuremath{\mathcal R}}
\newcommand{\cS}{\ensuremath{\mathcal S}}
\newcommand{\cU}{\ensuremath{\mathcal U}}
\newcommand{\bbE}{\ensuremath{\mathbb E}}
\newcommand{\bbL}{\ensuremath{\mathbb L}}
\newcommand{\bbN}{\ensuremath{\mathbb N}}
\newcommand{\bbP}{\ensuremath{\mathbb P}}
\newcommand{\bbR}{\ensuremath{\mathbb R}}
\newcommand{\bbX}{\ensuremath{\mathbb X}}
\newcommand{\bbZ}{\ensuremath{\mathbb Z}}
\newcommand{\bfD}{\ensuremath{\mathbf D}}
\begin{document}

\definecolor{airforceblue}{RGB}{204, 0, 102}
\newenvironment{draft}
  {\par\medskip
  \color{airforceblue}%
  \medskip}

\title[Disconnection and entropic repulsion for the harmonic crystal]{Disconnection and entropic repulsion for the harmonic crystal with random conductances}



\author{Alberto Chiarini}
\address{Eindhoven University of Technology}
\curraddr{Department of Mathematics and Computer Science, 5600 MB Eindhoven}
\email{chiarini@tue.nl}
\thanks{}

\author{Maximilian Nitzschner}
\address{Courant Institute of Mathematical Sciences, New York University}
\curraddr{251 Mercer Street, 10012 New York, NY, USA}
\email{maximilian.nitzschner@cims.nyu.edu}
\thanks{}

\begin{abstract}
We study level-set percolation for the harmonic crystal on $\bbZ^d$, $d \geq 3$, with uniformly elliptic random conductances. We prove that this model undergoes a non-trivial phase transition at a critical level that is almost surely constant under the environment measure. Moreover, we study the disconnection event that the level-set of this field below a level $\alpha$ disconnects the discrete blow-up of a compact set $A \subseteq \bbR^d$ from the boundary of an enclosing box. We obtain quenched asymptotic upper and lower bounds on its probability in terms of the homogenized capacity of $A$, utilizing 
results from Neukamm, Sch\"affner and Schl\"omerkemper, see~\cite{neukamm2017stochastic}.  Furthermore, we give upper bounds on the probability that a local average of the field deviates from some profile function depending on $A$, when disconnection occurs. The upper and lower bounds concerning disconnection that we derive are plausibly matching at leading order. 
In this case, this work shows that conditioning on disconnection leads to an entropic push-down of the field. The results in this article generalize the findings of~\cite{nitzschner2018entropic} and~\cite{chiarini2019entropic} by the authors which treat the case of constant conductances. Our proofs involve novel ``solidification estimates'' for random walks, which are similar in nature to the corresponding estimates for Brownian motion derived by Sznitman and the second author in~\cite{nitzschner2017solidification}. 
\end{abstract}

\subjclass[2010]{}
\keywords{}
\dedicatory{}
\maketitle

\section{Introduction}
\label{sec:intro}

In the present article we investigate the effect of \emph{impurities} on the percolative behavior of the level-sets of a harmonic crystal. Percolation models stemming from random interfaces have been investigated since the eighties, see~\cite{bricmont1987percolation,molchanov1983percolation}, and their study has gained considerable attention recently, in particular due to the renewed interest in models with long-range dependence. The discrete Gaussian free field or harmonic crystal constitutes a prominent example in this regard. In a physical context, the Gaussian free field with constant conductances on $\bbZ^d$, $d\geq 3$, may be interpreted as a microscopic description of the fluctuations in a \textit{homogeneous} crystal at non-zero temperature. The percolation phase transition and its behavior away from criticality have been thoroughly investigated in the previous decade, see~\cite{drewitz2018geometry,drewitz2017sign,duminil2020equality,popov2015decoupling,rodriguez2013phase,sznitman2015disconnection,sznitman2019macroscopic}, and connections to other percolation models, in particular random interlacements, have emerged, see~\cite{lupu2016loop,sznitman2012isomorphism}.

The microscopic description of \textit{inhomogeneous} crystals in a similar fashion motivates the introduction of a variant of the Gaussian free field with random conductances. To our knowledge this model first appeared in~\cite{caputo2003finite}, and was studied in~\cite{biskup2011scaling}, see also~\cite[Section 6]{biskup2011recent}. Here we prove the existence of a non-trivial, almost surely constant critical level for the percolation of the level-set of this field for all $d \geq 3$ in the case of uniformly elliptic, stationary and ergodic random conductances. Motivated by the results of~\cite{sznitman2015disconnection},~\cite{nitzschner2018entropic} and~\cite{chiarini2019entropic} we further study a certain \textit{disconnection event} in which the level-set of the Gaussian free field with random conductances below level $\alpha \in \bbR$ disconnects the discrete blow-up of a compact set $A \subseteq \bbR^d$ (with certain regularity properties) from the boundary of an enclosing box. The level $\alpha$ that we consider is such that the level-set above level $\alpha$ of the Gaussian free field with random conductances is in a strongly percolative regime, and therefore the disconnection event is atypical. We obtain quenched large deviation upper and lower bounds on the disconnection that substantially generalize the findings of~\cite{nitzschner2018entropic,sznitman2015disconnection}. Moreover, we study the effect of disconnection on the local behavior of the field, in a similar fashion as in~\cite{chiarini2019entropic}. We show that if certain critical levels coincide (an equality which is plausible but open at the moment), the disconnection event forces macroscopic averages of the Gaussian free field with random conductances to be pinned to a deterministic level given by $-(\alpha_\ast-\alpha) \mathscr{h}_A$ with high probability, where $\alpha_\ast$ is the percolation threshold of the model and $\mathscr{h}_A$ is the harmonic potential of the set $A$ for the limiting Brownian that is associated with the random conductance model. This effect has a similar flavor as the (classical) \textit{entropic repulsion}, which has been thoroughly investigated in the case of constant conductances for instance in~\cite{bolthausen2001entropic,bolthausen1995entropic,deuschel1999pathwise}.

One prominent feature of the Gaussian free field with random conductances is the characterization of its covariances by the Green function of a random walk in a random environment. It is well known (see for instance~\cite{barlow2010invariance,sidoravicius2004quenched}) that for uniformly elliptic, stationary and ergodic random conductances, the random walk fulfills a quenched functional central limit theorem: For almost every realization of the environment, the (diffusively scaled) walk converges in law to some limiting Brownian motion with a non-degenerate covariance matrix $a^\homo$. The asymptotic large deviation bounds that we obtain display a behavior involving such \textit{stochastic homogenization}: The capacity of $A$ for the limiting Brownian motion controls the large deviation bounds for the disconnection probability, which is a large-scale effect. On the other hand, this rate also depends on critical levels for the Gaussian free field with random conductances, which captures its local percolative behavior. Our proofs rely fundamentally on an analogue of \textit{solidification estimates} which were developed in~\cite{nitzschner2017solidification} in a continuum set-up for Brownian motion. Here, we instead prove solidification estimates for random walks among uniformly elliptic conductances and related capacity controls, which are of independent interest.
\vspace{\baselineskip}

We will now describe the model and the results in this article in more detail. Consider $\bbZ^d$, $d \geq 3$, as a graph with nearest-neighbor edges $\bbE_d$. This graph is then equipped with uniformly elliptic weights
which are bounded both from above and from below: To each edge $e \in \bbE_d$, we assign a conductance $\omega_e \in [\lambda,1]$, where $\lambda \in (0,1)$. We denote the set of all configurations of conductances by $\Omega_\lambda = [\lambda,1]^{\bbE_d}$, and we define for $\omega \in \Omega_\lambda$ by $\bbP^\omega$ the law on $\bbR^{\bbZ^d}$ such that
\begin{equation}\label{eq:IntroDefGFF}
\begin{minipage}{0.8\linewidth}
  under $\bbP^\omega$, the canonical field $(\varphi_x)_{x \in \bbZ^d}$ is a centered Gaussian field with covariances $\bbE^\omega[\varphi_x\varphi_y] =  g^\omega(x,y)$ for $x,y \in \bbZ^d$, 
\end{minipage}
\end{equation}
where $g^\omega(\cdot,\cdot)$ is the Green function of the (continuous-time, constant-speed) simple random walk on the weighted graph $(\bbZ^d,\bbE_d,\omega)$, see~\eqref{eq:GreenFunction}. Formally, one can view $\bbP^\omega$ as a Gibbs measure with
\begin{equation}
\begin{split}
\bbP^\omega(\mathrm{d}\varphi) \ & \text{``}\propto \text{''} \ \exp\left\{-\frac{1}{2}\cE^\omega(\varphi,\varphi) \right\} \prod_{x \in \bbZ^d} \lambda^{1}(\mathrm{d}\varphi_x), \\
 \cE^\omega(\varphi,\varphi) & = \frac{1}{2} \sum_{|x-y| = 1} \omega_{\{x,y\}} (\varphi_y-\varphi_x)^2,
\end{split}
\end{equation}
where $\lambda^1$ is the Lebesgue measure on $\bbR$ and $|\cdot|$ denotes the Euclidean distance.
 We endow $\Omega_\lambda$ with the canonical $\sigma$-algebra of cylinders $\cG$ and subsequently consider a probability measure (the environment measure) $P$ on $(\Omega_\lambda,\cG)$ that is stationary and ergodic with respect to shifts (see~\eqref{eq:ShiftsEnvironment} and below for details). The above ``energy'' $\cE^\omega(\cdot,\cdot)$  is the Dirichlet form associated with the generator $\cL^\omega_V$ acting on functions  $f : \bbZ^d \rightarrow \bbR$,
 \begin{equation}
 \label{eq:GeneratorIntro}
 \cL^\omega_V f(x) =  \sum_{y \, : \, |y-x| = 1} \omega_{\{x,y\}}(f(y) - f(x)),\qquad x\in \bbZ^d,
 \end{equation}
  of the (variable-speed) random walk among random conductances, known as the Random Conductance Model. The study of its homogenization properties has been the  object of very active research in the last three decades, see the survey~\cite{biskup2011recent} and references therein, and also~\cite{andres2013invariance,andres2018quenched,andres2015invariance,armstrong2018elliptic,bella2020quenched,biskup2020quenched,biskup2018limit,gloria2015quantification} for recent developments in the field. In particular, the homogenization of the capacity associated with $\cE^\omega$ will play an important role for the asymptotic bounds for the probability of disconnection events.
 
 \vspace{\baselineskip}
 
  The Gaussian free field with random conductances is obtained by first sampling $\omega \in \Omega_\lambda$ according to the law $P$ and then considering the law $\bbP^\omega$ as in~\eqref{eq:IntroDefGFF}. In this article we are mostly interested in \emph{quenched} results for this field, namely results holding for $P$-a.e.\ $\omega\in \Omega_\lambda$. 
We study the percolation of the upper level-sets (excursion sets) of the field $(\varphi_x)_{x\in \bbZ^d}$, namely, for $\alpha \in \bbR$, we introduce the random subset 
 \begin{equation}
 \label{eq:LevelSetIntro}
 E^{\geq \alpha} = \{ x \in \bbZ^d  \, : \,  \varphi_x \geq \alpha \},
\end{equation}  
which we refer to as the \emph{level-set} above $\alpha$.  For $\omega \in \Omega_\lambda$, we introduce the critical level
\begin{equation}
            \alpha_*^\omega = \inf\big\{\alpha\in \bbR:\, \bbP^\omega [E^{\geq \alpha}\mbox{ contains an infinite cluster}\,] = 0\big\} \in [-\infty,\infty].
        \end{equation}
As a combination of Proposition~\ref{prop:alpha_ast}, Theorem~\ref{thm:StretchedExpDecay} and Theorem~\ref{thm:alpha_bar} we show that $\alpha_\ast^\omega$ is $P$-a.s.\ constant, finite and strictly positive, and we denote the value of $\alpha^\omega_\ast$ on a set of full $P$-measure as $\alpha_\ast$. This establishes the non-trivial percolation phase transition of $E^{\geq \alpha}$. To capture the nature of the phase transition more precisely, we introduce two other critical parameters, $\alpha^\omega_{\ast\ast}$ and $\overline{\alpha}^\omega$ (see~\eqref{eq:alpha_astast_Def} and~\eqref{eq:OverlineAlphaDef} respectively), such that $\alpha > \alpha^\omega_{\ast\ast}$ describes a strongly non-percolative regime, whereas $\alpha < \overline{\alpha}^\omega$ characterizes a strongly percolative regime for $\varphi$, given the conductances $\omega$. We show that these parameters are $P$-a.s.\ constant and their values, given on a set of full $P$-measure by $\alpha_{\ast\ast}$ and $\overline{\alpha}$, fulfill $0 < \overline{\alpha} \leq \alpha_\ast  \leq \alpha_{\ast\ast} < \infty$. This extends the results of~\cite{rodriguez2013phase} and~\cite{drewitz2018geometry} to our context, and our proof utilizes the findings of the latter reference for a part of the argument. We refer to Remark~\ref{rem:OtherCritParam} for a discussion of the parameters $\alpha_{\ast\ast}$ and $\overline{\alpha}$. In the special case of i.i.d.\ conductances $\omega$, it is known that the diffusively scaled Random Conductance Model converges to an isotropic Brownian motion with a non-degenerate diffusivity constant. The same Brownian motion also appears as a diffusive limit of the variable-speed random walk with appropriately chosen \textit{constant} conductances. It is an open problem to determine whether the parameter $\alpha_\ast$ coincides with the corresponding critical parameter for the Gaussian free field with these constant conductances. For general ergodic, stationary and isotropic conductances, we argue that this is not the case. We again refer to Remark~\ref{rem:OtherCritParam} for more on this matter.

 For $\alpha > \alpha_{\ast\ast}$, we show in Theorem~\ref{thm:StretchedExpDecay} that the ``connectivity function'' $\bbP^\omega[x \stackrel{\geq \alpha}{\leftrightarrow} x+z ]$, which describes the probability that a nearest-neighbor path in $E^{\geq \alpha}$ connects $x$ to $x+z$, admits a stretched exponential bound in $|z|_\infty \wedge \frac{L^\rho}{100}$ uniformly over $x$ in a box of size $L^\rho$ centered at the origin, where $\rho > 1$, and $|z|_\infty$ denotes the sup-norm of $z$. The proof of this bound for the connectivity function in the strongly non-percolative regime relies on a quenched decoupling inequality, see Proposition~\ref{prop:QuenchedDecouplingIneq}, and a ``sprinkling procedure''. The latter allows us to dominate the long-range dependence of the field on multiple scales by slightly increasing the level, when going from one scale to the next (see~\eqref{eq:SprinklingIneq} and below for a precise statement). The decoupling inequality comes as a straightforward adaptation of~\cite{popov2015decoupling} and the sprinkling procedure is taken from~\cite{popov2015soft}, however some care is  required due to the inhomogeneity, the lack of translation-invariance of the field, and the specification of the parameter $\alpha_{\ast\ast}$. 
\vspace{\baselineskip}

We now turn to the results concerning disconnection.
Let $A \subseteq \bbR^d$ be a compact set with non-empty interior contained in the interior of
a box of side-length $2M$, $M > 0$, centered at the origin.  
One defines the discrete blow-up of $A$ and the outer boundary of the discrete blow-up of the box enclosing $A$ by
\begin{equation}
\label{eq:BlowUpBoxDef}
A_N = (NA) \cap \bbZ^d, \qquad S_N = \{ x \in \bbZ^d \, : \, |x|_\infty = \lfloor MN \rfloor\},
\end{equation}
respectively, where $\lfloor \, \cdot\, \rfloor$ denotes the integer part of a real number. Motivated by the findings of~\cite{nitzschner2018entropic} and~\cite{chiarini2019entropic} (see also~\cite{sznitman2015disconnection}) we introduce the \textit{disconnection event}
\begin{equation}
\label{eq:DefDisconnection}
\cD^\alpha_N = \Big\lbrace A_N \stackrel{\geq \alpha}{\centernot \longleftrightarrow}   S_N \Big\rbrace,
\end{equation}
in which no nearest-neighbor path in $E^{\geq \alpha}$ connects $A_N$ and $S_N$. 

In Theorem~\ref{thm:MainLowerBound}, we derive for $\alpha < \alpha_{\ast\ast}$ the quenched asymptotic lower bound
\begin{equation}
\label{eq:LowerBoundIntro}
    \liminf_N \frac{1}{N^{d-2}} \log \bbP^\omega[\cD_N^\alpha] \geq -\frac{1}{2} (\alpha_{**}- \alpha)^2 \capa^{\homo}(A),
\end{equation}
for $P$-a.e.\ $\omega \in \Omega_\lambda$, where $\capa^{\homo}(A)$ is the capacity of $A$ for a Brownian motion with covariance matrix $a^\homo$ that appears as the $P$-a.s.\ scaling limit of the random walk among random conductances, see~\eqref{eq:CapacityContinuumDef} for a precise definition.
This generalizes Theorem 2.1 of~\cite{nitzschner2018entropic} (see also Theorem 2.1 of~\cite{sznitman2015disconnection}) to our set-up, and we employ a capacity convergence via stochastic homogenization for the proof, see Proposition~\ref{prop:homo_killed}.

As far as upper bounds are concerned, we show in Theorem~\ref{thm:UpperBound} that for $\alpha < \overline{\alpha}$, it holds that
\begin{equation}
\label{eq:UpperBoundIntro}
    \limsup_{N\to\infty} \frac{1}{N^{d-2}} \log \bbP^\omega[\cD_N^\alpha] \leq -\frac{1}{2} (\overline{\alpha}- \alpha)^2 \capa^{\homo}(\mathring{A}),
\end{equation}
for $P$-a.e.\ $\omega \in \Omega_\lambda$, where $\mathring{A}$ is the interior of $A$. It is plausible that in fact $\overline{\alpha} = \alpha_{\ast\ast}$, so that~\eqref{eq:LowerBoundIntro} and~\eqref{eq:UpperBoundIntro} would provide matching asymptotic lower and upper bounds if $A$ is regular in the sense that $\capa^{\homo}(A) = \capa^{\homo}(\mathring{A})$, see also Remark~\ref{rem:on_regularity} concerning this condition. The upper bound~\eqref{eq:UpperBoundIntro} has been derived in the case of constant conductances as Theorem 3.1 of~\cite{nitzschner2018entropic} (building on Theorem 5.5 of~\cite{sznitman2015disconnection}, in which $A = [-1,1]^d$ and where the convexity of $A$ played a major role in the proof). Importantly, in the case of constant conductances, the equality of the critical parameters $\overline{\alpha} = \alpha_{\ast} = \alpha_{\ast\ast}$ has recently been established in~\cite{duminil2020equality}. 
Progress towards a proof of the corresponding equalities in the case of random conductances may come from adapting the techniques of~\cite{duminil2020equality}: On the one hand, it is still possible to decompose the Gaussian free field with random conductances into an infinite sum of independent Gaussian fields with finite range of dependence. On the other hand, the lack of translation-invariance of the field, and the nature of our parameters (which involve uniform estimates anchored at the origin, see Section~\ref{sec:GFF_RandomCond}) require some additional care.


Finally, we also investigate the macroscopic height profile of the field conditioned on the disconnection event $\mathcal{D}^\alpha_N$ in the strongly percolative regime $\alpha < \overline{\alpha}$. To this end, we introduce the random (signed) measure on $\bbR^d$,
\begin{equation}
\label{eq:EmpiricalGFFMeasureDef}
\bbX_N = \frac{1}{N^d} \sum_{x \in \bbZ^d} \varphi_x \delta_{\frac{x}{N}},
\end{equation}
and for any continuous compactly supported function $\eta : \bbR^d \rightarrow \bbR$ and a signed Radon measure $\mu$ on $\bbR^d$, we define
\begin{equation}
\langle \mu, \eta \rangle = \int \eta(x) \mu(\mathrm{d}x).
\end{equation}
Furthermore we introduce the ``profile'' function
\begin{equation}
\label{eq:LocalProfileFunction}
\mathscr{H}^\alpha_{\mathring{A}} = -(\overline{\alpha}- \alpha) \mathscr{h}_{\mathring{A}}.
\end{equation}
The function $\mathscr{h}_{\mathring{A}}$ is the harmonic potential of $\mathring{A}$, associated with the limiting Brownian motion obtained from the quenched functional central limit theorem, see~\eqref{eq:HarmonicPotContinuumDef}. 

We show in Theorem~\ref{thm:EntropicRepulsion} that for $\alpha < \overline{\alpha}$, $\Delta > 0$ and a compactly supported continuous function $\eta : \bbR^d \rightarrow \bbR$, one has the quenched asymptotic upper bound
\begin{equation}
\label{eq:EntropicRepulsionIntro}
\begin{split}
\limsup_{N \rightarrow \infty} \frac{1}{N^{d-2}} \log \, & \bbP^\omega\Big[ \big\vert \langle \bbX_N, \eta \rangle - \langle \mathscr{H}^\alpha_{\mathring{A}}, \eta \rangle \big\vert \geq \Delta ; \cD^\alpha_N \Big] \\
& \leq -\frac{1}{2}(\overline{\alpha}-\alpha)^2 \capa^{\homo}(\mathring{A}) - c_1(\Delta,\alpha,\eta),
\end{split}
\end{equation}
for $P$-a.e.\ $\omega \in \Omega_\lambda$, where $c_1(\Delta,\alpha,\eta)$ is a positive constant which depends on $\Delta, \alpha$ and $\eta$ as well as on $A$, $M$ and $d$. This result can be understood as follows: If the critical parameters $\overline{\alpha}, \alpha_\ast$ and $\alpha_{\ast\ast}$ coincide and $A$ is regular in the sense that $\capa^{\homo}(A) = \capa^{\homo}(\mathring{A})$, then a combination of~\eqref{eq:EntropicRepulsionIntro} with the quenched asymptotic lower bound~\eqref{eq:LowerBoundIntro} yields that for every $\alpha < \alpha_\ast$, 
\begin{equation}
\label{eq:EntropicRepulsionConditioned}
\lim_{N \rightarrow \infty} \bbE^\omega\big[ \big\vert \langle \bbX_N, \eta \rangle - \langle \mathscr{H}^\alpha_{A}, \eta \rangle \big\vert \wedge 1 | \cD^\alpha_N \big] = 0,
\end{equation}
for $P$-a.e.\ $\omega \in \Omega_\lambda$. In other words, conditionally on the disconnection event $\cD^\alpha_N$, the local macroscopic average of the Gaussian free field with random conductances is ``pinned'' to $\mathscr{H}^\alpha_A$ with high probability, for $P$-a.e.\ $\omega \in \Omega_\lambda$. Results of this type were established for constant conductances in~\cite{chiarini2019entropic}, where the aforementioned ``pinning'' was also shown to be uniform in $\eta$ over a certain class of bounded Lipschitz functions and a profile description (akin to~\cite{bolthausen1993critical}) was also developed. In this work, we do not aim to establish these additional properties so as not to dilute the focus.

Heuristically,~\eqref{eq:EntropicRepulsionConditioned} can be related to the optimal way for the Gaussian free field with random conductances to enforce disconnection. This shares some flavor with a capacity order large deviation principle for $\bbX_N$, investigated (for constant conductances) in~\cite{bolthausen1993critical}, and the rates in~\eqref{eq:LowerBoundIntro} and~\eqref{eq:UpperBoundIntro} correspond to the cost of observing a local shift of the field produced by the profile function $\mathscr{H}^\alpha_{A}$. We also refer to~\cite[Section 6]{li2015large}, in which a large deviation principle for the occupation time of random interlacements is used to study disconnection by ``high-density regions''. 

The analogues of the upper bounds~\eqref{eq:UpperBoundIntro} and~\eqref{eq:EntropicRepulsionIntro} for the case of constant conductances were derived in~\cite{nitzschner2018entropic} and~\cite{chiarini2019entropic} by employing the solidification estimates and related capacity bounds for Brownian motion from~\cite{nitzschner2017solidification}. In the present case, we develop similar solidification estimates for random walks among uniformly elliptic conductances, which are relevant in their own right. 

Specifically, we introduce a notion of \textit{porous interfaces} $\Sigma \subseteq \mathbb{Z}^d$ surrounding the discrete blow-up $A_N$ of a compact set $A \subseteq \bbR^d$ with non-empty interior. Roughly speaking, these porous interfaces vary over a class of ``deformations'' (felt at distance $\varepsilon \in \bbN$) and ``strength'' $\chi \in (0,1)$ of a \textit{hard interface} $S$ at distance $2^{\ell_\ast}$ (with $\ell_\ast$ a non-negative integer) from $A_N$.  

 We prove in Theorem~\ref{thm:SolidificationTheorem} a \textit{solidification estimate}, which informally can be stated as
\begin{equation}
\label{eq:SolidMainResultIntro}
 \lim_{N \rightarrow \infty} \;  \sup_{\omega\in \Omega_\lambda} \widetilde{\sup} \;  \sup_{x \in A_N} P^\omega_x[ \textnormal{Random walk never enters }\Sigma] = 0,
\end{equation}
where $P^\omega_x$ is the law of a random walk on $(\bbZ^d,\bbE_d,\omega)$ starting in $x \in \bbZ^d$, and $\widetilde{\sup}$  stands for the supremum over all porous deformations $\Sigma$ of $S$ such that $\varepsilon /2^{\ell_\ast} \leq a_N$, for a given sequence $(a_N)_{N \geq 0}$ of positive real numbers with $a_N \rightarrow 0$ as $N \rightarrow \infty$.  The proof of~\eqref{eq:SolidMainResultIntro} involves the construction of a certain \emph{resonance set} associated with $A_N$, which is hard to avoid for a random walk starting in $A_N$. The solidification estimates and related capacity controls obtained in Corollary~\ref{cor:CapacityDirichletSolidification} are instrumental for proving the asymptotic upper bounds~\eqref{eq:UpperBoundIntro} and~\eqref{eq:EntropicRepulsionIntro}. 

We briefly comment on the proofs of our results and give some intuition as well as some further directions and open problems. The most challenging part of this work concerns the upper bounds~\eqref{eq:UpperBoundIntro} and~\eqref{eq:EntropicRepulsionIntro}. We devise a variant of the coarse-graining procedure that was introduced in~\cite{nitzschner2017solidification} and also used in~\cite{chiarini2019entropic,chiarini2020entropic,nitzschner2018entropic,sznitman2019macroscopic} for the disconnection event $\cD^\alpha_N$. We then make use of certain Gaussian bounds leading to controls involving the capacity of the porous interfaces that are amenable to the solidification-type bounds described above. In a last step, we apply a $\Gamma$-convergence result from~\cite{neukamm2017stochastic}, which essentially yields the convergence of the (discrete) capacity $N^{2-d} \capa^\omega(A_N)$ to $\capa^\homo(A)$ for large $N$, for $P$-a.e.\ $\omega\in \Omega_\lambda$. Remarkably, both the solidification estimates and the Gaussian bounds only rely on  well-known quenched (killed) heat kernel or Green function estimates (see for instance~\cite{barlow2017random} for a general treatment). 
A special emphasis is therefore put on separating the parts of the proof that use solidification-type results from the homogenization features entering the picture, making the approach reasonably robust. One may hope that more refined heat kernel bounds that are available could potentially facilitate the study of the Gaussian free field in degenerate random environments (see for instance~\cite{andres2019heat,hambly2009parabolic} for relevant bounds in the case of degenerate conductances). 

As remarked earlier, there are strong links between the Gaussian free field and random interlacements, for which similar disconnection-type and other events of a large deviation nature have been studied extensively in recent years in the case of constant conductances, see~\cite{chiarini2020entropic,li2014lower,li2015large,nitzschner2017solidification,sznitman2017disconnection,sznitman2019bulk,sznitman2019macroscopic,sznitman2020excess}. The results of this work may potentially help in shedding some light on the situation of disconnection phenomena for random interlacements and random walks among random conductances and entropic repulsion for the occupation time measure, in the spirit of~\cite{chiarini2020entropic,li2017lower,li2014lower,sznitman2017disconnection}. In a different direction, we remark that the Gaussian free field with random conductances also has implications for more general gradient fields with nonconvex potentials, as studied in~\cite{biskup2011scaling}. For such fields the percolative properties are even less accessible (see~\cite{rodriguez2016decoupling} for a treatment of percolation models of this type). Finally, the study of disconnection by level-sets of the planar Gaussian free field (with suitably chosen boundary conditions on a large box) remains open also in the case of constant conductances. We refer to~\cite{ding2018chemical,ding2020percolation} for some recent results concerning level-set percolation in the planar case.
\vspace{\baselineskip}

The organization of this article is as follows. In Section~\ref{sec:NotationUsefulResults}, notation is introduced together with some known results on random walks and the Gaussian free field on the weighted graph $(\bbZ^d, \bbE_d,\omega)$, heat kernel bounds, some potential theory and the change of probability method. In Section~\ref{sec:GFF_RandomCond} we introduce the parameters $\alpha^\omega_{\ast}$, $\alpha^\omega_{\ast\ast}$ and $\overline{\alpha}^\omega$ and show that they are constant, finite and strictly positive for $P$-a.e.\ $\omega \in \Omega_\lambda$, see Proposition~\ref{prop:alpha_ast} and Theorems~\ref{thm:StretchedExpDecay} and~\ref{thm:alpha_bar}. We also show a stretched exponential decay of the connectivity function for $\alpha > \alpha_{\ast\ast}$.
Section~\ref{sec:Solidification} contains the proof of the solidification estimate~\eqref{eq:SolidMainResultIntro} and related capacity controls, see Theorem~\ref{thm:SolidificationTheorem} and Corollary~\ref{cor:CapacityDirichletSolidification}. In Section~\ref{sec:LowerBound}, we prove in Theorem~\ref{thm:MainLowerBound} the main quenched asymptotic lower bound on disconnection~\eqref{eq:LowerBoundIntro} and in Proposition~\ref{prop:homo_killed} the convergence of capacities that we employ in this bound and later. Section~\ref{sec:Quenched} contains quenched Gaussian bounds that are the foundation of the upper bounds in the next section. In Section~\ref{sec:UpperBounds} we show the main asymptotic upper bounds~\eqref{eq:UpperBoundIntro} and~\eqref{eq:EntropicRepulsionIntro} in Theorems~\ref{thm:UpperBound} and~\ref{thm:EntropicRepulsion}, using the results of Sections~\ref{sec:Solidification} and~\ref{sec:Quenched}.
\vspace{\baselineskip}

We will use the following convention concerning constants. We denote by $c$, $c'$, $\ldots$ positive constants with values that change from place to place. Numbered constants $c_1$, $c_2$, $\ldots$ are
defined at the place of their first occurrence within the text and remain fixed. All constants may implicitly depend on the dimension and on the parameter $\lambda \in (0,1)$. Dependence of constants on additional parameters will appear explicitly in the notation.
\section{Notation and some useful facts}
\label{sec:NotationUsefulResults}

In this section we introduce further notation and present some known results concerning random walks among inhomogeneous conductances, potential theory, the Gaussian free field, and an entropy inequality that will be useful in the derivation of lower bounds on the probability of disconnection in Section~\ref{sec:LowerBound}. We tacitly assume throughout the article that $d \geq 3$. 

Let us start with some elementary notation. We let $\bbN  = \{0,1,2,...\}$ stand for the set of natural numbers. For real numbers $s, t$, we let $s \wedge t$ and $s \vee t$ stand for the minimum and maximum of $s$ and $t$, respectively, and we denote by $\lfloor s \rfloor$ the integer part of $s$, when $s$ is non-negative. We denote by $| \cdot |$, $| \cdot |_1$ and $| \cdot |_\infty$ the Euclidean, $\ell^1$- and $\ell^\infty$-norms on $\bbR^d$, respectively. For $x \in \bbZ^d$ and $r \geq 0$, we write $B(x,r) = \{y \in \bbZ^d \, : \, |x-y|_\infty \leq r\} \subseteq \bbZ^d$ and $B^{(1)}(x,r) = \{y \in \bbZ^d \, : \, |x-y|_1 \leq r\} \subseteq \bbZ^d\}$ for the (closed) $\ell^\infty$- and $\ell^1$-balls of radius $r \geq 0$ and center $x \in \bbZ^d$, respectively. If $x,y \in \bbZ^d$ fulfill $|x - y| = 1$, we call them neighbors and write $x \sim y$. A function $\pi : \{ 0, ..., N \} \rightarrow \bbZ^d$ is called a nearest-neighbor path (of length $N \geq 1$) if $\pi(i) \sim \pi(i+1)$ for all $0 \leq i \leq N -1$. For $K \subseteq \bbZ^d$, we let $|K|$ stand for the cardinality of $K$, and we write $K \subset \subset \bbZ^d$ if $|K| < \infty$. Moreover, we write $\partial K = \{ y \in \bbZ^d \setminus K \, : \, y \sim x \text{ for some } x \in K \}$ for the external boundary of $K$, and $\partial_{\text{in}}K = \{y \in K \, : \, y \sim x \text{ for some } x \in \bbZ^d \setminus K \}$ for the internal boundary of $K$. For $K,L \subseteq \bbZ^d$, we  let $d_\infty(K,L)$ stand for the $\ell^\infty$-distance between $K$ and $L$. For a set $D \subseteq \bbR^d$ we denote by $D_N = (ND) \cap \bbZ^d$ the discrete blow-up of $D$, and for $\delta > 0$, we let $D^\delta$ be the closed $\delta$-neighborhood of $D$. For $\eta, \eta' \in \bbR^{\bbZ^d}$ we write $\eta \leq \eta'$ if $\eta_x \leq \eta'_x$ for all $x \in \bbZ^d$. A function $f : \bbR^{\bbZ^d} \rightarrow \bbR$ is called increasing if $\eta \leq \eta'$ implies $f(\eta) \leq f(\eta')$, and decreasing if the function $-f$ is increasing. Moreover, we say that $f$ is supported on $K \subseteq \bbZ^d$ if $f(\eta) = f(\eta')$ whenever $(\eta_x)_{x \in K} = (\eta'_x)_{x \in K}$. For functions $u, v : \bbZ^d \rightarrow \bbR$, we  routinely write $\langle u,v \rangle_{\bbZ^d} = \sum_{z \in \bbZ^d} u(z)v(z)$, if $|uv|$ is summable.

\vspace{\baselineskip}

We consider the integer lattice $\bbZ^d$ as a graph with edge set $\bbE_d = \{\{x,y\}: \,x,y\in \bbZ^d,\, x\sim y \}$. As in the introduction, we fix a parameter $\lambda\in (0,1)$ and consider $\Omega_\lambda = [\lambda,1]^{\bbE_d}$, the set of weight configurations on the graph $(\bbZ^d, \bbE_d)$. For any $\omega\in \Omega_\lambda$ we let 
\begin{equation}
\label{eq:UniformEll}
     \omega_{x,y} = \omega_{\{x,y\}} = \omega_{y,x} (\in [\lambda, 1])
\end{equation}
stand for the conductance along the undirected edge $\{x,y\} \in \bbE_d$. For $\omega \in \Omega_\lambda$, we also define
\begin{equation}
\label{eq:reversibMeasure}
\omega_x = \sum_{z \, : \, z \sim x} \omega_{x,z}, \qquad x \in \bbZ^d.
\end{equation}
Given $\omega \in \Omega_\lambda$, one can define the operators $\cL_V^\omega$ and $\cL^\omega_C$ acting on functions  $f : \bbZ^d \rightarrow \bbR$ by
\begin{align}
\label{eq:GeneratorVSRW}
    \cL^\omega_V f(x) &=  \sum_{y \, : \, y \sim x} \omega_{x,y}(f(y) - f(x)),\qquad x\in \bbZ^d; \\
\label{eq:GeneratorCSRW}
    \cL^\omega_C f(x)& =  \sum_{y \, : \, y \sim x} \frac{\omega_{x,y}}{\omega_x}(f(y) - f(x)),\qquad x\in \bbZ^d,
\end{align}
which are the generators of the variable-speed random walk or the constant-speed random walk on $(\bbZ^d, \bbE_d, \omega)$, respectively. We say that $f : \bbZ^d \rightarrow \bbR$ is $\omega$-harmonic in $U \subseteq \bbZ^d$ if for all $x \in U$, one has $\cL_C^\omega f(x) = 0$. 
\vspace{\baselineskip}

We now define the continuous-time, constant-speed simple random walk on the weighted graph $(\bbZ^d, \bbE_d, \omega)$ and discuss some potential theory associated with it. Given $\omega \in \Omega_\lambda$, we denote by $P^\omega_x$ a probability measure on the space $(\bbZ^d \times  (0,\infty))^{\bbN}$ under which $(Y_n)_{n \geq 0}$ has the law of a simple random walk among conductances $\omega$, starting from $x\in \bbZ^d$, governed by the transition probabilities
\begin{equation}
\label{eq:ContTimeRW}
r^\omega(y,z) = \begin{cases}
\frac{\omega_{y,z}}{\omega_z}, & \text{ if } y \sim z, \\
0, & \text{ otherwise,}
\end{cases}
\end{equation} 
and $(\zeta_n)_{n \geq 0}$ is a sequence of i.i.d.~exponential variables with parameter $1$, where $(Y_n, \zeta_n)_{n \geq 0}$ denote the canonical $\bbZ^d \times (0,\infty)$-valued coordinates on $(\bbZ^d \times  (0,\infty))^{\bbN}$. We attach to $w\in (\bbZ^d \times  (0,\infty))^{\bbN}$ a continuous-time trajectory $X_t(w)$, $t \geq 0$, by setting
\begin{equation}
X_t(w) = Y_k(w), \text{ for $t \geq 0$, when } \sum_{i = 0}^{k-1} \zeta_i(w) \leq t < \sum_{i = 0}^k \zeta_i(w)
\end{equation}
(if $k = 0$, the sum on the left is understood as $0$). Thus, under $P^\omega_x$, the trajectory $X_\cdot$ is a continuous-time Markov chain with generator~\eqref{eq:GeneratorCSRW}. The expectation corresponding to $P^\omega_x$ is denoted $E^\omega_x$. The variable-speed random walk $\overline{X}_\cdot$ is a Markov chain with generator~\eqref{eq:GeneratorVSRW}, and can be obtained from the constant-speed random walk $X$ by a time-change (see, e.g.,~\cite[Section 1.6]{sznitman2012topics}). Unless otherwise specified, we use random walk as a shorthand for the continuous-time, constant-speed random walk.

Given $U \subseteq \bbZ^d$, we introduce stopping times (with respect to the canonical filtration $(\cF_t)_{t \geq 0}$ generated by $(X_t)_{t \geq 0}$) $H_U = \inf\{ t \geq 0 \, : \, X_t \in U\}$, $\widetilde{H}_U = \inf\{ t \geq \zeta_1 \, : \, X_t \in U\}$, $T_U = \inf \{ t \geq 0 \, : \, X_t \notin U \}$, which are the entrance, hitting and exit times of $U$. We then introduce the heat kernel $q^\omega_t$ and the killed heat kernel (upon leaving $U \subseteq \bbZ^d$) $q^{\omega}_{t,U}$ via
\begin{align}
q^\omega_t(x,y) & = \frac{P^\omega_x[X_t = y]}{\omega_y},  &t \geq 0,\, x,y \in \bbZ^d, \text{ and} \\
\label{eq:KilledHeatKernel}
q^{\omega}_{t,U}(x,y) & = \frac{P^\omega_x[X_t = y, T_U > t]}{\omega_y},  &t \geq 0,\, x,y \in \bbZ^d.
\end{align}
We denote the Green function of the walk by $g^\omega(\cdot, \cdot)$ and the Green function of the walk killed upon leaving $U$ by $g_U^\omega(\cdot, \cdot)$:
\begin{align}
\label{eq:GreenFunction}
g^\omega(x,y) & = E^\omega_x\Big[ \int_0^\infty \mathbbm{1}_{\{X_t = y \}}\, \mathrm{d}t \Big] / \omega_y = \int_0^\infty q^\omega_t(x,y)\, \mathrm{d}t, \text{ and} \\
\label{eq:KilledGreenFunction}
g_U^\omega(x,y) & = E^\omega_x\Big[ \int_0^{T_U} \mathbbm{1}_{\{X_t = y \}}\, \mathrm{d}t \Big] / \omega_y = \int_0^\infty q^{\omega}_{t,U}(x,y)\, \mathrm{d}t.
\end{align}
By the ellipticity assumption on $\omega$, it is known that $q^\omega_t$, $q^\omega_{t,U}$, $g^\omega$ and $g^\omega_U$ are finite and symmetric (for every $t \geq 0$), and that $q^\omega_{t,U}(\cdot, \cdot)$ and $g^\omega_U(\cdot, \cdot)$ both vanish if one of their arguments lies in the complement of $U$.

Moreover, as an application of the strong Markov property at the exit time of $U$, one has 
\begin{equation}
\label{eq:DecompositionGreenKilledGreen}
g^\omega(x,y) = g^\omega_U(x,y) + E^\omega_x[T_U < \infty, g^\omega(X_{T_U},y)], \qquad x,y \in \bbZ^d,
\end{equation}
see, e.g., Proposition 1.6 in~\cite{sznitman2012topics}. Applying this identity to $U = \{x\}$ (and symmetry) readily shows that 
\begin{equation}
\label{eq:harmonicityOfGreen}
g^\omega(y,\cdot) \text{ is $\omega$-harmonic in } \bbZ^d \setminus \{y\}, \qquad y \in \bbZ^d.
\end{equation}

 It is known, see for instance Theorem 6.28 in~\cite{barlow2017random}, that the heat kernel fulfills lower and upper Gaussian bounds
\begin{equation}
\frac{c}{t^{d/2} }e^{-c'\frac{|x-y|^2}{t}} \leq q^\omega_t(x,y) \leq \frac{C}{t^{d/2}} e^{-C'\frac{|x-y|^2}{t}}, \qquad t \geq 1 \vee \tilde{c}|x-y|,\, x,y \in \bbZ^d,\, \omega \in \Omega_\lambda. 
\end{equation}
Importantly, the constants in these bounds depend on the conductances $\omega \in \Omega_\lambda$ only through $\lambda$ (which we suppress in the notation, see also the convention on constants at the end of Section~\ref{sec:intro}). These bounds readily imply bounds on the Green function, which are uniform in $\omega \in \Omega_\lambda$:
\begin{equation}
\label{eq:QuenchedGFEstimate}
\frac{c_2}{|x-y|^{d-2} \vee 1} \leq g^\omega(x,y) \leq \frac{c_3}{|x - y|^{d-2} \vee 1}, \qquad x, y \in \bbZ^d,\, \omega \in \Omega_\lambda. 
\end{equation}

We also have uniform killed heat kernel bounds, for instance, we can utilize a (slightly modified) version of Theorem 5.26 in~\cite{barlow2017random} to state that for $\vartheta \in (0,1)$ there exists an $R_0(\vartheta) \in \bbN$ such that for all $R \geq R_0(\vartheta)$, one has for every $\omega \in \Omega_\lambda$ and $x_0 \in \bbZ^d$:
\begin{equation}
\label{eq:HK_bound}
q_{t,B(x_0,R)}^{\omega}(x_1,x_2) \geq c_4(\vartheta) t^{-d/2}, \ x_1,x_2 \in B(x_0, (1-\vartheta)R) \text{ and } c_5(\vartheta)R^2 \leq t \leq R^2,
\end{equation} 
where $c_5(\vartheta) < 1$. 

 By Proposition 6.2 of~\cite{delmotte1997inegalite}, which follows from the elliptic Harnack inequality for the weighted graph $(\bbZ^d, \bbE^d,\omega)$, the following H\"older-regularity property of $\omega$-harmonic functions holds: There exists a constant $\tau > 0$ (only depending on $\lambda$), such that if $u : \bbZ^d \rightarrow \bbR$ is $\omega$-harmonic in $B^{(1)}(x_0,2r)$ for $x_0 \in \bbZ^d$, $r > 0$, then
 \begin{equation}
\label{eq:HoelderRegProperty}
 |u(x) - u(y)| \leq C \left( \frac{|x-y|_\infty}{r} \right)^\tau \sup_{z \in B^{(1)}(x_0,2r)}|u(z)| , \qquad x,y \in B^{(1)}(x_0,r).
\end{equation}  

\vspace{\baselineskip}

We now introduce some potential theory associated with the random walk. Given $A \subset \subset \bbZ^d$ and $\omega \in \Omega_\lambda$, we define the equilibrium measure of $A$
\begin{equation}
\label{eq:DefEqMeasure}
e^\omega_A(x) = P_x^\omega[\widetilde{H}_A = \infty]\omega_x \mathbbm{1}_A(x), \qquad x \in \bbZ^d,
\end{equation}
and its (finite) total mass, the capacity of $A$
\begin{equation}
\capa^\omega(A) = \sum_{x \in A}e_A^\omega(x). 
\end{equation}
In the case of a closed $\ell^\infty$-ball $B(x,L)$, where $x \in \bbZ^d$ and $L \geq 1$, one has the classical estimate
\begin{equation}
\label{eq:QuenchedBoxCapacityEstimate}
cL^{d-2} \leq \capa^\omega(B(x,L)) \leq C L^{d-2},
\end{equation}
in which the constants depend on $\omega \in \Omega_\lambda$ only through $\lambda$, see Lemma 7.21 (a) in~\cite{barlow2017random} and its proof, using the quenched bounds~\eqref{eq:QuenchedGFEstimate}.
The equilibrium potential of $A$ is defined as
\begin{equation}
\label{eq:EquilibriumPotential}
h^\omega_A(x) = P^\omega_x[H_A < \infty], \qquad x \in \bbZ^d,
\end{equation}
 and it is related to the equilibrium measure $e^\omega_A$ by the identity (see for instance Proposition 7.2 of~\cite{barlow2017random})
\begin{equation}
\label{eq:LastExDecomp}
h^\omega_A(x) = \sum_{y \in A}g^\omega(x,y) e^\omega_{A}(y), \qquad x \in \bbZ^d.
\end{equation}
 
We will also need the notion of the Dirichlet form, defined for $f: \bbZ^d \rightarrow \bbR$:
\begin{equation}
\cE^\omega(f) = \frac{1}{2} \sum_{x \sim y} \omega_{x,y} \big( f(y) - f(x) \big)^2.
\end{equation}
For $f,g : \bbZ^d \rightarrow \bbR$, we define by polarization (and symmetry of $\omega_{x,y}$) 
\begin{equation}
\cE^\omega(f,g) = \frac{1}{2} \sum_{x \sim y} \omega_{x,y} \big( f(y) - f(x) \big)\big( g(y) - g(x) \big),
\end{equation} 
when the resulting series converges absolutely (and so $\cE^{\omega}(f) = \cE^{\omega}(f,f)$). The Dirichlet form is related to the capacity of $A$ via the identity
\begin{equation}
\label{eq:Capacity_and_Dirichlet}
\capa^\omega(A) = \cE^\omega(h^\omega_A).
\end{equation}
For $h : \bbZ^d \rightarrow \bbR$ with finite support one has the inequality
\begin{equation}
\label{eq:EnergyDirichletFormBound}
\begin{split}
\Big\vert \sum_{x \in \bbZ^d} f(x) h(x) \Big\vert & \leq W^\omega(h)^{1/2}\cE^\omega(f)^{1/2}, \text{ with} \\
W^\omega(h) & = \sum_{x,y \in \bbZ^d} g^\omega(x,y)h(x)h(y), 
\end{split}
\end{equation}
for every $f : \bbZ^d \rightarrow \bbR$, see Proposition 1.3 in~\cite{sznitman2012topics} and its proof. Finally, we also introduce the capacity of $A\subseteq  B\subset \subset \bbZ^d$ for the random walk killed upon exiting $B$. That is, we set
\begin{equation}
    \capa^\omega_B(A) =  \cE^\omega(h^\omega_{A,B}, h^\omega_{A,B}), \qquad  h^\omega_{A,B}(x) = P_x^\omega[H_A<T_B],\qquad x\in \bbZ^d,
\end{equation}
and recall that one has the variational characterization
\begin{equation}\label{eq:variationalchar}
    \capa^\omega_B(A) =\inf \cE^\omega(f),
\end{equation}
where the infimum is taken over all $f:\bbZ^d\to \bbR$ such that $f|_A=1$ and $f|_{\bbZ^d\setminus B} = 0$.
\vspace{\baselineskip}

We now turn to the Gaussian free field on the weighted graph $(\bbZ^d, \bbE^d, \omega)$ for a fixed $\omega \in \Omega_\lambda$, which we introduced in~\eqref{eq:IntroDefGFF}, and recall a classical domain Markov property. Specifically, we define for $U \subseteq \bbZ^d$ and $\omega \in \Omega_\lambda$ the ($\omega$-)harmonic average $\xi^{\omega,U}$ of $\varphi$ in $U$ and the ($\omega$-)local field $\psi^{\omega,U}$ by
\begin{align}
\label{eq:HarmonicAverageDef}
\xi_x^{\omega,U}& = E^\omega_x[\varphi_{X_{T_U}}, T_U < \infty] = \sum_{y \in \bbZ^d} P^\omega_x[X_{T_U} = y, T_U < \infty]\varphi_y, \qquad x \in \bbZ^d; \\
\label{eq:LocalFieldDef}
\psi_x^{\omega,U} &= \varphi_x - \xi_x^{\omega,U}, \qquad x \in \bbZ^d.
\end{align}
Note that by definition $\varphi_x = \xi_x^{\omega,U} + \psi_x^{\omega,U}$, $\psi^{\omega,U}_x = 0$ if $x \in \bbZ^d \setminus U$, whereas $\xi^{\omega,U}_x = \varphi_x$ in this case. The domain Markov property asserts that
\begin{equation}
\label{eq:DomainMP}
\begin{minipage}{0.8\linewidth}
  $(\psi^{\omega,U}_x)_{x \in \mathbb{Z}^d}$ is independent of $\sigma(\varphi_y \, : \, y \in U^c)$ (in particular of  $(\xi^{\omega,U}_x)_{x \in \mathbb{Z}^d}$),
  and is distributed as a centered Gaussian field with covariance $g^\omega_U(\cdot,\cdot)$,
\end{minipage}
\end{equation}
where, $g^\omega_U(\cdot,\cdot)$ is the Green function of the random walk among conductances $\omega$ killed upon exiting $U$, see~\eqref{eq:KilledGreenFunction}.
\vspace{\baselineskip}

Lastly, we recall a classical entropy inequality that will be instrumental in the derivation of large deviation lower bounds in Section~\ref{sec:LowerBound}. Let $\widetilde{\bbP}$ and $\bbP$ be two probability measures with $\widetilde{\bbP}$ absolutely continuous with respect to $\bbP$. We define the relative entropy of $\widetilde{\bbP}$ with respect to $\bbP$ as 
\begin{equation}
\label{eq:RelEntropy}
H(\widetilde{\bbP} | \bbP) = \widetilde{\bbE} \Big[ \log \frac{\De \widetilde{\bbP}}{\De \bbP} \Big] = \bbE\Big[\frac{\De \widetilde{\bbP}}{\De \bbP}\log \frac{\De \widetilde{\bbP}}{\De \bbP} \Big] \in [0,\infty],
\end{equation}
where $\widetilde{\bbE}$ and $\bbE$ denote the expectation with respect to the probability measures $\widetilde{\bbP}$ and $\bbP$, respectively. For an event $F$ with positive $\widetilde{\bbP}$-probability, one has
\begin{equation}
\label{eq:RelEntropyInequality}
\bbP[F] \geq \widetilde{\bbP}[F]\exp\Big(-\frac{1}{\widetilde{\bbP}[F]} \Big(H(\widetilde{\bbP} | \bbP) + \frac{1}{\mathrm{e}}\Big) \Big),
\end{equation}
see, e.g., p.76 of~\cite{deuschel2001large}.

\section{Level-set percolation of the Gaussian free field with random conductances}
\label{sec:GFF_RandomCond}

    
    In this section we consider the Gaussian free field with random conductances. More precisely, we introduce a stationary and ergodic environment measure $P$ governing the random conductances $\omega \in \Omega_\lambda$. For a realization of $\omega$ sampled according to $P$, we then consider the field $\varphi$ as defined in~\eqref{eq:IntroDefGFF}, and characterize the percolation phase transition of its upper level-sets $E^{\geq \alpha}$, see~\eqref{eq:LevelSetIntro}. In particular, we give the definitions of the thresholds $\alpha^\omega_\ast$, $\alpha^\omega_{\ast\ast}$ and $\overline{\alpha}^\omega$, that are shown to be constant, with values $\alpha_\ast,\alpha_{\ast\ast}$ and $\overline{\alpha}$, for $P$-a.e.\ $\omega \in \Omega_\lambda$ and to fulfill $0 <\overline{\alpha} \leq \alpha_\ast \leq \alpha_{\ast\ast} < \infty$, see Proposition~\ref{prop:alpha_ast}, Theorem~\ref{thm:StretchedExpDecay} and Theorem~\ref{thm:alpha_bar} below.  
    
The parameter $\alpha_\ast$ describes the percolation threshold, and separates a phase $\alpha > \alpha_\ast$, where a (unique) infinite component exists in $E^{\geq \alpha}$ from a phase where all connected components of $E^{\geq \alpha}$ are finite. The range $\alpha > \alpha_{\ast\ast}$ describes a \textit{strongly non-percolative regime} for $E^{\geq \alpha}$, where one essentially has a stretched exponential decay of the connection probability, see Theorem~\ref{thm:StretchedExpDecay}. On the other hand, for $\alpha < \overline{\alpha}$ one observes a \textit{strongly percolative regime} for $E^{\geq \alpha}$, which is loosely speaking characterized by the  existence and local uniqueness of large clusters, similar as in~\cite{drewitz2018geometry}. Note however that the version of $\overline{\alpha}^\omega$ we study is potentially strictly bigger than the parameter studied in~\cite{drewitz2018geometry}, see Remark~\ref{rem:OtherCritParam}.

In the case of constant, non-random conductances, one has the equality of the thresholds $\overline{\alpha} = \alpha_\ast = \alpha_{\ast\ast}$, as shown recently in Theorem 1.1 of~\cite{duminil2020equality}. However, in our case, the equality of the critical parameters cannot immediately be obtained as a straightforward adaptation of the methods used in~\cite{duminil2020equality}, partially due to the lack of translation-invariance of the field.  
\vspace{\baselineskip}

Recall the definition of $\Omega_\lambda$ above~\eqref{eq:UniformEll}. We endow $\Omega_\lambda$ with the canonical $\sigma$-algebra of cylinders $\cG$. We consider a probability measure (the environment measure) $P$ on $(\Omega_\lambda,\cG)$ and the group of shifts
    \begin{equation}
    \label{eq:ShiftsEnvironment}
        \tau_x : \Omega_\lambda \to \Omega_\lambda,\quad (\tau_x \omega)_{y,z} =\omega_{x+y,x+z},\quad  x,y,z\in \bbZ^d,\, \omega\in \Omega_\lambda. 
    \end{equation}   
    We assume that $P$ is stationary and ergodic with respect to $(\tau_x)_{x\in \bbZ^d}$, that is,
    \begin{itemize}
        \item[i)] $P[\tau_x(A)] = P[A]$ for all $A\in \cG$ and all $x\in\bbZ^d$;
        \item[ii)] If $f:\Omega_\lambda \to \bbR$ is measurable and such that $f(\tau_x \omega) = f(\omega)$ for all $x\in \bbZ^d$ and $P$-a.e.\ $\omega \in \Omega_\lambda$, then $P$-a.s., $f$ is constant. 
    \end{itemize}
    
    Additionally, we consider the group of space-shifts on $\bbR^{\bbZ^d}$
    \begin{equation}
        t_x: \bbR^{\bbZ^d}\to \bbR^{\bbZ^d}, \quad t_x \varphi_\cdot = \varphi_{\cdot+x},\quad  \varphi \in \bbR^{\bbZ^d},\,  x\in \bbZ^d.    
    \end{equation}
    It follows from the definitions that for all bounded measurable functions $f:\bbR^{\bbZ^d} \to \bbR$, $\bbE^\omega[f(t_x \varphi)] = \bbE^{\tau_x \omega} [f(\varphi)]$ for all $x\in \bbZ^d$, $P$-a.s.

    As an immediate consequence of the stationarity and ergodicity of the environment measure we deduce that the critical parameter for level-set percolation is deterministic.
\begin{prop}
   \label{prop:alpha_ast} 
     The critical parameter
        \begin{equation}
            \alpha_*^\omega = \inf\big\{\alpha\in \bbR:\, \bbP^\omega [E^{\geq \alpha}\mbox{ contains an infinite cluster}\,] = 0\big\}
        \end{equation}
        is $P$-a.s.\ constant.
    \end{prop}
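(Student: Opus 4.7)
The plan is to leverage the ergodicity of the environment measure $P$ under the shifts $(\tau_x)_{x \in \bbZ^d}$. Concretely, I will show that $\omega \mapsto \alpha_*^\omega$ is a $\cG$-measurable function on $\Omega_\lambda$ that is invariant under every shift $\tau_x$. Once these two points are in hand, ergodicity of $P$ forces $\alpha_*^\omega$ to be $P$-a.s.\ equal to a deterministic constant.

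For the shift-invariance, I would use the identity $\bbE^\omega[f(t_x \varphi)] = \bbE^{\tau_x \omega}[f(\varphi)]$ noted just above the statement (which follows from the translation relation $g^{\tau_x \omega}(y,z) = g^\omega(y+x,z+x)$ satisfied by the Green function of the random walk on $(\bbZ^d,\bbE_d,\omega)$). For each $\alpha \in \bbR$, write
\begin{equation*}
    A_\alpha \;=\; \{\varphi \in \bbR^{\bbZ^d} : E^{\geq \alpha}(\varphi) \text{ contains an infinite cluster}\},
\end{equation*}
and observe that $A_\alpha$ is invariant under each of the space-shifts $t_x$ on $\bbR^{\bbZ^d}$, since whether $\{y : \varphi_y \geq \alpha\}$ has an infinite connected component does not depend on translating the configuration. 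Applying the above identity with $f = \IND_{A_\alpha}$ therefore yields $\bbP^{\tau_x \omega}[A_\alpha] = \bbP^\omega[A_\alpha]$ for every $x \in \bbZ^d$, and hence $\alpha_*^{\tau_x \omega} = \alpha_*^\omega$ for every such $x$.

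For the measurability of $\omega \mapsto \alpha_*^\omega$, I note that $\alpha \mapsto \bbP^\omega[A_\alpha]$ is non-increasing, so that
\begin{equation*}
    \alpha_*^\omega \;=\; \sup\{\alpha \in \bbQ : \bbP^\omega[A_\alpha] > 0\},
\end{equation*}
with the usual convention $\sup \emptyset = -\infty$. It thus suffices to show that for each fixed $\alpha$ the map $\omega \mapsto \bbP^\omega[A_\alpha]$ is $\cG$-measurable. Writing $A_\alpha$ as a countable monotone limit of cylinder events (for instance, the events that some finite cluster in $E^{\geq \alpha}$ meeting $B(0,n)$ has diameter at least $m$, letting $m \to \infty$ and then $n \to \infty$), each factor probability depends on $\omega$ only through finitely many entries of the covariance $g^\omega(\cdot,\cdot)$, which in turn are measurable functions of $\omega$. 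Measurability of $\omega \mapsto \bbP^\omega[A_\alpha]$ follows by dominated convergence.

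Combining the two facts, the shift-invariant measurable map $\omega \mapsto \alpha_*^\omega$ is $P$-a.s.\ constant by property ii) of the environment measure. There is no substantial obstacle: the only mildly delicate point is spelling out the measurability cleanly, but it reduces to the fact that the covariances $g^\omega$ depend measurably on the conductances $\omega$ and that connectivity events are determined by countably many local cylinder events.
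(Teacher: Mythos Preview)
Your proof is correct and follows essentially the same approach as the paper: both arguments use the translation invariance of the event $A_\alpha = \{E^{\geq \alpha}\text{ has an infinite cluster}\}$ under the field shifts $t_x$, the identity $\bbP^{\tau_x\omega}[A_\alpha] = \bbP^\omega[A_\alpha]$, and ergodicity of $P$. The only difference is cosmetic: the paper applies ergodicity to the map $\omega \mapsto \bbP^\omega[A_\alpha]$ for each $\alpha$ and then passes to the infimum, whereas you apply it directly to $\omega \mapsto \alpha_*^\omega$ after spelling out measurability more carefully than the paper does.
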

    \begin{proof} We observe that the event $\{E^{\geq \alpha}\mbox{ contains an infinite cluster}\}$ is translation-invariant with respect to $t_x$, $x\in \bbZ^d$. Thus, for $P$-a.e.\ $\omega\in \Omega$ and all $x\in \bbZ^d$
        \begin{equation*}
            \bbP^\omega [E^{\geq \alpha}\mbox{ contains an infinite cluster}] = \bbP^{\tau_x\omega} [E^{\geq \alpha}\mbox{ contains an infinite cluster}].
        \end{equation*}
    By ergodicity and stationarity of $P$, $ \bbP^\omega [E^{\geq \alpha}\mbox{ contains an infinite cluster}]$ is $P$-a.s.\ constant, and consequently $\alpha_*^\omega$ is also $P$-a.s.\ constant. 
\end{proof}
Next, we introduce a parameter $\alpha^\omega_{\ast\ast}\geq \alpha^\omega_{*}$, characterized by a uniform decay of a box-crossing probability. More precisely, it is defined by
     \begin{equation}
     \label{eq:alpha_astast_Def}
            \alpha^\omega_{\ast\ast} = \inf \Big\{\alpha\in \bbR:\,\text{$\exists \rho > 1$ with } \text{$\lim_{L\to\infty} \sup_{x\in B(0,L^\rho)}\!\! \bbP^\omega[B(x,L) \stackrel{\geq \alpha }{\longleftrightarrow} \partial B(x,2L)] = 0$}\Big\}.
     \end{equation} 
     Here and in the following, for $H,K \subseteq \bbZ^d$, the notation $ \lbrace H \stackrel{\geq \alpha}{ \longleftrightarrow}   K\rbrace$ denotes the existence of a nearest-neighbor path in $E^{\geq \alpha}$ starting in $H$ and ending in $K$.
   The parameter $\alpha^\omega_{\ast\ast}$ determines a strongly non-percolative regime $\alpha > \alpha_{\ast\ast}^\omega$. We show in the next theorem that this parameter is in fact $P$-a.s.\ constant and finite for any dimension $d \geq 3$, and that there is a $\rho = \rho(\omega) > 1$  such that in the strongly percolative regime, the connectivity function $\sup_{x \in B(0,\frac{1}{2}L^\rho)} \bbP^\omega[x \stackrel{\geq \alpha}{\longleftrightarrow} x+z ]$ has a stretched exponential decay in $|z|_\infty\wedge \frac{L^\rho}{100}$.

\begin{theorem}
\label{thm:StretchedExpDecay}
\begin{align}
\label{eq:alpha_astastConstFinite}
&\mbox{$\alpha_{\ast\ast}^\omega$ is $P$-a.s.\ constant and finite,} \\
\label{eq:alpha_astastpercolation}
&\alpha_\ast \leq \alpha_{\ast\ast}.
\end{align}
Moreover, for $d \geq 4$, $\alpha > \alpha_{\ast\ast}$, and $P$-a.e.\ $\omega \in \Omega_\lambda$, there exists a $\rho =\rho(\omega) >1$ such that
\begin{equation}
\label{eq:StretchedExpBiggerEqualFour}
\sup_{x \in B(0,\frac{1}{2} L^\rho)} \bbP^\omega[x \stackrel{\geq \alpha}{\longleftrightarrow} x+z ] \leq C(\omega,\alpha) e^{-\frac{c(\omega,\alpha)}{L} \left(|z|_\infty \wedge  \frac{L^\rho}{100}\right)}, \qquad z \in \bbZ^d, L \geq 1.
\end{equation}
In $d = 3$, for every $b > 1$, $\alpha > \alpha_{\ast\ast}$, and  $P$-a.e.\ $\omega \in \Omega_\lambda$, there exists a $\rho = \rho(\omega) > 1$ such that
\begin{equation}
\label{eq:StretchedExpThree}
\sup_{x \in B(0,\frac{1}{2}L^\rho)} \bbP^\omega[x \stackrel{\geq \alpha}{\longleftrightarrow} x+z ] \leq C(\omega,\alpha,b) e^{-\frac{c(\omega,\alpha,b)}{L\log^{3b}|z|_\infty}\left(|z|_\infty \wedge \frac{L^\rho}{100} \right)}, \qquad z \in \bbZ^d, L \geq 1.
\end{equation}
\end{theorem}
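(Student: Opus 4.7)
The plan is to address the three assertions in turn. The constancy of $\alpha_{\ast\ast}^\omega$ under $P$ will follow from the ergodicity of $(\tau_x)_{x \in \bbZ^d}$ once we show that $\alpha_{\ast\ast}^\omega$ is invariant under each shift $\tau_y$. The key identity $\bbP^{\tau_y \omega}[B(x,L) \stackrel{\geq \alpha}{\longleftrightarrow} \partial B(x,2L)] = \bbP^\omega[B(x+y,L) \stackrel{\geq \alpha}{\longleftrightarrow} \partial B(x+y,2L)]$, combined with the sandwich inclusion $y + B(0, L^{\rho'}) \subseteq B(0, L^\rho)$ valid for $1 < \rho' < \rho$ and $L$ large with $|y|_\infty$ fixed, transfers the vanishing of the box-crossing probability across shifts. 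The inequality $\alpha_\ast \leq \alpha_{\ast\ast}$ follows immediately by fixing $x = 0$ in the definition of $\alpha_{\ast\ast}^\omega$: for any $\alpha > \alpha_{\ast\ast}^\omega$ one has $\bbP^\omega[0 \stackrel{\geq \alpha}{\longleftrightarrow} \partial B(0,2L)] \leq \bbP^\omega[B(0,L) \stackrel{\geq \alpha}{\longleftrightarrow} \partial B(0,2L)] \to 0$, ruling out an infinite cluster through the origin, and the same reasoning applies at any other fixed vertex of $\bbZ^d$ (which eventually lies inside $B(0, L^\rho)$).

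For the finiteness of $\alpha_{\ast\ast}^\omega$ and the stretched exponential decay~\eqref{eq:StretchedExpBiggerEqualFour} in $d \geq 4$, the plan is a multiscale renormalization driven by the quenched decoupling inequality (Proposition~\ref{prop:QuenchedDecouplingIneq}, adapted from~\cite{popov2015decoupling}) and sprinkling (in the spirit of~\cite{popov2015soft}). Fix $\alpha > \alpha_{\ast\ast}$ with witness exponent $\rho > 1$, choose an intermediate $\alpha' \in (\alpha_{\ast\ast}, \alpha)$, and allocate a summable sprinkling sequence $(\delta_n)_{n \geq 0}$ with $\sum_n \delta_n \leq \alpha - \alpha'$. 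Setting $L_n = 2^n L$, consider the box-crossing probabilities $p_n = \sup_x \bbP^\omega[B(x, L_n) \stackrel{\geq \alpha_n}{\longleftrightarrow} \partial B(x, 2 L_n)]$ at a sequence of levels $(\alpha_n)$ interpolating between $\alpha'$ and $\alpha$, with the supremum ranging over a shrinking sequence of balls inside $B(0, L^\rho/2)$. The geometric observation that any $E^{\geq \alpha_n}$-crossing at scale $L_{n+1}$ contains two disjoint $E^{\geq \alpha_n}$-crossings at scale $L_n$ in well-separated sub-boxes, combined with the decoupling inequality paying $\delta_n$ of sprinkling to decorrelate these sub-boxes, produces a recursion of the form $p_{n+1} \leq C p_n^2 + \mathrm{err}(\delta_n, L_n)$. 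The defining property of $\alpha_{\ast\ast}$ applied at level $\alpha'$ supplies a small seed $p_0 \to 0$ uniformly in $x \in B(0, L^\rho)$; iterating as long as all sub-boxes remain inside this window (i.e.\ for $L_n \lesssim L^\rho$) yields $p_n \leq e^{-c(\omega, \alpha) \cdot 2^n}$. Finiteness $\alpha_{\ast\ast}^\omega < \infty$ is obtained by the same renormalization, starting from the seed bound $\bbP^\omega[B(x, L_0) \stackrel{\geq \alpha}{\longleftrightarrow} \partial B(x, 2L_0)] \leq C L_0^d e^{-c \alpha^2}$, which is uniform in $\omega$ thanks to~\eqref{eq:QuenchedGFEstimate} and is small for $\alpha$ sufficiently large. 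Finally,~\eqref{eq:StretchedExpBiggerEqualFour} follows by selecting the largest $n$ with $L_n \lesssim |z|_\infty \wedge L^\rho/100$: any nearest-neighbor $E^{\geq \alpha}$-connection from $x \in B(0, L^\rho/2)$ to $x+z$ forces a crossing at scale $L_n$ centered near $x$, giving the rate $c/L \cdot (|z|_\infty \wedge L^\rho/100)$.

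The main obstacle is twofold. First, the lack of translation invariance of $\varphi$ and the polynomial anchoring window in the definition of $\alpha_{\ast\ast}^\omega$ force us to verify at every step that the sub-boxes entering the recursion lie inside $B(0, L^\rho)$; this restricts the iteration depth and produces the cutoff at $L^\rho/100$ in the statement, and it also requires the decoupling inequality to hold uniformly in $\omega \in \Omega_\lambda$, which is precisely the content of Proposition~\ref{prop:QuenchedDecouplingIneq}. Second, the case $d = 3$ loses the fast spatial decay of the Green function ($g^\omega(x,y) \asymp 1/|x-y|$ instead of $1/|x-y|^{d-2}$), which weakens the decorrelation produced by the decoupling inequality: to keep the error terms summable, the sprinkling sequence must be chosen to decay only polynomially, $\delta_n \sim n^{-b}$ for any $b > 1$, and the cumulative effect of this slow sprinkling introduces the logarithmic correction $\log^{3b} |z|_\infty$ appearing in~\eqref{eq:StretchedExpThree}, exactly mirroring the constant conductance case treated in~\cite{popov2015soft}.
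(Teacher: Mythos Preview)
Your proposal is correct and follows essentially the same route as the paper: constancy via the shift-sandwich argument $B(y,L^{\rho'}) \subseteq B(0,L^\rho)$ and ergodicity, $\alpha_\ast \leq \alpha_{\ast\ast}$ by the obvious inclusion, and the stretched exponential decay via the multiscale renormalization of \cite{popov2015soft} driven by the quenched decoupling inequality (Proposition~\ref{prop:QuenchedDecouplingIneq}), with the supremum in the definition of $p_k$ taken over a window shrinking by $O(L_k)$ at each step so that all sub-boxes remain inside $B(0,L^\rho)$. Two small remarks: the paper handles finiteness by quoting \cite{drewitz2018geometry} rather than via your seed bound (though your argument works), and the polynomial sprinkling $\delta_k \sim k^{-b}$ is in fact used in \emph{all} dimensions in the paper's scheme---the $d=3$ logarithmic loss arises not from the sprinkling rate but from the weaker decoupling error $\exp(-c\delta^2 s^{d-2})$ when $d-2=1$, which forces the modified induction of \cite[(7.16)--(7.18)]{popov2015soft}.
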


\begin{rem} Let $d\geq 4$,  $\alpha>\alpha_{**}$, $\omega$ in a subset of $\Omega_\lambda$ of full $P$-measure, and $\rho(\omega)$ as above. As a consequence of~\eqref{eq:StretchedExpBiggerEqualFour} and choosing $L=|z|_\infty^{1/\rho}$, we have
     \begin{equation}
         \bbP^\omega[0 \stackrel{\geq \alpha}{\longleftrightarrow} z ] \leq C e^{- c |z|^{1-1/\rho}_\infty}, \qquad z \in \bbZ^d.
     \end{equation}    
for some positive constants $c,C$ that depend on $\omega$ and $\alpha$. Note that this formally corresponds to (2.1) in Theorem 2.1 of~\cite{popov2015decoupling} when setting $\rho = \infty$ in the exponent. 
In the case of constant conductances, the asymptotics of the connection probability have been investigated in much more detail, see~\cite{goswami2021radius}. In particular, the logarithmic correction $\log^{3b} |z|_\infty$ for $b > 1$ in $d = 3$ (see~\eqref{eq:StretchedExpThree}) can be replaced by $\log |z|_\infty$ in the case of constant conductances.
\end{rem}

The claims~\eqref{eq:StretchedExpBiggerEqualFour} and~\eqref{eq:StretchedExpThree} of Theorem~\ref{thm:StretchedExpDecay} will follow by a routine application of a multiscale argument following the arguments of~\cite[Section 7]{popov2015soft}, together with a Gaussian decoupling inequality which is similar to Theorem 1.2, Corollary 1.3 and Proposition 1.4 of~\cite{popov2015decoupling} (which dealt with the case of the Gaussian free field with constant conductances). 

We begin by developing the relevant (quenched) decoupling inequality.  

\begin{prop}
\label{prop:QuenchedDecouplingIneq}
Let $K_1 = B(x,L_1), K_2 = B(y,L_2) \subseteq \bbZ^d$, $L_1,L_2 \geq 0$, $x,y \in \bbZ^d$ be disjoint boxes. Assume that $f_1, f_2 : \bbR^{\bbZ^d} \rightarrow [0,1]$  are such that $f_i$ is supported on $K_i$, $i \in \{1,2\}$, and assume that $f_2$ is increasing.  For $\omega \in \Omega_\lambda$ and $\delta > 0$, we define 
\begin{equation}
G_\delta^\omega = \left\{ \sup_{x \in K_2} |\xi^{\omega,K_1^c}_x| \leq \frac{\delta}{2} \right\}
\end{equation}
(recall~\eqref{eq:HarmonicAverageDef} for the definition of the field $\xi^{\omega,K_1^c}$).

One has, $\bbP^\omega$-a.s.,
\begin{equation}
\label{eq:Decoupling1}
\begin{split}
 (\bbE^\omega[f_2(\varphi - \delta)] & - \bbP^\omega[(G^\omega_\delta)^c]) \mathbbm{1}_{G_\delta^\omega}  \leq \bbE^\omega[f_2(\varphi)|\sigma( \varphi_{x} \, : \,  x \in K_1) ] \mathbbm{1}_{G^\omega_\delta} \\
 & \leq (\bbE^\omega[f_2(\varphi + \delta)] +\bbP^\omega[(G^\omega_\delta)^c]) \mathbbm{1}_{G_\delta^\omega},
 \end{split}
\end{equation} 
and moreover
 \begin{equation}
 \begin{split}
 \label{eq:Decoupling2}
\bbE^\omega f_1(\varphi) \bbE^\omega f_2(\varphi - \delta) - 2\bbP^\omega[(G^\omega_\delta)^c]& \leq \bbE^\omega[f_1(\varphi)f_2(\varphi)] \\
& \leq \bbE^\omega f_1(\varphi)\bbE^\omega f_2(\varphi+\delta) + 2\bbP^\omega[(G^\omega_\delta)^c].
\end{split}
\end{equation}
Furthermore, if we set $s = d_\infty(K_1,K_2) > 0$ and $r = L_1 \wedge L_2$, one has
\begin{equation}
\label{eq:DecouplingErrorBound}
\sup_{\omega \in \Omega_\lambda} \bbP^\omega[(G^\omega_\delta)^c] \leq c( r+s)^{d-1} \exp\left(-c'\delta^2 s^{d-2}\right).
\end{equation}
\end{prop}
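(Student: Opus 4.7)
The plan is to deduce~\eqref{eq:Decoupling1} and~\eqref{eq:Decoupling2} from the domain Markov property~\eqref{eq:DomainMP} together with the monotonicity and support properties of $f_2$, and~\eqref{eq:DecouplingErrorBound} from Gaussian concentration applied to the harmonic average $\xi^{\omega,K_1^c}$, followed by a union bound controlled by a distance-sensitive variance estimate.

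For~\eqref{eq:Decoupling1}, apply~\eqref{eq:DomainMP} with $U = K_1^c$ to decompose $\varphi = \xi^{\omega,K_1^c} + \psi^{\omega,K_1^c}$, where $\xi^{\omega,K_1^c}$ is $\sigma(\varphi_y : y \in K_1)$-measurable and $\psi^{\omega,K_1^c}$ is an independent centered Gaussian field with covariance $g^\omega_{K_1^c}$, vanishing on $K_1$. Setting $F(\eta) := \bbE^\omega[f_2(\eta + \psi^{\omega,K_1^c})]$, one has $\bbE^\omega[f_2(\varphi) \mid \sigma(\varphi_y : y \in K_1)] = F(\xi^{\omega,K_1^c})$. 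Since $f_2$ is increasing and supported on $K_2$, the bound $|\xi^{\omega,K_1^c}_x| \leq \delta/2$ on $K_2$ (valid on $G^\omega_\delta$) gives the pointwise sandwich
\[
f_2\bigl(-\tfrac{\delta}{2}+\psi^{\omega,K_1^c}\bigr) \leq f_2(\xi^{\omega,K_1^c}+\psi^{\omega,K_1^c}) \leq f_2\bigl(\tfrac{\delta}{2}+\psi^{\omega,K_1^c}\bigr),
\]
with $\pm\delta/2$ read as constant functions. Integrating in $\psi^{\omega,K_1^c}$ yields $\bbE^\omega[f_2(-\tfrac{\delta}{2}+\psi^{\omega,K_1^c})] \leq F(\xi^{\omega,K_1^c}) \leq \bbE^\omega[f_2(\tfrac{\delta}{2}+\psi^{\omega,K_1^c})]$ on $G^\omega_\delta$. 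To compare with $\bbE^\omega[f_2(\varphi \pm \delta)]$, write $\bbE^\omega[f_2(\varphi+\delta)] = \bbE^\omega[F(\xi^{\omega,K_1^c}+\delta)]$ and restrict to $G^\omega_\delta$: there $\xi^{\omega,K_1^c}+\delta \geq \delta/2$ on $K_2$, so by the same monotonicity $F(\xi^{\omega,K_1^c}+\delta) \geq \bbE^\omega[f_2(\tfrac{\delta}{2}+\psi^{\omega,K_1^c})]$, and bounding the complement contribution via $f_2 \leq 1$ gives $\bbE^\omega[f_2(\tfrac{\delta}{2}+\psi^{\omega,K_1^c})] \leq \bbE^\omega[f_2(\varphi+\delta)] + \bbP^\omega[(G^\omega_\delta)^c]$. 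Combining yields the upper bound of~\eqref{eq:Decoupling1}; the lower bound is symmetric. Inequality~\eqref{eq:Decoupling2} then follows by multiplying~\eqref{eq:Decoupling1} by $f_1(\varphi)$ (which is $\sigma(\varphi_y : y \in K_1)$-measurable and lies in $[0,1]$), taking $\bbE^\omega$, and bounding the loss on $(G^\omega_\delta)^c$ by an additional $\bbP^\omega[(G^\omega_\delta)^c]$ via $f_1 f_2 \leq 1$, producing the factor $2$.

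For~\eqref{eq:DecouplingErrorBound}, $\xi^{\omega,K_1^c}_x$ is centered Gaussian under $\bbP^\omega$, and by~\eqref{eq:DecompositionGreenKilledGreen} applied at $y = x$,
\[
\var(\xi^{\omega,K_1^c}_x) = g^\omega(x,x) - g^\omega_{K_1^c}(x,x) = E^\omega_x\bigl[T_{K_1^c} < \infty,\, g^\omega(X_{T_{K_1^c}}, x)\bigr].
\]
For $x \in K_2$, since $X_{T_{K_1^c}} \in K_1$ and $d_\infty(x, K_1) \geq s$, the uniform Green function bound~\eqref{eq:QuenchedGFEstimate} yields $g^\omega(X_{T_{K_1^c}}, x) \leq c/d_\infty(x, K_1)^{d-2}$, and the hitting estimate $P^\omega_x[T_{K_1^c} < \infty] = h^\omega_{K_1}(x) \leq c\,\capa^\omega(K_1)/d_\infty(x, K_1)^{d-2} \leq c\,L_1^{d-2}/d_\infty(x, K_1)^{d-2}$ (via~\eqref{eq:LastExDecomp} combined with~\eqref{eq:QuenchedBoxCapacityEstimate}) gives the $\omega$-uniform, distance-sensitive bound $\var(\xi^{\omega,K_1^c}_x) \leq c\,L_1^{d-2}/d_\infty(x, K_1)^{2(d-2)}$. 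Standard Gaussian concentration produces $\bbP^\omega[|\xi^{\omega,K_1^c}_x| > \delta/2] \leq 2 \exp\bigl(-c'\delta^2 d_\infty(x, K_1)^{2(d-2)}/L_1^{d-2}\bigr)$. The bound~\eqref{eq:DecouplingErrorBound} follows by a union bound over $K_2$, organized in dyadic shells $\{x \in K_2 : 2^k s \leq d_\infty(x, K_1) < 2^{k+1} s\}$ of cardinality $O((r+2^k s)^{d-1} \cdot 2^k s)$; the Gaussian factor $\exp\bigl(-c'\delta^2 2^{2k(d-2)} s^{d-2}/L_1^{d-2}\bigr)$ absorbs the polynomial growth in $k$, and the resulting geometric series is dominated by its $k = 0$ term, yielding the stated prefactor $c(r+s)^{d-1}$.

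The main technical hurdle is in keeping the combinatorial prefactor of the union bound sharp: a naive volume count $|K_2| \asymp L_2^d$ would be far too lossy when $L_2 \gg s$, and extracting the surface-area-type scaling $(r+s)^{d-1}$ requires combining the distance-dependent variance estimate above with the dyadic-shell decomposition. The rest of the argument is quite robust to the specifics of the random environment: only the uniform (in $\omega$) Green function and capacity bounds~\eqref{eq:QuenchedGFEstimate} and~\eqref{eq:QuenchedBoxCapacityEstimate} enter.
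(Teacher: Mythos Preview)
Your treatment of~\eqref{eq:Decoupling1} and~\eqref{eq:Decoupling2} via the domain Markov decomposition and monotonicity is correct and matches the paper's approach (which simply defers to~\cite{popov2015decoupling}).

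For~\eqref{eq:DecouplingErrorBound}, your route diverges from the paper's and, as written, does not deliver the stated bound in all regimes. The variance estimate $\var(\xi^{\omega,K_1^c}_x) \leq c\, L_1^{d-2}/d_\infty(x,K_1)^{2(d-2)}$ rests on the hitting bound $h^\omega_{K_1}(x) \leq c\, L_1^{d-2}/d_\infty(x,K_1)^{d-2}$, which is vacuous once $d_\infty(x,K_1)^{d-2} < c\, L_1^{d-2}$. In particular, when $s < L_1$ --- a case the proposition allows --- your $k=0$ shell produces a Gaussian factor $\exp\bigl(-c'\delta^2 s^{2(d-2)}/L_1^{d-2}\bigr)$, which is strictly weaker than the claimed $\exp(-c'\delta^2 s^{d-2})$ by the factor $(s/L_1)^{d-2}<1$ in the exponent. (There is also an inconsistency in your write-up: the displayed Gaussian factor has $s^{d-2}$, whereas your variance bound gives $s^{2(d-2)}$.) To repair this you would need the cruder bound $\var(\xi^{\omega,K_1^c}_x) \leq c/d_\infty(x,K_1)^{d-2}$ coming from $h^\omega_{K_1}\leq 1$, after which the dyadic sum still leaves an extra factor of $s$ in the $k=0$ prefactor and requires a separate triviality argument when $\delta^2 s^{d-2}$ is small.

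The paper's argument is shorter and avoids all of this by exploiting a structural property you do not use: the $\omega$-harmonicity of $\xi^{\omega,K_1^c}$ in $K_1^c$. One introduces (when $L_1\leq L_2$; otherwise swap the roles of $K_1,K_2$) the enlarged box $\widetilde{K}_1 = B(x, L_1+s)$ and observes via the strong Markov property that every nearest-neighbor path from $K_2$ to $K_1$ must cross $\partial_{\text{in}}\widetilde{K}_1$, whence $(G^\omega_\delta)^c \subseteq \bigcup_{w \in \partial_{\text{in}}\widetilde{K}_1}\{|\xi^{\omega,K_1^c}_w| > \delta/2\}$. This reduces the union bound to at most $c(r+s)^{d-1}$ points, each at $\ell^\infty$-distance $s$ from $K_1$, where the simple variance bound $\var(\xi^{\omega,K_1^c}_w) \leq \sup_{y\in K_1} g^\omega(w,y) \leq c_3/s^{d-2}$ yields the correct exponent directly. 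The surface-area prefactor $(r+s)^{d-1}$ thus appears without any shell decomposition.
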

\begin{proof}
The decoupling inequalities~\eqref{eq:Decoupling1} and~\eqref{eq:Decoupling2} follow exactly as Theorem 1.2 and Corollary 1.3 in~\cite{popov2015decoupling}, with the necessary changes of $\bbP$ to $\bbP^\omega$ and use of the respective domain Markov property for the $\omega$-dependent Gaussian free field (see~\eqref{eq:DomainMP}).

We turn to the proof of~\eqref{eq:DecouplingErrorBound}. Let us first consider the case that $L_1 \leq L_2$, where we introduce the box $\widetilde{K}_1 = B(x,L_1 + s)$, and notice that the vertices of its interior boundary $\partial_{\text{in}}\widetilde{K}_1$ have a $\ell^\infty$-distance at least $s$ to $K_1$. For $x \in \partial_{\text{in}}\widetilde{K}_1$, we set
\begin{equation}
\Lambda^\omega_{\delta,x} = \left\{|\xi^{\omega,K_1^c}_x| \leq \frac{\delta}{2} \right\}.
\end{equation}
By an application of~\eqref{eq:DecompositionGreenKilledGreen}, we find that for all $x \in \partial_{\text{in}}\widetilde{K}_1$
\begin{equation}
\begin{split}
\label{eq:VarianceBoundXiDecouplingProof}
\bbE^\omega[(\xi^{\omega,K_1^c}_x)^2] & = \sum_{y \in K_1} P_x^\omega[H_{K_1} <\infty, X_{H_{K_1}} = y] g^\omega(x,y)  \\
&\leq \sup_{y \in K_1} g^\omega(x,y) \stackrel{\eqref{eq:QuenchedGFEstimate}}{\leq} \frac{c_3}{s^{d-2}}.
\end{split}
\end{equation}
Since every nearest-neighbor path from $K_2$ to $K_1$ passes through $\partial_{\text{in}}\widetilde{K}_1$, we find (using the strong Markov property of the random walk)
\begin{equation}
(G^\omega_\delta)^c \subseteq \bigcup_{x \in \partial_{\text{in}}\widetilde{K}_1} (\Lambda^\omega_{\delta,x})^c.
\end{equation}
The claim then follows by applying a union bound with the estimate $|\partial_{\text{in}} \widetilde{K}_1| \leq c'( r+s)^{d-1}$ together with the exponential Chebyshev inequality, the bound~\eqref{eq:VarianceBoundXiDecouplingProof}, and taking finally the supremum over $\omega \in \Omega_\lambda$. The case where $L_1 > L_2$ follows analogously, by instead considering the box $\widetilde{K}_2 = B(y,L_2+s)$ and its interior boundary $\partial_{\text{in}} \widetilde{K}_2$, which again has to be crossed by every path from $K_2$ to $K_1$. 
\end{proof}

\begin{rem}
\label{rem:GeneralizedDecoupling}
The decoupling inequality can in fact be made more general by considering $K_1$ finite and $K_2$ arbitrary, disjoint from $K_1$, with a slightly more involved expression for the polynomial factor in the error bound~\eqref{eq:DecouplingErrorBound}, see (1.10) of Proposition 1.4 in~\cite{popov2015decoupling}. However, the present result is sufficient for our purposes. 
\end{rem}

\begin{proof}[Theorem~\ref{thm:StretchedExpDecay}]
We start by proving~\eqref{eq:alpha_astastConstFinite}. Let $z \in \bbZ^d$. If $\alpha > \alpha^\omega_{\ast\ast}$, then there exists $\rho > 1$ such that 
\begin{equation}
\lim_{L \rightarrow \infty}\sup_{x \in B(0,L^\rho)}\bbP^\omega[B(x,L) \stackrel{\geq \alpha}{\longleftrightarrow} \partial B(x,2L)] = 0.
\end{equation}
Consider $1 < \widetilde{\rho} < \rho$, then, for every $L \geq c(\rho,\widetilde{\rho},z)$, one has $B(z, L^{\widetilde{\rho}})\subseteq B(0,L^\rho)$ and consequently it holds that 
\begin{equation}
\lim_{L \rightarrow \infty}\sup_{x \in B(0,L^{\widetilde{\rho}})}\bbP^{\tau_z\omega}[B(x,L) \stackrel{\geq \alpha}{\longleftrightarrow} \partial B(x,2L)] = 0.
\end{equation} 
This shows that $\alpha^\omega_{\ast\ast} \geq \alpha^{\tau_z\omega}_{\ast\ast}$ for every $z \in \bbZ^d$. By symmetry, one has that in fact $\alpha^\omega_{\ast\ast} = \alpha^{\tau_z\omega}_{\ast\ast}$ and therefore by stationarity and ergodicity of $P$, $\alpha^\omega_{\ast\ast}$ is constant $P$-a.s.\ 
The finiteness of $\alpha_{\ast\ast}$ follows for instance from Remark 7.2 1) and Corollary 7.3 in~\cite{drewitz2018geometry}, see also Remark~\ref{rem:OtherCritParam}, 2). The statement~\eqref{eq:alpha_astastpercolation} follows by definition, since $\{E^{\geq \alpha}\mbox{ contains an infinite connected component}\} = \bigcup_{x \in \bbZ^d} \{x \stackrel{\geq \alpha}{\longleftrightarrow} \infty\}$, and
\begin{equation}
\bbP^\omega[x \stackrel{\geq \alpha}{\longleftrightarrow} \infty] \leq \bbP^\omega[B(x,L)\stackrel{\geq \alpha}{\longleftrightarrow} \partial B(x,2L) ].
\end{equation}

We turn to the statements~\eqref{eq:StretchedExpBiggerEqualFour} and~\eqref{eq:StretchedExpThree} about the decay of the connectivity function above $\alpha_{\ast\ast}$. Let $\bbP^\omega_\alpha$ be the law of the level-set $E^{\geq \alpha}$ on $\{0,1\}^{\bbZ^d}$. By combination of~\eqref{eq:Decoupling2} and~\eqref{eq:DecouplingErrorBound} we see that for any increasing events $A_1$ and $A_2$ depending on disjoint boxes of size $r$ with distance $s > 0$ from each other, one has the ``sprinkling inequality''
\begin{equation}
\label{eq:SprinklingIneq}
\bbP^\omega_{\alpha}[A_1 \cap A_2] \leq \bbP^\omega_{\alpha-\delta}[A_1]\bbP^\omega_{\alpha-\delta}[A_2] + c(r+s)^{d-1} \exp\left(-c' \delta^2 s^{d-2}\right).
\end{equation}
The proof will proceed along the lines of Section 7 of~\cite{popov2015soft}, but some care is required due to the local nature of $\alpha_{\ast\ast}$. 

We introduce some notation first. Throughout we are considering $\alpha> \alpha_{\ast\ast}$, and set $\rho > 1$ (the choice of $\rho$ will be specified later).  Let $b \in (1,2]$ and set for $L_1 \geq 100$:
\begin{equation}
L_{k+1} = 2\left(1 + (k+5)^{-b}\right) L_k, \ k \geq 1.
\end{equation}
This sequence fulfills, for some $c(b) \in (1,\infty)$, 
\begin{equation}
L_1 2^{k-1} \leq L_k \leq c(b) L_1 2^{k-1}.
\end{equation}
We then set for $k \geq 1$ and $x \in \bbZ^d$
\begin{equation}
C_x^k = [0,L_k)^d \cap \bbZ^d+x, \qquad D_x^k = [-L_k, 2L_k) \cap \bbZ^d +x,
\end{equation}
and consider events of the form 
\begin{equation}
A_x^k(\alpha) = \{C^k_x \stackrel{\geq \alpha }{\longleftrightarrow} \bbZ^d \setminus D_x^k \},
\end{equation}
which fulfill 
\begin{equation}
A_x^{k+1}(\alpha) \subseteq \bigcup_{i \leq 3^d, j \leq 2d7^{d-1}} A^k_{x_j^k(x)}(\alpha) \cap A^k_{y_j^k(x)}(\alpha),
\end{equation}
where the collection of points $\{x_j^k(x)\}_{i = 1}^{3^d}$ is such that $C_x^{k+1}$ consists of the union of $(C^k_{x_i^k(x)}; i = 1,..., 3^d)$, and the union of all $(C^k_{y_j^k(x)}; j = 1,...,2d7^{d-1})$ is disjoint from $D_x^{k+1}$ and contains $\partial (\bbZ^d \setminus D_x^{k+1})$, see (7.7) and (7.8) of~\cite{popov2015soft}. In particular, one has that $|x -y^k_j(x)|_\infty \leq 3 L_{k+1}$ and $|x-x^k_i(x)| \leq 3L_{k+1}$ for all $ 1 \leq i\leq 3^d, 1 \leq j\leq 2d7^{d-1}$. We now define for $k \geq 1$ such that $L_1^\rho \geq 10L_k$
\begin{equation}
p^\omega_{k}(\alpha) = \sup_{x \in B(0, L_1^\rho - 10L_k)} \bbP^\omega[A^k_x(\alpha)]
\end{equation}
(importantly, this quantity deviates from the respective one in the proof in~\cite{popov2015soft}, since our model is manifestly not translation-invariant).

We then introduce scales for the ``sprinkling'' that we will use, namely we set $\widehat{\alpha} = \frac{\alpha_{\ast\ast}+\alpha}{2} (> \alpha_{\ast\ast})$ and the intermediate values (for sufficiently small $\varepsilon > 0$)
\begin{equation}
\alpha_k = \frac{\widehat{\alpha}}{\prod_{j = 1}^{k-1} (1- \varepsilon j^{-b})} (< \alpha), \ k \in \bbN.
\end{equation}
Using the sprinkling inequality~\eqref{eq:SprinklingIneq} with $r = 3 \sqrt{d} L_k$, $s \geq 2^{k-1} L_1(k+5)^{-b}$ and $\delta = \varepsilon k^{-b}\alpha_{k+1}$ (and $\alpha_k < \alpha$), one obtains 
\begin{equation}
\begin{split}
\bbP^{\omega}[A^{k+1}_x(\alpha_{k+1})] &\leq \sum_{i \leq 3^d, j \leq 2d7^{d-1}} \bbP^\omega[A^k_{x^k_i(x)}(\alpha_k)]\bbP^\omega[A^k_{y^k_j(x)}(\alpha_k)] \\&
+ c2^{kd}L_1^d \exp\left( -c' \frac{1}{k^{2b}} \left( \frac{L_1 2^k}{(k+5)^b} \right)^{d-2} \right).
\end{split}
\end{equation}
The terms on the right-hand side are bounded by taking respective suprema over $x \in B(0,L^\rho_1 - 10L_{k+1})$, since for $x \in B(0,L_1^\rho - 10L_{k+1})$, all $x^k_i(x)$
and $y^k_j(x)$ belong to $B(0,L_1^\rho- 7L_{k+1}) \subseteq B(0,L_1^\rho - 10 L_k)$, where the latter inclusion follows from $\frac{L_{k+1}}{L_k} \geq 2 > \frac{10}{7}$. After this step, one takes a supremum over $x \in B(0,L_1^\rho - 10 L_{k+1})$, which then yields
\begin{equation}
p^\omega_{k+1}(\alpha_{k+1}) \leq \frac{2d \cdot 21^d}{7} p^\omega_k(\alpha_k)^2 + c2^{kd}L_1^d \exp\left( -c'  \frac{1}{k^{2b}} \left( \frac{L_1 2^k}{(k+5)^b} \right)^{d-2} \right).
\end{equation}
This last part will allow an induction argument to proceed. Since $\widehat{\alpha} > \alpha_{\ast\ast}$, there is a choice of $\rho = \rho(\omega) >1$ such that for large enough $\widetilde{L}_1$, one has for all $L_1 \geq \widetilde{L}_1$ that 
\begin{equation}
p^\omega_1(\widehat{\alpha}) \leq \sup_{x \in B(0,L_1^\rho)} \bbP^\omega[ C_x^1 \stackrel{\geq \widehat{\alpha} }{\longleftrightarrow} \bbZ^d \setminus D_x^1] < \frac{7}{2d \cdot 21^d}.
\end{equation}
In the case of $d \geq 4$, the same argument leading up to (7.14) of~\cite{popov2015soft} can be used to establish
\begin{equation}
\label{eq:UniformDecayBoundalpha_k}
p^\omega_k(\alpha_k) \leq e^{-\cA - \cB 2^k},
\end{equation}
with some $\cA > 0$ and $\cB \in (0,1)$. Now let $x \in B(0,\frac{1}{2} L_1^\rho)$, then 
\begin{equation}
\bbP^\omega[x \stackrel{\geq \alpha}{\longleftrightarrow} x + z] \leq \bbP^\omega[[0,L_k)^d+ x \stackrel{\geq \alpha}{\longleftrightarrow} \partial([-L_k,2L_k)^d + x)]
\end{equation}
where $k = \max\{m : \frac{3}{2} L_m < |z|_\infty \wedge \frac{L_1^\rho}{100} \}$. 
We have that $L_k = O(2^k L_1)$ and $B(0,L_1^\rho - 10L_k)$ contains $B(0,\frac{1}{2}L_1^\rho)$, which yields (setting $L = L_1$)
\begin{equation}
\sup_{x \in B(0,\frac{1}{2}L^\rho)} \bbP^\omega[x \stackrel{\geq \alpha}{\longleftrightarrow} x + z] \leq \exp(-\cA - c\cB \tfrac{|z|_\infty \wedge (L^\rho/100) }{L}),
\end{equation}
by using~\eqref{eq:UniformDecayBoundalpha_k} and the fact that $\alpha_k < \alpha$. The case $d = 3$ can be obtained similarly by a modification of the argument presented in (7.16)--(7.18) of~\cite{popov2015soft}.
\end{proof}
Finally we define the critical parameter     
    \begin{equation}
    \label{eq:OverlineAlphaDef}
    \begin{aligned}
        \overline{\alpha}^\omega = \sup\Big\{ \alpha \in \bbR:\,  &\mbox{for all  $\alpha>\beta_+> \beta_-$},\\
        &\mbox{$\varphi$ under $\bbP^\omega$ strongly percolates at levels $\beta_+, \beta_-$}\Big\}
    \end{aligned}
    \end{equation}
    where the Gaussian free field $\varphi$ under $\bbP^\omega$  is said to strongly percolate at levels $\beta_+,\beta_-$ if there exists $\rho > d-1$ such that with $B_x = (x+[0,L)^d) \cap \bbZ^d$, 
    \begin{equation}
    \label{eq:NoLargeComponent}
        \lim_{L\to \infty}\sup_{x\in B(0,L^\rho)} \frac{1}{\log L} \log \bbP^{\omega}\left[\begin{minipage}{0.5\textwidth}$B_x\cap E^{\geq \beta_+}$ has no component of diameter at least $\tfrac{L}{10}$ \end{minipage}
        \right] = -\infty,
    \end{equation}
    and for any $z = L e$ with $|e|=1$, with the notation $D_x = (x + [-3L, 4L)^d) \cap \bbZ^d$, 
    \begin{equation}
    \label{eq:NoLargeComponentsDisconn}
        \lim_{L\to \infty}\sup_{x\in B(0,L^\rho)} \frac{1}{\log L} \log \bbP^{\omega}\left[
        \begin{minipage}{0.5\textwidth}
            there exist components of $B_x\cap E^{\geq \beta_+}$ and $B_{x+z}\cap E^{\geq \beta_+}$ with diameter at least $\tfrac{L}{10}$ which are not connected in $D_x\cap E^{\geq \beta_-}$
        \end{minipage}
        \right] = -\infty.
    \end{equation}

The requirement that $\rho > d -1$ in the above definition allows us to guarantee that in the strongly percolative regime $\alpha < \overline{\alpha}^\omega$, certain local fields within boxes of size $L$ centered at $z$ are well-behaved, uniformly for all $z$ inside $B(0,L^\rho)$ (see Proposition~\ref{prop:PsiBadUniformMesoscopic} for a precise statement). The coarse-graining procedure we employ in Section~\ref{sec:UpperBounds}, where we derive the asymptotic upper bound on the probability of the disconnection event $\cD^\alpha_N$, makes it necessary to control the behavior of the field within boxes of size roughly $L \simeq (N \log N) ^{\frac{1}{d-1}} \ll N$, for $N$ large. Put differently, the strongly percolative regime should allow us to control the probabilities in~\eqref{eq:NoLargeComponent} and~\eqref{eq:NoLargeComponentsDisconn} for boxes of size $L$ uniformly over $B(0,L^\rho)$, with $\rho > d-1$.

\begin{theorem}
\label{thm:alpha_bar}
\begin{align}
\label{eq:alpha_barpositive}
&\mbox{$\overline{\alpha}^\omega$ is $P$-a.s.\ constant and strictly positive,} \\
\label{eq:alpha_barpercolation}
&\overline{\alpha} \leq \alpha_{\ast}.
\end{align}
\end{theorem}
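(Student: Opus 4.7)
The approach, following Proposition~\ref{prop:alpha_ast} and the proof of~\eqref{eq:alpha_astastConstFinite}, is to show $\overline\alpha^\omega = \overline\alpha^{\tau_z\omega}$ for every $z \in \bbZ^d$ and then invoke the ergodicity of $P$. Using the identity $\bbE^\omega[f(t_z\varphi)] = \bbE^{\tau_z\omega}[f(\varphi)]$, the probabilities in~\eqref{eq:NoLargeComponent} and~\eqref{eq:NoLargeComponentsDisconn} at base point $x$ under $\bbP^{\tau_z\omega}$ coincide with the same probabilities at base point $x+z$ under $\bbP^\omega$. For any $\tilde\rho \in (d-1,\rho)$ the inclusion $B(z,L^{\tilde\rho}) \subseteq B(0,L^\rho)$ holds for $L$ large enough, so strong percolation at $\beta_+,\beta_-$ for $\omega$ with exponent $\rho$ implies the same for $\tau_z\omega$ with exponent $\tilde\rho$. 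Hence $\overline\alpha^{\tau_z\omega} \geq \overline\alpha^\omega$, and symmetry gives equality; ergodicity then forces $\overline\alpha^\omega$ to be $P$-a.s.\ constant.

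\textbf{Strict positivity.} This is the principal obstacle. I would adapt the finite-size renormalization scheme of~\cite{drewitz2018geometry}, in which the analogous statement is established in the constant-conductance case. The key is that all the required inputs---the Green function bound~\eqref{eq:QuenchedGFEstimate}, the killed heat kernel bound~\eqref{eq:HK_bound}, the capacity estimate~\eqref{eq:QuenchedBoxCapacityEstimate}, and the quenched decoupling inequality of Proposition~\ref{prop:QuenchedDecouplingIneq}---hold with constants depending on $\omega$ only through the ellipticity parameter $\lambda$. This $\omega$-uniformity substitutes for the translation invariance of the field that is heavily used in~\cite{drewitz2018geometry} and makes it possible to verify the finite-size criterion uniformly over $x \in B(0,L^\rho)$, producing the polynomial decay of arbitrary degree required in~\eqref{eq:NoLargeComponent} and~\eqref{eq:NoLargeComponentsDisconn} at some sufficiently small positive level $\beta_+$.

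\textbf{Proof of $\overline\alpha \leq \alpha_\ast$.} Fix $\alpha < \overline\alpha$ and arbitrary $\beta_- < \beta_+ < \alpha$; then by definition of $\overline\alpha^\omega$ strong percolation at $\beta_+,\beta_-$ holds for some $\rho > d-1$. I would chain the large components of $E^{\geq \beta_+}$ along consecutive boxes $B_{x_n}$ with $x_n = nLe_1$, joining successive ones by the $E^{\geq \beta_-}$-paths guaranteed by~\eqref{eq:NoLargeComponentsDisconn}. With $L = \lfloor R^{1/\rho}\rfloor$, a union bound over the $n \leq cL^{\rho-1}$ boxes lying in $B(0,L^\rho)$, together with the polynomial decay of arbitrary degree in~\eqref{eq:NoLargeComponent} and~\eqref{eq:NoLargeComponentsDisconn}, yields $\bbP^\omega[B_0 \stackrel{\geq \beta_-}{\longleftrightarrow} \partial B(0,R)] \to 1$ as $R \to \infty$. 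Since these events are decreasing in $R$ with probabilities tending to one, each has probability one, and so does their intersection $\{B_0 \stackrel{\geq \beta_-}{\longleftrightarrow} \infty\}$. Hence $E^{\geq \beta_-}$ $\bbP^\omega$-a.s.\ contains an infinite cluster, giving $\beta_- \leq \alpha_\ast$; letting $\beta_- \nearrow \alpha$ and then $\alpha \nearrow \overline\alpha$ concludes.
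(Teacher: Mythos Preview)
Your treatment of constancy is correct and matches the paper's shift-and-ergodicity argument.

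For strict positivity, you propose to \emph{adapt} the renormalization of~\cite{drewitz2018geometry}, writing that it ``is established in the constant-conductance case''. In fact Theorem~1.1 of~\cite{drewitz2018geometry} is formulated for general weighted graphs satisfying suitable heat-kernel bounds, and so already yields $\overline{h}^\omega>0$ for every fixed $\omega\in\Omega_\lambda$. The paper then simply observes (see Remark~\ref{rem:OtherCritParam}(2)) that $\overline{h}^\omega\leq\overline{\alpha}^\omega$, because the paper's parameter requires uniformity only over $B(0,L^\rho)$ whereas $\overline{h}^\omega$ requires it over all of $\bbZ^d$. Positivity is therefore immediate by citation; no adaptation is needed.

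For $\overline{\alpha}\leq\alpha_\ast$ your chaining idea coincides with the ``routine'' argument the paper defers to Remark~5.1 of~\cite{sznitman2015disconnection}, but your final step contains a genuine gap. You set $L=\lfloor R^{1/\rho}\rfloor$, so $B_0=[0,L)^d$ depends on $R$; the events $\{B_0\stackrel{\geq\beta_-}{\longleftrightarrow}\partial B(0,R)\}$ are then \emph{not} decreasing in $R$ (a larger starting box works against a farther target), and ``their intersection $\{B_0\stackrel{\geq\beta_-}{\longleftrightarrow}\infty\}$'' is not well-defined since $B_0$ varies. Consequently the deduction ``each has probability one'' fails. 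What the chaining actually delivers is that, with $\bbP^\omega$-probability tending to $1$, $E^{\geq\beta_-}$ contains a connected set of diameter $\gtrsim L^\rho$ meeting $B(0,L)$; this alone does not force an infinite cluster, since the large sets at different scales need not coincide. The paper only asserts $\bbP^\omega[E^{\geq\alpha}\text{ contains an infinite cluster}]>0$ (not $=1$), and with $\beta_-\geq\alpha$ rather than $\beta_-<\alpha$, obtaining $\alpha\leq\alpha_\ast^\omega$ directly. To repair your argument you must either anchor the chain to a fixed point (e.g.\ via FKG and a positive-probability seed event near the origin) or link the clusters across scales using the local-uniqueness property~\eqref{eq:NoLargeComponentsDisconn} together with a Borel--Cantelli scheme.
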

\begin{proof} We start by showing that $\overline{\alpha}^\omega$ is $P$-a.s.\ constant and strictly positive.  Fix $\alpha<\overline{\alpha}^{\omega}$ and $z\in \bbZ^d$. Arguing exactly as in the proof of Theorem~\ref{thm:StretchedExpDecay}, we obtain that if $\varphi$ under $\bbP^\omega$ strongly percolates at levels $\alpha>\beta_+>\beta_-$, then $\varphi$ under $\bbP^{\tau_z \omega}$ strongly percolates at levels $\alpha>\beta_+>\beta_-$. Thus, $\overline{\alpha}^{\omega} \leq \overline{\alpha}^{\tau_z\omega}$ for $P$-a.e.\ $\omega\in \Omega_\lambda$ and for all $z\in \bbZ^d$. By symmetry, we in fact have that $\overline{\alpha}^{\omega} = \overline{\alpha}^{\tau_z\omega}$ for $P$-a.e.\ $\omega\in \Omega_\lambda$ and for all $z\in \bbZ^d$. Since the environment measure is stationary and ergodic, this implies that $\overline{\alpha}^{\omega}$ is $P$-a.s.\ constant. 
Observe that for all $\omega\in \Omega_\lambda$, by Theorem 1.1 of~\cite{drewitz2018geometry}, we have $\overline{\alpha}^\omega>0$, see also Remark~\ref{rem:OtherCritParam}, 2). As  $\overline{\alpha}^\omega$ is $P$-a.s.\ constant, this concludes the proof of~\eqref{eq:alpha_barpositive}. Finally let $\alpha< \overline{\alpha}^{\omega}$, it is routine to see with a union bound that $\bbP^\omega[E^{\geq \alpha}\mbox{ contains an infinite cluster}] > 0$ and thus $\alpha\leq \alpha_\ast^{\omega}$, see for example Remark 5.1 of~\cite{sznitman2015disconnection}. Since $\alpha< \overline{\alpha}^{\omega}$ was arbitrary, we obtain $\overline{\alpha}^{\omega}\leq  \alpha_\ast^{\omega}$. 
\end{proof}

\begin{rem}
\label{rem:OtherCritParam}
1) In the case of constant conductances, the parameter $\overline{\alpha}$ has been introduced to study the well-behavedness of (parts of) the supercritical phase of $E^{\geq \alpha}$. For instance~\cite{drewitz2014chemical} establishes bounds on chemical distances and a shape theorem for clusters in $E^{\geq \alpha}$, for $\alpha < \overline{\alpha}$. It is also known that the random walk on the (unique) infinite cluster of $E^{\geq \alpha}$ fulfills a quenched functional central limit theorem, see~\cite{procaccia2016quenched}. The parameter $\alpha_{\ast\ast}$ has been introduced in~\cite{rodriguez2013phase} for the case of constant conductances and studied extensively in~\cite{popov2015decoupling} to establish the (stretched) exponential decay, similar to Theorem~\ref{thm:StretchedExpDecay}, valid for all $z \in \bbZ^d$. Recently it was shown in~\cite{duminil2020equality} that in this case, one has $\overline{\alpha} = \alpha_{\ast}  = \alpha_{\ast\ast}$.

2) The case of non-random, inhomogeneous conductances is studied on general graphs in~\cite{drewitz2018geometry}. In their set-up, the quantities $\overline{h}^\omega$ and $h_{\ast\ast}^\omega$ for the weighted graph $(\bbZ^d,\bbE_d,\omega)$ are introduced in (1.9) and (7.9) of the same reference, and these quantities roughly correspond to $\overline{\alpha}^\omega$ in~\eqref{eq:OverlineAlphaDef} and $\alpha_{\ast\ast}^\omega$ in~\eqref{eq:alpha_astast_Def}, where the uniformity of the respective defining properties is required over all of $\bbZ^d$. We therefore have for any $\omega \in \Omega_\lambda$
\begin{equation}
0 < \overline{h}^\omega \leq \overline{\alpha}^\omega \leq \alpha^\omega_{\ast} \leq \alpha^\omega_{\ast\ast} \leq h^\omega_{\ast\ast} < \infty,
\end{equation}
using Theorem 1.1 and Remark 7.2 1) of~\cite{drewitz2018geometry}. Note that we may conceivably have strict inequalities $\overline{h}^\omega < \overline{\alpha}$ and $\alpha_{\ast\ast} < h_{\ast\ast}^\omega$ for $P$-a.e.\ $\omega \in \Omega_\lambda$. Indeed, considering the case of i.i.d. conductances, requiring uniformity in~\eqref{eq:alpha_astast_Def},~\eqref{eq:NoLargeComponent} or~\eqref{eq:NoLargeComponentsDisconn} over all $\bbZ^d$ leads to the inspection of ``bad regions'' for the conductances via a Borel-Cantelli argument. The definitions we use instead are tailor-made to avoid this effect, and still strong enough to capture both the relevant connectivity decay in Theorem~\ref{thm:StretchedExpDecay} for $\alpha > \alpha_{\ast\ast}$ and the small probability of $\psi^\omega$-bad boxes over an appropriately sized bigger box in Proposition~\ref{prop:PsiBadUniformMesoscopic} for $\alpha < \overline{\alpha}$. The uniformity over boxes of size $L^\rho$ we require in our definitions is reminiscent of the  properties of ``very good'' boxes used for instance in~\cite{barlow2004random}.

3) In the special case in which the conductances $\omega \in \Omega_\lambda$ are i.i.d.\ under the environment measure $P$, it is well-known that the variable-speed random walk in $(\bbZ^d,\bbE_d,\omega)$ with diffusive scaling has a $P$-a.s.\ scaling limit given by a Brownian motion with covariance matrix $a^{\homo} = \sigma^2 I_d$, where $I_d$ is the $(d \times d)$-identity matrix and $\sigma^2 > 0$ (see Theorem 1.1 of~\cite{sidoravicius2004quenched}). By considering constant conductances $\omega^{(\sigma)} \equiv \frac{1}{2}\sigma^2$ on $(\bbZ^d, \bbE_d)$, it is straightforward to show that the corresponding random walk on the weighted graph $(\bbZ^d, \bbE_d,\omega^{(\sigma)})$ with diffusive scaling has the same scaling limit. It is presently an open problem to relate the parameter $\alpha_\ast^{(\sigma)}$ corresponding to the Gaussian free field with \textit{constant} conductances given by $\omega^{(\sigma)}$ to the parameter $\alpha_\ast$ from~\eqref{prop:alpha_ast}. However, by relaxing the i.i.d.~assumption, one can construct an ergodic, stationary and isotropic random environment for which $\alpha_\ast$ differs from the corresponding parameter $\alpha_\ast^{(\sigma)}$. We sketch the construction of such an environment below.
  
  Set $d \geq 4$ and consider independent Poisson point processes $(\cM_i)_{i=1,\ldots,d}$ on $\bbR$, with intensity measure $\mu \, \De \lambda^1$, governed by $P$, where $\lambda^1$ denotes the Lebesgue measure on $\bbR$ and $\mu > 0$ is chosen later. For $\cM_i = \sum_{j = 1}^\infty \delta_{x_j^{(i)}} $, $i=1,\ldots,d$, we set $A_i = \{ x_j^{(i)} \, : \, j \in \bbN\}$. Then, for $\lambda \in (0,1)$ fixed, we define the conductances for $x,y\in \bbZ^d$ with $x\sim y$ as follows:
  \begin{equation}
    \begin{cases}
      \omega_{x,y} = \lambda, &\text{if $d_\infty(\{x,y\},A_1\times\ldots \times A_d)<1$,}\\
      \omega_{x,y} = 1, &\text{otherwise,} \\ 
    \end{cases}
  \end{equation}
  Since the environment is elliptic, it is known that the corresponding variable-speed random walk converges to a Brownian motion with diffusivity $\sigma^2$ which can be bounded by
  \begin{equation}
    \lambda < \frac{1}{E_P[\omega_{0,x}^{-1}]}\leq\frac{\sigma^2}{2} \leq E_P[\omega_{0,x}] < 1,
  \end{equation}
  where $x \sim 0$ (see for instance Section 4 in~\cite{biskup2011recent}). In particular $ \alpha_\ast^{(\sigma)} > \alpha_\ast^{(\sqrt{2})}$ (the latter corresponds to the percolation threshold for the Gaussian free field with constant unit conductances). We will show below that $\alpha_\ast^{(\sqrt{2})} \geq \alpha_\ast$, thus leading to the desired conclusion. Heuristically, this is due to the fact that a path connecting any $z\in \bbZ^d$ to infinity above some level $\alpha > \alpha_\ast^{(\sqrt{2})}$ must cross arbitrarily large regions with constant unit conductances, in which the field is locally in a strongly non-percolative regime.

 Let $\kappa>0$ be a real number to be fixed later. We note that eventually every set $[n,2n]\cap A_i$ and $[-2n,-n]\cap A_i$ has a gap between consecutive points of size at least $\kappa \log n$. Indeed, for all $i=1,\ldots, d$,
  \begin{equation}
    \begin{aligned}
      &P\big([n,2n]\cap A_i \text{ does not have a gap of size at least }\kappa \log n\big) \\ &\leq P\big( \cM_i( [\kappa (k-1) \log n ,\kappa k \log n ))\geq 1 ,\, \text{ for all } \tfrac{n}{\kappa\log n} +1\leq k \leq \tfrac{2n}{\kappa\log n}\big)
      \\
      &\leq \Big(1 - \frac{1}{n^{\mu\kappa}}\Big)^{\left\lfloor n/(\kappa\log n) \right\rfloor },
    \end{aligned}
  \end{equation}
  which is summable over $n \in \bbN$ as long as 
  \begin{equation}
  \label{eq:muKappaCondition}
  \mu \kappa < 1,
\end{equation}
and similarly for $[-2n,-n] \cap A_i$.  By the Borel-Cantelli Lemma, this implies that for each $i = 1,...,d$ there exists $n_0 \in \bbN$ (depending on $\cM_1,...,\cM_n$) such that for all $n \geq n_0$ and all $1 \leq i \leq d$, $[n,2n] \cap A_i$ and $[-2n,-n] \cap A_i$ have a gap of size at least $\kappa \log n$.

In particular, one can construct an ``interface'' within the annulus $B(0,2n) \setminus B(0,n)$, such that its $\frac{\kappa}{3}\log n$-neighborhood only contains unit conductances. More precisely, for $n \geq n_0$ one can consider the set $W_n \subseteq B(0,2n) \setminus B(0,n)$ of points $x$, for which
 \begin{equation} 
 \label{eq:SlabCondition}
 \omega_{z,y} = 1, \text{ for all } z \sim y \in \bbZ^d \text{ with } |e_i \cdot (
  y - x)| \leq \tfrac{\kappa}{3}\log n , \text{ for some } 1 \leq i \leq d,
 \end{equation}
 where $(e_i \, : \, i = 1,...,d)$ stands for the canonical basis of $\bbR^d$. By construction, for $z \in \bbZ^d$ and $n$ large enough, 
 \begin{equation}
 \label{eq:MustCrossSlab}
   \begin{minipage}{0.9\textwidth}
   Every infinite nearest-neighbor path in $\bbZ^d$ starting in $z$ must cross  $W_n$. \end{minipage}
\end{equation} 
 We then set $V = B(x, \frac{\kappa}{10}\log n  )$. One can consider a probability measure $\widehat{\bbP}^\omega$, for which one has (on $\bbZ^d$)
 \begin{equation}
 \label{eq:CouplingOfFields}
 \phi^\omega = \psi^V + \xi^{\omega,V}, \qquad \text{and} \qquad \phi = \psi^V + \xi^{V},
 \end{equation}
  where $\psi^V$ is a Gaussian free field with zero boundary conditions on $\partial V$ and $\xi^V$, $\xi^{\omega,V}$ are independent Gaussian fields obtained by the Markov property for the Gaussian free field within $V$, with conductances $\omega$ and $1$, respectively (see~\eqref{eq:HarmonicAverageDef}--\eqref{eq:DomainMP}). Note that $\phi^\omega$ under $\widehat{\bbP}^\omega$ has the same law as $\varphi$ under $\bbP^\omega$, whereas $\phi$ under $\widehat{\bbP}^\omega$ is a Gaussian free field with unit conductances. Moreover, we consider the events 
 \begin{equation}
 \cG^\omega_\delta = \bigg\{ \sup_{y \in B(x,\frac{\kappa}{100}\log n ) } |\xi^{\omega,V}_y| \leq \frac{\delta}{2} \bigg\}, \qquad \cG_\delta =\bigg\{ \sup_{y \in B(x,\frac{\kappa}{100}\log n ) } |\xi^{V}_y| \leq \frac{\delta}{2} \bigg\}.
\end{equation}  
   For $\alpha > \alpha_\ast^{(\sqrt{2})} + \delta$, it follows that  
  \begin{equation}
  \label{eq:CounterexampleConnectionBound}
  \begin{split}
  \bbP^\omega\left[x \stackrel{ \geq \alpha }{ \longleftrightarrow} \partial B(x, \tfrac{\kappa}{100}\log n) \right] & \leq \widehat{\bbP}^\omega\left[x \stackrel{\{\phi^\omega \geq \alpha\}}{ \longleftrightarrow} \partial B(x, \tfrac{\kappa}{100}\log n), \cG^\omega_\delta \cap \cG_\delta \right] \\
  & + \widehat{\bbP}^\omega\left[ (\cG^\omega_\delta)^c\right] + \widehat{\bbP}^\omega\left[ (\cG_\delta)^c\right] \\
  & \stackrel{\eqref{eq:CouplingOfFields}}{\leq} \widehat{\bbP}^\omega\left[x \stackrel{\{\phi \geq \alpha- \delta\}}{ \longleftrightarrow} \partial B(x, \tfrac{\kappa}{100}\log n)\right] \\
  & + \widehat{\bbP}^\omega\left[ (\cG^\omega_\delta)^c\right] + \widehat{\bbP}^\omega\left[ (\cG_\delta)^c\right].
  \end{split}
  \end{equation}
 One can use a version of~\eqref{eq:DecouplingErrorBound} (see also Remark~\ref{rem:GeneralizedDecoupling}) to infer a bound for the last two summands on the right-hand side in~\eqref{eq:CounterexampleConnectionBound}. For the first summand, we now use the exponential decay of the connection probability (see Theorem 2.1 in~\cite{popov2015decoupling}) in combination with the sharpness of the phase transition for the Gaussian free field with constant conductances (see~\cite{duminil2020equality}), hence: 
  \begin{equation}
  \begin{split}
  \bbP^\omega\left[x \stackrel{ \geq \alpha }{ \longleftrightarrow} \partial B(x, \tfrac{\kappa}{100}\log n) \right] & \leq C (\kappa\log n)^{d-1} \exp\big(- c^\star \kappa \log n \big)  \\
  & + C (\kappa\log n)^d \exp\big( - c' \delta^2(\kappa \log n)^{d-2}\big).
  \end{split}
  \end{equation}
We finally fix $\kappa$ large enough such that $c^\star \kappa > d$ and $\mu$ small enough such that~\eqref{eq:muKappaCondition} holds. Using a union bound one has for $z \in \bbZ^d$ (for $n$ large enough)
\begin{equation}
\begin{split}
\bbP^\omega\left[z \stackrel{\geq \alpha}{ \longleftrightarrow} \infty \right] & \stackrel{\eqref{eq:MustCrossSlab}}{\leq} C  n^{d} \cdot \sup_{x \in W_n }  \bbP^\omega\left[x \stackrel{ \geq \alpha }{ \longleftrightarrow} \partial B(x, \tfrac{\kappa}{100}\log n) \right] \\
& ( \rightarrow 0 \text{ as } n \rightarrow \infty, \text{ by~\eqref{eq:CounterexampleConnectionBound}}). 
\end{split}
\end{equation}
 It follows that for $\alpha > \alpha_\ast^{(\sqrt{2})} + \delta$, $\bbP^\omega [E^{\geq \alpha}\mbox{ contains an infinite cluster}\,] = 0$, so $\alpha_\ast \leq \alpha_\ast^{(\sqrt{2})} + \delta$. Since $\delta > 0$ was arbitrary, we have $\alpha_\ast \leq \alpha_\ast^{(\sqrt{2})} ( < \alpha_\ast^{(\sigma)})$.

4) Beside the parameter $\alpha_\ast$ for the Gaussian free field with random conductances $\omega \in \Omega_\lambda$ one can also study the percolation threshold $\widehat{\alpha}_\ast$ for the corresponding \textnormal{annealed} model. The latter is obtained by averaging over the environment $\omega$, namely by considering the field $(\varphi_x)_{x \in \bbZ^d}$ under the annealed measure $P \times \bbP^\omega$. Since $\Omega_\lambda \ni \omega \mapsto \bbP^\omega [E^{\geq \alpha}\mbox{ contains an infinite cluster}]$ is $P$-a.s.~constant (see the proof of Proposition~\ref{prop:alpha_ast}), one has the equality $\widehat{\alpha}_\ast = \alpha_\ast$, and therefore also a non-trivial phase transition. Remarkably, the field $\varphi$ is stationary under the annealed measure, but not Gaussian except in the case of constant conductances.
\end{rem}

\section{Solidification estimates for random walks}
\label{sec:Solidification}
In this section we develop uniform estimates on the absorption of a random walk on $(\mathbb{Z}^d,\bbE_d)$, $d \geq 3$, equipped with uniformly elliptic weights $\omega \in \Omega_\lambda$ by porous interfaces $\Sigma$ surrounding the discrete blow-up $A_N$ of a compact set $A \subseteq \mathbb{R}^d$ with non-empty interior. 

The main result comes in Theorem~\ref{thm:SolidificationTheorem} below, and constitutes an adaptation of Theorem 3.1 of~\cite{nitzschner2017solidification} to a discrete set-up with non-uniform weights. These estimates and  related uniform controls on the capacity of the porous interfaces, see Corollary~\ref{cor:CapacityDirichletSolidification}, are pivotal for the proofs of the upper bounds on the probability of the disconnection events presented in Theorems~\ref{thm:UpperBound} and~\ref{thm:EntropicRepulsion} of Section~\ref{sec:UpperBounds}. Importantly, as in~\cite{nitzschner2017solidification}, no convexity assumption is made on the set $A$. In particular, the comparison of the capacities $\capa^\omega(\Sigma)$ and $\capa^\omega(A_N)$ as they appear in Corollary~\ref{cor:CapacityDirichletSolidification} is more delicate since projection arguments which facilitate such bounds in the case of convex sets $A$ are not available.
Similarly as in~\cite{nitzschner2017solidification}, our approach will involve considerations of relative volumes in boxes of size $2^\ell$, at multiple scales. Due to the discrete nature of our problem, this can only be performed up to a certain precision. Moreover, quenched lower bounds for the heat kernel on the weighted graph $(\bbZ^d, \bbE_d, \omega)$ are instrumental to overcome the lack of a scaling invariance that one has in the case of Brownian motion.  

Let us now introduce some notation. As in the introduction, we let $\lambda \in (0,1)$ which will be kept fixed and let $\omega \in \Omega_\lambda = [\lambda,1]^{\bbE_d}$ be real, uniformly elliptic conductances on the edges of $(\bbZ^d, \bbE_d)$.
Recall that all constants may depend implicitly on $\lambda$. We recall the notation for the continuous-time random walk among the conductances $(\omega_e)_{e \in \bbE_d}$ in~\eqref{eq:ContTimeRW} and below, and introduce for $r \in \mathbb{N}$ the $(\cF_t)$-stopping time
\begin{equation}
\label{eq:tau_Def}
\tau_r = \inf\{ t \geq 0 \, : \, |X_t - X_0|_\infty \geq r \}.
\end{equation}
We consider now a bounded, non-empty set $U_0 \subseteq \bbZ^d$ and its complement $U_1$. The boundary $S = \partial U_0 = \partial_{\text{in}} U_1$ is a bounded, non-empty subset of $\bbZ^d$. We are interested in measuring a certain local density of $U_0$ in boxes of dyadic scale. For this, we consider a non-negative integer $\ell$ and $x\in \bbZ^d$ and define the local density measure by
\begin{equation}\label{eq:densitymeausre}
    \mu_{x,\ell} (y) = \frac{1}{|B(x,2^\ell)|} \mathbbm{1}_{B(x,2^\ell)}(y), \qquad y\in \bbZ^d.
\end{equation} 

The local density functions associated with $U_1$ are defined as

\begin{equation}
\label{eq:DefSigmaEll}
\begin{split}
\sigma_{\ell}(x) & = \mu_{x,\ell}(U_1) \left( = \tfrac{|B(x,2^{\ell}) \cap U_1| }{|B(x,2^{\ell})|} \right), \\
\widetilde{\sigma}_{\ell}(x) & = \mu_{x,\ell+2}(U_1) \left( = \tfrac{|B(x,4 \cdot 2^{\ell}) \cap U_1| }{|B(x,4 \cdot 2^{\ell})|} \right).
\end{split}
\end{equation}

For a given function $f :\bbZ^d \to \bbR$ we write $(f)_{B(x,2^\ell)}$ for the average of $f$ on $B(x,2^\ell)$. Given a bounded, non-empty set $A \subseteq \bbZ^d$ and $\ell_{\ast} \geq 0$, we define
\begin{equation}
\label{eq:U0l_def}
\cU_{\ell_\ast, A} = \{ U_0 \subseteq \bbZ^d \text{ bounded} \, : \, \sigma_\ell(x) \leq \tfrac{1}{2}, \text{ for all } x \in A, \ell \leq \ell_\ast \}.
\end{equation}
Intuitively, any $U_0 \in \cU_{\ell_\ast, A}$ is such that $A \subseteq \bbZ^d$ is ``well inside'' $U_0$. For a given $U_0 \in \cU_{\ell_\ast, A}$ as in~\eqref{eq:U0l_def}, and $\omega \in \Omega_\lambda$ consider $\varepsilon \in \mathbb{N}$ and $\chi \in (0,1)$, and define the \emph{porous interfaces}
\begin{equation}
\label{eq:ClassOfPorousInterfaces}
\cS^\omega_{U_0, \varepsilon, \chi} = \{ \Sigma \subseteq \bbZ^d \text{ bounded} \, : \, P^\omega_x[H_\Sigma < \tau_\varepsilon] \geq \chi, \text{ for all } x \in S \}
\end{equation}
(recall~\eqref{eq:tau_Def} for the definition of $\tau_\varepsilon$). The value $\varepsilon$ governs the distance of the porous interfaces from $S (= \partial U_0)$, while $\chi$ is a measure for the strength at which it is felt for a random walk among given conductances $\omega$. 

We now present the uniform controls (``solidification estimates'') alluded to above in the following main theorem.
\begin{theorem}
\label{thm:SolidificationTheorem}
Let $A \subseteq \bbR^d$ be compact with non-empty interior, $\chi \in (0,1)$ and $(a_N)_{N \geq 0}$ a sequence of positive reals with $a_N \rightarrow 0$ as $N \rightarrow \infty$. It holds that
\begin{equation}
\label{eq:SolidificationStatement}
\lim_{N \rightarrow \infty} \sup_{\varepsilon/ 2^{\ell_\ast} \leq a_N} \sup_{U_0 \in \cU_{\ell_\ast, A_N}} \sup_{\omega \in \Omega_\lambda} \sup_{\Sigma \in \cS^\omega_{U_0, \varepsilon, \chi} } \sup_{x \in A_N} P^\omega_x[H_\Sigma = \infty] = 0.
\end{equation}
Moreover, one has that
\begin{equation}
\label{eq:Solidification_From_Outside}
\lim_{N \rightarrow \infty} \sup_{\varepsilon/ 2^{\ell_\ast} \leq a_N} \sup_{U_0 \in \cU_{\ell_\ast, A_N}} \sup_{\omega \in \Omega_\lambda} \sup_{\Sigma \in \cS^\omega_{U_0, \varepsilon, \chi} } \sup_{x \in \bbZ^d} \big(P^\omega_x[H_\Sigma = \infty] - P^\omega_x[H_{A_N} = \infty]  \big) = 0.
\end{equation}
\end{theorem}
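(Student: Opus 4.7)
The plan is to first establish~\eqref{eq:SolidificationStatement} via a multi-scale argument and then to deduce~\eqref{eq:Solidification_From_Outside} as a direct consequence. For the latter, a strong Markov decomposition at $H_{A_N}$ yields
\begin{equation*}
P^\omega_x[H_\Sigma = \infty] \leq P^\omega_x[H_{A_N} = \infty] + P^\omega_x[H_{A_N} < \infty,\, H_\Sigma = \infty] \leq P^\omega_x[H_{A_N} = \infty] + \sup_{y \in A_N} P^\omega_y[H_\Sigma = \infty],
\end{equation*}
uniformly in $x \in \bbZ^d$, and taking the suprema over $N, \omega, U_0, \Sigma$ together with~\eqref{eq:SolidificationStatement} gives~\eqref{eq:Solidification_From_Outside}.

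For~\eqref{eq:SolidificationStatement}, the strategy adapts the continuum multi-scale argument of~\cite{nitzschner2017solidification} to discrete random walks in uniformly elliptic environments, with the role of Brownian scaling played by the uniform quenched heat kernel lower bound~\eqref{eq:HK_bound}. Fix $x \in A_N$ and consider dyadic shells $\Gamma_k = B(x, 2^{\ell_0+k}) \setminus B(x, 2^{\ell_0+k-1})$ for $k=1,\ldots,K_N$, where $\ell_0 = \ell_0(\chi)$ is large enough that $2^{\ell_0} \gg \varepsilon$ and $K_N = \ell_\ast - \ell_0$; the hypothesis $\varepsilon/2^{\ell_\ast} \leq a_N \to 0$ ensures $K_N \to \infty$ with $N$. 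The core claim is a single-scale absorption estimate: with $T_k$ the first exit time of $B(x, 2^{\ell_0+k})$, on the event $\{H_\Sigma > T_{k-1}\}$,
\begin{equation*}
P^\omega_x[H_\Sigma > T_k \mid \cF_{T_{k-1}}] \leq 1 - c\chi,
\end{equation*}
for a constant $c>0$ uniform in $\omega \in \Omega_\lambda$ and in the scale. Iterating this estimate over the $K_N$ shells produces a bound $(1-c\chi)^{K_N}$, which tends to zero as $N \to \infty$.

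The single-scale estimate combines three uniform ingredients. First, the density hypothesis $\sigma_\ell(x)\leq 1/2$ from~\eqref{eq:U0l_def} gives $|U_0\cap B(x,2^\ell)| \geq \tfrac{1}{2}|B(x,2^\ell)|$ at every scale $\ell \leq \ell_\ast$, and a comparison of $\sigma_\ell(x)$ with $\sigma_{\ell-1}(x)$ ensures that each shell $\Gamma_k$ contains both $U_0$- and $U_1$-points in positive density. Second, the quenched heat kernel bound~\eqref{eq:HK_bound} applied at radius $R = 2^{\ell_0+k}$ yields, uniformly in $\omega\in\Omega_\lambda$ and in starting points on $\partial B(x,2^{\ell_0+k-1})$, a positive probability of visiting any subset of $B(x,(1-\vartheta)2^{\ell_0+k})$ of density bounded below at times $t\asymp 2^{2(\ell_0+k)}$; since every nearest-neighbor path from a $U_0$-point to a $U_1$-point of $\Gamma_k$ must cross $S$, this produces a uniform lower bound on the probability that the walk visits $S\cap\Gamma_k$ before $T_k$. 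Third, at any $z\in S$ reached, the porous interface property~\eqref{eq:ClassOfPorousInterfaces} gives $P^\omega_z[H_\Sigma < \tau_\varepsilon]\geq \chi$; and since $2^{\ell_0}\gg\varepsilon$, the $\varepsilon$-ball around $z$ remains within the enlarged current dyadic region, so the absorption occurs before $T_{k+1}$.

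The principal obstacle — and the reason the ``resonance set'' construction alluded to in the introduction is required — is the coordination of scales in the presence of a wandering walk whose density control is anchored at points of $A_N$. The key point is that keeping the basepoint $x \in A_N$ fixed throughout the iteration allows repeated use of the profiles $\sigma_\ell(x)$ and the enlarged $\widetilde{\sigma}_\ell(x)$ from~\eqref{eq:DefSigmaEll} at every scale $\ell \leq \ell_\ast$, even as the walk explores progressively larger shells; the uniformity in $\omega \in \Omega_\lambda$ of the heat kernel and Green function bounds recalled in Section~\ref{sec:NotationUsefulResults} is precisely what absorbs the lack of translation and scale invariance and ensures that all suprema in~\eqref{eq:SolidificationStatement} indeed vanish.
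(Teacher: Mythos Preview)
Your deduction of~\eqref{eq:Solidification_From_Outside} from~\eqref{eq:SolidificationStatement} via the strong Markov property is correct and matches the paper. The problem is the argument for~\eqref{eq:SolidificationStatement} itself.

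The single-scale absorption estimate is not valid, because your claim that ``each shell $\Gamma_k$ contains both $U_0$- and $U_1$-points in positive density'' is false. The membership $U_0 \in \cU_{\ell_\ast, A_N}$ only asserts $\sigma_\ell(x) \leq \tfrac12$ for $x \in A_N$ and $\ell \leq \ell_\ast$, i.e.\ an \emph{upper} bound on the $U_1$-density near $A_N$; it gives no lower bound whatsoever. A concrete obstruction: take $A_N = B(0,N)$ and $U_0 = B(0,100N)$. Then for any $x \in A_N$ and any $\ell$ with $2^\ell \leq 10N$ one has $B(x,2^\ell) \subseteq U_0$, so $\sigma_\ell(x) = 0$. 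Thus $U_0 \in \cU_{\ell_\ast,A_N}$ for $2^{\ell_\ast}$ up to order $N$, yet \emph{every} dyadic shell $\Gamma_k \subseteq B(x,2^{\ell_\ast})$ lies entirely inside $U_0$ and contains no point of $S = \partial U_0$. Your iteration produces no absorption attempt at all, and the bound $(1-c\chi)^{K_N}$ collapses. Comparing $\sigma_\ell(x)$ with $\sigma_{\ell-1}(x)$ cannot manufacture $U_1$-points that are not there; the transition of $\sigma_\ell(x)$ from values $\leq \tfrac12$ to values near $1$ may occur entirely at scales $\ell > \ell_\ast$.

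This is precisely why the resonance set is not an optional refinement but the heart of the proof. The paper does \emph{not} anchor the multi-scale analysis at $x \in A_N$. Instead it first constructs, for given $I,J,L$, the set $\mathrm{Res}$ of points $y$ at which at least $J$ of the densities $\widetilde{\sigma}_\ell(y)$, $\ell \in \cA_\ast$, lie in the balanced range $[\widetilde{\alpha},1-\widetilde{\alpha}]$, and proves (Theorem~\ref{thm:Resonance}, via the $I$-family machinery and Proposition~\ref{prop_multiscale_descent}) that the walk from $A_N$ hits $\mathrm{Res}$ except with probability $\Phi_{J,I,L} \to 0$ as $I \to \infty$. Only \emph{after} reaching such a $y$ does one iterate the single-scale absorption (Lemma~\ref{main_lemma}): at $y$ one is guaranteed $J$ scales with balanced density, hence $J$ genuine attempts to meet $S$ and then $\Sigma$, yielding $P^\omega_y[H_\Sigma = \infty] \leq (1-c_9(\chi))^J$. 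Your last paragraph identifies the obstacle but then asserts it away; keeping the basepoint in $A_N$ is exactly what fails, because the density control there is one-sided.
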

\begin{figure}
\begin{center}
\includegraphics[scale=.5]{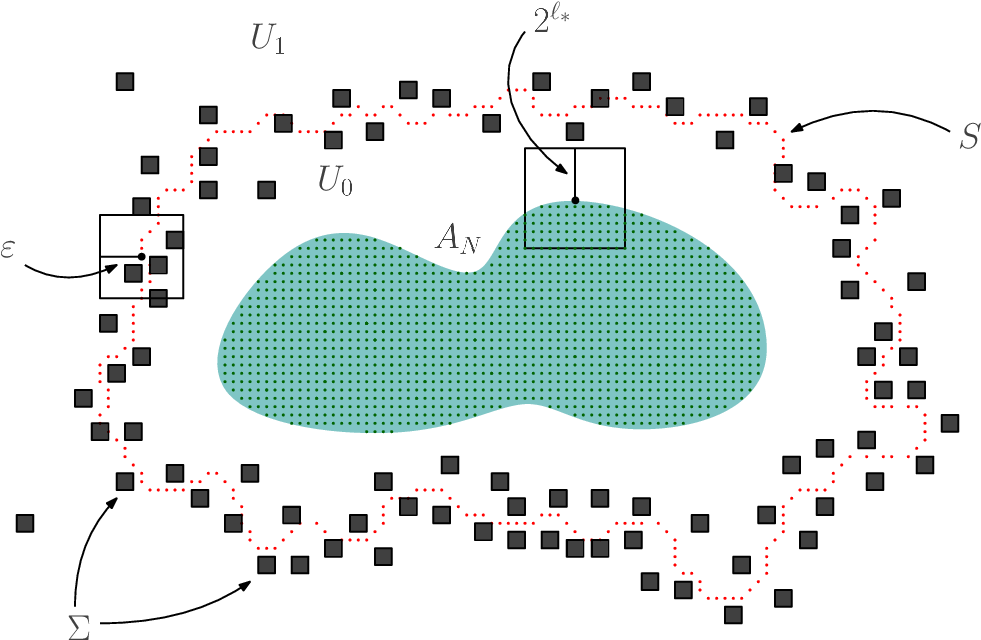}
\end{center}
\caption{An illustration of a $U_0$ in $\cU_{\ell_\ast,A_N}$ and $\Sigma$ in $\cS^\omega_{U_0,\varepsilon,\chi}$.}
\label{fig:interface}
\end{figure}

An immediate consequence of this theorem is the  following capacity lower bound that will enter the proofs in the following sections. 

\begin{cor}
\label{cor:CapacityDirichletSolidification}
Let $A \subseteq \bbR^d$ be compact with non-empty interior, $\chi \in (0,1)$ and $(a_N)_{N \geq 0}$ a sequence of positive real numbers with $a_N \rightarrow 0$ as $N \rightarrow \infty$. Then, one has
\begin{equation}
\label{eq:Capacity_Lower_Bound}
\liminf_{N \rightarrow \infty} \inf_{\varepsilon/2^{\ell_\ast} \leq a_N}  \inf_{U_0 \in \cU_{\ell_\ast, A_N}}  \inf_{\omega \in \Omega_\lambda}\inf_{\Sigma \in \cS^\omega_{U_0,\varepsilon, \chi}} \frac{\capa^\omega(\Sigma)}{\capa^\omega(A_N)} \geq 1,
\end{equation}
and 
\begin{equation}
\label{eq:Dirichlet_Form_Bound}
\begin{split}
\varlimsup_{N \rightarrow \infty} \sup_{\varepsilon/ 2^{\ell_\ast} \leq a_N} \sup_{U_0 \in \cU_{\ell_\ast, A_N}} \sup_{\omega \in \Omega_\lambda} \sup_{\Sigma \in \cS^\omega_{U_0, \varepsilon, \chi} } & \frac{1}{N^{d-2}} \big[ \cE^\omega( h^\omega_{A_N} - h^\omega_\Sigma) \\
& - (\capa^\omega(\Sigma) - \capa^\omega(A_N)) \big]  \leq 0.
\end{split}
\end{equation}
\end{cor}
\begin{proof}
We first show~\eqref{eq:Capacity_Lower_Bound}. One has that for any $N \geq N_0$, $\varepsilon / 2^{\ell_\ast} \leq a_N$, $U_0 \in \cU_{\ell_\ast, A_N}$, $\Sigma \in \cS^\omega_{U_0, \varepsilon, \chi}$:
\begin{equation}
\begin{split}
\capa^\omega(\Sigma) & \geq \sum_{y \in \bbZ^d} P^\omega_y [H_{A_N} < \infty] e^\omega_\Sigma(y) \geq \inf_{x \in A_N} P^\omega_x[H_\Sigma < \infty] \capa^\omega(A_N),
\end{split}
\end{equation}
using~\eqref{eq:LastExDecomp} and the symmetry of $g^\omega(\cdot,\cdot)$. Dividing by $\capa^\omega(A_N)$ (for large enough $N$) and taking the respective infima on the right-hand side readily yields the claim upon using Theorem~\ref{thm:SolidificationTheorem}. 

We now turn to the proof of~\eqref{eq:Dirichlet_Form_Bound}. By the bilinearity of the Dirichlet form and~\eqref{eq:Capacity_and_Dirichlet}, it suffices to prove that
\begin{equation}
\label{eq:LiminfDirichlet}
\liminf_{N \rightarrow \infty} \inf_{\varepsilon/2^{\ell_\ast} \leq a_N}  \inf_{U_0 \in \cU_{\ell_\ast, A_N}}  \inf_{\omega \in \Omega_\lambda}\inf_{\Sigma \in \cS^\omega_{U_0,\varepsilon, \chi}}  \frac{1}{N^{d-2}} \big( \cE^\omega(h^\omega_{A_N}, h^\omega_\Sigma) - \capa^\omega(A_N) \big) \geq 0,
\end{equation}
see also the proof of Lemma 2.2 of~\cite{chiarini2019entropic} for a similar argument. To see this, we utilize the discrete Gauss-Green identity, which yields that
\begin{equation}
\begin{split}
\cE^\omega(h^\omega_{\Sigma}, h^\omega_{A_N} ) & = \sum_{x \in \bbZ^d} h^\omega_\Sigma(x) e^\omega_{A_N}(x)  \geq \inf_{x \in A_N} P^\omega_x[H_\Sigma < \infty] \capa^\omega(A_N). 
\end{split}
\end{equation}
This establishes~\eqref{eq:LiminfDirichlet} upon using~\eqref{eq:SolidificationStatement} and $\capa^\omega(A_N) \leq c(A) N^{d-2}$, by~\eqref{eq:QuenchedBoxCapacityEstimate}. 
\end{proof}

We now turn to the proof of Theorem~\ref{thm:SolidificationTheorem}, which is given in three steps. In a first step, we study in more detail how a random walk enters sets of well-balanced local density. Thereafter, we introduce a discrete ``resonance set'' that is hard to avoid for random walks starting within $U_0$. In a last step, we essentially use the resonance set as a substitute for a Wiener-type criterion to show in an asymptotic sense, that the ``porous interfaces'' $\Sigma$ cannot be avoided by a random walk starting in $A_N$. 

\subsection{Local density functions}

In this subsection we fix $U_0 \subseteq \bbZ^d$ non-empty and bounded and denote its complement in $\bbZ^d$ as $U_1$. We collect some properties of the local density functions $\sigma_{\ell}$ and $\widetilde{\sigma}_{\ell}$, $\ell \geq 0$, associated with $U_1$ (see~\eqref{eq:DefSigmaEll}). 

The following two lemmas are discrete versions of Lemma 1.1 and Lemma 1.2 in~\cite{nitzschner2017solidification}, respectively. Their proofs proceed as in the continuum case and are omitted.  Lemma~\ref{lem:Lipschitz} establishes a Lipschitz property of the local density and relates $\sigma_\ell$ to the average of $\sigma_{\ell'}$ on $B(x,2^\ell)$ when $\ell' < \ell$. Lemma~\ref{lem:Dichotomy} shows that when $\ell' < \ell$ and $\sigma_{\ell'}$ has an average $\beta'$ in the box $B(x,2^\ell)$, then either $\sigma_{\ell'}$ has both values bigger and smaller than $\beta'$ by a certain amount, or remains close to $\beta'$ in a sizable part of $B(x,2^\ell)$. This property is then used in the proof of Proposition~\ref{prop_1_step}. 
\begin{lem} 
\label{lem:Lipschitz}
For $x, y \in\bbZ^d$ and $\ell \in \mathbb{N}$, one has
\begin{equation}\label{eq:lip}
    |\sigma_\ell(x) -\sigma_\ell(x+y)|\leq 2^{-\ell} |y|_1.
\end{equation}    
Moreover, for any $\ell' <\ell$ non-negative integers and $c_0 = d 2^{d-1}$ one has
\begin{equation}\label{eq:average}
    |\sigma_\ell(x) - (\sigma_{\ell'})_{B(x,2^{\ell})}| \leq c_0\, 2^{\ell' - \ell}
\end{equation}
(recall the notation $(\cdot)_{B(x,2^\ell)}$ for the average on $B(x,2^\ell)$, see above~\eqref{eq:U0l_def}).
\end{lem}

\begin{lem} 
\label{lem:Dichotomy}
Consider $x\in\bbZ^d$ and $0 \leq \ell'<\ell$ integers and set $\beta' = (\sigma_{\ell'})_{B(x,2^\ell)}$. Then, for all $0\leq \delta \leq \beta'\wedge (1-\beta')$ at least one of the following holds true:
\begin{equation}
\label{eq:dichotomy}
    \left\{ 
    \begin{aligned}
        &\text{i) } \mu_{x,\ell}(\{\sigma_{\ell'} > \beta' + \delta \})\geq \frac{\delta}{2} \mbox{ and } \mu_{x,\ell}(\{\sigma_{\ell'} < \beta' - \delta\})\geq \frac{\delta}{2}, \\
        &\text{ii) } \mu_{x,\ell}(\{\beta'-\delta \leq \sigma_{\ell'} \leq \beta' + \delta\})\geq \frac{1}{4}-\frac{\delta}{2}.
    \end{aligned}    
    \right.
\end{equation}
\end{lem}

As a next step, we give a version of Propositions 1.3 and 1.4 of~\cite{nitzschner2017solidification} for our purposes. We show that for $\ell' < \ell$ big enough, when $\sigma_{\ell'}$ has average $\beta'$ over $B(x,2^{\ell})$ not too close to $0$ or $1$, a random walk starting from $x$ has a non-degenerate probability to enter a region where $\sigma_{\ell'}$ is close to $\beta'$, before exiting $B(x,2^\ell)$. 
 \begin{prop}
\label{prop_1_step}
    Let $\delta > 0$ and $x \in \bbZ^d$. There exists $\ell_{\min}(\delta) \in \bbN$ such that for every $\ell, \ell' \in \bbN$ with $\ell_{\min}(\delta) \leq \ell' < \ell$ and $\delta \leq \beta' \wedge (1-\beta') \wedge \tfrac{1}{4}$, where $\beta' = (\sigma_{\ell'} )_{B(x,2^\ell)}$, the following holds
    \begin{equation}
    \inf_{\omega \in \Omega_\lambda} P^\omega_x [H_{ \{\sigma_{\ell'} \in [\beta' - \delta, \beta' + \delta] \} } < \tau_{2^\ell} ] \geq c_6(\delta).
    \end{equation}
\end{prop}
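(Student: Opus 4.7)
The plan is to combine the dichotomy~\eqref{eq:dichotomy}, the quenched killed heat kernel lower bound~\eqref{eq:HK_bound}, and the Lipschitz-type estimate~\eqref{eq:lip}. Set
\[
V_+ = \{\sigma_{\ell'} > \beta' + \delta\}, \quad V_- = \{\sigma_{\ell'} < \beta' - \delta\}, \quad V_0 = \{|\sigma_{\ell'} - \beta'| \leq \delta\}.
\]
The hypothesis $\delta \leq \beta' \wedge (1-\beta') \wedge 1/4$ allows applying the dichotomy~\eqref{eq:dichotomy} at level $\delta$, placing us in one of two cases: (i) $\mu_{x,\ell}(V_+) \wedge \mu_{x,\ell}(V_-) \geq \delta/2$, or (ii) $\mu_{x,\ell}(V_0) \geq 1/4 - \delta/2 \geq 1/8$.

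The main technical tool is a uniform hitting bound extracted from~\eqref{eq:HK_bound}. Fix $\vartheta = \vartheta(\delta) \in (0,1/2)$ small enough that the annulus $B(x, 2^\ell) \setminus B(x, (1-\vartheta) 2^\ell)$ occupies a relative volume of at most $\delta/16$ in $B(x, 2^\ell)$, and take $\ell_{\min}(\delta)$ large enough that $2^{\ell_{\min}(\delta)-1} \geq R_0(\vartheta)$ and $2^{-\ell_{\min}(\delta)} \leq \delta$. For $\ell > \ell' \geq \ell_{\min}(\delta)$, set $R = 2^\ell - 1$, so that $\tau_{2^\ell}$ coincides with $T_{B(x, R)}$ when started at $x$. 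For any $V \subseteq \bbZ^d$ with $\mu_{x,\ell}(V) \geq \delta/2$, a volume comparison yields $|V \cap B(x, (1-\vartheta) R)| \geq c(\delta)\, 2^{d\ell}$. Using~\eqref{eq:KilledHeatKernel}, the uniform bounds $\omega_y \geq 2d\lambda$, and~\eqref{eq:HK_bound} at time $t = R^2$,
\[
P^\omega_x[H_V < \tau_{2^\ell}] \geq \sum_{y \in V \cap B(x, (1-\vartheta) R)} \omega_y\, q^\omega_{t, B(x, R)}(x, y) \geq 2d\lambda\, c_4(\vartheta)\, t^{-d/2}\, c(\delta)\, 2^{d\ell} \geq c'(\delta) > 0,
\]
uniformly in $\omega \in \Omega_\lambda$; the same holds with the absolute constant $1/8$ in place of $\delta/2$.

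In case (ii), applying the hitting bound with $V = V_0$ directly yields the claim. In case (i), either $x \in V_0$ (and the probability is $1$) or without loss of generality $x \in V_+$, in which case I apply the hitting bound with $V = V_-$. The key link is provided by~\eqref{eq:lip}: neighboring vertices $y \sim y'$ satisfy $|\sigma_{\ell'}(y) - \sigma_{\ell'}(y')| \leq 2^{-\ell'} \leq \delta$, so the first vertex along the walk trajectory where $\sigma_{\ell'}$ drops to a value $\leq \beta' + \delta$ actually lies in $(\beta', \beta' + \delta] \subseteq V_0$; hence $\{H_{V_-} < \tau_{2^\ell}\} \subseteq \{H_{V_0} < \tau_{2^\ell}\}$, and we are done. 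Taking $c_6(\delta)$ to be the smaller of the two resulting constants finishes the proof. I expect the only point requiring care to be the joint choice of $\vartheta(\delta)$ and $\ell_{\min}(\delta)$ against the heat kernel constants $R_0(\vartheta), c_4(\vartheta)$ from~\eqref{eq:HK_bound}, which is what makes the final lower bound uniform in $\omega \in \Omega_\lambda$; the remainder of the argument is essentially algebraic.
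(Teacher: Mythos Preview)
Your proof is correct and follows essentially the same approach as the paper: the dichotomy~\eqref{eq:dichotomy}, the killed heat kernel lower bound~\eqref{eq:HK_bound} applied inside the shrunken ball $B(x,(1-\vartheta)R)$, and the Lipschitz estimate~\eqref{eq:lip} to force a crossing through $V_0$. The one notable difference is that in case~(i) the paper uses a two-step hitting argument (visit $V_+$, then from there visit $V_-$, yielding a bound $C(\delta)^2$ via the strong Markov property), whereas you case on the location of $x$ and need only a single hitting bound; your version is mildly simpler but otherwise equivalent.
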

\begin{proof}

Given $\delta > 0$ we take $\delta' > 0$ solely depending on $\delta$ such that for all $\ell \geq \overline{\ell} (= \overline{\ell}(\delta)$),
\begin{equation}
\mu_{x,\ell}(\{ y \in \bbZ^d \, : \, |y-x|_\infty > 2^\ell (1-\delta') \}) \leq \tfrac{\delta}{4}.
\end{equation} 
We then choose $\tilde{\ell} (=\tilde{\ell}(\delta))$ large enough such that (with $\vartheta = \delta'$ and $R \geq 2^{\tilde{\ell}}$), the heat kernel lower bound~\eqref{eq:HK_bound} holds. We define
\begin{equation}
\label{eq:MinScaleDefinition}
\ell_{\text{min}}(\delta) = \tilde{\ell} \vee \overline{\ell} \vee \min \{ \ell \in \bbN \,  : \, 2^{-\ell} \leq \tfrac{\delta}{8}\}.
\end{equation}
The third member of the maximum is introduced for later use. Let us now assume we are in situation i) of the dichotomy~\eqref{eq:dichotomy}. In this case, we have that 
\begin{equation}
\label{eq:nondegeneracy_betadelta}
    \left\{ 
    \begin{aligned}
       & \mu_{x,\ell}(\{\sigma_{\ell'} > \beta' + \delta \} \cap B(x,2^\ell(1-\delta'))) \geq \tfrac{\delta}{4} , \\
        &\mu_{x,\ell}(\{\sigma_{\ell'} < \beta' - \delta \} \cap B(x,2^\ell(1-\delta'))) \geq \tfrac{\delta}{4}.
    \end{aligned}    
    \right.
\end{equation}
Then, using the killed heat kernel bounds, we see that for $\omega \in \Omega_\lambda$ (since $\ell \geq \tilde{\ell}$):
\begin{equation}
\begin{split}
P^\omega_x & [H_{\{\sigma_{\ell'} \geq \beta' + \delta \} \cap B(x,2^\ell(1-\delta')) } < T_{B(x,2^\ell)}] \\
 & \stackrel{\eqref{eq:KilledHeatKernel}}{\geq}  \sum_{z \in \{\sigma_{\ell'} \geq \beta' + \delta \} \cap B(x,2^\ell(1-\delta')) } \omega_z q^{\omega}_{ 4^{\ell}, B(x,2^\ell) }(x,z) 
 \geq C(\delta),
\end{split}
\end{equation}
combining~\eqref{eq:nondegeneracy_betadelta}, \eqref{eq:HK_bound} and $\omega_z \geq 2d \lambda$ in the second step. A similar argument shows that for all $y \in B(x,2^\ell(1-\delta'))$:
\begin{equation}
\begin{split}
P^\omega_y[H_{\{\sigma_{\ell'} \leq \beta' - \delta \} \cap B(x,2^\ell(1-\delta')) } < T_{B(x,2^\ell)}]
\geq C(\delta).
\end{split}
\end{equation}
By Lipschitz continuity of $\sigma_{\ell'}$, see~\eqref{eq:lip}, and the fact that $\ell > \ell' \geq \ell_{\text{min}}(\delta)$ we see (using the strong Markov property) that a random walk starting in $x$ has a probability of at least $C(\delta)^2$ to visit first $\{\sigma_{\ell'} > \beta' + \delta \}$ and then $\{\sigma_{\ell'} < \beta' - \delta \}$ before exiting $B(x,2^{\ell})$ and thus reaching $\{ \sigma_{\ell'} \in [\beta' - \delta, \beta' + \delta]\}$. On the other hand, if in the dichotomy~\eqref{eq:dichotomy} ii) holds, we have
\begin{equation}
\mu_{x,\ell}(\{ \sigma_{\ell'} \in [\beta' - \delta,\beta' + \delta] \}) \geq \tfrac{1}{4}- \tfrac{\delta}{2} \geq \tfrac{1}{8}.
\end{equation}
We can then argue in the same fashion as above that 
\begin{equation}
P^\omega_x[H_{ \{\sigma_{\ell'} \in [\beta' - \delta, \beta' + \delta] \} } < \tau_{2^\ell}] \geq c.
\end{equation}
Collecting the two bounds and taking the infimum over $\omega \in \Omega_\lambda$ yields the claim. 
\end{proof}

We now take well-separated decreasing scales $2^{\ell_0} > 2^{\ell_1} > ... > 2^{\ell_J}$ (see~\eqref{eq:separation}), and in a similar spirit to Proposition 1.4 of~\cite{nitzschner2017solidification}, we show that for a random walk starting in $x\in \bbZ^d$ with $\sigma_{\ell_0}(x)$ close to $\tfrac{1}{2}$, there is a non-degenerate probability to enter a region where all $\widetilde{\sigma}_{\ell_j}$, $0 \leq j \leq J$ lie in the fixed interval $[\widetilde{\alpha}, 1 - \widetilde{\alpha}]$, where $\widetilde{\alpha}$ is defined in~\eqref{eq:DefinitionAlpha} below, before the walk moves at a distance $\tfrac{3}{2} 2^{\ell_0}$.

Due to the discrete nature of our set-up, we are again forced to consider $\ell_J \geq \ell_{\text{min}}(\delta)$, defined in~\eqref{eq:MinScaleDefinition}, for some $\delta > 0$ depending on $J$. More precisely, let $J \geq 1$, $c_0 = d 2^{d-1}$ and define 
\begin{equation}
\label{eq:LJdef}
L(J) = \min \{ L \geq 5 \, : \, c_0 2^{-L} \leq \tfrac{1}{200J} \}.
\end{equation}
We then look at separated scales $2^{\ell_0} > 2^{\ell_1} > ... > 2^{\ell_J}$ with $\ell_J \geq \ell_{\text{min}}((200J)^{-1})$. 
\begin{equation}
\label{eq:separation}
\ell_{j+1} \leq \ell_j - L(J),\quad 0 \leq j < J.
\end{equation}
Next, we introduce the increasing sequence of compact sub-intervals of $(0,1)$, namely
\begin{equation}
I_j = \Big[\tfrac{1}{2} - \tfrac{j+1}{100J}, \tfrac{1}{2} + \tfrac{j+1}{100J} \Big],\quad 0 \leq j \leq J
\end{equation}
and the non-decreasing sequence of stopping times
\begin{equation}
\gamma_0 = H_{ \{\sigma_{\ell_0} \in I_0 \} }, \qquad \gamma_{j+1} = \gamma_j + H_{ \{\sigma_{\ell_{j+1}} \in I_{j+1} \} } \circ \theta_{\gamma_j},\quad 0 \leq j < J
\end{equation}
($(\theta_t)_{t \geq 0}$ denotes the family of canonical shift operators on the space of right-continuous functions from $[0,\infty)$ to $\bbZ^d$ with finitely many jumps on every finite interval).
\begin{prop}
\label{prop_multiscale_descent}
Assume $J \geq 1$, and that $\ell_j$, $0 \leq j \leq J$, satisfy the separation condition~\eqref{eq:separation} as well as $\ell_J \geq \ell_{\min}((200J)^{-1})$. We let $\cC$ stand for the event 
\begin{equation}
\cC = \{ \gamma_0 = 0 \} \cap \bigcap_{0 \leq j < J} \{ \gamma_{j+1} < \gamma_j + \tau_{2^{\ell_j}} \circ \theta_{\gamma_j} \}.
\end{equation}
We have for every $x \in \bbZ^d$ such that $\sigma_{\ell_0}(x) \in [\tfrac{1}{2} - \tfrac{1}{2^{\ell_{\min}((200J)^{-1})}}, \tfrac{1}{2} + \tfrac{1}{2^{\ell_{\min}((200J)^{-1})}}]$ that 
\begin{equation}
\inf_{\omega \in \Omega_\lambda} P^\omega_x[\cC] \geq c_7(J). 
\end{equation}
On the event $\cC$, we have 
\begin{equation}
\sup \{|X_s - X_{\gamma_j} |_\infty : \gamma_j \leq s \leq \gamma_J \}   \leq  \tfrac{3}{2} 2^{\ell_j}, \quad 0 \leq j < J,
\end{equation}
\begin{equation}
\label{eq:DefinitionAlpha}
\widetilde{\sigma}_{\ell_j}(X_{\gamma_J}) \in [\widetilde{\alpha}, 1- \widetilde{\alpha}], 0 \leq j \leq J, \text{ where } \widetilde{\alpha} = \tfrac{1}{3}4^{-d}.
\end{equation}
\end{prop}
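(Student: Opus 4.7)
The plan is to proceed by induction on $j = 0, 1, \ldots, J-1$, maintaining at each stage the invariant that $\sigma_{\ell_j}(X_{\gamma_j}) \in I_j$ while the walk has not yet left the previous ball $B(X_{\gamma_{j-1}}, 2^{\ell_{j-1}})$. For the base case, the definition~\eqref{eq:MinScaleDefinition} of $\ell_{\min}$ applied with $\delta = (200J)^{-1}$ gives $2^{-\ell_{\min}((200J)^{-1})} \leq \tfrac{1}{1600J} \leq \tfrac{1}{100J}$, so the hypothesis on $\sigma_{\ell_0}(x)$ forces $\sigma_{\ell_0}(x) \in I_0$ and therefore $\gamma_0 = 0$.

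For the inductive step, I condition on $\cF_{\gamma_j}$ and denote $y = X_{\gamma_j}$; by hypothesis $\sigma_{\ell_j}(y) \in I_j$. Set $\beta' = (\sigma_{\ell_{j+1}})_{B(y, 2^{\ell_j})}$. The averaging estimate~\eqref{eq:average}, combined with the separation~\eqref{eq:separation} and the choice of $L(J)$ in~\eqref{eq:LJdef}, yields $|\sigma_{\ell_j}(y) - \beta'| \leq c_0 \, 2^{\ell_{j+1} - \ell_j} \leq c_0 \, 2^{-L(J)} \leq \tfrac{1}{200J}$. Setting $\delta = \tfrac{1}{200J}$, a direct computation based on the inclusion $[\beta' - \delta, \beta' + \delta] \subseteq I_{j+1}$ and on the estimate $|\beta' - \tfrac{1}{2}| \leq \tfrac{1}{100} + \tfrac{1}{200J} \leq \tfrac{3}{200}$ shows that $\delta \leq \beta' \wedge (1-\beta') \wedge \tfrac{1}{4}$, while $\ell_{j+1} \geq \ell_J \geq \ell_{\min}(\delta)$ by hypothesis. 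Proposition~\ref{prop_1_step} applied at $y$ together with the strong Markov property at $\gamma_j$ therefore gives
\[
P^\omega_x\!\left[\gamma_{j+1} < \gamma_j + \tau_{2^{\ell_j}} \circ \theta_{\gamma_j} \,\big|\, \cF_{\gamma_j}\right] \geq c_6\!\left(\tfrac{1}{200J}\right),
\]
uniformly in $\omega \in \Omega_\lambda$. Iterating the bound over $j = 0, 1, \ldots, J-1$ yields $\inf_{\omega \in \Omega_\lambda} P^\omega_x[\cC] \geq c_6\!\left(\tfrac{1}{200J}\right)^J =: c_7(J)$, proving the probabilistic lower bound.

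The remaining two assertions are deterministic consequences on $\cC$. For the displacement bound, the conditions $\gamma_{k+1} < \gamma_k + \tau_{2^{\ell_k}} \circ \theta_{\gamma_k}$ for $k \geq j$ together with the triangle inequality and the geometric sum $\sum_{k \geq j} 2^{\ell_k} \leq 2^{\ell_j}/(1 - 2^{-L(J)}) \leq \tfrac{32}{31}\, 2^{\ell_j} \leq \tfrac{3}{2}\, 2^{\ell_j}$ (using $L(J) \geq 5$) give $|X_s - X_{\gamma_j}|_\infty \leq \tfrac{3}{2}\, 2^{\ell_j}$ for all $\gamma_j \leq s \leq \gamma_J$. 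For the density bound, the inclusion $B(X_{\gamma_j}, 2^{\ell_j}) \subseteq B(X_{\gamma_J}, 4 \cdot 2^{\ell_j})$ (which holds because $\tfrac{3}{2} + 1 \leq 4$) combined with $\sigma_{\ell_j}(X_{\gamma_j}) \in I_j \subseteq [\tfrac{1}{2} - \tfrac{1}{100}, \tfrac{1}{2} + \tfrac{1}{100}]$ yields
\[
\widetilde{\sigma}_{\ell_j}(X_{\gamma_J}) \geq \sigma_{\ell_j}(X_{\gamma_j}) \cdot \frac{|B(X_{\gamma_j}, 2^{\ell_j})|}{|B(X_{\gamma_J}, 4 \cdot 2^{\ell_j})|},
\]
and a symmetric estimate from above follows by interchanging the roles of $U_0$ and $U_1$. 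Since the volume ratio on the right tends to $4^{-d}$ as $\ell_j \to \infty$, a slight enlargement of $\ell_{\min}(\cdot)$ in~\eqref{eq:MinScaleDefinition} (if needed) to force this ratio to exceed $(1.47 \cdot 4^d)^{-1}$ gives $\widetilde{\sigma}_{\ell_j}(X_{\gamma_J}) \in [\widetilde{\alpha}, 1 - \widetilde{\alpha}]$ with $\widetilde{\alpha} = \tfrac{1}{3} \cdot 4^{-d}$.

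The main technical nuisance is the consistent tuning of the constants $L(J)$ and $\ell_{\min}$: $L(J)$ must be large enough that the averaging error $c_0\, 2^{-L(J)}$ is smaller than the half-gap $(100J)^{-1}$ between successive intervals $I_j \subset I_{j+1}$; $\ell_{\min}$ must be large enough that Proposition~\ref{prop_1_step} applies at scale $\ell_J$ with $\delta = (200J)^{-1}$, that $\gamma_0 = 0$ is forced by the hypothesis on $\sigma_{\ell_0}(x)$, and that the discrete volume ratios are close enough to $4^{-d}$ to preserve $\widetilde{\alpha}$. These requirements are already captured by~\eqref{eq:LJdef} and~\eqref{eq:MinScaleDefinition}, possibly with a slight enlargement of the latter for the volume-ratio condition.
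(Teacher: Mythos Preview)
Your proof is correct and follows essentially the same approach as the paper: induction via Proposition~\ref{prop_1_step} with $\delta=(200J)^{-1}$ and the strong Markov property, then the geometric displacement bound and the volume-comparison argument for $\widetilde{\sigma}_{\ell_j}$. One minor remark: the enlargement of $\ell_{\min}$ you suggest for the volume-ratio condition is not needed, since the discrete ratio $(2\cdot 2^{\ell_j}+1)^d/(8\cdot 2^{\ell_j}+1)^d$ always strictly exceeds $4^{-d}$.
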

\begin{proof}
We use the notation $\delta = \frac{1}{200J} (\leq \frac{1}{4})$, and use Proposition~\ref{prop_1_step} repeatedly with this choice of $\delta$. More precisely, we define for $0 \leq j \leq J$ the event 
\begin{equation}
\cC_j = \{ \gamma_0 = 0 \} \cap \bigcap_{0 \leq i < j} \{ \gamma_{i+1} < \gamma_i + \tau_{2^{\ell_i}} \circ \theta_{\gamma_i} \}
\end{equation}
(with $\cC_J = \cC$) and aim at showing via induction that for every $\omega \in \Omega_\lambda$, $P^\omega_x[\cC_j] \geq c_6(\delta)^j$. Since $2^{-\ell_0} \leq 2^{-\ell_{\text{min}}((200J)^{-1})} \leq \tfrac{1}{8 \cdot 200J}$, we immediately see that $\sigma_{\ell_0}(x) \in [\tfrac{1}{2} - \tfrac{1}{200J}, \tfrac{1}{2} + \tfrac{1}{200J}] = I_0$, implying that $\gamma_0 = 0$, $P^\omega_x$-a.s., so the induction hypothesis is true in the case $j = 0$. We now perform induction in $j$. To this end, let $\beta'_{j+1} = (\sigma_{\ell_{j+1}})_{B(X_{\gamma_j},2^{\ell_j}) }$. On $\cC_j$, one has (by definition) that $\sigma_{\ell_{j}}(X_{\gamma_j}) \in I_j$. Next, by~\eqref{eq:average} and~\eqref{eq:separation} we have that 
\begin{equation}
\begin{split}
& |\sigma_{\ell_j}(X_{\gamma_j}) - \beta'_{j+1}| \leq c_0 2^{-L(J)} \leq \tfrac{1}{200J}, \text{ thus} \\
& \beta'_{j+1} \in \left[\tfrac{1}{2} - \tfrac{j+1}{100} - \tfrac{1}{200J},\tfrac{1}{2} +\tfrac{j+1}{100} + \tfrac{1}{200J}  \right],
\end{split}
\end{equation}
which in turn implies that $[\beta'_{j+1} - (200J)^{-1}, \beta'_{j+1} + (200J)^{-1}] \subseteq I_{j+1}$. Applying the strong Markov property at time $\gamma_j$, we obtain that 
\begin{equation}
P^\omega_x[\cC_{j+1}] = E^\omega_x[\mathbbm{1}_{\cC_j} P^\omega_{X_{\gamma_j}} [H_{ \{\sigma_{\ell_{j+1}} \in I_{j+1} \} } < \tau_{2^{\ell_j}}  ]] \geq c_6(\delta)^{j+1}.
\end{equation}
Here we used that $\ell_{\text{min}}((200J)^{-1}) \leq \ell_{j+1} < \ell_{j}$ so that Proposition~\ref{prop_1_step} can be applied. Taking the induction up to $J$, we finally find $\inf_{\omega \in \Omega_\lambda} P^\omega_x[\cC] \geq c_6(\tfrac{1}{200J})^J = c_7(J)$. We now turn to the rest of the proof. First, by the triangle inequality, one has on the event $\cC$: 
\begin{equation}
\begin{split}
\sup \{|X_s - X_{\gamma_j} |_\infty : \gamma_j \leq s \leq \gamma_J \} & \leq 2^{\ell_j} + 2^{\ell_{j+1}} + ... + 2^{\ell_{J-1}} \\
& \leq 2^{\ell_j} \sum_{m \geq 0} 2^{-m L(J)} < \tfrac{3}{2} 2^{\ell_j}, 
\end{split}
\end{equation}
using~\eqref{eq:separation} in the penultimate step. This last part in particular implies that on $\cC$, for any $0 \leq j \leq J$, $B(X_{\gamma_j}, 2^{\ell_j}) \subseteq B(X_{\gamma_J}, 4 \cdot 2^{\ell_j})$, and $\sigma_{\ell_j}(X_{\gamma_j}) \in I_j \subseteq [\tfrac{1}{3}, \tfrac{2}{3}]$, and ultimately $\widetilde{\sigma}_{\ell_j}(X_{\gamma_J}) \in [\tfrac{1}{3}4^{-d}, 1 - \tfrac{1}{3}4^{-d}] = [\widetilde{\alpha}, 1- \widetilde{\alpha}]$. 
\end{proof}
\subsection{Resonance set}

In the present subsection, we define a discrete analogue of the resonance set of~\cite[Section 2]{nitzschner2017solidification} associated with a finite subset $A \subseteq \bbZ^d$ which is characterized by the presence of at least $J$ among a collection of local densities $\widetilde{\sigma}_\ell$ of $U_1 ( = \bbZ^d \setminus U_0)$ attaining non-degenerate values in $[\widetilde{\alpha}, 1- \widetilde{\alpha}]$. Crucially, we will show that for a simple random walk on the weighted graph $(\bbZ^d, \bbE^d, \omega)$ starting in $A$, the resonance set attached to $U_0$ is visited with high probability, provided the set $U_0$ is chosen such that $A$ is ``well inside'' $U_0$ as measured by local density functions $\sigma_\ell$ being small. These bounds are uniform in $\omega \in \Omega_\lambda$. 

The presentation and set-up of the proof is very similar to the case of Brownian motion, and the main difficulty  is again an instance of the lack of arbitrary precision in the discrete set-up, which forces certain constraints on the definition of the resonance set. Roughly speaking, we have to ensure that the objects under consideration pertain to scales above $\ell_{\text{min}}((200J)^{-1})$ in order for the results of the previous section to be applicable. 

Let $d \geq 3$ and $U_0 \subseteq \bbZ^d$ be bounded, non-empty with associated local densities $\sigma_\ell$, $\widetilde{\sigma}_\ell ( = \sigma_{\ell +2 })$, see~\eqref{eq:DefSigmaEll}. We will be interested in a scale $\ell_\ast$ controlling from above the scales under consideration, as well as the distance between $A$ and $U_1 (= \bbZ^d \setminus U_0)$. Moreover, $J \geq 1$ will control the ``strength'' of resonance, $L \geq L(J)$ (chosen as in~\eqref{eq:LJdef}) the separation of scales and $I$ the number of scales inspected. Let
\begin{align}
\ell_0 & = \sup \{ \ell \in (J+1)L \bbN \, : \, \ell \leq \ell_\ast \}, \\
\cA_\ast & = \{ \ell \in L\bbN \, : \, \ell_0 \geq \ell > \ell_0 - I(J+1) L \}, \\
\cA &= \{ \ell \in L(J+1)\bbN \, : \, \ell_0 \geq \ell > \ell_0 - I(J+1) L \}. 
\end{align}
We say that $\ell_\ast$ is \emph{$(I,J,L)$-compatible}, if with $\ell_0$ as above, we have
\begin{equation}
\label{eq:CompatibilityCond}
\ell_0 - (I+1)(J+1) L > \ell_{\text{min}}((200J)^{-1}),
\end{equation}
with $\ell_{\text{min}}((200J)^{-1})$ defined in~\eqref{eq:MinScaleDefinition} with $\delta  = \frac{1}{200J}$. This compatibility condition will ensure that the bounds from the previous subsection, in particular Proposition~\ref{prop_multiscale_descent}, apply when inspecting all relevant scales. Moreover this implies that $|\cA_\ast| = (J+1)I$ and $|\cA| = I$. For a given choice of $I, J \geq 1$, $L \geq L(J)$, $U_0$ and $\ell_\ast$ $(I,J,L)$-compatible, the \emph{resonance set} is now defined as 
\begin{equation}
\text{Res} = \text{Res}(U_0,I,J,L, \ell_\ast) = \{ x \in \bbZ^d \, : \, \sum_{\ell \in \cA_\ast} \mathbbm{1}_{\{ \widetilde{\sigma}_\ell(x) \in [\widetilde{\alpha}, 1 - \widetilde{\alpha}] \} }\geq J \},
\end{equation}
which is a bounded (possibly empty) subset of $\bbZ^d$. Note that the resonance set does not depend on the choice of $\omega \in \Omega_\lambda$. 

We now state the main result of this subsection (recall the definition of $\cU_{\ell_\ast,A}$ from~\eqref{eq:U0l_def}):
\begin{theorem}
\label{thm:Resonance}
Let $J, I \geq 1$ $L \geq L(J)$, $A \subseteq \bbZ^d$ non-empty, and 
\begin{equation}
\Phi_{J,I,L,A} = \sup_{\ell_\ast } \sup_{U_0 \in \cU_{\ell_\ast, A}} \sup_{\omega \in \Omega_\lambda} \sup_{x \in A} P^\omega_x[H_{\textnormal{Res}} = \infty],
\end{equation}
where the first supremum is over all $\ell_\ast$ which are $(I,J,L)$-compatible. Define also
\begin{equation}
\Phi_{J,I,L} =  \Phi_{J,I,L,\{ 0 \}}.
\end{equation}
Then, we have the maximality property
\begin{equation}
\label{eq:maximality}
\Phi_{J,I,L,A} \leq \Phi_{J,I,L},
\end{equation}
and as $I \rightarrow \infty$, 
\begin{equation}
\limsup_I I^{-1/2^{J-1}} \log \Phi_{J,I,L} \leq \log(1 - c_7(J)) (< 0). 
\end{equation}
\end{theorem}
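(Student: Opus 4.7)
The maximality claim~\eqref{eq:maximality} reduces any starting point $x \in A$ to the origin by shifting the environment. For $x \in A$, $\omega \in \Omega_\lambda$ and $U_0 \in \cU_{\ell_\ast,A}$, set $\omega' = \tau_x\omega$ and $U_0' = U_0 - x$. By translation covariance, the law of $(X_t - x)_{t \geq 0}$ under $P^\omega_x$ coincides with the law of $(X_t)_{t \geq 0}$ under $P^{\omega'}_0$, since a step across $\{y, y+e\}$ in the shifted coordinates uses $(\tau_x\omega)_{y,y+e} = \omega_{x+y,x+y+e}$. A direct computation from~\eqref{eq:DefSigmaEll} gives $\sigma^{U_0'}_\ell(\cdot) = \sigma^{U_0}_\ell(\cdot + x)$, and consequently $\mathrm{Res}(U_0',I,J,L,\ell_\ast) = \mathrm{Res}(U_0,I,J,L,\ell_\ast) - x$, yielding $P^\omega_x[H_{\mathrm{Res}(U_0)} = \infty] = P^{\omega'}_0[H_{\mathrm{Res}(U_0')} = \infty]$. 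Since $\sigma^{U_0'}_\ell(0) = \sigma^{U_0}_\ell(x) \leq 1/2$ for all $\ell \leq \ell_\ast$, we have $U_0' \in \cU_{\ell_\ast,\{0\}}$; as $\omega \mapsto \tau_x\omega$ is a bijection of $\Omega_\lambda$, taking suprema over $\omega$, $U_0$, $\ell_\ast$, and finally over $x \in A$, gives~\eqref{eq:maximality}.

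For the asymptotic bound, the plan is to proceed by induction on $J \geq 1$, treating Proposition~\ref{prop_multiscale_descent} as the atomic step that converts a single ``$\sigma_{\ell_0} \approx 1/2$'' encounter into a resonance-set hit with probability at least $c_7(J)$. For the base case $J=1$, the $I$ anchor scales $\ell^{(1)} > \cdots > \ell^{(I)}$ in $\cA$ are spaced by $2L \geq 2L(1)$ and all lie above $\ell_{\min}(1/200)$ thanks to~\eqref{eq:CompatibilityCond}. Since $U_0$ is bounded and the walk is transient in $d\geq 3$, the density $\sigma_{\ell^{(i)}}(X_t)$ at each anchor passes from $\leq 1/2$ at $t=0$ to $1$ eventually; by the Lipschitz bound~\eqref{eq:lip}, provided the target half-width $\delta$ exceeds $2^{-\ell^{(i)}}$, it crosses the window $[1/2 - \delta, 1/2]$ at some a.s.~finite stopping time $S_i$. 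At $S_i$, Proposition~\ref{prop_multiscale_descent} applies to the two-scale block $\{\ell^{(i)}, \ell^{(i)} - L\}$ and hits $\mathrm{Res}$ with probability at least $c_7(1)$, while displacing the walk by no more than $\tfrac{3}{2} 2^{\ell^{(i)}}$. Chaining the $I$ attempts via the strong Markov property at $S_1,\ldots,S_I$ yields $\Phi_{1,I,L} \leq (1-c_7(1))^I$, which matches the claim since $I^{1/2^0} = I$.

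For the inductive step $J-1 \to J$, the plan is to establish a recursion of the schematic form $\Phi_{J,I,L} \leq C \cdot \Phi_{J-1, \lfloor I^{1/2} \rfloor, L}$, which when iterated $J-1$ times from the base case gives a bound of the order $(1-c_7(1))^{I^{1/2^{J-1}}}$, and the claimed $\limsup$ follows after absorbing polynomial factors. The recursion is obtained by partitioning the $I$ anchors of $\cA$ into $\lfloor I^{1/2}\rfloor$ super-blocks of $\lfloor I^{1/2}\rfloor$ anchors each, and arguing that within each super-block the walk either accumulates at least one additional ``good scale'' (which, when consolidated across super-blocks, satisfies the $J$-resonance requirement on the coarser grid of super-blocks) or fails with probability controlled by $\Phi_{J-1,\lfloor I^{1/2}\rfloor, L}$ through the inductive hypothesis applied to an auxiliary resonance set requiring only $J-1$ good scales.

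The main obstacle is this inductive step. Each application of Proposition~\ref{prop_multiscale_descent} at an anchor of scale $\ell^{(i)}$ displaces the walk by up to $\tfrac{3}{2}\cdot 2^{\ell^{(i)}}$, which is vastly larger than the scale $2^{\ell^{(i+1)}}$ at the next anchor and can therefore disturb the local densities $\sigma_{\ell^{(i+1)}}$, $\widetilde\sigma_{\ell^{(i+1)}}$ needed for subsequent invocations. Controlling this requires repeated use of the Lipschitz estimate~\eqref{eq:lip} together with the $(J+1)L$-separation built into $\cA$, and one must also verify carefully that the $(I,J,L)$-compatibility condition~\eqref{eq:CompatibilityCond} is preserved as one passes from the full grid $\cA$ to its coarser super-block subgrid at each inductive step, so that every invocation of Proposition~\ref{prop_multiscale_descent} along the recursion remains legitimate.
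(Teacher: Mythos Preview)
Your maximality argument is correct and coincides with the paper's. The asymptotic bound, however, is only a plan, and the inductive scheme on $J$ has a genuine gap: the target recursion $\Phi_{J,I,L} \leq C\,\Phi_{J-1,\lfloor\sqrt{I}\rfloor,L}$ compares two incompatible setups --- for parameter $J-1$ the anchor grid $\cA$ is $JL$-spaced rather than $(J+1)L$-spaced, the set $\cA_\ast$ and hence $\mathrm{Res}$ change, and~\eqref{eq:CompatibilityCond} uses $\ell_{\min}((200(J-1))^{-1})$. A super-block partition of the $J$-grid does not produce the $(J-1)$-grid, and you give no mechanism to transport estimates between the two frameworks. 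Your chaining in the base case is also unjustified: the crossing times $S_i$ at the fixed scales $\ell^{(1)}>\cdots>\ell^{(I)}$ need not be ordered, and after one application of Proposition~\ref{prop_multiscale_descent} the remaining ones become stale (in fact $\Phi_{1,I,L}=0$ directly, since a single crossing already lands in the $1$-resonance set by the inclusion $B(\cdot,2^\ell)\subseteq B(\cdot,4\cdot 2^\ell)$).

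The paper keeps $J$ fixed throughout and recurses instead on the resonance threshold $k\in\{1,\ldots,J\}$, via an abstraction called an \emph{$I$-family}: ordered stopping times $0\leq S_0\leq\cdots\leq S_I$, an $\cF_{S_0}$-measurable label set $\cL$, and \emph{adaptively} chosen pairwise distinct scales $\widehat{\ell}_i\in\cL$ with $\sigma_{\widehat{\ell}_i}(X_{S_i})\approx\tfrac12$. Setting $T_i=S_i+\tau_{2\cdot 2^{\widehat{\ell}_i}}\circ\theta_{S_i}$ and $\Gamma^{(J)}_k(I)=\sup P^\omega_0[\inf\{s\geq S_0: X_s\in\mathrm{Res}_{(\cL_\ast,k)}\}>\max_i T_i]$ (supremum over all $I$-families), one has $\Gamma^{(J)}_1(I)=0$, and a dichotomy --- either $T_i<S_{i+\lfloor\sqrt I\rfloor}$ for all $i$, giving roughly $\sqrt I$ well-separated applications of Proposition~\ref{prop_multiscale_descent}, or some $\lfloor\sqrt I\rfloor$ stopping times clump inside a single interval $[S_i,T_i]$, in which case the clump forms a sub-$I$-family for the $k$-threshold --- yields
\[
\Gamma^{(J)}_{k+1}(I)\ \leq\ (1-c_7(J))^{\sqrt I-1}+I^{1+\frac{k-1}{2}}\,\Gamma^{(J)}_k\bigl(\lfloor\sqrt I\rfloor-k+1\bigr).
\]
Iterating from $k=1$ to $k=J$ produces the exponent $I^{1/2^{J-1}}$. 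The adaptive scale choice and the clumping dichotomy are precisely what absorb the displacement and ordering obstacles you flag in your last paragraph.
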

\begin{rem}
It should be noted that as $I$ increases, the set of compatible $\ell_\ast$ decreases, so that for \textit{fixed} $A$, we can only make a statement about sets $U_0$ whose boundary is ``at distance'' $2^{\ell_\ast}$ from that of $A$, and so are the associated resonance sets. However, this result will ultimately be applied to $A = K_N = (NK) \cap \bbZ^d$, where $K$ is a compact subset of $\bbR^d$ with non-empty interior, in the limit $N \rightarrow \infty$. Therefore, for any given $I,J,L$ we can find a compatible $\ell_\ast$, for which we then choose $N \gg 2^{\ell_\ast}$ giving us a useful resonance set, since its ``distance'' from the boundary of $A$ is of a much smaller order than the size of $A$.
\end{rem}

The proof of this Theorem~\ref{thm:Resonance} is an adaptation Theorem 2.1 of~\cite{nitzschner2017solidification} and is given in the Appendix for completeness. It involves the use of the strong Markov property for random walks and Proposition~\ref{prop_multiscale_descent}, for which quenched killed heat kernel estimates were instrumental. The latter is the main difference compared to the version for Brownian motion, together with the aforementioned necessity of the $(I,J,L)$-compatibility condition in the discrete set-up. 

\subsection{Proof of Theorem~\ref{thm:SolidificationTheorem}}
In this subsection, we turn to the proof of the main solidification result~\eqref{eq:SolidificationStatement} of Theorem~\ref{thm:SolidificationTheorem}. Let us remark that the discrete nature of the problem is an impediment to the program only as far as we are forced to work with results asymptotic in $N$: As in the proof of the equivalent statement for Brownian motion, the crucial step is the introduction of the resonance set, and here we need $\ell_\ast$ to be $(I,J,L)$-compatible as defined in the previous subsection. Ultimately, this will be achieved by choosing $2^{\ell_\ast} \asymp N$, which ensures that for \emph{fixed} $I, J \geq 1$ and $L \geq L(J)$, $\ell_\ast$ will in fact be $(I,J,L)$-compatible for large $N$. 

We now make the above program precise: 
\begin{proof}[Theorem~\ref{thm:SolidificationTheorem}]
We start by introducing a discrete version of Lemma 3.2 in~\cite{nitzschner2017solidification}, utilizing again quenched killed heat kernel estimates to adapt the proof to our set-up.
\begin{lem}
\label{main_lemma}
Let $\omega \in \Omega_\lambda$. For $\Sigma \in \cS^\omega_{U_0, \varepsilon, \chi}$, $\ell \geq c_8$ such that $\varepsilon \leq \tfrac{1}{4}\cdot 2^{\ell}$, $x_0, y \in \bbZ^d$ with $\widetilde{\sigma}_\ell(x_0) \in [\widetilde{\alpha}, 1- \widetilde{\alpha}]$ and $|y - x_0| \leq \tfrac{1}{4}\cdot 2^\ell$, it holds that
\begin{equation}
P^\omega_y[H_\Sigma < T_{B(x_0, 5 \cdot 2^\ell )} ] \geq c_9(\chi).
\end{equation}
\end{lem}
\begin{proof} Define  $\widetilde{U}_0 = U_0 \cap B(x_0, 4 \cdot 2^\ell )$ and $\widetilde{U}_1 = U_1 \cap B(x_0, 4 \cdot 2^\ell )$, and note that 
\begin{equation}
\label{eq:LowerBoundLemma49}
\begin{split}
P^\omega_y&[X_{ \frac{81}{4} \cdot 4^\ell} \in U_0,  X_{\frac{81}{2} \cdot 4^\ell} \in U_1 \text{ and } \tfrac{81}{2} \cdot 4^\ell < T_{B(x_0, \frac{9}{2} \cdot 2^\ell )}]\\
& \stackrel{\eqref{eq:KilledHeatKernel},\eqref{eq:reversibMeasure} }{\geq} (2d\lambda)^2  \sum_{x \in \widetilde{U}_0  } \sum_{z \in\widetilde{U}_1} q^{\omega}_{\frac{81}{4} \cdot4^\ell, B(x_0, \frac{9}{2} \cdot 2^\ell )}(y,x) q^{\omega}_{\frac{81}{4} \cdot4^\ell, B(x_0, \frac{9}{2} \cdot 2^\ell )}(x,z) \geq c'.
\end{split}
\end{equation} 
In the last step, we used that both $\widetilde{U}_0$ and $\widetilde{U}_1$ have at least a volume of $\widetilde{\alpha} (4\cdot 2^\ell)^d$ due to the assumption, together with the heat kernel bound~\eqref{eq:HK_bound} (with $\vartheta = \tfrac{1}{9}$, $R = \tfrac{9}{2} \cdot 2^\ell$ so that $(1-\vartheta)R = 4 \cdot 2^\ell$, and $t = R^2$, choosing $c_8$ sufficiently large). On the event under the probability on the left-hand side of~\eqref{eq:LowerBoundLemma49}, $X$ visits $U_0$ and $U_1$ before exiting $B(x_0, \frac{9}{2} \cdot 2^\ell )$, which is to say that (with $S = \partial U_0$):
\begin{equation}
P^\omega_y[H_S < T_{B(x_0, \frac{9}{2} \cdot 2^\ell )}] \geq c'.
\end{equation}
Finally, by the strong Markov property, we obtain
\begin{equation}
\begin{split}
& P^\omega_y[H_\Sigma \circ \theta_S + H_S< T_{B(x_0, 5 \cdot 2^\ell )}] \geq \\
&E^\omega_y\big[ H_S <  T_{B(x_0, \frac{9}{2} \cdot 2^\ell )}, P^\omega_{X_{H_S}}[H_\Sigma < \tau_\varepsilon] \big] \geq c'\chi = c_9(\chi),
\end{split}
\end{equation}
using that $\varepsilon \leq \tfrac{1}{4}\cdot 2^\ell$. 
\end{proof}
We resume the proof of the main solidification estimate~\eqref{eq:SolidificationStatement}. To this end, pick $J \geq 1$, $I \geq 1$ and $L \geq L(J)$ (recall~\eqref{eq:LJdef} for the definition of $L(J)$). Now choose $N_0 (=N_0(I,J,L))$ big enough such that for all $N \geq N_0$, we have 
\begin{align}
&a_N \leq \tfrac{1}{4}2^{-(I+1)(J+1)L}, \\
\label{eq:TwoCondCompatibility}
&\ell_\ast \geq 0 \text{ is $(I,J,L)$-compatible  and $\ell_\ast - (I+1)(J+1)L \geq c_8$ if }1/2^{\ell_\ast} \leq a_{N_0},
\end{align}
where $c_8$ is the constant in Lemma~\ref{main_lemma}. Importantly, we have
\begin{align}
|\cA_\ast| = I(J+1), \ \min \cA_\ast \geq \ell_\ast - (I+1)(J+1)L \geq \ell_{\min}((200J)^{-1}), \ \max \cA_\ast \leq \ell_\ast.
\end{align}
We will now conclude the proof and argue that for $\varepsilon/2^{\ell_\ast} \leq a_N$ and $x_0 \in \text{Res}$, $\omega \in \Omega_\lambda$, $\Sigma \in \cS^\omega_{U_0,\varepsilon, \chi}$ with $U_0 \in \cU_{\ell_\ast, A_N}$:
\begin{equation}
P^\omega_{x_0}[H_\Sigma > T_{B(x_0, 5 \cdot 2^{\max \cA_\ast})}] \leq (1 - c_9(\chi))^J.
\end{equation}
Indeed, $\varepsilon \leq a_N 2^{\ell_\ast} \leq \tfrac{1}{4} 2^{\min \cA_\ast}$, and applying the strong Markov property at successive exit times of balls $B(x_0, 5 \cdot 2^\ell)$, $\ell \in \cA_\ast$, we find (because $5 \cdot 2^{\ell'} \leq \tfrac{1}{4}\cdot 2^\ell$ for $\ell' < \ell \in \cA_\ast$) upon repeated use of Lemma~\ref{main_lemma}, that 
\begin{equation}
P^\omega_{x_0}[H_\Sigma > T_{B(x_0, 5 \cdot 2^{\max \cA_\ast})}] \leq (1 - c_9(\chi))^{\sum_{\ell \in \cA_\ast} \mathbbm{1}_{ \{ \widetilde{\sigma}_\ell(x_0) \in [\widetilde{\alpha}, 1 - \widetilde{\alpha} ] \} } } \leq (1 - c_9(\chi))^J,
\end{equation}
having used $x_0 \in \text{Res}$ in the last step. So we obtain for $\omega \in \Omega_\lambda$, $x \in A_N$, and $N \geq N_0$:
\begin{equation}
\begin{split}
P^\omega_x[H_\Sigma = \infty] & \leq P^\omega_x[H_{\text{Res}} = \infty] + E^\omega_x[H_{\text{Res}} < \infty, P^\omega_{X_{H_{\text{Res}}}}[H_\Sigma = \infty] ] \\
& \leq P^\omega_x[H_{\text{Res}} = \infty] + (1 - c_9(\chi))^J \\
& \leq \Phi_{J,I,L}+ (1 - c_9(\chi))^J.
\end{split}
\end{equation}
For $N \geq N_0$, we then take the supremum over $x \in A_N$, $\Sigma \in \cS^\omega_{U_0, \varepsilon, \chi}$, $\omega \in \Omega_\lambda$, $U_0 \in \cU_{\ell_\ast, A_N}$ and $\varepsilon/ 2^{\ell_\ast} \leq a_N$, and let $N$ tend to $\infty$ to obtain:
\begin{equation}
\varlimsup_{N \rightarrow \infty} \sup_{\varepsilon/ 2^{\ell_\ast} \leq a_N} \sup_{U_0 \in \cU_{\ell_\ast, A_N}}  \sup_{\omega \in \Omega_\lambda} \sup_{\Sigma \in \cS^\omega_{U_0, \varepsilon, \chi} } \sup_{x \in A_N} P^\omega_x[H_\Sigma = \infty] \leq  \Phi_{J,I,L}+ (1 - c_9(\chi))^J  
\end{equation}
where we used that the set of $(\varepsilon, \ell_\ast) \in \bbN^2$, $\varepsilon \geq 1$, such that $\varepsilon / 2^{\ell_\ast} \leq a_N$ fulfills~\eqref{eq:TwoCondCompatibility} since $N \geq N_0$. We first let $I \rightarrow \infty$, then $J \rightarrow \infty$ and obtain the claim~\eqref{eq:SolidificationStatement}. Now~\eqref{eq:Solidification_From_Outside} follows from standard arguments, which we only sketch briefly (see also Lemma 2.1 of~\cite{chiarini2019entropic}). Note that for $x \in \bbZ^d$, we have
\begin{equation}
\label{eq:ProofSolidFromOutsidePenultimate}
\begin{split}
P^\omega_x[H_\Sigma = \infty] = E^\omega_x[ P^\omega_x[H_\Sigma = \infty, H_{A_N} < \infty| \cF_{H_A}]] + P^\omega_x[H_{A_N } = \infty].
\end{split} 
\end{equation}
Using the strong Markov property at $H_{A_N}$, together with and fact that on $\{H_{A_N} < H_\Sigma, H_{A_N} < \infty\}$, it holds that $H_\Sigma = H_\Sigma \circ \theta_{H_{A_N}} + H_{A_N}$, the first term on the right-hand side of~\eqref{eq:ProofSolidFromOutsidePenultimate} is bounded by $  \sup_{x \in A_N} P^\omega_x[H_{\Sigma} = \infty]$, and \eqref{eq:Solidification_From_Outside} readily follows. 
\end{proof}

\section{Quenched entropic lower bound on disconnection}
\label{sec:LowerBound}

In this section we consider levels $\alpha < \alpha_{**}$ and prove a quenched asymptotic lower bound on the decay rate for the probability of disconnection $\bbP^\omega[\cD^\alpha_N]$ in the limit as $N$ goes to infinity. For the derivation we follow a traditional approach in large deviation theory. We tilt the Gaussian free field measure in such a way that disconnection becomes typical. Provided that one can compute the Radon-Nikodym derivative of the tilted measure with respect to the original measure, all that is left to do is to use the relative entropy lower bound~\eqref{eq:RelEntropyInequality}. The estimate so obtained is roughly in terms of $N^{2-d}\capa^\omega(A_N)$ which, with the aid of Corollary~\ref{cor:Capacity_convergence_NoKilling} below, converges for $P$-a.e.\ $\omega\in \Omega_\lambda$ to $\capa^{\homo}(A)$, which is introduced in~\eqref{eq:CapacityContinuumDef}. 

\begin{theorem}
    \label{thm:MainLowerBound}
     Let $\alpha < \alpha_{**}$, then for $P$-a.e.\ $\omega\in \Omega_\lambda$ and $A \subseteq \bbR^d$ compact, it holds that
    \begin{equation}
    \label{eq:QuenchedDiscLowerBound}
        \liminf_{N\to\infty} \frac{1}{N^{d-2}} \log \bbP^\omega[\cD_N^\alpha] \geq -\frac{1}{2} (\alpha_{**}- \alpha)^2 \capa^{\homo}(A).
    \end{equation}
\end{theorem}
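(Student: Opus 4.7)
The plan is to use the classical entropy-tilt method: I would construct a Gaussian measure $\widetilde{\bbP}^\omega_{N,\epsilon}$ on $\bbR^{\bbZ^d}$ under which the disconnection event becomes typical, estimate the relative entropy $H(\widetilde{\bbP}^\omega_{N,\epsilon}\mid\bbP^\omega)$, and invoke~\eqref{eq:RelEntropyInequality}. To sidestep the issue that the equilibrium potential $h^\omega_{A_N}$ need not be close to $1$ in a thick neighborhood of $A_N$ when $\partial A$ is rough, I would tilt by the equilibrium potential of a slight enlargement $A^\epsilon_N := (N A^\epsilon)\cap\bbZ^d$, where $A^\epsilon$ is the closed $\epsilon$-neighborhood of $A$, and send $\epsilon\downarrow 0$ at the end, using the outer continuity $\capa^\homo(A^\epsilon)\downarrow \capa^\homo(A)$ for compact $A$.

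Concretely, fix $\varepsilon,\epsilon>0$, set $c = \alpha_{**}-\alpha+\varepsilon$, and let $\widetilde{\bbP}^\omega_{N,\epsilon}$ be the Gaussian law on $\bbR^{\bbZ^d}$ with covariance $g^\omega(\cdot,\cdot)$ and mean $-c\, h^\omega_{A^\epsilon_N}$; equivalently, $\varphi$ under $\widetilde{\bbP}^\omega_{N,\epsilon}$ is distributed as $\widetilde{\varphi}-c\, h^\omega_{A^\epsilon_N}$ with $\widetilde{\varphi}$ a centered GFF under $\bbP^\omega$. A standard Cameron--Martin calculation combined with~\eqref{eq:Capacity_and_Dirichlet} gives
\begin{equation*}
H(\widetilde{\bbP}^\omega_{N,\epsilon}\mid\bbP^\omega) \;=\; \tfrac{c^2}{2}\,\cE^\omega(h^\omega_{A^\epsilon_N}) \;=\; \tfrac{c^2}{2}\,\capa^\omega(A^\epsilon_N).
\end{equation*}
The heart of the argument is to show $\widetilde{\bbP}^\omega_{N,\epsilon}[\cD^\alpha_N]\to 1$. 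Since $h^\omega_{A^\epsilon_N}\equiv 1$ on $A^\epsilon_N$, the complement of $\cD^\alpha_N$ under $\widetilde{\bbP}^\omega_{N,\epsilon}$ corresponds under $\bbP^\omega$ to a crossing of $\{\widetilde{\varphi}\geq \alpha + c\, h^\omega_{A^\epsilon_N}\}$ from $A_N$ to $S_N$, and any such crossing contains a sub-path from some $x_0\in A_N$ to $\partial B(x_0,\lfloor\epsilon N\rfloor)$ lying in the level set $\{\widetilde{\varphi}\geq \alpha_{**}+\varepsilon\}$. Applying Theorem~\ref{thm:StretchedExpDecay} at the supercritical level $\alpha_{**}+\varepsilon$ with $L=(2MN)^{1/\rho(\omega)}$ (so that $A_N\subseteq B(0,\tfrac{1}{2}L^\rho)$), followed by a union bound over $x_0\in A_N$ and over endpoints in $\partial B(x_0,\lfloor \epsilon N\rfloor)$, yields for $P$-a.e.\ $\omega$ and $d\geq 4$ a bound of the form $C(\omega)\,N^{2d-1}\exp(-c(\omega)\,\epsilon\, N^{1-1/\rho(\omega)})$, which tends to $0$; the case $d=3$ is handled analogously via~\eqref{eq:StretchedExpThree}.

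Plugging these inputs into~\eqref{eq:RelEntropyInequality} with $F=\cD^\alpha_N$, taking logarithms, dividing by $N^{d-2}$, and invoking the capacity homogenization $N^{2-d}\capa^\omega(A^\epsilon_N)\to\capa^\homo(A^\epsilon)$ from Proposition~\ref{prop:homo_killed} for $P$-a.e.\ $\omega$, one obtains
\begin{equation*}
\liminf_{N\to\infty}\tfrac{1}{N^{d-2}}\log \bbP^\omega[\cD^\alpha_N] \;\geq\; -\tfrac{1}{2}(\alpha_{**}-\alpha+\varepsilon)^2\,\capa^\homo(A^\epsilon).
\end{equation*}
Restricting $\epsilon$ to the rationals ensures that the exceptional $P$-null set remains a null set; one then lets $\epsilon\downarrow 0$ followed by $\varepsilon\downarrow 0$ to conclude. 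The main obstacle is the ``disconnection under the tilt'' step: it requires matching the enlargement thickness $\epsilon N$ to the length scale $L = N^{1/\rho(\omega)}$ at which the stretched exponential decay of Theorem~\ref{thm:StretchedExpDecay} beats the $N^{2d-1}$ combinatorial factor of the union bound, uniformly for $\omega$ in a set of full $P$-measure.
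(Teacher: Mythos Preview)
Your approach is essentially the same as the paper's: tilt by a multiple of an equilibrium potential of an $\epsilon$-enlargement of $A$, show disconnection becomes typical via the stretched exponential decay of Theorem~\ref{thm:StretchedExpDecay}, and conclude by the entropy inequality~\eqref{eq:RelEntropyInequality} plus capacity homogenization.

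Two small technical differences are worth flagging. First, the paper tilts by the \emph{killed} potential $h^\omega_{(A^\delta)_N,B_N}$ rather than your unkilled $h^\omega_{A^\epsilon_N}$; this makes the shift finitely supported, so the Cameron--Martin density $\exp\{\cE^\omega(f,\varphi)-\tfrac12\cE^\omega(f,f)\}$ is manifestly well-defined, and the relevant homogenization input is then Proposition~\ref{prop:homo_killed} directly (your citation of Proposition~\ref{prop:homo_killed} for the unkilled capacity should instead point to Corollary~\ref{cor:Capacity_convergence_NoKilling}). Your unkilled variant is fine too, since $h^\omega_{A^\epsilon_N}$ has finite Dirichlet energy and the Gauss--Green identity reduces $\cE^\omega(h^\omega_{A^\epsilon_N},\varphi)$ to the finite sum $\sum_x e^\omega_{A^\epsilon_N}(x)\varphi_x$, but a sentence of justification is needed. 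Second, both Proposition~\ref{prop:homo_killed} and Corollary~\ref{cor:Capacity_convergence_NoKilling} require the set to be a Lipschitz domain; the paper notes that one may slightly modify $A^\delta$ to achieve this, and you should make the same remark for $A^\epsilon$.
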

We now introduce  $\capa^{\homo}(A)$ and show in Corollary~\ref{cor:Capacity_convergence_NoKilling} the $P$-a.s.\ convergence  of the discrete capacity $N^{2-d} \capa^\omega(A_N)$ to this quantity, employing a $\Gamma$-convergence result in Corollary 3.4 of~\cite{neukamm2017stochastic}.

It is well known that if $P$ is a stationary and ergodic probability on $\Omega_\lambda$, then for $P$-a.e.\ $\omega$ the scaled variable-speed random walk $(\overline{X}_{t N^2}/N)_{t\geq 0}$ under $P_0^\omega$ converges in law to a Brownian motion  $(Z_t)_{t\geq 0}$ with deterministic and non-degenerate covariance $a^\homo\in \bbR^{d\times d}$. Its law started at $x\in \bbR^d$ is denoted by $W_x$. The Dirichlet form on $L^2(\bbR^d)$ with domain $W^{1,2}(\bbR^d)$ associated with it is given by
\begin{equation}
    \bfD(u,u) =\frac{1}{2} \int_{\bbR^d} \nabla u(x) \cdot a^\homo \nabla u(x) \,\De x,\qquad u\in W^{1,2}(\bbR^d),
\end{equation}
where $W^{1,2}(\bbR^d)$ is the classical Sobolev space of weakly differentiable functions that are square-integrable and have square integrable first weak derivatives. We then define the harmonic potential of a closed or open bounded set $A \subseteq \bbR^d$ as
\begin{equation}
\label{eq:HarmonicPotContinuumDef}
\mathscr{h}_A(x) = W_x[(Z_t)_{t \geq 0} \text{ hits } A], \qquad x \in \bbR^d,
\end{equation}
and for $A \subseteq B \subseteq \bbR^d$ both bounded and closed or open we define the harmonic potential of $A$ with respect to $B$ as 
\begin{equation}
\label{eq:HarmonicPotContinuumKilledDef}
\mathscr{h}_{A,B}(x) = W_x[(Z_t)_{t \geq 0} \text{ hits } A \text{ before exiting }B], \qquad x \in \bbR^d.
\end{equation}
Note that $\mathscr{h}_{A,B} \in W^{1,2}(\bbR^d)$. Combining Theorem 4.3.3, p. 171 of~\cite{fukushima2010dirichlet} with Theorem 2.1.5, p. 72 of the same reference, one knows that for any closed or open bounded set $A \subseteq \bbR^d$, $\mathscr{h}_A$ is in the extended Dirichlet space of $(\bfD, W^{1,2}(\bbR^d))$ (see Example 1.5.3 in~\cite{fukushima2010dirichlet} for a characterization of this space). We can therefore define
\begin{equation}
\label{eq:CapacityContinuumDef}
    \capa^\homo(A) = \bfD(\mathscr{h}_A,\mathscr{h}_A),\qquad \capa^\homo_B(A) = \bfD(\mathscr{h}_{A,B},\mathscr{h}_{A,B}).
\end{equation}

\begin{rem}\label{rem:on_regularity} 1) The matrix $a^\homo$ is typically not directly accessible. If the law $P$ of the conductances is invariant under symmetries of $\bbZ^d$, then $a^\homo = \sigma^2 I_d$ for some $\sigma^2 > 0$. In this case, $\capa^\homo(A)$ is a scalar multiple of the Brownian capacity of $A$. In particular, the ``regularity condition'' $\capa^\homo(A) = \capa^\homo(\mathring{A})$ used in Section~\ref{sec:UpperBounds} is equivalent to the equality of the Brownian capacities of $A$ and $\mathring{A}$, which is the regularity condition used in~\cite{chiarini2019entropic,chiarini2020entropic,nitzschner2017solidification,nitzschner2018entropic}.

2) If $P$ is a stationary and ergodic probability on $\Omega_\lambda$, then for $P$-a.e.\ $\omega$, also the scaled constant-speed random walk $(X_{t N^2}/N)_{t\geq 0}$ under $P_0^\omega$ converges in law to a Brownian motion. Its covariances are given by $E_P[\omega_0]^{-1} a^\homo$, where $E_P$ is the expectation associated with $P$.
    
\end{rem}
In Proposition~\ref{prop:homo_killed} and Corollary~\ref{cor:Capacity_convergence_NoKilling} below we show that, additionally to the quenched invariance principle for the random walk among random conductances, one also has the $P$-a.s.\ convergence of the associated capacities. These results will be instrumental in the proofs of Theorems~\ref{thm:MainLowerBound},~\ref{thm:UpperBound} and~\ref{thm:EntropicRepulsion}. In what follows, we say that $D \subseteq \bbR^d$ is a Lipschitz domain if $D$ is bounded and the boundary of $D$
can be represented locally as a graph of a Lipschitz function.


\begin{prop}\label{prop:homo_killed} Let $D\subseteq \bbR^d$ be a compact Lipschitz domain and $B$ the open Euclidean ball of radius $R>0$ centered at the origin such that $D\subseteq B$. Then, for $P$-a.e.\ $\omega\in \Omega_\lambda$,
    \begin{equation}\label{eq:minima_convergence_killed}
        \frac{1}{N^{d-2}}\capa^\omega_{B_N}(D_N) \to \capa^\homo_B(D),\qquad \text{as $N\to\infty$,}
    \end{equation}
    and for all $f:\bbR^d\to \bbR$ continuous and compactly supported one has
    \begin{equation}\label{eq:minima_convergence}
        \frac{1}{N^{d}}\sum_{x\in\bbZ^d} h^\omega_{D_N,B_N}(x) f(\tfrac{x}{N}) \to \int_{\bbR^d} \mathscr{h}_{D,B}(x) f(x)\,\De x, \qquad \text{as $N\to\infty$.}
    \end{equation}
    \end{prop}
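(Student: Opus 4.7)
The plan is to reduce both statements to a single application of the stochastic $\Gamma$-convergence result of Neukamm–Schäffner–Schlömerkemper (Corollary 3.4 of~\cite{neukamm2017stochastic}), using the variational characterization~\eqref{eq:variationalchar} of the capacity. Write $N^{2-d}\capa^\omega_{B_N}(D_N) = \inf N^{2-d}\cE^\omega(f)$, where the infimum is over $f:\bbZ^d\to\bbR$ with $f|_{D_N}\equiv 1$ and $f|_{\bbZ^d\setminus B_N}\equiv 0$. The unique minimizer is $h^\omega_{D_N,B_N}$. Similarly, $\capa^\homo_B(D) = \inf\bfD(u,u)$ over $u\in W^{1,2}(\bbR^d)$ with $u|_D\equiv 1$ and $u|_{\bbR^d\setminus B}\equiv 0$, the minimizer being $\mathscr{h}_{D,B}$. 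The idea is to rescale functions on $\bbZ^d$ to functions on $\bbR^d$ (via piecewise-constant or piecewise-affine interpolation on the grid $\tfrac{1}{N}\bbZ^d$), so that the rescaled discrete energies $\widetilde{\cE}^\omega_N(u) = N^{2-d}\cE^\omega(u(\tfrac{\lfloor N\cdot\rfloor}{N}))$ are exactly the discrete quadratic forms treated in~\cite{neukamm2017stochastic}.

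The $\Gamma$-convergence result from~\cite{neukamm2017stochastic}, applied under our uniform ellipticity assumption~\eqref{eq:UniformEll} and stationarity/ergodicity of $P$, asserts that for $P$-a.e.\ $\omega\in\Omega_\lambda$ one has $\widetilde{\cE}^\omega_N \xrightarrow{\Gamma} \bfD$ with respect to the $L^2_{\text{loc}}$ topology, together with the equicoercivity needed to compare minima. To handle the two Dirichlet-type constraints (value $1$ on $D_N$ and $0$ off $B_N$), one incorporates them as unilateral obstacles: the rescaled minimization problem is $\inf\{\widetilde{\cE}^\omega_N(u) : u=1 \text{ on } D,\, u=0 \text{ on } \bbR^d\setminus B\}$ up to a vanishing boundary-layer error coming from the discretization of $\partial D$ and $\partial B$. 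Here the Lipschitz regularity of $D$ enters crucially: it lets us approximate the continuum competitors by discrete ones with negligible energy cost via a standard extension/cut-off construction, and conversely it lets us extract subsequential $L^2$-limits of near-minimizers with the correct trace, so the class of competitors passes to the limit cleanly.

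From the $\Gamma$-convergence together with equicoercivity one deduces in the standard way convergence of minima, giving~\eqref{eq:minima_convergence_killed}, as well as convergence of minimizers in $L^2_{\text{loc}}(\bbR^d)$ (uniqueness of the limit minimizer $\mathscr{h}_{D,B}$ promotes subsequential convergence to full convergence). Concretely, the piecewise-constant rescaling $u_N(x) = h^\omega_{D_N,B_N}(\lfloor Nx\rfloor)$ is uniformly bounded by $1$ and supported in a bounded neighbourhood of $B$, hence $u_N\to\mathscr{h}_{D,B}$ in $L^2(\bbR^d)$ for $P$-a.e.\ $\omega$. Testing this convergence against a continuous compactly supported $f$ and recognising the resulting integral as a Riemann sum yields~\eqref{eq:minima_convergence} since
\begin{equation*}
\frac{1}{N^d}\sum_{x\in\bbZ^d} h^\omega_{D_N,B_N}(x)\, f(\tfrac{x}{N}) = \int_{\bbR^d} u_N(x)\, f_N(x)\,\De x,
\end{equation*}
where $f_N(x) = f(\tfrac{\lfloor Nx\rfloor}{N}) \to f(x)$ uniformly.

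The main obstacle is the careful matching of the discrete and continuum Dirichlet data at the boundary $\partial D$: one must argue that replacing ``$f\equiv 1$ on $D_N$'' by ``$u\equiv 1$ on $D$'' (and similarly for $B$) creates only a negligible change in energy as $N\to\infty$. This is precisely where the Lipschitz assumption on $D$ is used, allowing one to build a smooth extension with controlled gradient in a tubular neighbourhood of $\partial D$ and to thicken/shrink $D$ at scale $1/N$ without affecting the capacity in the limit; the rest of the argument is then a direct application of the abstract $\Gamma$-convergence machinery.
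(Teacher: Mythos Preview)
Your proposal is correct and follows essentially the same route as the paper: both reduce the statement to the stochastic $\Gamma$-convergence result of Neukamm--Sch\"affner--Schl\"omerkemper (Corollary~3.4 of~\cite{neukamm2017stochastic}), identify $N^{2-d}\capa^\omega_{B_N}(D_N)$ and $\capa^\homo_B(D)$ as the respective minima, and obtain~\eqref{eq:minima_convergence} from the convergence of minimizers. The only notable difference is in the bookkeeping for the boundary constraints: the paper works on the Lipschitz domain $U=B\setminus D$, fixes a smooth $g$ with $g|_D=1$ and $g|_{\bbR^d\setminus B}=0$, and shows directly that subsequential limits of finite-energy sequences lie in $g+W_0^{1,2}(U)$ (passing through $W_0^{1,2}(U^\delta)$ for all $\delta>0$), whereas you phrase the same issue as a boundary-layer error to be absorbed by thickening/shrinking $D$; and the paper uses piecewise-affine interpolants (later converted to piecewise-constant via~\cite{alicandro2000finite}) rather than piecewise-constant from the start.
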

    \begin{proof} The proof of this Proposition follows from Corollary 3.4 in~\cite{neukamm2017stochastic} which is a quenched $\Gamma$-convergence result for discrete random energies stemming from a potential much more general than quadratic and possibly degenerate. Under our assumption of bounded and uniformly elliptic conductances, their results apply to the potential $V(\omega,\{x,y\},r) = \omega_{x,y} r^2$, $q=p=2$ and $\beta =\infty$. 
    In Corollary 3.4 of~\cite{neukamm2017stochastic}, the authors impose boundary conditions with a boundary layer which we wish to remove in our statement. The discussion below is to address the different boundary condition.
    
    In this proof we will keep close to the notation of~\cite{neukamm2017stochastic}, so we work with $\varepsilon =1/N$, we identify each $u:\varepsilon\bbZ^d \to \bbR$ with its unique, canonical piecewise affine interpolation on $\bbR^d$, and denote by $\cA^\varepsilon$ the collection of all such piecewise affine interpolations.

    We proceed by defining our Dirichlet boundary conditions, which is the main difference with respect to Corollary 3.4 in~\cite{neukamm2017stochastic}. That is, we set
    \begin{equation}
        \cA_\varepsilon^{b.c.} = \{u\in \cA^\varepsilon: u \equiv 0 \text{ on $(\bbR^d\setminus B)\cap \varepsilon \bbZ^d $ and }u\equiv 1\text{ on $D \cap \varepsilon \bbZ^d$} \}.
    \end{equation}
Next we consider the energy functional
    \begin{equation}
        J^\omega_\varepsilon(u) = \begin{cases}
            \displaystyle \frac{\varepsilon^{d-2}}{2} \sum_{x,y \in \bbZ^d,  x  \sim y} \omega_{x,y} \big(u(\varepsilon x) - u(\varepsilon y)\big)^2, & \textrm{if $u\in \cA^{b.c.}_\varepsilon$},\\
            \displaystyle+\infty, & \textrm{otherwise.}
        \end{cases}
    \end{equation}
Observe that from~\eqref{eq:variationalchar} we have
\begin{equation}
    \inf_{u\in  \cA^{b.c.}_{1/N}} J^\omega_{1/N}(u) = N^{2-d}\capa_{B_N}^\omega(D_N)
\end{equation}
and the infimum is achieved uniquely at $\tilde{h}_{N} \in \cA^{b.c.}_{1/N}$ such that $\tilde{h}_{N}(x/N) = h^\omega_{D_N,B_N}(x)$ for all $x\in \bbZ^d$. 

Define the set $U = B \setminus D$ and note that it is a bounded domain with Lipschitz boundary. We also let $g\in C^\infty(\bbR^d)$ be such that $g|_D = 1$ and $g|_{\bbR^d\setminus B} = 0$. Observe that for all $\delta > \varepsilon\sqrt{d}$, if $u\in \cA_\varepsilon^{b.c.}$, then $u \equiv g$ on  $\bbR^d\setminus U^\delta$. We claim that for $P$-a.e.\ $\omega\in \Omega_\lambda$ and all $\delta>0$ small enough:
\begin{itemize}
    \item \emph{(Coercivity)} Any sequence $(u_\varepsilon)$ with finite energy, that is,
    \begin{equation}
        \limsup_{\varepsilon \downarrow 0} J^\omega_\varepsilon(u_\varepsilon)<\infty,
    \end{equation}
    admits a subsequence that strongly converges in $L^2(U^\delta)$ to a limit  $u\in g+W_0^{1,2}(U)$, where $W_0^{1,2}(U)$ denotes the closure of $C^\infty_0(U)$ in $W^{1,2}(\bbR^d)$.
    \item \emph{($\Gamma$-convergence)} The sequence of functionals $(J^\omega_\varepsilon)$  $\Gamma$-converges with respect to the strong convergence in $L^2(U^\delta)$
    to the functional $J_\homo$ given by
    \begin{equation}
    \label{eq:Jhom_Def}
        J_\homo(u)=
        \begin{cases}
            \displaystyle \frac{1}{2}\int_{U} \nabla u(x) \cdot a^\homo\nabla u(x)\,\De x, & \textrm{if $u\in g + W_0^{1,2}(U)$},\\
            \displaystyle+\infty, & \textrm{otherwise.}
        \end{cases}
    \end{equation}
    where $a^\homo\in \bbR^{d\times d}$ is a symmetric and positive definite matrix. Moreover, in view of the variational characterization of the capacity (see Theorem 2.1.5 and Theorem 4.3.3 in~\cite{fukushima2010dirichlet}), we have $\capa^\homo_B(D) = \inf J_\homo$ and the infimum is achieved uniquely at $\mathscr{h}_{D,B}$.
\end{itemize}

We will only discuss the coercivity, as the $\Gamma$-convergence follows from the exact same argument as in Corollary 3.4 of~\cite{neukamm2017stochastic} once the coercivity is established. 
Fix $\delta>0$, the existence of a subsequential limit $u\in g+ W_0^{1,2}(U^\delta)$ of $(u_\varepsilon)$ with respect to strong convergence in $L^2(U^\delta)$, follows from Lemma 3.3 of~\cite{neukamm2017stochastic}, from the observation that $u_\varepsilon-g\in W_0^{1,2}(U^\delta)$ for all $\varepsilon$ small enough, and from
\begin{equation}
\begin{split}
    \limsup_{\varepsilon\downarrow 0} \int_{U^\delta} |\nabla u_\varepsilon-\nabla g|^2\,\De x & \leq  c\limsup_{\varepsilon\downarrow 0} \int_{U^\delta} |\nabla u_\epsilon|^2 \,\De x + c' \\
    & \leq c \limsup_{\varepsilon\downarrow 0} J_\varepsilon^\omega(u_\varepsilon) + c' < \infty,
    \end{split}
\end{equation}
with the penultimate inequality due to Lemma 2.1 of~\cite{neukamm2017stochastic}.
Note that by considering  $0<\delta'<\delta$ and looking at sub-subsequences we can deduce $u\in g+ W_0^{1,2}(U^{\delta'})$. As $\delta'$ is arbitrary, we get $u\in g+ W_0^{1,2}(U)$.

As a result of the $\Gamma$-convergence, the coercivity, and the uniqueness of the minimizers in our setup, one has the convergence of the minima and minimizers of $J^\omega_\varepsilon$ to those of $J_\homo$ as $\varepsilon\downarrow 0$. That is, we obtain that for $P$-a.e.\ $\omega\in \Omega_\lambda$
    \begin{equation}
       \lim_{N\to\infty} \frac{1}{N^{d-2}} \capa^\omega_{B_N}(D_N) = \lim_{N\to\infty} \min J^\omega_{1/N} = \min J_\homo = \capa^\homo_B(D),
    \end{equation}
    which proves~\eqref{eq:minima_convergence_killed}.
    Furthermore, the affine interpolation $\tilde{h}_N$ of $h^\omega_{D_N,B_N}(\cdot/N)$ converges in $L^2(U^\delta)$ to $\mathscr{h}_{D,B}$ for some $\delta>0$ as $N\to\infty$. Since $\tilde{h}_N$ and $\mathscr{h}_{D,B}$ coincide on $\bbR^d\setminus U_{\delta}$ for all $N$ large enough, and since an application of Proposition A.1 in~\cite{alicandro2000finite} allows to replace affine interpolations with piecewise constant interpolations and still retain convergence $L^2(U^\delta)$, we finally get, with the help of Cauchy-Schwartz's inequality,~\eqref{eq:minima_convergence} for any continuous test function with compact support. 
    \end{proof}

    In the following corollary we extend the above homogenization result to the case without imposing zero boundary conditions outside of a big ball.
    
    \begin{cor}
    \label{cor:Capacity_convergence_NoKilling}
        Let $D\subseteq \bbR^d$ be a bounded Lipschitz domain.
        Then, for $P$-a.e.\ $\omega\in \Omega$,
    \begin{equation}\label{eq:convergence_capacity}
        \frac{1}{N^{d-2}}\capa^\omega(D_N) \to \capa^\homo(D),\qquad \text{as $N\to\infty$,}
    \end{equation}
    and for all $f:\bbR^d\to \bbR$ continuous and compactly supported one has
    \begin{equation}\label{eq:convergence_minimizer}
        \frac{1}{N^{d}}\sum_{x\in\bbZ^d} h^\omega_{D_N}(x) f(\tfrac{x}{N}) \to \int_{\bbR^d} \mathscr{h}_{D}(x) f(x)\,\De x,  \qquad \text{as $N\to\infty$.}
    \end{equation}
    \end{cor}
    \begin{proof} The basic idea is to show that it is possible to take $R\to\infty$ before taking the limit $N\to \infty$ in Proposition~\ref{prop:homo_killed}.
    First, observe that we have the following decay for the hitting probability of $D_N$ if we start from $x\in \bbZ^d\setminus D_N$, which is an application of the decay of the Green function, see~\eqref{eq:QuenchedGFEstimate},
    \begin{equation}\label{eq:harmonicdecay}
        \begin{aligned}
            h^\omega_{D_N}(x)  &\leq  \frac{c}{d_\infty(\{x\},D_N)^{d-2}}  \sum_{y\in \partial D_N} e^{\omega}_{D_N}(y) = c \frac{\capa^\omega(D_N)}{d_\infty(\{x\},D_N)^{d-2}}.
        \end{aligned}
    \end{equation}
    Note that from the random walk characterization of the capacity one has
    \begin{equation}
        \begin{aligned}
            \capa^\omega_{B_N}(D_N) -  \capa^\omega(D_N) &= \sum_{x\in\partial D_N} \omega_x \Big(P_x^\omega[\widetilde{H}_{D_N}>T_{B_N}] - P_x^\omega[\widetilde{H}_{D_N} = \infty]\Big)\\
                       &\leq\sum_{x\in\partial D_N} \omega_x E_x^\omega[\widetilde{H}_{D_N}>T_{B_N}, P^\omega_{X_{T_{B_N}}}[\widetilde{H}_{D_N}<\infty]].
        \end{aligned}
    \end{equation}
    Since $\widetilde{H}_{D_N} = H_{D_N}$ if the walk starts from $\partial B_N$, in view of~\eqref{eq:harmonicdecay} and $\capa^\omega(D_N)\leq \capa^\omega_{B_N}(D_N)$
    \begin{equation}
        0\leq \capa^\omega_{B_N}(D_N) -  \capa^\omega(D_N) \leq c \frac{\capa^\omega_{B_N} (D_N)^2}{d_\infty(B_N^c,D_N)^{d-2}}.
    \end{equation}
    We can now finish the proof of~\eqref{eq:convergence_capacity}. Fix $R>0$ arbitrary and such that $A\subseteq B$. Then,
    \begin{equation}
        \begin{aligned}
            \varlimsup_{N\to\infty} &\Big|N^{2-d}\capa^\omega(D_N) - \capa^\homo(D)\Big| \leq \varlimsup_{N\to\infty} \Big|N^{2-d}\capa^\omega_{B_N}(D_N) - \capa_B^\homo(D)\Big|\\ &+ c\varlimsup_{N\to\infty} \bigg(\frac{\capa^\omega_{B_N}(D_N)}{N^{d-2}}\bigg)^2 \frac{N^{d-2}}{d_\infty(B_N^c,D_N)^{d-2}} + |\capa_B^\homo(D)-\capa^\homo(D)|.
        \end{aligned}
    \end{equation}
    An application of Proposition~\ref{prop:homo_killed} and taking $R\to\infty$ completes the proof of~\eqref{eq:convergence_capacity}.
    
    The convergence \eqref{eq:convergence_minimizer} follows from the following simple observation
    \begin{equation}
        \begin{aligned}
            0\leq h^\omega_{D_N}(x) - h^\omega_{D_N,B_N}(x) &= P_x^\omega[H_{D_N}<\infty, T_{B_N}\leq H_{D_N}]\\
            & \leq \sup_{y\notin B_N}P^\omega_y[H_{D_N}<\infty] \leq c\frac{\capa^\omega(D_N)}{(NR)^{d-2}},
        \end{aligned}
    \end{equation}
    together with the fact that $\mathscr{h}_{D,B} \uparrow \mathscr{h}_D$ as $R\to \infty$. 
    \end{proof}

    \begin{proof}[Theorem~\ref{thm:MainLowerBound}] Let $f : \bbZ^d \to \bbR$ be finitely supported, and consider for a given $\omega \in \Omega_\lambda$ the probability measure $\widetilde{\bbP}^\omega$ on $\bbR^{\bbZ^d}$ defined by
        \begin{equation}
            \De \widetilde{\bbP}^\omega = \exp\Big\{\cE^\omega(f,\varphi) - \frac{1}{2}\cE^\omega(f,f)\Big\}\, \De \bbP^\omega.
        \end{equation}
        Then, as in (2.4) of~\cite{sznitman2015disconnection}, $\widetilde{\bbP}^\omega$ is equivalently characterized by
        \begin{equation}
            \begin{minipage}{0.8\linewidth}\begin{center}
                $\varphi$ under $\widetilde{\bbP}^\omega$ has the same law as $(\varphi_x + f(x))_{x\in \bbZ^d}$ under $\bbP^\omega$.\end{center}
            \end{minipage}
        \end{equation}
        We now fix three parameters $\epsilon, R,\delta > 0$ such that $A^\delta \subseteq B$, where $B$ is the open Euclidean ball of radius $R$ centered at the origin and $A^\delta$ denotes the closed $\delta$-neighborhood of $A$. By slightly modifying $A^\delta$, if necessary, we can assume it has Lipschitz boundary. We also set  $(N A^\delta)\cap \bbZ^d = (A^\delta)_N$. We consider the functions
        \begin{equation}
        f^\omega_N = -(\alpha_{\ast\ast} - \alpha + \epsilon)\,h^\omega_{(A^\delta)_N, B_N},
        \end{equation}
        and we denote the corresponding probability measures by $\widetilde{\bbP}^\omega_N$.
        
        As a consequence of the relative entropy inequality~\eqref{eq:RelEntropyInequality}, we find that 
        \begin{equation}\label{eq:after_relative_entropy}
            \log \bbP^\omega[\cD^\alpha_N] \geq \log \widetilde{\bbP}_N^\omega[\cD^\alpha_N] -\frac{1}{\widetilde{\bbP}^\omega_N[\cD^\alpha_N]} \Big(\cE^\omega(f_N^\omega,f_N^\omega) +\frac{1}{\mathrm{e}}\Big),
        \end{equation}
        where we used that $H(\widetilde{\bbP}^\omega_N|\bbP^\omega) = \widetilde{\bbE}^\omega_N[\cE^\omega(f_N^\omega,\varphi)] - \tfrac{1}{2}\cE^\omega(f_N^\omega,f_N^\omega) = \tfrac{1}{2} \cE^\omega(f_N^\omega,f_N^\omega)$.
        Because of our choice of $f_N^\omega$, we have 
        \begin{equation}
            \cE^\omega(f_N^\omega,f_N^\omega) = - (\alpha_{**} -\alpha + \epsilon)^2 \capa^\omega_{B_N}((A^\delta)_N).
        \end{equation}
        We now show that $\cD_N^\alpha$ is typical under $\widetilde{\bbP}^\omega_N$ in the limit $N\to\infty$, for $P$-a.e.\ $\omega \in \Omega_\lambda$.
        Indeed,
        \begin{equation}\label{eq:tilted_disconnection}
                \widetilde{\bbP}^\omega_N[\cD^\alpha_N]  = \bbP^\omega\Big[A_N \stackrel{\geq \alpha - f_N^\omega}{\centernot \longleftrightarrow} S_N\Big] \geq \bbP^\omega\Big[A_N \stackrel{\geq \alpha_{\ast\ast} + \epsilon}{\centernot \longleftrightarrow} \partial_i (A^\delta)_N\Big],
        \end{equation}
     where for $U,V \subseteq \bbZ^d$ and a function $g: \bbZ^d \rightarrow \bbR$, the event $\{U \stackrel{\geq g}{\centernot \longleftrightarrow} V\}$ denotes the absence of a nearest-neighbor path in $\{ x \in \bbZ^d \, : \, \varphi_x \geq g(x) \}$ starting in $U$ and ending in $V$. In the last step we used that $f_N^\omega = -(\alpha_{\ast\ast} - \alpha + \epsilon)$ on $(A^\delta)_N$. As a result of a union bound, we have for some constant $c'>0$, depending only on the dimension, and constants $C, c, \kappa > 0$, possibly depending on $\omega$, 
        \begin{equation}\label{eq:complementaryevent}
            \begin{aligned}
                \bbP^\omega\Big[A_N \stackrel{\geq \alpha_{\ast\ast} + \epsilon}{\longleftrightarrow} \partial_i (A^\delta)_N\Big] & = \bbP^\omega \bigg[\bigcup_{x\in A_N} \big\{ x \stackrel{\geq \alpha_{\ast\ast} + \epsilon}{\longleftrightarrow} \partial_i (A^\delta)_N \big \}\bigg] \\
                & \leq |A_N| \sup_{x\in A_N } \bbP^\omega \Big[ x \stackrel{\geq \alpha_{\ast\ast} + \epsilon}{\longleftrightarrow} \partial B(x, c'\delta N) \Big] \\
                & \leq C N^{2d - 1} e^{- c N^{\kappa}} \to 0,\quad \text{as $N\to \infty$,} 
            \end{aligned}
        \end{equation}
        where in the last step we used a union bound together with the stretched exponential estimate in Theorem~\ref{thm:StretchedExpDecay}.
        Therefore, combining~\eqref{eq:tilted_disconnection} and~\eqref{eq:complementaryevent},
        \begin{equation}\label{eq:typical}
            \widetilde{\bbP}^\omega_N[\cD^\alpha_N] \geq 1 - \bbP^\omega\Big[A_N \stackrel{\geq \alpha_{\ast\ast} + \epsilon}{\longleftrightarrow} \partial_i (A^\delta)_N\Big] \to 1,\quad\text{as $N\to \infty$.}
        \end{equation}
        Finally, by means of~\eqref{eq:after_relative_entropy},~\eqref{eq:typical}, and Proposition~\ref{prop:homo_killed} above for $P$-a.e.\ $\omega \in \Omega_\lambda$
        \begin{equation}
            \begin{aligned}
            \liminf_{N\to\infty} \frac{1}{N^{d-2}} \log \bbP^\omega[\cD_N^\alpha] & \geq - \frac{1}{2} (\alpha_{**}- \alpha + \epsilon)^2 \limsup_{N\to\infty} \frac{1}{N^{d-2}}\, \capa^\omega_{B_N}((A^\delta)_N) \\
            & =  - \frac{1}{2} (\alpha_{**}- \alpha + \epsilon)^2 \capa^{\homo}_B(A^\delta).
            \end{aligned}
        \end{equation}
        We now take successively the limits $\epsilon,\delta \to 0$ and $R\to\infty$, and the claim is proved. 
        \end{proof}

\section{Quenched upper bounds for Gaussian functionals}
\label{sec:Quenched}

In this section, we derive some bounds on certain Gaussian functionals similar in spirit to~\cite[Section 4]{sznitman2015disconnection}, see also~\cite[Sections 3 and 4]{chiarini2019entropic}. These bounds, which are presented in Theorem~\ref{thm:ZmMainBound} below, will capture in the next section the main asymptotic decay rates for the probability of the disconnection event $\cD^\alpha_N$, possibly intersected with the ``entropic repulsion'' event  $\big\{\big\vert \langle \bbX_N, \eta \rangle - \langle \mathscr{H}^\alpha_{\mathring{A}}, \eta \rangle \big\vert \geq \Delta \big\}$.  The bounds we derive in this section again utilize quenched Green function estimates for the weighted graph $(\bbZ^d,\bbE_d,\omega)$ and are uniform in $\omega \in \Omega_\lambda$. 

We start by introducing some notation and the set-up. Let $L \geq 1$ and $K \geq 100$ be integers. We consider the lattice
\begin{equation}
\label{eq:LatticeL_Def}
\bbL = L \bbZ^d,
\end{equation}
 and the collection of boxes for $z \in \bbZ^d$:
\begin{equation}
\label{eq:BoxesDefinition}
\begin{split}
B_z &= z + [0,L)^d \cap \bbZ^d \subseteq D_z = z + [-3L,4L)^d \cap \bbZ^d \\
& \subseteq U_z = z + [-KL+1,KL-1)^d \cap \bbZ^d.
\end{split}\end{equation}
For a given $\omega \in \Omega_\lambda$, the collection of boxes $U_z$ is used to obtain an $\omega$-dependent decomposition of the Gaussian free field into an harmonic average $\xi^{\omega,U_z}$ and a local field  $\psi^{\omega,U_z}$ (recall~\eqref{eq:HarmonicAverageDef} and~\eqref{eq:LocalFieldDef} for the respective definitions). For $z \in \bbL$, we write
\begin{align}
\label{eq:Sec6_Decomposition_first}
\xi^{\omega,z}_x & = \xi^{\omega, U_z}_x, \qquad x \in \bbZ^d, \\
\label{eq:Sec6_Decomposition_second}
\psi^{\omega,z}_x & = \psi^{\omega, U_z}_x, \qquad x \in \bbZ^d.
\end{align}
Evidently, one has the following property, which follows directly from~\eqref{eq:DomainMP} and the fact that under $\bbP^\omega$, all considered fields are jointly Gaussian:
 \begin{equation}
 \label{eq:CollectionC}
\begin{minipage}{0.9\linewidth}
 If $\cC \subseteq \bbL$ is a collection of sites with mutual $| \cdot |_\infty$-distance at least $(4K+1)L$, the centered Gaussian fields $\psi^{\omega,z}$, $z \in \cC$ are independent, and also independent from the collection $(\xi^{\omega,z}_{x})_{x \in U_z, z \in \cC }$. 
\end{minipage}
\end{equation}
We attach to such a collection $\cC$ a set of boxes $C \subseteq \bbZ^d$ having side-length $L$ as well as a collection of functions $\cM$ which correspond at an informal level to assigning to each box in $B_z \subseteq C$ a point in its $3L$-neighborhood. Specifically, we define
\begin{equation}
\label{eq:DefinitionCandM}
\begin{cases}
& C = \bigcup_{z \in \cC} B_z, \\
& \cM = \{ m \in  (\bbZ^d)^\cC \, : \, m(z) \in D_z \text{ for all }z \in \cC\}.
\end{cases}
\end{equation}

We now introduce two Gaussian fields that track the behavior of the harmonic averages associated with the collection $\cC$, and develop controls on their variance and expectation that are uniform in $\omega \in \Omega_\lambda$. To simplify notation, we introduce the quantity
\begin{equation}
\nu^\omega(z) = \frac{e_C^\omega(B_z)}{\capa^\omega(C)}, \qquad z \in \cC.
\end{equation}
Throughout the remainder of this section, we consider 
\begin{equation}\begin{split}
\eta : \bbR^d \rightarrow \bbR, & \text{ continuous, with compact support}, \\
\eta_N(x) & = \eta\left(\frac{x}{N} \right),\qquad x \in\bbZ^d
\end{split}
\end{equation}
and suppress dependence on $\eta$ in the notation. 

The relevant Gaussian fields associated with $m \in \cM$ are 
\begin{equation}
\label{eq:ZmDefinition}
\begin{split}
Z^\omega_m & = \sum_{z \in \cC} \nu^\omega(z)\xi^{\omega,z}_{m(z)}, \\
Z^\omega_{m,\beta,\rho} & = (1+\rho) Z_{m}  - \beta \langle \bbX_N, \eta \rangle, \qquad \beta\geq 0, \rho \in \bbR
\end{split}
\end{equation}
(see~\eqref{eq:EmpiricalGFFMeasureDef} for the definition of $\bbX_N$).
Moreover, we consider
\begin{equation}
\label{eq:ZomegaDefinition}
Z^\omega = \inf_{m \in \cM} Z^{\omega}_m.
\end{equation}
In the next theorem, which generalizes Theorem 4.2 of~\cite{sznitman2015disconnection} and is similar in spirit to Proposition 3.3 of~\cite{chiarini2019entropic}, we develop the relevant uniform controls on the variance of $Z_{m,\beta,\rho}^\omega$ and the expectation of $Z^\omega$. 
\begin{theorem}
Let $L = L(N)$ be an increasing sequence of integers with $L = o(N)$, and $\rho = \rho(N)$ a sequence of reals with $|\rho| < 1$. It holds that
\label{thm:ZmMainBound} 
\begin{equation}
\label{eq:VarZmBound}
\limsup_{N \rightarrow \infty} \sup_{\omega\in \Omega_\lambda} \sup_{\cC} \sup_{m \in \cM} \Big[ N^{d-2} \text{\normalfont Var}^\omega(Z^\omega_{m,\beta,\rho}) - \frac{N^{d-2}}{\capa^\omega(C)}\widetilde{\alpha}_{K,L,\beta,\rho} \Big]\leq \beta^2 c_{10}(\eta), 
\end{equation}
where $\widetilde{\alpha}_{K,L,\beta,\rho}$ is given by: 
\begin{align}
\label{eq:alphaKLbetarho}
\widetilde{\alpha}_{K,L,\beta,\rho} & = (1+\rho)^2  - 2\beta(1+\rho) \tfrac{1}{N^d} \langle \eta_N, h^\omega_C \rangle_{\bbZ^d} + \cR_{K,L,\beta,\rho}, \text{ where } \\
|\cR_{K,L,\beta,\rho}| & \leq (1+\rho)^2 U(K,L)+  \beta c_{11}(\eta)  \big(U(K,L) + \tfrac{(KL)^d}{N^d} |\cC| + \tfrac{(KL)^d}{N^d}\capa^\omega(C) \big),
\end{align}
and $\lim_K \limsup_L U(K,L) = 0$.
Furthermore, one has the upper bound
\begin{equation}
\label{eq:ExpInfimumBound}
\sup_{\cC} \sup_{\omega \in \Omega_\lambda} | \bbE^\omega[Z^\omega]|\left( \frac{\capa^\omega(C)}{|\cC|} \right)^{\frac{1}{2}} \leq \frac{c_{12}}{K^{c_{13}}}.
\end{equation} 
\end{theorem}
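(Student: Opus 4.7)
Both statements hinge on the covariance identity
$\mathrm{Cov}^\omega(\xi^{\omega,z}_x, \varphi_y) = g^\omega(x,y) - g^\omega_{U_z}(x,y)$,
which follows from the domain Markov property~\eqref{eq:DomainMP} combined with strong Markov at the exit time of $U_z$ (writing $g^\omega$ as $g^\omega_{U_z}$ plus the integral of $g^\omega$ against the harmonic exit measure). The separation condition $|z - z'|_\infty \geq (4K+1)L$ imposed on $\cC$ in~\eqref{eq:CollectionC} makes $U_z, U_{z'}$ disjoint for $z \neq z'$, so that $g^\omega_{U_{z'}}(m(z),m(z'))=0$, and hence $\mathrm{Cov}^\omega(\xi^{\omega,z}_{m(z)}, \xi^{\omega,z'}_{m(z')}) = g^\omega(m(z),m(z'))$ for $z \neq z'$ and $= g^\omega(m(z),m(z)) - g^\omega_{U_z}(m(z),m(z))$ when $z = z'$.

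For~\eqref{eq:VarZmBound} I would expand $\mathrm{Var}^\omega(Z^\omega_{m,\beta,\rho})$ as the usual three-term combination of $\mathrm{Var}^\omega(Z^\omega_m)$, $\mathrm{Cov}^\omega(Z^\omega_m, \langle \bbX_N, \eta\rangle)$ and $\mathrm{Var}^\omega(\langle \bbX_N, \eta\rangle)$ and treat each piece. The leading contribution $\sum_{z,z'\in \cC}\lambda^\omega(z)\lambda^\omega(z')g^\omega(m(z),m(z'))$ is approximated by $\capa^\omega(C)^{-2}\sum_{y,y'\in C}e^\omega_C(y)e^\omega_C(y')g^\omega(y,y') = 1/\capa^\omega(C)$, via the identity $\sum_{y'}e^\omega_C(y')g^\omega(y,y') = h^\omega_C(y) \equiv 1$ on $C$ (cf.~\eqref{eq:LastExDecomp}); since $m(z) \in D_z$ lies within $\ell^\infty$-distance $3L$ of $B_z$ while off-diagonal pairs in $\cC$ are separated by at least $(4K+1)L$, the uniform Green function bound~\eqref{eq:QuenchedGFEstimate} gives a relative replacement error of order $1/K$, producing the $U(K,L)$ contribution; the diagonal term and the killed Green function correction $\sum_z\lambda^\omega(z)^2 g^\omega_{U_z}(m(z),m(z))$ are absorbed using $\lambda^\omega(z) \leq \capa^\omega(B_z)/\capa^\omega(C) \leq c L^{d-2}/\capa^\omega(C)$ together with~\eqref{eq:QuenchedGFEstimate}. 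An analogous manipulation for the cross term produces the main contribution $(N^d\capa^\omega(C))^{-1}\langle\eta_N, h^\omega_C\rangle$ and generates precisely the explicit error terms comprising $\cR_{K,L,\beta,\rho}$, where the factors $|C|$ and $\capa^\omega(C)$ originate respectively from the capacity-measure approximation $\sum_z \lambda^\omega(z) |B_z|$ and from the killed-Green-function remainder. Finally, $N^{d-2}\mathrm{Var}^\omega(\langle \bbX_N,\eta\rangle)\leq c_{10}(\eta)$ follows directly from~\eqref{eq:QuenchedGFEstimate} and the compact support of $\eta$, since $\sum_{x,x'\in \supp(\eta_N)}(|x-x'|^{d-2}\vee 1)^{-1} = O(N^{d+2})$.

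For~\eqref{eq:ExpInfimumBound}, each $Z^\omega_m$ is centered so $\bbE^\omega[Z^\omega] \leq 0$, and by the Gaussian symmetry $(Z^\omega_m)_{m\in \cM} \stackrel{d}{=} (-Z^\omega_m)_{m\in \cM}$ we have $|\bbE^\omega[Z^\omega]| = \bbE^\omega[\sup_{m\in \cM} Z^\omega_m]$. I would estimate this supremum by a Dudley chaining argument with respect to the intrinsic metric $d_Z(m,m')^2 = \mathrm{Var}^\omega(Z^\omega_m - Z^\omega_{m'})$. Writing $Z^\omega_m - Z^\omega_{m'} = \sum_{z: m(z)\neq m'(z)}\lambda^\omega(z)(\xi^{\omega,z}_{m(z)}-\xi^{\omega,z}_{m'(z)})$ and expanding the variance, each diagonal piece $\lambda^\omega(z)^2 \mathrm{Var}^\omega(\xi^{\omega,z}_a - \xi^{\omega,z}_b)$ with $a,b\in D_z$ is controlled by exploiting that $D_z$ sits at $\ell^\infty$-distance at least $(K-3)L$ from $\partial U_z$: since $\nu^{U_z}_{(\cdot)}(w)$ is a non-negative $\omega$-harmonic function on $U_z$, the discrete elliptic Harnack principle (available quenched and uniformly in $\omega\in \Omega_\lambda$ from~\eqref{eq:HK_bound}) provides $\|\nu^{U_z}_a - \nu^{U_z}_b\|_{\mathrm{TV}} = O(K^{-c})$, whence $\mathrm{Var}^\omega(\xi^{\omega,z}_a-\xi^{\omega,z}_b) = O(K^{-2c})$; cross correlations between distinct $z,z'$ gain additionally from the decay of $g^\omega$ between the disjoint boundaries $\partial U_z, \partial U_{z'}$ at mutual distance $\geq (2K+1)L$. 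Combined with $\lambda^\omega(z)\leq cL^{d-2}/\capa^\omega(C)$ and $\capa^\omega(C) \asymp |\cC|L^{d-2}$, chaining then yields $\bbE^\omega[\sup_m Z^\omega_m] \leq c_{12} K^{-c_{13}} \sqrt{|\cC|/\capa^\omega(C)}$.

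The main obstacle is part (ii): a naive sup-of-Gaussians bound applied to $|\cM| \leq (7L)^{d|\cC|}$ fields with $\sigma^2 \sim 1/\capa^\omega(C)$ leaves a spurious $\sqrt{\log L}$ factor, so one must leverage the fine smoothness of the harmonic exit distribution $\nu^{U_z}_{(\cdot)}$ deep inside $U_z$ via chaining to convert the combinatorial $|\cM|$-growth into the sharp uniform gain $K^{-c_{13}}$. Establishing this smoothness uniformly in $\omega \in \Omega_\lambda$ is the delicate step and requires a Harnack-type comparison, which is a finer quenched consequence of uniform ellipticity than the pointwise estimate~\eqref{eq:QuenchedGFEstimate}; the variance computation in (i), by contrast, is largely a careful bookkeeping exercise once the covariance identity is in hand.
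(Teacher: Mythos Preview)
Your overall strategy is right---the three-term variance expansion, the covariance identity for the $\xi^{\omega,z}$, the approximation $\sum_{y,y'} e^\omega_C(y)e^\omega_C(y')g^\omega(y,y') = \capa^\omega(C)$, and the Dudley chaining for the expected infimum are exactly how the paper proceeds. But there is a genuine gap in your treatment of~\eqref{eq:VarZmBound}, and it is precisely the step you dismiss as bookkeeping.

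You write that ``the uniform Green function bound~\eqref{eq:QuenchedGFEstimate} gives a relative replacement error of order $1/K$'' when passing from $g^\omega(m(z),m(z'))$ to $g^\omega(x,x')$ with $x\in B_z$, $x'\in B_{z'}$. This is false for general $\omega\in\Omega_\lambda$. The two-sided estimate~\eqref{eq:QuenchedGFEstimate} only tells you that both quantities lie in $[c_2 r^{2-d}, c_3 r^{2-d}]$ with $r\sim KL$; it says nothing about their \emph{ratio} being close to $1$. With~\eqref{eq:QuenchedGFEstimate} alone you would obtain $U(K,L)\leq c_3/c_2-1$, a constant that does not vanish as $K,L\to\infty$, and the statement $\lim_K\limsup_L U(K,L)=0$ would fail. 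What the paper actually does is define $\gamma^\omega(K,L)=\sup g^\omega(y,y')/g^\omega(x,x')$ and $\widetilde\gamma^\omega(K,L)=\inf g^\omega(y,y')/g^\omega(x,x')$ over the relevant configurations, and then prove $\gamma^\omega,\widetilde\gamma^\omega\to 1$ via the \emph{same} elliptic H\"older regularity (the Delmotte estimate following from Harnack) that you correctly invoke for~\eqref{eq:ExpInfimumBound}: since $g^\omega(x',\cdot)$ is $\omega$-harmonic in a ball of radius $\sim KL$ around $z$, the oscillation of $g^\omega(x',\cdot)$ over $D_z$ is bounded by $C K^{-\tau}\sup g^\omega(x',\cdot)$, which after two applications yields $|g^\omega(y,y')/g^\omega(x,x')-1|\leq C' K^{-\tau}$. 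The identical mechanism is used for the cross term $\cG^\omega_N$. So the Harnack input is not optional in part~(i); it is exactly what produces the vanishing $U(K,L)$, and your final paragraph has the dependence backwards.

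For~\eqref{eq:ExpInfimumBound} your plan matches the paper: H\"older regularity of the exit distribution $y\mapsto P^\omega_y[X_{T_{U_z}}=\cdot]$ on $D_z$ gives $\bbE^\omega[(Z^\omega_m-Z^\omega_{m'})^2]\capa^\omega(C)\leq c\,(|m-m'|_\infty/(KL))^{2\tau}$, and chaining over the product set $\cM$ converts this into $|\bbE^\omega[Z^\omega]|\sqrt{\capa^\omega(C)}\leq c_{12}K^{-\tau}\sqrt{|\cC|}$. One small point: you do not need the two-sided estimate $\capa^\omega(C)\asymp|\cC|L^{d-2}$ for this conclusion; the chaining bound already comes out in the form $\sqrt{|\cC|/\capa^\omega(C)}$ directly.
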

\begin{proof}
We start with the proof of~\eqref{eq:VarZmBound}. Note that for $\omega \in \Omega_\lambda$, $\beta \geq 0$, $|\rho| <1$ and a collection $\cC$ as in~\eqref{eq:DefinitionCandM}, the field $Z^\omega_{m,\beta,\rho}$ (indexed by $m \in \mathcal{M}$) is a centered Gaussian field, so one has 
\begin{equation}
\label{eq:DecompositionVarZ}
\text{Var}^\omega(Z^\omega_{m,\beta,\rho}) = (1+\rho)^2 \text{Var}^\omega(Z_m^\omega) + \beta^2 \cH_N^\omega - 2(1+\rho)\beta \cG_N^\omega,
\end{equation}
where we defined the shorthand notation
\begin{equation}
\cG_N^\omega = \bbE^\omega[Z_m^\omega \langle \bbX_N,\eta \rangle], \qquad \cH_N^\omega = \bbE^\omega[\langle \bbX_N, \eta \rangle^2].
\end{equation}
We now develop upper bounds on the first two summands in~\eqref{eq:DecompositionVarZ} and a lower bound on the last summand.

By the quenched upper bound~\eqref{eq:QuenchedGFEstimate}, we find that
\begin{equation}
\label{eq:H_NBound}
\begin{split}
\limsup_{N \rightarrow \infty} \sup_{\omega \in \Omega_\lambda} N^{d-2}  \bbE^\omega[\langle \bbX_N,\eta \rangle^2] & \leq \limsup_{N \rightarrow \infty} \frac{c_3}{N^{d+2}} \sum_{x,y \in \bbZ^d} \frac{\eta\left(\frac{x}{N} \right)\eta\left(\frac{y}{N} \right)}{|x-y|^{d-2} \vee 1} \\
& \leq \lim_{N \rightarrow \infty} \frac{c_3 c'}{N^{2d}} \sum_{\stackrel{x, y \in \bbZ^d}{x \neq y}} \eta\left(\frac{x}{N} \right)\eta\left(\frac{y}{N} \right)g_{BM}\left(\frac{x}{N}, \frac{y}{N}\right) \\
& = c_3 c' E(\eta) = c_{10}(\eta),
\end{split} 
\end{equation}
where $g_{BM}$ denotes the Green function of standard Brownian motion and 
\begin{equation}
E(\eta) = \iint \eta(x)\eta(y)g_{BM}(x,y)\, \De x \De y
\end{equation}
is the energy associated with the function $\eta$. 

We now turn to the bound on $\text{Var}^\omega(Z^\omega_m)$, which proceeds along the lines of (4.18)--(4.25) in~\cite{sznitman2015disconnection}. 
Let $\cC$ be a collection as in~\eqref{eq:CollectionC} and assume $z \neq z'$ are elements in $\cC$. We see that for $x \in D_z$, $x' \in D_{z'}$:
\begin{equation}
\begin{split}
\bbE^\omega[\xi^{\omega,z}_x \xi^{\omega,z'}_{x'}] & = \sum_{y,y' \in \bbZ^d} P^\omega_x[X_{T_{U_{z}}} = y]P^\omega_{x'}[X_{T_{U_{z'}}} = y] g^\omega(y,y') \\
& \stackrel{\eqref{eq:DecompositionGreenKilledGreen}}{=} g^\omega(x,x'),
\end{split}
\end{equation}
whereas for $z = z' \in \cC$, and $x,x' \in D_z$ one has
\begin{equation}
\label{eq:SecondCaseHarmonicCovariance}
\bbE^\omega[\xi^{\omega,z}_x\xi^{\omega,z}_{x'}] = \sum_{y \in \bbZ^d} P_{x'}^\omega[X_{T_{U_z}} = y]g^\omega(x,y),
\end{equation} 
see also (4.18) and (4.19) in~\cite{sznitman2015disconnection} for a similar argument. We obtain from the definition~\eqref{eq:ZmDefinition} that
\begin{equation}\label{eq:variance_continue}
\text{Var}^\omega(Z_m^\omega) = \sum_{z \in \cC} \nu^\omega(z)^2\bbE^\omega[(\xi^{\omega,z}_{m(z)})^2] + \sum_{z\neq z' \in \cC} \nu^\omega(z)\nu^\omega(z') g^\omega(m(z),m(z')).
\end{equation}
To control the last term and similar expressions, we introduce the quantities
\begin{equation}
\begin{split}
\gamma^\omega(K,L) & = \sup_{\substack{z,z' \in \bbL \\ |z-z'|_\infty \geq KL}} \sup_{\substack{y \in D_z, y' \in D_{z'} \\ x \in B_z, x' \in B_{z'}} }  \frac{g^\omega(y,y')}{g^\omega(x,x')} (\geq 1), \\
\widetilde{\gamma}^\omega(K,L) & = \inf_{\substack{z,z' \in \bbL \\ |z-z'|_\infty \geq KL}} \inf_{\substack{y \in D_z, y' \in D_{z'} \\ x \in B_z, x' \in B_{z'}} }  \frac{g^\omega(y,y')}{g^\omega(x,x')} (\leq 1).
\end{split}
\end{equation}
We shall now prove that 
\begin{align}
\label{eq:ClaimGamma}
\lim_{K \rightarrow \infty} \limsup_{L \rightarrow \infty} \sup_{\omega \in \Omega_\lambda} \gamma^\omega(K,L) & = 1, \text{ and }\\
\label{eq:ClaimTildeGamma}
\lim_{K \rightarrow \infty} \liminf_{L \rightarrow \infty} \inf_{\omega \in \Omega_\lambda} \widetilde{\gamma}^\omega(K,L) &= 1.
\end{align}
To this end, fix $\omega \in \Omega_\lambda$, $L \geq 1$, $K \geq c$, $z, z' \in \bbL$ with $|z-z'|_\infty \geq KL$ and $y \in D_z$, $y' \in D_{z'}$, $x \in B_z$, $x' \in B_{z'}$. We write, using the symmetry of $g^\omega(\cdot,\cdot)$ and the fact that $g^\omega > 0$: 
\begin{equation}
\label{eq:ProofConvergenceGammaStep1}
\frac{g^\omega(y,y')}{g^\omega(x,x')} -1 = \frac{g^\omega(x',y) - g^\omega(x',x)}{g^\omega(x,x')} + \frac{g^\omega(y,y') - g^\omega(y,x')}{g^\omega(x,x')}.
\end{equation}
By~\eqref{eq:harmonicityOfGreen}, $g^\omega(x', \cdot)$ is $\omega$-harmonic in $B^{(1)}(z,  \frac{1}{2} |z-z'|_\infty)$ (recall that $B^{(1)}(x_0,r)$ denotes the closed ball with $r \geq 0$ around $x_0 \in \bbZ^d$ with respect to the graph distance on $\bbZ^d$). Also, $B^{(1)}(z,  \frac{1}{4} |z-z'|_\infty)$ contains both $x$ and $y$ if $K \geq c$. By~\eqref{eq:HoelderRegProperty} there exists a constant $\tau > 0$ only depending on $\lambda$, for which one has the $\tau$-H\"older bound for $g^\omega(x',\cdot)$:
\begin{equation}
\label{eq:ProofConvergenceGammaStep2}
\begin{split}
& \bigg\vert\frac{g^\omega(x',y) - g^\omega(x',x)}{g^\omega(x,x')}\bigg\vert  \leq C {\bigg(\frac{|x-y|_\infty}{\tfrac{1}{4} |z-z'|_\infty}\bigg)}^\tau \cdot \frac{\sup_{w \in B^{(1)}(z,\frac{1}{4} |z-z'|_\infty)} g^\omega(x',w) }{g^\omega(x,x')} \\
&  \qquad\qquad\qquad  \stackrel{\eqref{eq:QuenchedGFEstimate}}{\leq} C' \frac{1}{K^\tau}  (|z-z'|_\infty + 2L)^{d-2} \sup_{w \in B^{(1)}(z,\frac{1}{4} |z-z'|_\infty)} g^\omega(x',w) \\
& \qquad\qquad\qquad \stackrel{\eqref{eq:QuenchedGFEstimate}}{\leq}  C'' \frac{1}{K^\tau} \bigg( \frac{|z-z'|_\infty + 2L}{|z-z'|_\infty} \bigg)^{d-2},
\end{split} 
\end{equation}
having used in the last step that for $x' \in B_{z'}$, $w \in B^{(1)}(z, \frac{1}{4}|z-z'|_\infty)$, we have $|x'-w| \geq \frac{1}{2}|z-z'|_\infty$. We therefore obtain that for any $L \geq 1$ and $K \geq c$, one has
\begin{equation}
\label{eq:ProofConvergenceGammaStep3}
\sup_{ \omega \in \Omega_\lambda}\sup_{\substack{z,z' \in \bbL \\ |z-z'|_\infty \geq KL}} \sup_{\substack{y \in D_z, y' \in D_{z'} \\ x \in B_z, x' \in B_{z'}} } \bigg\vert\frac{g^\omega(x',y) - g^\omega(x',x)}{g^\omega(x,x')}\bigg\vert \leq  \frac{C''}{K^\tau} \bigg(1 + \frac{2}{K} \bigg)^{d-2},
\end{equation}
which vanishes upon taking $\limsup_L$, and $\lim_K$. By the same argument, one has the upper bound
\begin{equation}
\label{eq:ProofConvergenceGammaStep4}
\sup_{ \omega \in \Omega_\lambda}\sup_{\substack{z,z' \in \bbL \\ |z-z'|_\infty \geq KL}} \sup_{\substack{y \in D_z, y' \in D_{z'} \\ x \in B_z, x' \in B_{z'}} } \bigg\vert\frac{g^\omega(y,y') - g^\omega(y,x')}{g^\omega(x,x')}\bigg\vert \leq  \frac{\widetilde{C}}{K^\tau} \bigg(1 + \frac{2}{K} \bigg)^{d-2}.
\end{equation}
The claim~\eqref{eq:ClaimGamma} then follows upon taking the respective suprema of the absolute values in~\eqref{eq:ProofConvergenceGammaStep1}, using the triangle inequality and~\eqref{eq:ProofConvergenceGammaStep3} and~\eqref{eq:ProofConvergenceGammaStep4}, and sending first $L$ and then $K$ to infinity. The claim~\eqref{eq:ClaimTildeGamma} follows in a similar manner. 

We now resume estimating~\eqref{eq:variance_continue}. Note that for $\omega \in \Omega_\lambda$, $z \in \cC$ and any $y \in D_z$, we have
\begin{equation}\label{eq:OneBoxVarianceBound}
\bbE^\omega[(\xi_{y}^{\omega,z})^2] \stackrel{\eqref{eq:SecondCaseHarmonicCovariance}}{ =} \sum_{y \in \bbZ^d} P_x^\omega[X_{T_{U_z}} = y] g^\omega(x,y) \stackrel{\eqref{eq:QuenchedGFEstimate}}{\leq} \frac{c}{(KL)^{d-2}},
\end{equation}
since $d_\infty(\partial U_z, D_z) \geq (K-3)L$. Moreover, for any $z \in \cC$, we have the upper bound
\begin{equation}
\begin{split}
\nu^\omega(z)& = \frac{1}{\capa^\omega(C)} \sum_{x \in B_z} P^\omega_x[\widetilde{H}_C = \infty] \omega_x \\
& \leq \frac{1}{\capa^\omega(C)} \sum_{x \in B_z} P^\omega_x[\widetilde{H}_{B_z} = \infty] \omega_x = \frac{\capa^\omega(B_z)}{\capa^\omega(C)},
\end{split}
\end{equation}
using the definition of the equilibrium measure~\eqref{eq:DefEqMeasure}. We therefore infer the upper bound 
\begin{equation}
\label{eq:UpperBoundVarianceZ}
\begin{split}
\text{Var}^\omega(Z^\omega_m) & \leq \frac{c}{K^{d-2}} \frac{1}{\capa^\omega(C)} + \frac{\gamma^\omega(K,L)}{\capa^\omega(C)^2} \sum_{x,x' \in C, x \neq x'} e_C^\omega(x)e_C^\omega(x')g^\omega(x,x') \\
& = \frac{1}{\capa^\omega(C)} \Big( \frac{c}{K^{d-2}} + \gamma^\omega(K,L) \Big).
\end{split}
\end{equation} 
where for the first term we used that $\sum_{z \in \cC} \nu^\omega(z) = 1$ together with~\eqref{eq:QuenchedBoxCapacityEstimate}. 

We now derive a lower bound on $\cG^\omega_N$. We decompose $\eta = \eta^+ - \eta^-$ and consequently obtain a decomposition $\cG^{\omega}_N =  \cG^{+,\omega}_N - \cG^{-,\omega}_N$, where
\begin{equation}
\cG_N^{\pm,\omega} = \bbE^\omega[Z_m^\omega \langle \bbX_N,\eta^\pm \rangle].
\end{equation}
We first give a lower bound on $\cG^{+,\omega}_N$. We introduce $\cU = \bigcup_{z\in \cC} U_z$. For every $x\notin \cU$ we have $d_\infty(\{x\},\cC)\geq KL$ and $\bbE^\omega [\xi^{\omega,z}_{m(z)}\varphi_x] = g^\omega(m(z),x)$. Thus,
\begin{equation}
\label{eq:EstimationGN}
\begin{split}
  \cG_N^{+,\omega} & = \tfrac{1}{N^d} \sum_{x\in \bbZ^d} \eta^+\big(\tfrac{x}{N}\big) \sum_{z\in \cC} \bbE^\omega [\xi^{\omega,z}_{m(z)}\varphi_x] \nu^\omega(z)\\
  & \geq \tfrac{1}{N^d \capa^\omega(C)} \sum_{x\in \cU^c} \eta^+\big(\tfrac{x}{N}\big) \sum_{z\in \cC} g^\omega(m(z),x) e^\omega_C(B_z)\\
  & \geq \tfrac{\widetilde{\gamma}^\omega(K,L)}{N^d \capa^\omega(C)} \sum_{x\in \cU^c} \eta^+\big(\tfrac{x}{N}\big) \sum_{x'\in C} g^\omega(x,x') e^\omega_C(x')\\
  & \stackrel{\eqref{eq:LastExDecomp}}{\geq} \tfrac{\widetilde{\gamma}^\omega(K,L)}{\capa^\omega(C)}\Big(\tfrac{1}{N^d} \sum_{x \in \bbZ^d}  \eta^+\big(\tfrac{x}{N}\big) h^\omega_C(x)
  - \tfrac{1}{N^d}\sum_{x\in \cU} \eta^+\big(\tfrac{x}{N}\big)\Big).
\end{split}
\end{equation}
Observe that $|\cU| \leq c (KL)^d |\cC|$. Next, we give an upper bound for $\cG^{-,\omega}_N$. Note that for $x \in \cU$, there is a unique $z_x \in \cC$ such that $|z_x-z|_\infty \leq KL$. Therefore, we have
\begin{equation}
\label{eq:EstimationGN_Minus}
\begin{split}
  \cG_N^{-,\omega} & = \tfrac{1}{N^d} \sum_{x\in \bbZ^d} \eta^-\big(\tfrac{x}{N}\big) \sum_{z\in \cC} \bbE^\omega [\xi^{\omega,z}_{m(z)}\varphi_x] \nu^\omega(z)\\
  & \leq \tfrac{\gamma^\omega(K,L)}{N^d \capa^\omega(C)} \sum_{x\in \bbZ^d} \eta^-\big(\tfrac{x}{N}\big) \sum_{x'\in C} g^\omega(x,x') e^\omega_C(x') \\
  & + \tfrac{1}{N^d \capa^\omega(C) } \sum_{x \in \cU} \eta^- \big(\tfrac{x}{N}\big)  g^\omega(m(z_x),x) e^\omega_C(B_{z_x}) \\
    & \stackrel{\eqref{eq:QuenchedGFEstimate}}{\leq} \tfrac{\gamma^\omega(K,L)}{ \capa^\omega(C)} \Big( \tfrac{1}{N^d} \sum_{x\in \bbZ^d} \eta^-\big(\tfrac{x}{N}\big)h^\omega_C(x)+ \tfrac{c_3}{N^d} \sum_{x \in \cU} \eta^- \big(\tfrac{x}{N}\big) e^\omega_C(B_{z_x}) \Big).
\end{split}
\end{equation}
Note that $\tfrac{c_3}{N^d} \sum_{x \in \cU} \eta^- \big(\tfrac{x}{N}\big) e^\omega_C(B_{z_x})\leq c \|\eta\|_\infty\tfrac{(KL)^d}{N^d} \capa^\omega(C)$. We readily get the claim~\eqref{eq:alphaKLbetarho}, upon combining~\eqref{eq:DecompositionVarZ},~\eqref{eq:H_NBound},~\eqref{eq:UpperBoundVarianceZ},~\eqref{eq:EstimationGN} and~\eqref{eq:EstimationGN_Minus}, collecting the error terms,  and setting
\begin{equation}
\begin{split}
U(K,L) & = \bigg(\frac{c}{K^{d-2}}+ \sup_{\omega \in \Omega_\lambda} \gamma^\omega(K,L) - 1 \bigg) \vee \bigg( 1- \inf_{\omega \in \Omega_\lambda} \widetilde{\gamma}^\omega(K,L) \bigg),
\end{split}
\end{equation}
in view of~\eqref{eq:ClaimGamma},~\eqref{eq:ClaimTildeGamma}.  

It remains to prove~\eqref{eq:ExpInfimumBound}, for which we use an argument based on the metric entropy method. We claim that there exists $\tau > 0$ such that for $m,m' \in \mathcal{M}$:
\begin{equation}
\label{eq:ClaimedUpperBoundGaussianDistance}
\bbE^\omega[(Z^\omega_m - Z^\omega_{m'})^2]\capa^\omega(C) \leq c_{12} \left(\frac{| m - m'|_\infty^2}{(KL)^2}\right)^\tau,
\end{equation}
where $| m - m'|_\infty =\sup_{z \in \cC} |m(z) - m'(z)|_\infty$. Indeed, let $\cC$ be as above and $m,m' \in \cM$. By direct calculation, we find
\begin{equation}
\label{eq:UpperBoundGaussianDist}
\begin{split}
\bbE^\omega[(Z^\omega_m - Z^\omega_{m'})^2] = & \sum_{z,z'\in \cC} \nu^\omega(z)\nu^\omega(z') \sum_{x,x'}(P^\omega_{m(z)}[X_{T_{U_z}} = x] - P^\omega_{m'(z)}[X_{T_{U_z}} = x]) \\
&(P^\omega_{m(z')}[X_{T_{U_{z'}}} = x'] - P^\omega_{m'(z')}[X_{T_{U_{z'}}} = x'])g^\omega(x,x').
\end{split}
\end{equation}
Since for a given $x \in \partial U_z$, $P^\omega_\cdot[X_{T_{U_z}} = x]$ is $\omega$-harmonic and non-negative in $U_z$, we can combine the Harnack inequality (see Theorem 1 in~\cite{delmotte1997inegalite}) with the $\tau$-H\"older regularity (see~\eqref{eq:HoelderRegProperty}) to obtain for $y,y' \in D_z$ (and $K \geq c$):
\begin{equation}
|P^\omega_y[X_{T_{U_z}} = x] - P^\omega_{y'}[X_{T_{U_z}} = x]|\leq  c \left(\frac{|y-y'|_\infty}{KL}\right)^\tau P^\omega_y[X_{T_{U_z}} = x].
\end{equation}
Similarly, one can apply the analogous bound to $P_\cdot[X_{T_{U_{z'}}} = x']$ where $x' \in \partial U_{z'}$, which upon insertion into~\eqref{eq:UpperBoundGaussianDist} yields
\begin{equation}
\bbE^\omega[(Z^\omega_m - Z^\omega_{m'})^2] \leq c \left(\frac{|m-m'  |_\infty^2}{(KL)^2}\right)^\tau \bbE^\omega[(Z_m^\omega)^2] \leq \widetilde{c}\left(\frac{|m-m'  |_\infty^2}{(KL)^2}\right)^\tau \frac{1}{\capa^\omega(C)},
\end{equation}
similarly as in (4.29) of~\cite{sznitman2015disconnection}, which is precisely~\eqref{eq:ClaimedUpperBoundGaussianDistance}.

If we introduce now the scaled Gaussian process $\widetilde{Z}^
\omega_m = \sqrt{\capa^\omega(C)} Z^\omega_m$ for $m \in \cM$, we obtain the upper bound (for each $\omega \in \Omega_\lambda$)
\begin{equation}
\bbE^\omega[(\widetilde{Z}^\omega_m - \widetilde{Z}^\omega_{m'})^2]^{\frac{1}{2}} \leq \frac{c}{(KL)^\tau} |m-m' |_{\infty}^\tau \leq \frac{7^\tau c}{K^\tau},
\end{equation}
for $m, m' \in \cM$. By a straightforward adaptation of the arguments leading up to (4.33) of~\cite{sznitman2015disconnection}, which uses Theorem 1.3.3 of~\cite{adler2007random}, we therefore infer 
\begin{equation}
|\sqrt{\capa^\omega(C)} \bbE^\omega[Z^\omega]| \leq \frac{c_{12}}{K^\tau}\sqrt{|\cC|},
\end{equation}
Rearranging and taking the suprema over $\omega \in \Omega_\lambda$ and all $\cC$ yields~\eqref{eq:ExpInfimumBound} (with $c_{13} = \tau$). 
\end{proof}

\begin{rem}
Theorem~\ref{thm:ZmMainBound} is crucial in the proof of the asymptotic upper bounds in the next section. For the upper bound on the disconnection event, we  take $\rho = \beta = 0$, see~\eqref{eq:BoundInTermsofZ} and~\eqref{eq:Borell-TIS_onlyDisc}. In the proof of Proposition~\ref{prop:MainPropProofSec7}, which is pivotal for treating the case of both disconnection and the entropic repulsion-type event under the probability in~\eqref{eq:EntropicRepulsionIntro}, we instead consider $\rho = \beta\frac{1}{N^d}  \langle \eta_N, h^\omega_C\rangle_{\bbZ^d}$, with $\beta$ small enough.
\end{rem}

\section{Asymptotic upper bound on disconnection and entropic repulsion}
\label{sec:UpperBounds}

In this section we state and prove in the main Theorem~\ref{thm:UpperBound} a quenched asymptotic upper bound on the probability of the disconnection event $\cD^\alpha_N$ from~\eqref{eq:DefDisconnection}, in the strongly percolative regime $\alpha < \overline{\alpha}$. This result complements the quenched asymptotic lower bound~\eqref{eq:QuenchedDiscLowerBound}, and if $\overline{\alpha} = \alpha_\ast = \alpha_{\ast\ast}$ holds (which is plausible but open), the respective rates would in fact be matching for sets $A$ that are regular in the sense that $\capa^\homo(\mathring{A}) = \capa^\homo(A)$, see Remark~\ref{rem:on_regularity} concerning this condition. Moreover, we derive in Theorem~\ref{thm:EntropicRepulsion} a quenched asymptotic upper bound on the probability that $\cD^\alpha_N$ occurs and that $\langle \bbX_N, \eta \rangle$ (the ``average'' of the Gaussian free field over some fixed continuous function $\eta$ with compact support) deviates from $\langle \mathscr{H}^\alpha_{\mathring{A}}, \eta \rangle$ by a certain amount $\Delta > 0$. This bound comes with an additional cost $c_1(\Delta,\alpha,\eta)$ to the right hand side of~\eqref{eq:QuenchedDiscUpperBound} in Theorem~\ref{thm:UpperBound}, see~\eqref{eq:EntropicRepulsionSec7}, which is uniform in $\omega \in \Omega_\lambda$.

These results are obtained by adapting a coarse-graining procedure taken from~\cite{nitzschner2017solidification}, see also~\cite{chiarini2019entropic}, together with the quenched Gaussian bounds of the previous section, the solidification estimates of Section~\ref{sec:Solidification}, and the homogenization results in Corollary~\ref{cor:Capacity_convergence_NoKilling}. 

\begin{theorem}
\label{thm:UpperBound}
 Let $\alpha < \overline{\alpha}$, then for $P$-a.e.\ $\omega\in \Omega_\lambda$ and $A \subseteq \bbR^d$ compact with non-empty interior, it holds that
\begin{equation}
\label{eq:QuenchedDiscUpperBound}
    \limsup_{N\to\infty} \frac{1}{N^{d-2}} \log \bbP^\omega[\cD_N^\alpha] \leq -\frac{1}{2} (\overline{\alpha}- \alpha)^2 \capa^{\homo}(\mathring{A}).
    \end{equation}
\end{theorem}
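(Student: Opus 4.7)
The plan is to adapt the coarse-graining procedure developed in \cite{nitzschner2017solidification} and further used in \cite{chiarini2019entropic,nitzschner2018entropic,sznitman2019macroscopic} to the present setting, combining it with the Gaussian bounds of Theorem \ref{thm:ZmMainBound} (specialised to $\beta = \rho = 0$), the solidification estimates of Corollary \ref{cor:CapacityDirichletSolidification}, and the capacity homogenization of Corollary \ref{cor:Capacity_convergence_NoKilling}. First I would fix two sprinkling levels $\beta_- < \beta_+$ with $\alpha < \beta_- < \beta_+ < \overline{\alpha}$, and a compact Lipschitz domain $K \subseteq \mathring{A}$ with $\capa^\homo(K)$ arbitrarily close to $\capa^\homo(\mathring{A})$ (by inner regularity of $\capa^\homo$). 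Since $K_N \subseteq A_N$, one has $\cD^\alpha_N \subseteq \{K_N \stackrel{\geq \alpha}{\centernot\longleftrightarrow} S_N\}$, so it suffices to bound this larger event. I would then choose a mesoscopic scale $L = L(N) \to \infty$ with $L/N \to 0$, and tile a neighbourhood of $K_N$ by the boxes $B_z \subseteq D_z \subseteq U_z$ of \eqref{eq:BoxesDefinition}, for $z$ in the sublattice $\bbL = L\bbZ^d$ of \eqref{eq:LatticeL_Def}. The domain Markov property \eqref{eq:DomainMP} decomposes $\varphi_x = \xi^{\omega,z}_x + \psi^{\omega,z}_x$ on each $U_z$, with the local fields $\psi^{\omega,z}$ attached to well-separated $U_z$'s independent.

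Next, I would classify each $B_z$ as $\psi^\omega$-\emph{good} or $\psi^\omega$-\emph{bad} depending on whether $\psi^{\omega,z}$ realises the strong percolation properties \eqref{eq:NoLargeComponent}--\eqref{eq:NoLargeComponentsDisconn} at levels $\beta_+, \beta_-$ inside $D_z$. Since $\beta_+ < \overline{\alpha}$, the defining property \eqref{eq:OverlineAlphaDef} of $\overline{\alpha}^\omega$ yields, for $P$-a.e.\ $\omega$, a super-polynomial decay in $L$ of the probability that $B_z$ is bad, uniformly over $z$ in a polynomial-size window around the origin --- this uniformity is exactly what is built into \eqref{eq:OverlineAlphaDef}, and corresponds to the forthcoming Proposition~\ref{prop:PsiBadUniformMesoscopic} alluded to in Remark~\ref{rem:OtherCritParam}. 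A union bound then shows that, with overwhelming quenched probability, no box intersecting an enlargement of $K_N$ is bad. On this ``all-good'' event, $\{K_N \stackrel{\geq \alpha}{\centernot\longleftrightarrow} S_N\}$ forces the existence of a coarse-grained set $\cC \subseteq \bbL$ whose associated region $\Sigma = \bigcup_{z \in \cC} B_z$ separates $K_N$ from $S_N$, and such that for each $z \in \cC$ the harmonic average $\xi^{\omega,z}$ takes a value $\leq -(\beta_+ - \alpha)$ at some representative point of $D_z$ --- otherwise the strong percolation of $\psi^{\omega,z}$ at level $\beta_+$ combined with the matching-path property at level $\beta_-$ would assemble a path in $E^{\geq \alpha}$ from $K_N$ to $S_N$, contradicting disconnection (the standard pigeonhole argument of \cite[Section 5]{sznitman2015disconnection} adapted to our setting).

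The combinatorial number of such $\cC$ is bounded by $\exp(c N^{d-2} L^{-(d-2)} \log N)$, which is subleading as $L \to \infty$. For each fixed $\cC$, Theorem \ref{thm:ZmMainBound} with $\beta = \rho = 0$ controls the variance of $Z^\omega = \inf_{m \in \cM} Z^\omega_m$ from \eqref{eq:ZomegaDefinition} via \eqref{eq:VarZmBound} and its expectation via \eqref{eq:ExpInfimumBound}; a Borell--TIS concentration inequality applied to this Gaussian infimum then yields
\[
\bbP^\omega\!\left[\cD^\alpha_N,\; \cC \text{ is realised}\right] \;\leq\; \exp\!\left(-\tfrac{1}{2}(\beta_+ - \alpha)^2 \capa^\omega(\Sigma)(1 - o(1))\right),
\]
where the $o(1)$ vanishes upon sending first $L \to \infty$ and then the box-size parameter $K \to \infty$. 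Building an admissible $U_0 \in \cU_{\ell_\ast, K_N}$ from the complement of an enlargement of $K_N$ at dyadic scale $2^{\ell_\ast} \asymp L$, one checks that $\Sigma \in \cS^\omega_{U_0, \varepsilon, \chi}$ with some $\varepsilon \asymp L \ll 2^{\ell_\ast}$ and a fixed $\chi \in (0,1)$; Corollary \ref{cor:CapacityDirichletSolidification} then provides $\capa^\omega(\Sigma) \geq (1 - o(1)) \capa^\omega(K_N)$. Finally Corollary \ref{cor:Capacity_convergence_NoKilling} yields the quenched convergence $N^{2-d}\capa^\omega(K_N) \to \capa^\homo(K)$, and letting successively $\beta_+ \uparrow \overline{\alpha}$ and $K \uparrow \mathring{A}$ produces \eqref{eq:QuenchedDiscUpperBound}.

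The main obstacle is two-fold. On the one hand, establishing the super-polynomial decay of bad-box probabilities \emph{uniformly} over a polynomial-size window for $P$-a.e.\ $\omega$ requires carefully unpacking the uniformity built into \eqref{eq:OverlineAlphaDef}, something that is not obtainable from the mere existence of an infinite cluster and which is the very reason for working with our version of $\overline{\alpha}^\omega$ rather than the stronger $\overline{h}^\omega$ of \cite{drewitz2018geometry} (see Remark~\ref{rem:OtherCritParam}). On the other hand, verifying that the coarse-grained region $\Sigma$ genuinely lies in $\cS^\omega_{U_0, \varepsilon, \chi}$ demands a quenched lower bound on $P^\omega_x[H_\Sigma < \tau_\varepsilon]$ for $x \in \partial U_0$, uniformly in $\omega \in \Omega_\lambda$; this is where the loss of translation invariance is most keenly felt, as one must rely on the quenched heat-kernel estimates \eqref{eq:HK_bound} rather than on any homogeneous hitting probability, which is precisely the reason the solidification apparatus of Section~\ref{sec:Solidification} was engineered with $\sup_{\omega \in \Omega_\lambda}$ already built into \eqref{eq:SolidificationStatement}.
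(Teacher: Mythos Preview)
Your broad strategy matches the paper's, but two concrete steps do not go through as written.

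The first is the handling of $\psi^\omega$-bad boxes. You claim that a union bound over the $(N/L)^d$ boxes in an enlargement of $K_N$, together with the super-polynomial decay in $L$ of the individual bad-box probability coming from~\eqref{eq:OverlineAlphaDef}, makes the event ``some box is bad'' negligible. But super-polynomially small in $L$---hence in $N$, since $L$ is only a power of $N$---is \emph{not} $\exp(-cN^{d-2})$-small, so in the splitting $\bbP^\omega[\cD^\alpha_N] \leq \bbP^\omega[\text{some box bad}] + \bbP^\omega[\cD^\alpha_N;\,\text{all good}]$ the first term dominates at the large-deviation scale and you only recover the trivial bound $\limsup N^{2-d}\log \bbP^\omega[\cD^\alpha_N] \leq 0$. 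The paper (Proposition~\ref{prop:BadEventProposition}) does not try to exclude all bad boxes; instead, exploiting the independence of the $\psi^{\omega,z}$ on well-separated boxes, it shows via a Bernoulli large-deviation estimate that the event $\cB^\omega_N$ of having a fraction $\varrho^\omega(L_0)\to 0$ of bad \emph{columns} is super-exponentially unlikely at scale $N^{d-2}$. On $\cD^\alpha_N\setminus\cB^\omega_N$ the isoperimetric projection argument of~\cite{deuschel1996surface} then extracts, within each box of the segmentation, $L_0$-boxes that are simultaneously $\psi^\omega$-good and $\xi^\omega$-bad (see~\eqref{eq:ProjectionsExistOnEffDiscEvent}).

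The second gap is the solidification setup. A deterministic $U_0$ taken as an $L$-enlargement of $K_N$, with $2^{\ell_\ast}\asymp L$ and $\varepsilon\asymp L$, gives $\varepsilon/2^{\ell_\ast}\not\to 0$, so Corollary~\ref{cor:CapacityDirichletSolidification} does not apply; and in any case the random separating interface $\Sigma$ need not lie within distance $\varepsilon$ of this fixed $\partial U_0$. The paper introduces a second, nearly-macroscopic scale $\widehat{L}_0\asymp\sqrt{\gamma_N}N$ (with $\gamma_N\to 0$, see~\eqref{eq:ScalesDefinition}) and takes $U_0$ to be a \emph{random} set built from the segmentation $\widehat{\cS}^\omega_N$ of $\widehat{L}_0$-boxes where the local density of the blocking region is balanced (see~\eqref{eq:DefOfSets}); then $\Sigma\in\cS^\omega_{U_0,10\widehat{L}_0,c(K)}$ with $10\widehat{L}_0/2^{\ell_\ast}\asymp\sqrt{\gamma_N}\to 0$ since $2^{\ell_\ast}\asymp N$. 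This two-scale structure is also what brings the combinatorial complexity down to $|\cK_N|=\exp(o(N^{d-2}))$ in~\eqref{eq:smallCombComplexity}; your single-scale bound $\exp(cN^{d-2}L^{-(d-2)}\log N)$ is not justified for a generic separating surface at scale $L$.
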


\begin{theorem}
 Let $\alpha < \overline{\alpha}$, and $\eta : \bbR^d \rightarrow \bbR$ continuous with compact support, then for $P$-a.e.\ $\omega\in \Omega_\lambda$ and $A \subseteq \bbR^d$ compact with non-empty interior, it holds that
\label{thm:EntropicRepulsion}

\begin{equation}
\label{eq:EntropicRepulsionSec7}
    \begin{split}
\limsup_{N \rightarrow \infty} \frac{1}{N^{d-2}} \log \, & \bbP^\omega\Big[ \big\vert \langle \bbX_N, \eta \rangle - \langle \mathscr{H}^\alpha_{\mathring{A}}, \eta \rangle \big\vert \geq \Delta ; \cD^\alpha_N \Big] \\
& \leq -\frac{1}{2}(\overline{\alpha}-\alpha)^2 \capa^{\homo}(\mathring{A}) - c_1(\Delta,\alpha,\eta).
\end{split}
    \end{equation}
\end{theorem}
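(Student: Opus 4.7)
The plan is to adapt the proof of Theorem~\ref{thm:UpperBound} by tilting the reference Gaussian variable $Z^\omega_m$ with a multiple of the linear functional $\langle \bbX_N,\eta\rangle$. The starting point is the coarse-graining procedure underlying the proof of Theorem~\ref{thm:UpperBound}: at a mesoscopic scale $L=L(N)$ with $L/N\to 0$ and a large parameter $K$, after discarding a contribution whose probability decays super-exponentially in $N^{d-2}$ (coming from configurations with too many $\psi^\omega$-bad boxes, whose rareness follows from the strongly percolative regime $\alpha<\overline{\alpha}$ via Proposition~\ref{prop:PsiBadUniformMesoscopic}), the disconnection event $\cD^\alpha_N$ is covered by a sub-exponential (in $N^{d-2}$) union of events of the form $\{Z^\omega_m \leq -(\overline{\alpha} - \alpha) + o(1)\}$, indexed by admissible collections $\cC \subseteq \bbL$ satisfying the separation property in~\eqref{eq:CollectionC} and maps $m \in \cM$. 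Each associated set $C=\bigcup_{z \in \cC}B_z$ is a porous interface surrounding $A_N$ in the sense of section~\ref{sec:Solidification}, so Corollaries~\ref{cor:CapacityDirichletSolidification} and~\ref{cor:Capacity_convergence_NoKilling} yield $N^{2-d}\capa^\omega(C)\to \capa^\homo(\mathring{A})$ uniformly along the coarse-grained family.

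By symmetry (replacing $\eta \mapsto -\eta$), it suffices to bound $\bbP^\omega[F^+_N]$, where $F^+_N = \{\langle\bbX_N,\eta\rangle - \langle \mathscr{H}^\alpha_{\mathring{A}},\eta\rangle \geq \Delta\} \cap \cD^\alpha_N$. Fix $\beta > 0$ small, to be tuned, and let $\rho_N = \beta\, N^{-d}\langle \eta_N, h^\omega_C \rangle_{\bbZ^d}$; by Corollary~\ref{cor:Capacity_convergence_NoKilling}, $\rho_N \to \beta \langle \eta, \mathscr{h}_{\mathring{A}}\rangle$. With this specific choice, the coefficient $\widetilde{\alpha}_{K,L,\beta,\rho_N}$ in~\eqref{eq:alphaKLbetarho} collapses to $1 - \beta^2\bigl(N^{-d}\langle \eta_N, h^\omega_C\rangle_{\bbZ^d}\bigr)^2 + \cR_{K,L,\beta,\rho_N}$. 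On $F^+_N$ intersected with the above good part, using $\mathscr{H}^\alpha_{\mathring{A}} = -(\overline{\alpha}-\alpha)\mathscr{h}_{\mathring{A}}$, one obtains
\begin{equation*}
Z^\omega_{m,\beta,\rho_N} = (1+\rho_N)Z^\omega_m - \beta\langle\bbX_N,\eta\rangle \leq -(1+\rho_N)(\overline{\alpha}-\alpha) - \beta\bigl(\langle \mathscr{H}^\alpha_{\mathring{A}},\eta\rangle + \Delta\bigr) + o(1),
\end{equation*}
which, in view of the limit for $\rho_N$, simplifies to $-(\overline{\alpha}-\alpha) - \beta\Delta + o(1)$.

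A Gaussian tail bound for the centered variable $Z^\omega_{m,\beta,\rho_N}$, followed by a union bound over the sub-exponential family of $(\cC,m)$, combined with Theorem~\ref{thm:ZmMainBound} and the capacity convergences above (first sending $L\to\infty$, then $K\to\infty$), yields
\begin{equation*}
\limsup_{N\to\infty}\frac{1}{N^{d-2}}\log\bbP^\omega[F^+_N] \leq -\frac{1}{2}\cdot \frac{\bigl((\overline{\alpha}-\alpha) + \beta\Delta\bigr)^2 \capa^\homo(\mathring{A})}{1 - \beta^2 \langle\eta,\mathscr{h}_{\mathring{A}}\rangle^2 + \beta^2 c_{10}(\eta)\capa^\homo(\mathring{A})}.
\end{equation*}
A Taylor expansion in $\beta$ around zero gives $-\tfrac{1}{2}(\overline{\alpha}-\alpha)^2\capa^\homo(\mathring{A}) - (\overline{\alpha}-\alpha)\capa^\homo(\mathring{A})\,\beta\Delta + O(\beta^2)$, so for $\beta$ sufficiently small a strictly positive extra constant $c_1(\Delta,\alpha,\eta)>0$ emerges. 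Optimizing $\beta \asymp \Delta$ yields $c_1 \asymp \Delta^2$.

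The main obstacle is the coarse-graining step that reduces $\cD^\alpha_N$ to a sub-exponential union of Gaussian constraints on $Z^\omega_m$. This relies crucially on the defining property of $\overline{\alpha}$ to rule out $\psi^\omega$-bad boxes uniformly over mesoscopic boxes lying in a macroscopic region anchored at the origin, and on the uniformity of the solidification estimates of section~\ref{sec:Solidification} and the $\Gamma$-convergence of capacities (Corollary~\ref{cor:Capacity_convergence_NoKilling}) over the resulting family of porous interfaces. A secondary subtlety, easily addressed, is the need to approximate $\overline{\alpha}$ from below by $\overline{\alpha}-\varepsilon$ during the coarse-graining and to let $\varepsilon \downarrow 0$ at the very end.
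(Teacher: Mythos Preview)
Your overall strategy---tilt $Z^\omega_m$ by $\beta\langle\bbX_N,\eta\rangle$ with the specific choice $\rho_N=\beta N^{-d}\langle\eta_N,h^\omega_C\rangle_{\bbZ^d}$, then apply the Borell--TIS inequality and Theorem~\ref{thm:ZmMainBound}---is exactly what the paper does. The algebraic cancellation you note in $\widetilde{\alpha}_{K,L,\beta,\rho_N}$ is correct, and so is the Taylor-expansion endgame.

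The genuine gap is your assertion that ``Corollaries~\ref{cor:CapacityDirichletSolidification} and~\ref{cor:Capacity_convergence_NoKilling} yield $N^{2-d}\capa^\omega(C)\to\capa^\homo(\mathring A)$ uniformly'' and the companion claim that $\rho_N\to\beta\langle\eta,\mathscr{h}_{\mathring A}\rangle$. Neither is true uniformly over the coarse-grained family. Corollary~\ref{cor:CapacityDirichletSolidification} gives only a \emph{lower} bound $\capa^\omega(\Sigma)\geq(1-o(1))\capa^\omega(A'_N)$, and~\eqref{eq:Solidification_From_Outside} gives only $h^\omega_{A_N}-h^\omega_\Sigma\leq o(1)$, not the reverse. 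The porous interface $\Sigma$ attached to a given $\kappa\in\cK_N$ can sit anywhere between $A_N$ and $S_N$; its capacity can substantially exceed $\capa^\omega(A_N)$ and $N^{-d}\langle\eta_N,h^\omega_\Sigma\rangle_{\bbZ^d}$ need not be close to $\langle\eta,\mathscr{h}_{\mathring A}\rangle$. Without that closeness, the cross term $-\rho_N(\overline\alpha-\alpha)+\beta(\overline\alpha-\alpha)\langle\mathscr{h}_{\mathring A},\eta\rangle$ does \emph{not} vanish and your simplified threshold $-(\overline\alpha-\alpha)-\beta\Delta$ is unjustified.

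The paper resolves this with a dichotomy that you are missing: for a parameter $\mu>0$, one declares $\kappa$ \emph{good} if $\cE^\omega(h^\omega_\Sigma-h^\omega_{A'_N})\leq\mu N^{d-2}$ and \emph{bad} otherwise. On good $\kappa$, the energy bound~\eqref{eq:EnergyDirichletFormBound} forces $N^{-d}\langle\eta_N,h^\omega_\Sigma-h^\omega_{A'_N}\rangle_{\bbZ^d}=O(\sqrt\mu)$, so after homogenization of $h^\omega_{A'_N}$ your threshold computation goes through with $\Delta$ replaced by $\Delta/2$. On bad $\kappa$, one drops the repulsion event altogether and uses~\eqref{eq:Dirichlet_Form_Bound} of Corollary~\ref{cor:CapacityDirichletSolidification}, which converts the Dirichlet-energy excess into a capacity excess $\capa^\omega(\Sigma)-\capa^\omega(A'_N)\geq(\mu-o(1))N^{d-2}$; this alone produces an extra cost $\tfrac{a^2}{2}\mu$ in the exponent. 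Choosing $\mu=\beta^2$ and $\beta$ small then yields the claimed $c_1(\Delta,\alpha,\eta)>0$. Your proposal would be complete once you insert this good/bad splitting; without it, the uniform control over $\kappa$ fails.
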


 If the equalities $\overline{\alpha} = \alpha_\ast = \alpha_{\ast\ast}$ hold and the set $A$ is regular, one obtains by combining Theorems~\ref{thm:MainLowerBound} and~\ref{thm:EntropicRepulsion} that for $\alpha < \alpha_\ast$ and $P$-a.e.\ $\omega \in \Omega_\lambda$ 
 \begin{equation}
 \limsup_{N \rightarrow \infty} \frac{1}{N^{d-2}} \log \,  \bbP^\omega\Big[ \big\vert \langle \bbX_N, \eta \rangle - \langle \mathscr{H}^\alpha_{A}, \eta \rangle \big\vert \geq \Delta \, \big\vert \, \cD^\alpha_N \Big] \\
 \leq  - c_1(\Delta,\alpha,\eta),
 \end{equation}
 using the same methods as for Corollary 3.2 of~\cite{chiarini2019entropic}.  In particular~\eqref{eq:EntropicRepulsionConditioned} would hold, meaning that the average of the Gaussian free field is  pinned with high probability to the profile function $\mathscr{H}^\alpha_{A}$, conditionally on disconnection.

Recall from the previous section that, for $L \geq 1$, $K \geq 100$ integers and $B_z$, $z \in \bbL$, an $L$-box, we have for a fixed $\omega\in \Omega_\lambda$ the decomposition $\varphi = \xi^{\omega,z} + \psi^{\omega,z}$ into ($\omega$-)harmonic average and ($\omega$-)local field, see~\eqref{eq:Sec6_Decomposition_first},~\eqref{eq:Sec6_Decomposition_second}. We now introduce the notion of $B_z$ being ``$\psi^\omega$-good'' at levels $\gamma>\delta$, where $\delta < \gamma < \overline{\alpha}$. A box $B_z$ is called $\psi^\omega$-good at levels $\gamma > \delta$ if 
\begin{equation}
\label{eq:PsiGoodCond1}
 \begin{minipage}{0.9\linewidth}\begin{center} $B_z \cap \{x \in \bbZ^d \, : \, \psi^{\omega,z}_x \geq \gamma \}$  contains a component of diameter at least  $\frac{L}{10}$,\end{center}
    \end{minipage}
\end{equation}
and for any adjacent box $B_{z'}$ with $z' \in \bbL$, $|z-z'| = L$, one has
\begin{equation}
\label{eq:PsiGoodCond2}
 \begin{minipage}{0.9\linewidth}
two components of $B_z \cap \{x\in \bbZ^d  :  \psi^{\omega,z}_x \geq \gamma\}$ and $B_{z'} \cap \{x\in \bbZ^d : \psi^{\omega,z'}_x \geq \gamma\}$ with diameter at least $\frac{L}{10}$ are connected in $D_z \cap \{x\in \bbZ^d :  \psi^{\omega,z}_x \geq \delta\}$
    \end{minipage}
\end{equation}
($D_z$ is defined in~\eqref{eq:BoxesDefinition}). A box $B_z$ that is not $\psi^\omega$-good at levels $\gamma > \delta$ is then called $\psi^\omega$-bad. Note that these definitions generalize their counterparts for the Gaussian free field with constant conductances, see (5.7) and (5.8) in~\cite{sznitman2015disconnection}.

We also introduce the notion of $B_z$ being ``$\xi^\omega$-good'' at level $a > 0$, by which we mean that
\begin{equation}
\label{eq:XiGoodCond}
\inf_{x \in D_z} \xi^{\omega,z}_x > -a.
\end{equation}
Similarly, a box $B_z$ that is not $\xi^\omega$-good will be called $\xi^\omega$-bad. Both definitions are again analogues of the notion of $h$-good and $h$-bad boxes in the case of the Gaussian free field with constant conductances, see (5.9) of~\cite{sznitman2015disconnection}. In the next proposition, we develop bounds on the probability of the event that a given $L$-box is $\psi^\omega$-bad or $\xi^\omega$-bad.

\begin{prop}
\label{prop:PsiBadUniformMesoscopic}
Let $L \geq 1$, $K \geq 100$, $a > 0$. Then,
\begin{equation}
\label{eq:QuenchedBoundOneBox}
\sup_{\omega \in \Omega_\lambda} \sup_{z \in \bbL} \bbP^\omega\Big[ \sup_{x \in D_z} |\xi^{\omega,z}_x| > a \Big] \leq 2\exp\Big\{- c(KL)^{d-2}\Big(a- \tfrac{c}{K^{c_{13}} L^{\frac{d-2}{2}}} \Big)_+^2 \Big\}.
\end{equation}
For $\overline{\alpha} > \gamma > \delta$ and $P$-a.e.\ $\omega \in \Omega_\lambda$ there exists $\rho > d-1$ such that
\begin{equation}
\label{eq:PsiBadUniformMesoscopic}
\lim_{L\to \infty} \sup_{z \in B(0,L^{\rho}) \cap \bbL }  \frac{1}{\log L} \log \bbP^\omega \big[B_z \textnormal{ is $\psi^\omega$-bad at levels $\gamma > \delta$}\big] = - \infty.
\end{equation} 
\end{prop}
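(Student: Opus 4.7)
For~\eqref{eq:QuenchedBoundOneBox}, the plan is to apply the Borell--TIS Gaussian concentration inequality to the centered Gaussian field $(\xi^{\omega,z}_x)_{x \in D_z}$. The pointwise variance bound $\bbE^\omega[(\xi^{\omega,z}_x)^2] \leq c/(KL)^{d-2}$ for $x \in D_z$ has already been derived in~\eqref{eq:OneBoxVarianceBound} and is uniform in $\omega \in \Omega_\lambda$, so the variance parameter entering Borell--TIS is of order $(KL)^{-(d-2)}$. The magnitude of the expected supremum can then be controlled by specializing~\eqref{eq:ExpInfimumBound} of Theorem~\ref{thm:ZmMainBound} to the singleton collection $\cC = \{z\}$: in this case $\lambda^\omega(z) = 1$, $Z^\omega_m = \xi^{\omega,z}_{m(z)}$, $Z^\omega = \inf_{x \in D_z}\xi^{\omega,z}_x$, and together with $\capa^\omega(B_z) \geq cL^{d-2}$ from~\eqref{eq:QuenchedBoxCapacityEstimate} this yields $|\bbE^\omega[\inf_{x \in D_z}\xi^{\omega,z}_x]| \leq c/(K^{c_{13}} L^{(d-2)/2})$ uniformly in $\omega$ and $z$; by Gaussian symmetry the same bound holds for $|\bbE^\omega[\sup_{x \in D_z}\xi^{\omega,z}_x]|$. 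Borell--TIS applied to $\sup_{x \in D_z}\xi^{\omega,z}_x$ gives exponential concentration around its mean, and writing $\{\sup_{x \in D_z}|\xi^{\omega,z}_x|>a\} \subseteq \{\sup_{x \in D_z}\xi^{\omega,z}_x>a\}\cup\{\inf_{x \in D_z}\xi^{\omega,z}_x<-a\}$ together with the symmetry $-\inf \xi^{\omega,z} \stackrel{d}{=} \sup \xi^{\omega,z}$ produces the factor $2$ and the exponent in~\eqref{eq:QuenchedBoundOneBox}.

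For~\eqref{eq:PsiBadUniformMesoscopic}, the strategy is to reduce a $\psi^\omega$-bad box to a combination of (a) atypically large deviations of the harmonic averages on $B_z$ and its $L$-neighbors, and (b) a failure of the strong percolation condition for $\varphi$ at slightly shifted levels, which becomes controllable precisely because $\gamma < \overline{\alpha}$. Fix $\varepsilon \in (0, \overline{\alpha} - \gamma)$ and set $\beta_+ = \gamma + \varepsilon$, $\beta_- = \delta + \varepsilon$, so that $\overline{\alpha} > \beta_+ > \beta_-$. Let $G_z$ be the event that $\sup_{x \in D_{z'}} |\xi^{\omega,z'}_x| < \varepsilon$ for every $z' \in \bbL$ with $|z-z'|_\infty \leq L$. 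On $G_z$, the decomposition $\varphi = \xi^{\omega,z'} + \psi^{\omega,z'}$ on each such $z'$ gives the pointwise inclusions $\{\varphi_x \geq \beta_+\} \cap B_{z'} \subseteq \{\psi^{\omega,z'}_x \geq \gamma\} \cap B_{z'}$ and $\{\varphi_x \geq \beta_-\} \cap D_z \subseteq \{\psi^{\omega,z}_x \geq \delta\} \cap D_z$. Hence on $G_z$, if $\varphi$ fulfils the strong-percolation analogues of~\eqref{eq:PsiGoodCond1}--\eqref{eq:PsiGoodCond2} at levels $\beta_+, \beta_-$ for $B_z$ and each of its $L$-adjacent boxes, then $B_z$ is $\psi^\omega$-good at levels $\gamma, \delta$. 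This yields the inclusion
\begin{equation*}
\{B_z \text{ is } \psi^\omega\text{-bad at }\gamma > \delta\} \subseteq G_z^c \cup F^\varphi_z,
\end{equation*}
where $F^\varphi_z$ is the corresponding $\varphi$-strong-percolation failure event at $\beta_+, \beta_-$ anchored at $B_z$.

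Since $\beta_+ < \overline{\alpha}$ and $\beta_+ > \beta_-$, the definition~\eqref{eq:OverlineAlphaDef} together with Theorem~\ref{thm:alpha_bar} provide, for $P$-a.e.\ $\omega$, some $\rho = \rho(\omega) > d-1$ such that $\sup_{z \in B(0,L^\rho)\cap \bbL}\bbP^\omega[F^\varphi_z]$ decays faster than any polynomial in $L$. On the other hand, a union bound over the $O(1)$ sites $z'$ in the definition of $G_z$, combined with~\eqref{eq:QuenchedBoundOneBox} applied with $a = \varepsilon$, yields superpolynomial decay of $\bbP^\omega[G_z^c]$ uniformly in $\omega$ and $z \in \bbL$. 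Choosing any $\rho' \in (d-1, \rho)$ ensures that for large $L$ the $L$-neighborhood of $B(0,L^{\rho'}) \cap \bbL$ stays inside $B(0,L^\rho) \cap \bbL$, so the uniform control on $F^\varphi_z$ transfers to the adjacent-box failure events needed for $z \in B(0,L^{\rho'})$. Combining these two decays, taking $\log$, dividing by $\log L$, and letting $L \to \infty$ then gives~\eqref{eq:PsiBadUniformMesoscopic}. The principal care lies in reconciling the non-translation-invariant definition of $\psi^\omega$-good (which mixes two distinct local fields $\psi^{\omega,z}$ and $\psi^{\omega,z'}$ on $B_z$ and $B_{z'}$ with a $\delta$-level connection inside $D_z$) with the single-field $\varphi$-conditions in~\eqref{eq:NoLargeComponent}--\eqref{eq:NoLargeComponentsDisconn}, which is precisely what forces the simultaneous sprinkling control over all $(2d+1)$ harmonic averages at $z$ and its $L$-neighbors.
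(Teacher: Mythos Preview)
Your argument for~\eqref{eq:QuenchedBoundOneBox} is essentially identical to the paper's: both combine the variance bound~\eqref{eq:OneBoxVarianceBound}, the entropy-method bound~\eqref{eq:ExpInfimumBound} specialized to a single box, and Borell--TIS.

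Your strategy for~\eqref{eq:PsiBadUniformMesoscopic} is also the same as the paper's --- reduce $\psi^\omega$-badness to a $\varphi$-strong-percolation failure plus a harmonic-average deviation --- but there is a genuine gap in your treatment of condition~\eqref{eq:PsiGoodCond2}. With your choice $\beta_+=\gamma+\varepsilon>\gamma$, the inclusion you state goes the wrong way for connectivity: on $G_z$ you only get $E^{\geq\beta_+}\cap B_{z'}\subseteq\{\psi^{\omega,z'}\geq\gamma\}\cap B_{z'}$, so a large component of $\{\psi^{\omega,z'}\geq\gamma\}$ need not be a large component of $E^{\geq\beta_+}$ (points where $\psi^{\omega,z'}\geq\gamma$ may have $\varphi$ as low as $\gamma-\varepsilon<\beta_+$). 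Hence the $\varphi$-connectivity hypothesis at levels $\beta_+,\beta_-$ says nothing about such components, and your implication ``$\varphi$-good $\Rightarrow$ $\psi^\omega$-good'' fails for~\eqref{eq:PsiGoodCond2}.

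The fix, which the paper implements, is to treat the two conditions with \emph{different} sprinkled levels. For~\eqref{eq:PsiGoodCond1} one uses $\beta_+>\gamma$ as you do, via $E^{\geq\beta_+}\subseteq\{\psi^{\omega,z}\geq\gamma\}$. For~\eqref{eq:PsiGoodCond2} one instead takes levels strictly between $\delta$ and $\gamma$ --- the paper uses $\tfrac{\gamma+\delta}{2}>\tfrac{\gamma+3\delta}{4}$ --- and the reverse inclusion $\{\psi^{\omega,z'}\geq\gamma\}\subseteq E^{\geq(\gamma+\delta)/2}$ on the event $\{\sup_{D_{z'}}|\xi^{\omega,z'}|\leq(\gamma-\delta)/4\}$. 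Then large $\psi$-components at level $\gamma$ become large $\varphi$-components at level $(\gamma+\delta)/2$; the connectivity statement~\eqref{eq:NoLargeComponentsDisconn} (applied at levels $\tfrac{\gamma+\delta}{2},\tfrac{\gamma+3\delta}{4}$, both below $\overline\alpha$) joins them in $D_z\cap E^{\geq(\gamma+3\delta)/4}$, and a final application of the harmonic-average bound brings this back to $D_z\cap\{\psi^{\omega,z}\geq\delta\}$. Your overall architecture is right; only the direction of the sprinkling for the connectivity condition needs to be reversed.
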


\begin{proof}
We first prove~\eqref{eq:QuenchedBoundOneBox}. Fix $\omega \in \Omega_\lambda$ and $z \in \bbL$. By~\eqref{eq:ExpInfimumBound} (with $|\mathcal{C}| = 1$), one has $|\bbE^\omega[\inf_{x \in D_z} \xi^{\omega,z}_x]| \leq c (K^{c_{13}} L^{\frac{d-2}{2}})^{-1}$, and the same bound holds for $\bbE^\omega[\sup_{x \in D_z} \xi^{\omega,z}_x]$. By~\eqref{eq:OneBoxVarianceBound}, we see that $\text{Var}^\omega(\xi^{\omega,z}_x) \leq c (KL)^{2-d}$. By the Borell-TIS inequality, one obtains the bound
\begin{equation}
\bbP^\omega\Big[ \sup_{x \in D_z} |\xi^{\omega,z}_x| > a \Big] \leq 2\exp\Big\{- c(KL)^{d-2}\Big(a- \tfrac{c}{K^{c_{13}} L^{\frac{d-2}{2}}} \Big)_+^2 \Big\},
\end{equation}
and since the right-hand side does not depend on $z \in \bbL$ and $\omega \in \Omega_\lambda$, we may take the respective suprema, giving~\eqref{eq:QuenchedBoundOneBox}. 

We now turn to the proof of~\eqref{eq:PsiBadUniformMesoscopic}. Let $\overline{\alpha} > \beta_+ > \beta_- > \gamma > \delta$. Note that for $P$-a.e.\ $\omega \in \Omega_\lambda$ there exists some $\rho'  = \rho'(\omega)> d-1$ such that \eqref{eq:NoLargeComponent} holds. Moreover, by~\eqref{eq:QuenchedBoundOneBox}, we see that
\begin{equation}
\label{eq:QuenchedBoundBadnessApplication}
\lim_{L\to\infty}\sup_{\omega \in \Omega_\lambda} \sup_{z \in \bbL}  \frac{1}{\log L} \log \bbP^\omega \Big[ \sup_{x \in D_z} | \varphi_x - \psi^{\omega,z}_x | > (\beta_+ - \gamma) \wedge \tfrac{\gamma-\delta}{4} \Big] = -\infty.
\end{equation}
If for $z \in B(0,L^{\rho'}) \cap \bbL$, $B_z \cap \{\psi^{\omega,z} \geq \gamma\}$ has no component of diameter at least $\frac{L}{10}$, then either $B_z \cap E^{\geq \beta_+}$ has no component of diameter at least $\frac{L}{10}$, or the event under the probability in~\eqref{eq:QuenchedBoundBadnessApplication} occurs. By a union bound we therefore find that for $P$-a.e.\ $\omega \in \Omega_\lambda$, 
 \begin{equation}
 \label{eq:PsiBadUnlikely1}
\lim_{L\to \infty} \sup_{z \in B(0,L^{\rho'}) \cap \bbL }  \frac{1}{\log L}\log  \bbP^\omega \Bigg[  \begin{minipage}{0.4\textwidth}
    $B_z  \cap \{x \in \bbZ^d \, : \, \psi^{\omega,z}_x \geq \gamma \}$  contains no component of diameter at least $\tfrac{L}{10}$
\end{minipage} \Bigg] = -\infty.
\end{equation}
 Now let $B_z$ and $B_{z'}$ be neighboring $L$-boxes with $z,z' \in \bbL$. Suppose that both $\sup_{x \in D_z} |\varphi_x - \psi^{\omega,z}_x| \leq \frac{1}{4}(\gamma-\delta)$ and $\sup_{x \in D_{z'}} |\varphi_x - \psi^{\omega,z'}_x| \leq \frac{1}{4}(\gamma-\delta)$ hold, so connected sets in $B_z \cap \{\psi^{\omega,z} \geq \gamma\}$ and $B_{z'} \cap \{\psi^{\omega,z'} \geq \gamma\}$ with diameter each at least $\frac{L}{10}$ are connected sets in $B_z \cap E^{\geq \frac{\gamma+\delta}{2}}$ and $B_{z'} \cap E^{\geq \frac{\gamma+\delta}{2}}$, respectively.
 
 Using~\eqref{eq:NoLargeComponentsDisconn} with $\beta_+$ replaced by $\frac{\delta+\gamma}{2}$ and $\beta_-$ replaced by $\frac{\gamma+3\delta}{4}$, for $P$-a.e.\ $\omega \in \Omega_\lambda$ there exists some $\widetilde{\rho} = \widetilde{\rho}(\omega)> d-1$ such that one has 
     \begin{equation}
     \label{eq:NoConnectionPhi}
        \lim_{L\to \infty}\sup_{z\in B(0,L^{\widetilde{\rho}}) \cap \bbL } \frac{1}{\log L} \log \bbP^{\omega}\left[
        \begin{minipage}{0.5\textwidth}
            there exist components of $B_z\cap E^{\geq \frac{\gamma+\delta}{2} }$ and $B_{x+z}\cap E^{\geq \frac{\gamma+\delta}{2}}$ with diameter at least $\tfrac{L}{10}$ which are not connected in $D_x\cap E^{\geq  \frac{\gamma+3\delta}{4} }$
        \end{minipage}
        \right] = -\infty.
    \end{equation}
    
If there exist two connected components of $B_z \cap \{\psi^{\omega,z} \geq \gamma\}$ and $B_{z'} \cap \{\psi^{\omega,z'} \geq \gamma\}$ with diameter each at least $\frac{L}{10}$ that are not connected in $D_z \cap \{\psi^{\omega,z} \geq \delta\}$, then either the event under the probability in~\eqref{eq:NoConnectionPhi} occurs, or $\sup_{x \in D_z} |\varphi_x - \psi^{\omega,z}_x| > \frac{\gamma - \delta}{4}$ or $\sup_{x \in D_{z'}} |\varphi_x - \psi^{\omega,z'}_x| > \frac{\gamma - \delta}{4}$ occurs. By~\eqref{eq:NoConnectionPhi},~\eqref{eq:QuenchedBoundBadnessApplication} and a union bound, we see that that for $P$-a.e.\ $\omega \in \Omega_\lambda$

\begin{equation}
\label{eq:PsiBadUnlikely2}
\lim_{L\to\infty}  \sup_{z \in B(0,L^{\widetilde{\rho}}) \cap \bbL } \frac{1}{\log L} \log  \bbP^\omega  \left[
    \begin{minipage}{0.50\textwidth}
        there are components of $B_z \cap \{x\in \bbZ^d :  \psi^{\omega,z}_x \geq \gamma\}$  and $B_{z'} \cap \{x\in \bbZ^d\, : \,\psi^{\omega,z'}_x \geq \gamma\}$  with diameter at least $\tfrac{L}{10}$ not connected in $D_z \cap \{x\in \bbZ^d :  \psi^{\omega,z}_x \geq \delta\}$ 
    \end{minipage}\right] = -\infty.
\end{equation}  
  
Setting $\rho = \rho' \wedge \widetilde{\rho}$, we infer from~\eqref{eq:PsiBadUnlikely1} and~\eqref{eq:PsiBadUnlikely2}, that for $P$-a.e.\ $\omega \in \Omega_\lambda$ the super-polynomial bound~\eqref{eq:PsiBadUniformMesoscopic} holds. 
\end{proof}

 We will now recall two facts from~\cite{sznitman2015disconnection} concerning a connectivity property of the level-set involving $\psi^\omega$-good boxes and a super-exponential bound on the probability that many boxes in a certain range are $\psi^\omega$-bad. 
 
 We choose $\gamma > \delta $ and $a > 0$ rationals such that 
 \begin{equation}
  a + \alpha = \delta < \gamma < \overline{\alpha}
 \end{equation}
(we will eventually let $a$ tend to $\overline{\alpha}-\alpha$). We also let $\overline{K} = 4K + 1$ for a given integer $K \geq 100$. Now assume that for $L \geq 1$,
\begin{equation}
\text{ $\cC \subseteq \bbL$ is a subset of points with mutual distance at least $\overline{K}L$.}
\end{equation}
For $\omega \in \Omega_\lambda$, one has the following result, which follows in the exact same way as Lemma 5.3 of~\cite{sznitman2015disconnection} from the independence (under $\bbP^\omega$) of the fields $(\psi^{\omega,z})_{z \in \cC}$, see~\eqref{eq:CollectionC} and the definition of $\psi^\omega$-good and $\xi^\omega$-good boxes, see~\eqref{eq:PsiGoodCond1}--\eqref{eq:XiGoodCond}.

\begin{lem}
\label{lem:Independence}
The events 
\begin{equation}
\{\textnormal{$B_z$ is $\psi^\omega$-good at levels $\gamma > \delta$}\}_{z \in \cC}
\end{equation}
are independent under $\bbP^\omega$. If $B_{z_i}$, $0 \leq i \leq n$ is a sequence of neighboring $L$-boxes, i.e. $(z_i)_{0 \leq i \leq n}$ forms a nearest-neighbor path in $\bbL$, all $\psi^\omega$-good at levels $\gamma > \delta$ and $\xi^\omega$-good at level $a$, then 
there exists a path in $E^{\geq \delta - a} \cap \big(\bigcup_{i = 0}^n D_{z_i} \big)$, starting in $B_{z_0}$ and ending in $B_{z_n}$.
\end{lem} 

As a next step we prove a super-exponential bound as $L \rightarrow \infty$ on the occurrence of a ``large'' number of $\psi^\omega$-bad boxes in a box of size $N_L (\gg L)$ (see~\eqref{eq:NLDefinition}), which is similar in character to Proposition 5.4 in~\cite{sznitman2015disconnection}. 
This will be an instrumental tool for decoupling the ``wavelet part'' encoded by the fields $\psi^\omega$ from the ``undertow'' encoded by the fields $\xi^\omega$ in the study of disconnection events. However, some care is required due to the inhomogeneous nature of the field $\varphi$ under $\bbP^\omega$. Specifically, we need to rely on the fact that for $P$-a.e.\ $\omega$, the probability of a box $B_z$ being $\psi^\omega$-bad at levels $\gamma>\delta$ where $\delta < \gamma < \overline{\alpha}$, has a super-exponential decay \textit{uniformly} over $z$ in a box of size $L^\rho$, centered at the origin, where $\rho > d-1$. This property~\eqref{eq:PsiBadUniformMesoscopic} of the supercritical regime is the motivation in the definition of $\overline{\alpha}$, see also Remark~\ref{rem:OtherCritParam}.

\begin{prop}
\label{prop:BadEventProposition}
For $P$-a.e.\ $\omega \in \Omega_\lambda$, there exists a positive function $\varrho^\omega(L)$ (depending also on $\gamma,\delta,K$) with 
\begin{equation}
\lim_{L\to \infty} \varrho^\omega(L) = 0,
\end{equation}
such that setting 
\begin{align}
\label{eq:NLDefinition}
N_L = L^{d-1} / \log L, \text{ for }L > 1,
\end{align}
one has for $P$-a.e.\ $\omega \in \Omega_\lambda$
\begin{equation}
\label{eq:SuperExpDecay}
\lim_{L \rightarrow \infty} \; N_L^{-(d-2)} \log \bbP^\omega \left[
    \begin{minipage}{0.47\textwidth} \textnormal{there are at least $\varrho^\omega(L) (\frac{N_L}{L})^{d-1}$ columns in
$[-N_L,N_L]^d$ in the direction $e$ containing
a $\psi^\omega$-bad box at levels $ \gamma > \delta$} \end{minipage}\right] = -\infty, 
\end{equation}
where for $e$ vector of the canonical basis of $\bbR^d$ a column in $[-N_L,N_L]^d$ in the direction $e$ refers to the collection of $L$-boxes $B_z$ located at $z \in \bbL$, intersecting $[-N_L,N_L]^d$ with same
projection on the discrete hyperplane $\{x \in \bbZ^d\, : \, x \cdot e = 0\}$, and~\eqref{eq:SuperExpDecay} holds for all vectors $e$ of the canonical basis.
\end{prop}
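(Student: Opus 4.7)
The plan is to bound the number of columns containing a $\psi^\omega$-bad box by the total number of $\psi^\omega$-bad boxes in $[-N_L,N_L]^d$, and then apply a Chernoff-type estimate using the independence of $\psi^\omega$-bad events for well-separated boxes, combined with the uniform super-polynomial decay~\eqref{eq:PsiBadUniformMesoscopic}.

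First I would observe that since $\rho>d-1$ in~\eqref{eq:PsiBadUniformMesoscopic} and $N_L=L^{d-1}/\log L$, for $L$ large enough we have $[-N_L,N_L]^d\subseteq B(0,L^\rho)$, so that setting
\[
p_L^\omega \;:=\; \sup_{z\in [-N_L,N_L]^d\cap\bbL}\bbP^\omega[B_z\text{ is $\psi^\omega$-bad at levels }\gamma>\delta],
\]
the property~\eqref{eq:PsiBadUniformMesoscopic} yields $p_L^\omega \leq L^{-M^\omega(L)}$ for some deterministic (given $\omega$) sequence $M^\omega(L)\to\infty$ as $L\to\infty$, for $P$-a.e.\ $\omega\in\Omega_\lambda$. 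Clearly the number of bad columns in direction $e$ is dominated by the total number $X^\omega$ of $\psi^\omega$-bad boxes $B_z$ with $z\in [-N_L,N_L]^d\cap\bbL$, so it suffices to show that $\bbP^\omega[X^\omega\geq \varrho^\omega(L)(N_L/L)^{d-1}]$ decays super-exponentially in $N_L^{d-2}$ for a suitable choice of $\varrho^\omega(L)\to 0$.

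Next, I would decompose $[-N_L,N_L]^d\cap\bbL$ into $\overline{K}^d$ sub-lattices $\cC_1,\ldots,\cC_{\overline{K}^d}$ of points with mutual $\ell^\infty$-distance at least $\overline{K}L=(4K+1)L$. By Lemma~\ref{lem:Independence}, within each $\cC_i$ the events $\{B_z\text{ is $\psi^\omega$-bad}\}_{z\in\cC_i}$ are independent under $\bbP^\omega$. Writing $X_i^\omega=\sum_{z\in\cC_i}\mathbbm{1}\{B_z\text{ is $\psi^\omega$-bad}\}$ we have $X^\omega=\sum_i X_i^\omega$, so by a union bound and the standard tail bound $\bbP[\mathrm{Bin}(n,p)\geq t]\leq\binom{n}{t}p^t\leq (enp/t)^t$ applied with $n\leq (2N_L/L+1)^d$, $p\leq p_L^\omega$ and $t=\varrho^\omega(L)(N_L/L)^{d-1}/\overline{K}^d$, one obtains
\[
\frac{1}{N_L^{d-2}}\log\bbP^\omega[X^\omega\geq \varrho^\omega(L)(N_L/L)^{d-1}]\;\leq\; \frac{t}{N_L^{d-2}}\Big[(d-2-M^\omega(L))\log L-\log\varrho^\omega(L)+O(\log\log L)\Big]+o(1),
\]
where I used $N_L/L=L^{d-2}/\log L$. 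Since $t/N_L^{d-2}=\varrho^\omega(L)/(\overline{K}^d\log L)$, the right-hand side equals $-\varrho^\omega(L)M^\omega(L)/\overline{K}^d$ up to lower-order terms. Choosing $\varrho^\omega(L)=1/\sqrt{M^\omega(L)}$ (which depends only on $\omega$, $K$, $\gamma$, $\delta$) yields $\varrho^\omega(L)\to 0$ while $\varrho^\omega(L)M^\omega(L)=\sqrt{M^\omega(L)}\to\infty$, giving the desired super-exponential bound.

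The main obstacle is the delicate bookkeeping in matching the choice of $\varrho^\omega(L)$ to the rate $M^\omega(L)$ furnished by~\eqref{eq:PsiBadUniformMesoscopic}: the decay $p_L^\omega$ is only known to be super-polynomial rather than, say, stretched-exponential, and since $\log(N_L/L)$ contributes a term of order $\log L$ to the Chernoff exponent, one must ensure that the polynomial factor $L^{-M^\omega(L)}$ provided by~\eqref{eq:PsiBadUniformMesoscopic} beats this $\log L$ by an arbitrarily large factor for $L$ large. This is precisely what the super-polynomial decay (rather than any fixed polynomial) guarantees, and it is also the reason why the definition of $\overline{\alpha}$ was tailored to give a uniform bound over boxes of radius $L^\rho$ with $\rho>d-1$ (so that the entire cube $[-N_L,N_L]^d$ of side-length $\sim L^{d-1}$ is covered), as emphasized in Remark~\ref{rem:OtherCritParam}.
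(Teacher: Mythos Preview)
Your proposal is correct and follows essentially the same approach as the paper: both proofs decompose $[-N_L,N_L]^d\cap\bbL$ into $\overline{K}^d$ sublattices to exploit the independence from Lemma~\ref{lem:Independence}, invoke stochastic domination by i.i.d.\ Bernoulli variables with parameter $\eta^\omega(L)=\sup_{z\in B(0,L^\rho)\cap\bbL}\bbP^\omega[B_z\text{ $\psi^\omega$-bad}]$, and then apply a binomial tail bound. Your choice $\varrho^\omega(L)=1/\sqrt{M^\omega(L)}$ coincides with the paper's $\varrho^\omega(L)=\sqrt{\log L/\log(1/\eta^\omega(L))}$ once one writes $\eta^\omega(L)=L^{-M^\omega(L)}$; the only cosmetic differences are that you bound bad columns by the total number of bad boxes (a harmless simplification) and use the cruder estimate $\bbP[\mathrm{Bin}(n,p)\geq t]\leq(enp/t)^t$ in place of the full relative-entropy rate function.
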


\begin{proof}
Choose $\omega \in \Omega_\lambda$ such that~\eqref{eq:PsiBadUniformMesoscopic} holds (by Proposition~\ref{prop:PsiBadUniformMesoscopic}, the set of such $\omega$ has full $P$-measure) and denote the event under the probability in~\eqref{eq:SuperExpDecay} by $A^\omega$. Following the proof of Proposition 5.4 in~\cite{sznitman2015disconnection} we denote by $m$ the total number of $L$-boxes belonging to the union of all columns in $[-N_L,N_L]^d$. Then one has for large $L$ that 
\begin{equation}
\label{eq:UpperLowerBoundNumberBoxesColumns}
c_{14} \big( \tfrac{N_L}{L} \big)^d \leq m \leq c_{15}\big( \tfrac{MN_L}{L} \big)^d,
\end{equation}
where $M > 0$ is the fixed number introduced above~\eqref{eq:BlowUpBoxDef}.
Decompose $\bbL$ into the disjoint union 
\begin{equation}
\bbL = \bigcup_{y \in \{0,L,...,(\overline{K}-1)L\}^d } \cL_y, \qquad \cL_y = y + \overline{K}\bbL,
\end{equation}
where $|\{0,L,...,(\overline{K}-1)L\}^d| = \overline{K}^d$. We introduce the quantities (for $L > 1$ and $\rho = \rho(\omega) > d-1$ as above~\eqref{eq:PsiBadUniformMesoscopic})
\begin{align}
\label{eq:etaDef}
\eta^\omega(L) & =  \sup_{z \in B(0,L^{\rho}) \cap \bbL }   \bbP^\omega \big[B_z \textnormal{ is $\psi^\omega$-bad at levels $\gamma > \delta$}\big], \\
\label{eq:rhoDef}
\varrho^\omega(L) & = \sqrt{\tfrac{\log L }{\log(1/\eta^\omega(L))}}, \\
\widetilde{\varrho}^\omega(L) & = \tfrac{\varrho^\omega(L)}{m\overline{K}^d} \big(\tfrac{N_L}{L} \big)^{d-1}
\label{eq:varrhoDef} \stackrel{\eqref{eq:UpperLowerBoundNumberBoxesColumns}}{\in} \big(\tfrac{\varrho^\omega(L)}{c_{15}(\overline{K}M)^d}\tfrac{L}{N_L} ,\tfrac{\varrho^\omega(L)}{c_{14}\overline{K}^d}\tfrac{L}{N_L} \big).
\end{align} 
By~\eqref{eq:PsiBadUniformMesoscopic}, $\lim_{L\rightarrow \infty} \varrho^\omega(L) = 0$, and by~\eqref{eq:NLDefinition}, we immediately obtain that also $\lim_{L\rightarrow \infty} \widetilde{\varrho}^\omega(L) = 0$. Moreover, since $\rho > d-1$, one has $\lim_{L \rightarrow \infty} \frac{N_L}{L^\rho} = 0$, and so
\begin{equation}
\label{eq:N_LBoxContainedInRhoBox}
[-N_L,N_L]^d \cap \bbZ^d \subseteq B(0,L^\rho), \qquad \text{for }L \geq c(\rho).
\end{equation}
 By Lemma~\ref{lem:Independence}, the events $\{\text{$B_z$ is $\psi^\omega$-bad at levels $\gamma > \delta$}\}_{z \in \cL_y}$ are independent under $\bbP^\omega$. We introduce the random variables
\begin{equation}
\mathfrak{X}_z^\omega = \mathbbm{1}_{\{\text{$B_z$ is $\psi^\omega$-bad at levels $\gamma > \delta$}\}}, \qquad z \in \bbL,
\end{equation}
and for fixed $y \in \{0,L,...,(\overline{K}-1)L\}^d$, the collection $\{\mathfrak{X}_z^\omega \}_{z \in \cL_y}$ consists of independent Bernoulli random variables. It follows that for $L$ large, one has
\begin{equation}
\begin{split}
\bbP^\omega[A^\omega] & \leq \bbP^\omega\bigg[\bigcup_{y \in \{0,L,...,\overline{K}L \}^d } \bigg\{\sum_{z \in \cL_y \cap [-N_L,N_L]^d } \mathfrak{X}_z^\omega \geq m\widetilde{\varrho}^\omega(L)  \bigg\} \bigg] \\
& \leq \overline{K}^d \bbP^\omega[\mathfrak{Z}^\omega \geq m\widetilde{\varrho}^\omega(L) ],
\end{split}
\end{equation}
where $\mathfrak{Z}^\omega$ under $\bbP^\omega$ is a Binomial($m,\eta^\omega(L)$)-random variable. Here we used the fact that the cardinality of $\cL_y \cap [-N_L,N_L]^d$ is bounded from above by $m$, see~\eqref{eq:UpperLowerBoundNumberBoxesColumns}, and that $(\mathfrak{X}^\omega_z)_{z \in \cL_y}$ can be stochastically dominated by a set of Bernoulli random variables (indexed by $\cL_y$) with success parameter $\eta^\omega(L)$, using~\eqref{eq:etaDef} and~\eqref{eq:N_LBoxContainedInRhoBox}. 

Using standard bounds for sums of Bernoulli random variables one obtains 
\begin{equation}
\begin{split}
\bbP^\omega[A^\omega] & \leq \overline{K}^d \exp\big(-m I_L^\omega \big),\\ I_L^\omega & = \widetilde{\varrho}^\omega(L) \log \tfrac{\widetilde{\varrho}^\omega(L)}{\eta^\omega(L)} + (1 - \widetilde{\varrho}^\omega(L)) \log \tfrac{1-\widetilde{\varrho}^\omega(L)}{1-\eta^\omega(L)}.
\end{split}
\end{equation}
The claim follows by using that
\begin{equation}
N_L^{d-2} = o(mI_L^\omega), \text{ as } L \rightarrow \infty,
\end{equation}
which follows from the choice of the scale $N_L$ in~\eqref{eq:NLDefinition} and the definitions~\eqref{eq:etaDef} and~\eqref{eq:varrhoDef} (see (5.23)--(5.27) in~\cite{sznitman2015disconnection} for the details of this calculation).  
\end{proof}

We will now move to the proofs of Theorems~\ref{thm:UpperBound} and~\ref{thm:EntropicRepulsion}. We heavily rely on a coarse-graining procedure which appeared first in Section 4 of~\cite{nitzschner2017solidification}, and was used to develop the corresponding versions of the above mentioned theorems in the case of the Gaussian free field with constant, non-random conductances in~\cite{chiarini2019entropic,nitzschner2018entropic}. We introduce some more notation in order to make this coarse-graining procedure explicit. 

For $\delta < \gamma < \overline{\alpha}$, one can select a sequence $(\gamma_N)_{N \geq 1}$ of numbers in $(0,1]$ as in (4.18) of~\cite{nitzschner2017solidification}, in particular,
\begin{equation}\label{eq:gamma_N}
\gamma_N \rightarrow 0,\quad \gamma_N^{\frac{d+1}{2}}/(N^{2-d}\log N) \to \infty,\quad \text{as }N \rightarrow \infty.
\end{equation}
The sequence $(\gamma_N)_{N \geq 1}$ depends implicitly on $\varrho^\omega$ from~\eqref{eq:rhoDef}, but we suppress this dependence in the notation. For $N \geq 1$, we define the two scales
\begin{equation}
\label{eq:ScalesDefinition}
L_0 = L_0(N) = \Big\lfloor (\gamma_N^{-1} N \log N)^{\tfrac{1}{d-1}} \Big\rfloor, \qquad \widehat{L}_0 = \widehat{L}_0(N) = 100 d \left\lfloor \sqrt{\gamma_N} N \right\rfloor,
\end{equation}
together with the corresponding lattices
\begin{equation}
\bbL_0  = L_0 \bbZ^d,  \qquad \widehat{\bbL}_0 = \tfrac{1}{100d} \widehat{L}_0\bbZ^d = \left\lfloor \sqrt{\gamma_N} N \right\rfloor \bbZ^d.
\end{equation}
The lattice $\bbL_0$ corresponds to $\bbL$ in~\eqref{eq:LatticeL_Def} with the choice $L = L_0$. We also recall the definition of boxes $B_z$, $D_z$ and $U_z$ (again with $L = L_0$) from~\eqref{eq:BoxesDefinition}. The scale $\widehat{L}_0$ corresponds to a ``nearly macroscopic'' scale, whereas the scale $L_0$ is the size of the microscopic boxes constituting a discrete porous interface for the discrete blow-up $A'_N$ (of a subset $A' \subseteq \mathring{A}$) in the sense of Section~\ref{sec:Solidification}, which roughly speaking is present when the disconnection event $\cD^\alpha_N$ occurs. 
\vspace{0.3\baselineskip}

\noindent
\emph{Outline of the proof.} The proofs of Theorems~\ref{thm:UpperBound} and~\ref{thm:EntropicRepulsion} will require a multi-step procedure. We outline the set-up in an informal fashion and sketch where the main ingredients of the proof, Corollary~\ref{cor:CapacityDirichletSolidification}, Corollary~\ref{cor:Capacity_convergence_NoKilling}, Theorem~\ref{thm:ZmMainBound}, and Proposition~\ref{prop:BadEventProposition} enter the argument. The first two steps are a routine extension of the corresponding methods pertaining to the Gaussian free field with constant conductances that appeared in Section 3 of~\cite{nitzschner2018entropic} and Section 3 of~\cite{chiarini2019entropic} (see also~\cite{chiarini2020entropic,nitzschner2017solidification,sznitman2019macroscopic}) and are recalled here for the convenience of the Reader.

\vspace{0.3\baselineskip}

\noindent \textbf{1.} \emph{Reduction to an effective disconnection event}:

We consider $\omega$ in a subset of $\Omega_\lambda$ with full $P$-measure and $\gamma < \delta < \alpha (< \overline{\alpha}$). By Lemma~\ref{lem:Independence}, for large $N$ on the disconnection event $\cD^\alpha_N$, there exists an interface of ``blocking'' $L_0$-boxes located between $A_N$ and the complement of $B(0,(M+1)N)$, which are all $\xi^\omega$-bad at level $a = \delta - \alpha$ or $\psi^\omega$-bad at levels $\delta < \gamma$. 

In the first step, we define in~\eqref{eq:BadEventForDisconnection} a ``bad'' event $\cB_N^\omega$ that corresponds to the existence of many $\psi^\omega$-bad boxes at levels $\delta < \gamma$ within a box $B(0,10(M+1)N)$. On account of Proposition~\ref{prop:BadEventProposition}, we show that $\cB_N^\omega$ has a negligible probability for large $N$ (see~\eqref{eq:SuperExponentialBound}). We then introduce the \textit{effective disconnection event} $ \widetilde{\cD}^{\omega,\alpha}_N = \cD^\alpha_N \setminus \cB_N^\omega$, allowing us to restrict our attention to interfaces of blocking $L_0$-boxes that are $\xi^\omega$-bad.   
\vspace{0.3\baselineskip}

\noindent \textbf{2.} \emph{Coarse graining procedure}:

In a next step, we introduce a random variable $\kappa^\omega_N$ on $ \widetilde{\cD}^{\omega,\alpha}_N $, see~\eqref{eq:kappaN_Definition}, which encodes the position of the $\xi^\omega$-bad boxes. The corresponding coarse-grained events are denoted $\cD^\omega_{N,\kappa}$. This coarse-graining, which is similar in spirit to the one developed in~\cite{nitzschner2017solidification} (see also~\cite{chiarini2019entropic} and~\cite{nitzschner2018entropic}) is of small combinatorial complexity $\exp\{o(N^{d-2}) \}$ and reduces the problem to finding an asymptotic upper bound on the probability of $\cD^\omega_{N,\kappa}$ or $\{| \langle \bbX_N, \eta \rangle - \langle \mathscr{H}^\alpha_{\mathring{A}},\eta \rangle| > \Delta\} \cap \cD^{\omega}_{N,\kappa}$.

As a result of this coarse-graining procedure, we obtain a collection of boxes of size $2\widehat{L}_0$ located at points $\widehat{\cS}_N^\omega$. Within these boxes the ``interface'' of $\xi^\omega$-bad boxes of (the smaller) size $L_0$ has substantial presence. The larger boxes located at the points $\widehat{\cS}_N^\omega$ are ultimately used to define a segmentation $U_0$ of the porous interface we construct. The porous interface $\Sigma$ itself is made out of $L_0$-boxes  at a mutual distance bigger or equal to $(4K+1){L}_0$, located within a sparsified subcollection of $\widehat{L}_0$-boxes at $\widetilde{\cS}^\omega_N \subseteq \widehat{\cS}^\omega_N$. The relevant definitions of $U_0$ and $\Sigma$ appear in~\eqref{eq:DefOfSets}. The geometric picture is illustrated in Figure~\ref{fig:segmentation} below.

\begin{figure}[htbp]
\begin{center}
\includegraphics[scale=.55]{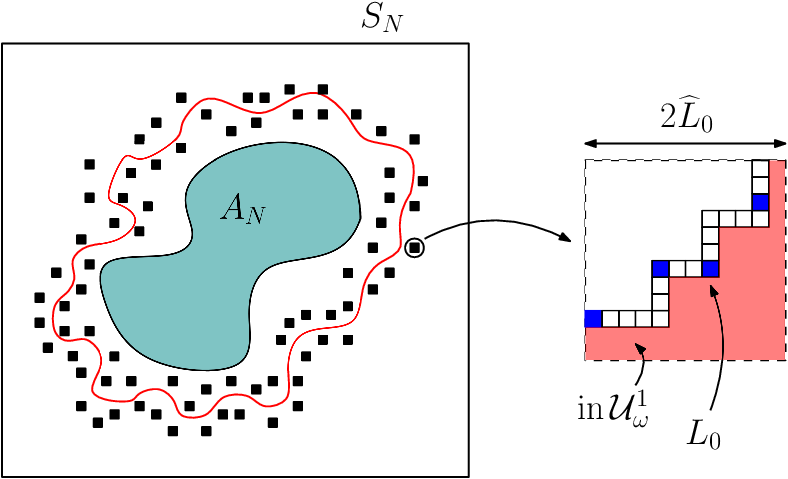}
\end{center}
\caption{Informal depiction of the geometry corresponding to the choice of $\kappa \in \cK_N$: Within each of the  boxes of size $2\widehat{L}_0$ located at $\widetilde{S}_N^\omega$ (picture on the left-hand side), one extracts a non-degenerate number of boxes of size $L_0$ (in blue, picture on the right-hand side). The smaller boxes are located on the interface of $\cU^1_\omega$.}
\label{fig:segmentation}
\end{figure}

\noindent \textbf{3.} \emph{Encoding into a Gaussian functional and application of quenched Gaussian bounds}:

In this step we employ the quenched Gaussian upper bounds of Section~\ref{sec:Quenched}. On the event $\cD^\omega_{N,\kappa}$, we consider the interface of $\xi^\omega$-bad boxes constructed in the previous step and use the Gaussian functionals $Z^\omega_m$ introduced in~\eqref{eq:ZmDefinition} to capture the event $\cD^\omega_{N,\kappa}$, see~\eqref{eq:badboxes}. In the proof of the asymptotic upper bound on $\bbP^\omega[\cD^\alpha_N]$, we can then directly employ Theorem~\ref{thm:ZmMainBound} and bring into play a bound in terms of $\frac{1}{N^{d-2}} \capa^\omega(\Sigma)$, see~\eqref{eq:UpperBoundBTIS_onlyDisc}. This term will subsequently be bounded using the quenched discrete solidification estimates and a homogenization result. 

The situation pertaining to $ \{ |\langle \bbX_N, \eta \rangle - \langle \mathscr{H}^\alpha_{\mathring{A}},\eta \rangle| > \Delta\}\cap \cD^\omega_{N,\kappa} $ requires more care: For a given number $\mu > 0$ and a fixed compact subset $A' \subseteq \mathring{A}$, we devise a dichotomy for $\kappa \in \cK_N$, leading to a set of ``good configurations'' $\cK^{\mu,\omega}_N$. Heuristically, the harmonic potential associated with the boxes present in a ``good configuration'' is close to that of $A_N'$, as measured by the Dirichlet form, see~\eqref{eq:GoodConfigurationsDef}. Asymptotic upper bounds are then obtained separately, uniformly over $\kappa \in \cK^{\mu,\omega}_N$ or $\kappa \in \cK_N \setminus \cK^{\mu,\omega}_N$, see~\eqref{eq:UpperBoundEntropicRepulsionSplitted}.

\begin{itemize}
\item In the case of ``bad configurations'' $\kappa \in \cK_N \setminus \cK^{\mu,\omega}_N$, we bound the probability of $\cD^\omega_{N,\kappa} \cap \{|\langle \bbX_N, \eta \rangle - \langle \mathscr{H}^\alpha_{\mathring{A}},\eta \rangle| > \Delta\}$ by $\bbP^\omega[\cD^\omega_{N,\kappa}]$, which we already treated. 
\item In the case of good configurations $\kappa \in \cK^{\mu,\omega}_N$, we first introduce an approximation of $\mathscr{H}^\alpha_{\mathring{A}}$ by $\cH^{\omega,\overline{\alpha}-a}_\Sigma$,  see~\eqref{eq:DiscreteShiftedHarmonicPotential}. The corresponding error term~\eqref{eq:DeltaOmegaDef} obtained when testing against $\eta$, will be bounded in terms of $\mu$, using the definition of good configurations and homogenization later. We can then use the functional $Z^\omega_{m,\beta,\rho}$ and Theorem~\ref{thm:ZmMainBound} to control control the probability of \textit{both} the occurrence of a deviation of $|\langle \bbX_N, \eta \rangle - \frac{1}{N^{d}} \langle \cH^{\omega,\overline{\alpha}-s}_\Sigma,\eta_N \rangle_{\bbZ^d}|$ and of the $\xi^\omega$-bad boxes, for any $s > 0$, see Proposition~\ref{prop:MainPropProofSec7}. The resulting bound in terms of $\frac{1}{N^{d-2}} \capa^\omega(\Sigma)$ captures the \textit{additional cost} of a deviation of the empirical field $\bbX_N$ from the approximate profile function $\cH^{\omega,\overline{\alpha}-a}_\Sigma$, in presence of the interface of $\xi^\omega$-bad boxes. 
\end{itemize}

\noindent \textbf{4.} \emph{Application of (quenched) discrete solidification results}:

Having brought into play bounds that involve the discrete capacity $\capa^\omega(\Sigma)$ as well as the approximation $\cH^{\omega,\overline{\alpha}-a}_\Sigma$, we now argue that the set $\Sigma$ can be viewed as a porous interface in the sense of Section~\ref{sec:Solidification} for the ``segmentation'' $U_0$ associated with $\kappa \in \cK_N$ (see~\eqref{eq:DefOfSets}). This allows us to use appropriate  bounds in the following ways:
\begin{itemize}
\item By applying the capacity lower bound~\eqref{eq:Capacity_Lower_Bound} of Corollary~\ref{cor:CapacityDirichletSolidification}, we uniformly bound the term $\frac{1}{N^{d-2}} \capa^\omega(\Sigma)$ from below by $\frac{1}{N^{d-2}} \capa^\omega(A_N')$, with $A' \subseteq \mathring{A}$ compact, see~\eqref{eq:SolidificationCapacityLowerBoundStep4}.  
\item We furthermore use the solidification-type bound of the difference between $\capa^\omega(\Sigma)$ and $\capa^\omega(A_N')$ by the Dirichlet energy of $h^\omega_{A_N'} - h^\omega_\Sigma$, see~\eqref{eq:Dirichlet_Form_Bound} of Corollary~\ref{cor:CapacityDirichletSolidification}, to obtain an ``additional cost term'' controlled by $\mu > 0$ for $\bbP^\omega[\cD^\omega_{N,\kappa}]$ in the case of bad configurations $\kappa \in \cK_N \setminus \cK^{\mu,\omega}_N$, see~\eqref{eq:BadConfigurationsUpperBound}. 
\item Finally, in the situation of good configurations $\kappa \in \cK^{\mu,\omega}_N$ we are able to control the approximation error between $ \langle \mathscr{H}^\alpha_{\mathring{A}} , \eta \rangle$ and $\tfrac{1}{N^d} \langle \cH^{\omega,\overline{\alpha}-a}_\Sigma, \eta_N \rangle_{\bbZ^d}$ by a constant depending on $\mu$ and the difference between $\langle \mathscr{H}^\alpha_{\mathring{A}} , \eta \rangle$ and $\tfrac{1}{N^d} \langle \cH^{\omega,\overline{\alpha}-a}_{A_N'}, \eta_N \rangle_{\bbZ^d}$, see~\eqref{eq:SplittingDelta_Hom_Solid_Part} and~\eqref{eq:ClosenessOfL2SolidificationPart}. 
\end{itemize}

\noindent \textbf{5.} \emph{Homogenization}:

In this last step, we finish the proof by using Corollary~\ref{cor:Capacity_convergence_NoKilling}. In a first simpler step, we use the convergence of $\frac{1}{N^{d-2}} \capa^\omega(A_N')$ to $\capa^\homo(A')$ as $N\rightarrow \infty$, see~\eqref{eq:HomogenizationCapacityApplSection7}. On the other hand, on account of the same Corollary, we see that the difference between $\langle \mathscr{H}^\alpha_{\mathring{A}} , \eta \rangle$ and $\tfrac{1}{N^d} \langle \cH^{\omega,\overline{\alpha}-a}_{A_N'}, \eta_N \rangle_{\bbZ^d}$ can be controlled in terms of $A'$ in the limit $N \rightarrow \infty$, see~\eqref{eq:ErrorTermFinalBound}. By taking $A' \uparrow \mathring{A}$, we finish the proof.

\begin{proof}[Theorems~\ref{thm:UpperBound} and~\ref{thm:EntropicRepulsion}] Both results are proved in a step-by-step manner, and rely on the same effective disconnection event and coarse-graining procedure. We choose $\omega$ in a subset of $\Omega_\lambda$ of full $P$-measure such that the statements of Corollary~\ref{cor:Capacity_convergence_NoKilling} and~ Proposition~\ref{prop:BadEventProposition} are fulfilled.
\vspace{\baselineskip}

\noindent\textbf{Step 1: Effective disconnection event.} We start by introducing a local density function $\widehat{\sigma}_\omega$ (see~\eqref{eq:sigmaHatDef} below) to extract in scale $\widehat{L}_0$ the interface of blocking $L_0$-boxes that we are interested in. To this end, consider the (random) subset
\begin{equation}
\cU_\omega^1 = \ \begin{minipage}{0.8\linewidth}
  the union of all $L_0$-boxes $B_z$ that are either contained in\\  $B(0,(M+1)N)^c$ or connected to an $L_0$-box in $B(0,(M+1)N)^c$ by
a path of $L_0$-boxes $B_{z_i}$, $0 \leq i \leq n$, all (except possibly the last one) $\psi^\omega$-good at levels $\delta < \gamma$ and $\xi^\omega$-good at level $a = \delta - \alpha$.
\end{minipage}
\end{equation}
One introduces the function
\begin{equation}
\label{eq:sigmaHatDef}
\widehat{\sigma}_\omega(x) = |\cU_\omega^1 \cap B(x,\widehat{L}_0) | / |B(x,\widehat{L}_0)|, \quad x \in \bbZ^d,
\end{equation}
together with the set $\widehat{\cS}^\omega_N$, that provides a ``segmentation'' of the interface of blocking $L_0$-boxes, namely 
\begin{equation}
\widehat{\cS}^\omega_N = \left\{x \in \widehat{\bbL}_0\, : \, \widehat{\sigma}_\omega(x) \in \left[ \tfrac{1}{4}, \tfrac{3}{4} \right] \right\}.
\end{equation} 
We can extract from $\widehat{\cS}^\omega_N$ a further (random) subset $\widetilde{\cS}^\omega_N$ such that
\begin{equation} 
\begin{minipage}{0.8\linewidth}$\widetilde{\cS}^\omega_N$ is a maximal subset of $\widehat{\cS}^\omega_N$ with the property that the $B(x,2\widehat{L}_0)$, $x \in \widetilde{\cS}^\omega_N$, are pairwise disjoint.
\end{minipage}
\end{equation}
We then introduce the ``bad'' event $\cB_N^\omega$ (similar to (3.19) in~\cite{nitzschner2018entropic}), which is defined as 
\begin{equation}
\label{eq:BadEventForDisconnection}
\cB_N^\omega = \bigcup_{e \in \{e_1,...,e_d \} } \left\{ \begin{minipage}{0.6\textwidth}
  there are at least $\varrho^\omega(L_0)(N_{L_0}/L_0)^{d-1}$ columns of $L_0$-boxes in the direction $e$ in $B(0,10(M+1)N)$ that contain a $\psi^\omega$-bad $L_0$-box at levels $\delta < \gamma$
\end{minipage}\right\},
\end{equation}
with $\varrho^\omega(L)$ as in Proposition~\ref{prop:BadEventProposition},  and $\{e_1,...,e_d \}$ is the canonical basis of $\bbR^d$. Let us argue that
\begin{equation}
\label{eq:SuperExponentialBound}
\lim_{N \rightarrow \infty} \frac{1}{N^{d-2}} \log \bbP^\omega[\cB_N^\omega] = -\infty.
\end{equation}
Indeed, by inserting the definitions of $L_0$ and $N_{L_0}$ (see~\eqref{eq:ScalesDefinition} and~\eqref{eq:NLDefinition}), one can show that for large $N$, it holds that
\begin{equation}
N_{L_0} = \tfrac{L_0^{d-1}}{\log L_0} \geq c \gamma_N^{-1} N \geq 10(M+1)N,
\end{equation}
see also Lemma 4.2 of~\cite{nitzschner2017solidification}. Therefore, the event $\cB^\omega_N$ is contained in the union over $e$ of the events under the probability in~\eqref{eq:SuperExpDecay} (with $L$ replaced by $L_0$), moreover $N_{L_0} \geq N$. The claim~\eqref{eq:SuperExponentialBound} follows immediately by~\eqref{eq:SuperExpDecay}. This motivates the introduction of the \emph{effective} disconnection event 
\begin{equation}
\label{eq:EffectiveDiscEvent}
\widetilde{\cD}^{\omega,\alpha}_N = \cD^\alpha_N \setminus \cB_N^\omega.
\end{equation}

\noindent\textbf{Step 2: Coarse graining.} We now define the coarse-graining procedure for $\widetilde{\cD}^{\omega,\alpha}_N$.

Similar to (4.39)--(4.41) of~\cite{nitzschner2017solidification} (see also below (3.21) of~\cite{nitzschner2018entropic} and (3.16) in~\cite{chiarini2019entropic}), one can show that
\begin{equation}\label{eq:ProjectionsExistOnEffDiscEvent}
  \begin{minipage}{0.85\textwidth}
    for large $N$, on $\widetilde{\cD}^{\omega,\alpha}_N$, for each $x \in \widetilde{\cS}^\omega_N$, one can find a collection $\widetilde{\cC}^\omega_x$ of points in $\bbL_0$ and $\widetilde{i}_x^\omega \in \{ 1,...,d \}$, such that the $L_0$-boxes $B_z$, $z\in \widetilde{\cC}^\omega_x$, intersect $B(x,\widehat{L}_0)$ and have $\widetilde{\pi}^\omega_x$-projection at mutual distance at least $\overline{K}L_0$, where $\widetilde{\pi}^\omega_x$ is the orthogonal projection on the set of points in $\bbZ^d$ with vanishing $\widetilde{i}^\omega_x$-coordinate. Moreover, $\widetilde{\cC}^\omega_x$ has cardinality $\big[\big( \tfrac{c'(\omega)}{K} \tfrac{\widehat{L}_0}{L_0} \big)^{d-1} \big]$ and for each $z \in \widetilde{\cC}^\omega_x$, $B_z$ is $\psi^\omega$-good at level $\gamma < \delta$ and $\xi^\omega$-bad at level $a = \delta- \alpha$.\end{minipage}
\end{equation}
 The existence of $\widetilde{\pi}_x^\omega$ and $\widetilde{\cC}^\omega_x$ for each $x \in \widetilde{\cS}^\omega_N$ on $\widetilde{\cD}^{\omega,\alpha}_N$ follows from the isoperimetric controls (A.3) -- (A.6), p. 480--481 of~\cite{deuschel1996surface}, paired with the fact that\\
   $\varrho^\omega(L_0) \big(\frac{N_{L_0}}{L_0}\big)^{d-1} / (\widehat{L}_0 / L_0)^{d-1} \rightarrow 0$ as $N \rightarrow \infty$ (see Lemma 4.2 of~\cite{nitzschner2017solidification}).

One then introduces a random variable $\kappa_N^\omega$ defined on $\widetilde{\cD}^{\omega,\alpha}_N$ with range $\cK_N$, 
\begin{equation}
\label{eq:kappaN_Definition}
\kappa_N^\omega = \big(\widehat{\cS}^\omega_N, \widetilde{\cS}^\omega_N, (\widetilde{\pi}^\omega_x, \widetilde{\cC}^\omega_x)_{x \in \widetilde{\cS}^\omega_N} \big),
\end{equation}
as in (4.41) of~\cite{nitzschner2017solidification}, see also (3.22) of~\cite{nitzschner2018entropic} (the range $\cK_N$ does not depend on $\omega$). A counting argument shows that 
\begin{equation}
\label{eq:smallCombComplexity}
|\cK_N| = \exp\{ o(N^{d-2}) \},
\end{equation} 
see (4.43) in~\cite{nitzschner2017solidification}. For large $N$, this gives a coarse-graining 
\begin{equation}\label{eq:coarseGraining}
\widetilde{\cD}^{\omega,\alpha}_N = \bigcup_{\kappa \in \cK_N} \cD^\omega_{N,\kappa}, \ \text{ where } \cD^\omega_{N,\kappa} = \widetilde{\cD}^{\omega,\alpha}_N \cap \{ \kappa_N^\omega = \kappa \}.
\end{equation}

For later use, we associate to a choice of $\kappa = (\widehat{\cS},\widetilde{\cS}, (\widetilde{\pi}_x,\widetilde{\cC}_x)_{x \in \widetilde{\cS}}) \in \cK_N$ the following sets
\begin{equation}
\label{eq:DefOfSets}
\begin{cases}
\cC & = \bigcup_{x \in \widetilde{\cS}} \widetilde{\cC}_x, \\
\Sigma & = \bigcup_{z \in \cC} B_z \subseteq \bbZ^d, \\
U_1 & = \text{ the unbounded component of }\bbZ^d \setminus   \bigcup_{x \in \widehat{\cS}} B\Big(x, \tfrac{1}{50d} \widehat{L}_0 \Big), \\
U_0 & = \bbZ^d \setminus U_1. 
\end{cases}
\end{equation}

The sets $U_0$ and $\Sigma$ correspond to a ``segmentation'' and ``porous interface'' in the sense of \eqref{eq:U0l_def} and \eqref{eq:ClassOfPorousInterfaces} (with the choice $\varepsilon = 10 \widehat{L}_0$), and are chosen similarly to (3.19) in~\cite{chiarini2019entropic}, with the important distinction that the present set-up is formulated entirely in terms of subsets of $\bbZ^d$. Furthermore, we define for a real number
\begin{equation}
\mu > 0
\end{equation}
and a compact subset $A' \subseteq \mathring{A}$ chosen later, a set of ``good'' configurations $\kappa \in \cK^{\mu,\omega}_N$, in which the set of boxes $\Sigma$ (depending on $\kappa$) has a harmonic potential $h^\omega_\Sigma$ close to $h^\omega_{A'_N}$ as measured by the Dirichlet form $\cE^\omega$, namely 
\begin{equation}
\label{eq:GoodConfigurationsDef}
\cK^{\mu,\omega}_N = \{ \kappa \in \cK_N \, : \, \cE^\omega(h^\omega_{\Sigma} - h^\omega_{A'_N}) \leq \mu N^{d-2} \}.
\end{equation} 
The configurations $\kappa \in \cK_N \setminus \cK^{\mu,\omega}_N$ form the ``bad'' configurations.


\vspace{\baselineskip}

\noindent\textbf{Step 3: Encoding into a Gaussian functional and application of quenched Gaussian bounds.} We  now apply the coarse-graining procedure to study the disconnection event and the intersection of the disconnection event with the entropic repulsion event. 

By the coarse graining~\eqref{eq:coarseGraining}, the super-exponential estimate~\eqref{eq:SuperExponentialBound} and the combinatorial bound~\eqref{eq:smallCombComplexity} we obtain 
\begin{equation}
\label{eq:UpperBoundOnlyDiscStep1}
\limsup_{N\to\infty} \frac{1}{N^{d-2}}\log \bbP^\omega[\cD^\alpha_N] \leq \limsup_{N\to\infty} \sup_{\kappa \in \cK_N} \frac{1}{N^{d-2}} \log \bbP^\omega[\cD^\omega_{N,\kappa}].
\end{equation}
Let $\kappa \in \cK_N$. On the event $\cD^\omega_{N,\kappa}$, in view of~\eqref{eq:ProjectionsExistOnEffDiscEvent}, we find that all $B_z$ with $z\in \cC$ are $\xi^\omega$-bad at level $a$ and at mutual distance $\geq \overline{K}L_0$, for large $N$. Thus, 
\begin{equation}\label{eq:badboxes}
  \cD^\omega_{N,\kappa} \subseteq \bigcap_{z \in \cC} \{\inf_{x\in D_z} \xi^{\omega,z}_x \leq -a \}.
\end{equation}
Moreover, $L_0 = o(N)$. For $\kappa \in \cK_N$, we also associate the set $\cM$ in~\eqref{eq:DefinitionCandM} to $\cC$ and observe that $\cap_{z \in \cC} \{\inf_{x\in D_z} \xi^{\omega,z}_x \leq -a \}\subseteq \{Z^\omega\leq -a\}$ (recall the definition of $Z^\omega$ in~\eqref{eq:ZomegaDefinition}). Combining~\eqref{eq:UpperBoundOnlyDiscStep1} and~\eqref{eq:badboxes}, we find that
\begin{equation}
\label{eq:BoundInTermsofZ}
\limsup_{N\to\infty} \frac{1}{N^{d-2}}\log \bbP^\omega[\cD^\alpha_N] \leq - \liminf_{N\to\infty} \inf_{\kappa \in \cK_N} \frac{1}{N^{d-2}} \log \bbP^\omega[Z^\omega \leq - a].
\end{equation}
 By the Borell-TIS inequality, we obtain 
\begin{equation}
\label{eq:Borell-TIS_onlyDisc}
\begin{split}
\bbP^\omega[Z^\omega \leq - a] & \leq \exp\Big\{-\frac{1}{2\sigma_\omega^2} \Big( a - | \bbE^\omega[Z^\omega]| \Big)_+^2 \Big\}, \qquad \text{where} \\
\sigma_\omega^2 & = \sup_{m \in \cM} \text{Var}^\omega(Z^\omega_m).
\end{split}
\end{equation}
In view of~\eqref{eq:VarZmBound} with the choice $\rho = \beta = 0$ and $\eta = 0$, and~\eqref{eq:ExpInfimumBound} of Theorem~\ref{thm:ZmMainBound} (with $C = \Sigma$), we obtain 
\begin{equation}
\label{eq:UpperBoundBTIS_onlyDisc}
\begin{split}
\limsup_{N\to\infty} &\frac{1}{N^{d-2}}\log \bbP^\omega[\cD^\alpha_N]  \\ & \leq - \liminf_{N\to\infty} \frac{1}{N^{d-2}} \inf_{\kappa \in \cK_N} \frac{1}{2} \Big\{ \Big(a - \tfrac{c_{12}}{K^{c_{13}}} \sqrt{\tfrac{|\cC|}{\capa^\omega(\Sigma)}} \Big)_+^2 \frac{\capa^\omega(\Sigma)}{1+\widetilde{U}(K)} \Big\},
\end{split}
\end{equation}
where $\widetilde{U}(K) = \limsup_L U(K,L)$, with $U(K,L)$ as in Theorem~\ref{thm:ZmMainBound}. In particular, $\widetilde{U}(K) \rightarrow 0$ as $K \rightarrow \infty$.
We now turn to the more intricate case involving the repulsion event. For $\mu > 0$, we obtain as in~\eqref{eq:UpperBoundOnlyDiscStep1} that
\begin{equation}
\label{eq:UpperBoundEntropicRepulsionSplitted}
\begin{split}
\limsup_{N\to\infty} & \frac{1}{N^{d-2}} \log \bbP^\omega[ | \langle \bbX_N, \eta \rangle - \langle \mathscr{H}^\alpha_{\mathring{A}},\eta \rangle| > \Delta \,;\, \cD^\alpha_N] \\
 & \leq \Big( \limsup_{N\to\infty} \sup_{\kappa \in \cK^{\mu,\omega}_N} \frac{1}{N^{d-2}} \log \bbP^\omega[ | \langle \bbX_N, \eta \rangle - \langle \mathscr{H}^\alpha_{\mathring{A}},\eta \rangle| > \Delta \, ; \, \cD^\omega_{N,\kappa}] \Big) \\
& \vee \Big( \limsup_{N\to\infty} \sup_{\kappa \in \cK_N \setminus \cK^{\mu,\omega}_N} \frac{1}{N^{d-2}} \log \bbP^\omega[\cD^\omega_{N,\kappa}] \Big)
\end{split}
\end{equation}

The second member of the maximum can be treated as in~\eqref{eq:badboxes}--\eqref{eq:Borell-TIS_onlyDisc}, yielding the right-hand side of~\eqref{eq:UpperBoundBTIS_onlyDisc} with $\cK_N$ replaced by $\cK_N \setminus \cK^{\mu,\omega}_N$ in the infimum. We therefore turn our attention to the first member of the maximum. To every $\kappa \in \cK^{\mu,\omega}_N$, we associate a ``discrete approximation'' to $\mathscr{H}^\alpha_{\mathring{A}}$, given by the function $\cH^{\omega,\overline{\alpha}-a}_\Sigma$, where we set
\begin{equation}
\label{eq:DiscreteShiftedHarmonicPotential}
\cH^{\omega,\theta}_{F}(x) = -(\overline{\alpha}-\theta) h^\omega_F(x), \qquad x \in \bbZ^d,\,\theta\in \bbR,\, F \subseteq \bbZ^d \text{ finite}.
\end{equation}
(For us, $\theta = \overline{\alpha}- a$ will be a natural choice). With the quantity
\begin{equation}
\label{eq:DeltaOmegaDef}
\overline{\Delta}^\omega_N(\mu) = \sup_{\kappa \in \cK^{\mu,\omega}_N} | \langle \mathscr{H}^\alpha_{\mathring{A}} , \eta \rangle - \tfrac{1}{N^d} \langle \cH^{\omega,\overline{\alpha}-a}_\Sigma, \eta_N \rangle_{\bbZ^d}|,
\end{equation}
where $\eta_N = \eta(\cdot/N)$, one obtains (for $\kappa \in \cK^{\mu,\omega}_N$)
\begin{equation}
\begin{split}
\label{eq:UpperBoundDiscretizedForBTIS}
\bbP^\omega &[ | \langle \bbX_N, \eta \rangle - \langle \mathscr{H}^\alpha_{\mathring{A}},\eta \rangle| > \Delta  \,;\, \cD^{\omega}_{N,\kappa}] \\
& \leq \bbP^\omega[ | \langle \bbX_N, \eta \rangle -  \tfrac{1}{N^d}\langle \cH^{\omega,\overline{\alpha}-a}_\Sigma, \eta_N \rangle_{\bbZ^d} | > \Delta - \overline{\Delta}^\omega_N(\mu) \,;\, \cD^{\omega}_{N,\kappa}].
\end{split}
\end{equation}
For large $N$, we will eventually show that, $\overline{\Delta}^\omega_N(\mu)$ can be controlled by $\mu$, using $\kappa \in \cK^{\mu,\omega}_N$ and the $L^2$-convergence of $h^\omega_{A'_N}$ to $\mathscr{h}_{A'}$ as a result of homogenization. The right-hand side of~\eqref{eq:UpperBoundDiscretizedForBTIS} will now be treated using the Borell-TIS inequality. 

\begin{prop}
\label{prop:MainPropProofSec7}
Let $s, \widetilde{\Delta},\mu > 0$. For small $\beta \in (0,c_{16}(\eta))$, there exists a function $K \mapsto \varepsilon(K)$, dependent on $\beta$ and $\eta$, with $\varepsilon(K) \rightarrow 1$ as $K \rightarrow \infty$, and such that
\begin{equation}
\label{eq:GaussianBoundBTIS_with_repulsionClaim}
\begin{split}
&\limsup_{N \rightarrow \infty}  \sup_{\kappa \in \cK^{\mu,\omega}_N} \frac{1}{N^{d-2}}\log \bbP^\omega\big[ \{ | \langle \bbX_N, \eta \rangle -  \tfrac{1}{N^d}\langle  \cH^{\omega,\overline{\alpha}- s}_\Sigma, \eta_N \rangle_{\bbZ^d} | > \widetilde{\Delta} \}  \\ & \quad\qquad\cap \bigcap_{z \in \cC} \{\inf_{x \in D_z} \xi^{\omega,z}_x \leq -s \} \big] \\
& \quad \leq - \liminf_{N \rightarrow \infty}  \inf_{\kappa \in \cK^{\mu,\omega}_N}   \frac{1}{2} \Big(s + \beta \widetilde{\Delta} - \tfrac{c}{K^{c_{13}}} \sqrt{\tfrac{|\cC|}{\capa^\omega(\Sigma)}} \Big)_+^2 \tfrac{N^{2-d} \capa^\omega(\Sigma)}{\varepsilon(K) + \beta^2c_{10}(\eta)N^{2-d}\capa^\omega(\Sigma) } 
\end{split}
\end{equation}
\end{prop}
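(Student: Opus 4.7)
The plan is to prove the proposition by reducing the stated event to a pair of Borell--TIS estimates on the Gaussian functionals $Z^\omega_{m,\beta,\rho}$ of Theorem~\ref{thm:ZmMainBound}, with a judicious choice of the tilting parameter $\rho$ that produces a critical cancellation on the variance side. Throughout I fix $\kappa \in \cK^{\mu,\omega}_N$ and associate to it $\cC$, $\Sigma$, $\cM$ as in~\eqref{eq:DefOfSets} (so that in the notation of Section~\ref{sec:Quenched}, $C = \Sigma$).

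First I would set
$$
\rho_* \;=\; \beta\,\tfrac{1}{N^d}\langle \eta_N, h^\omega_\Sigma\rangle_{\bbZ^d},
$$
and observe that since $\cH^{\omega,\overline{\alpha}-s}_\Sigma = -s\,h^\omega_\Sigma$ by definition~\eqref{eq:DiscreteShiftedHarmonicPotential}, the event under the probability splits into an \emph{overshoot} case $\{\langle \bbX_N,\eta\rangle > -\tfrac{s}{N^d}\langle h^\omega_\Sigma,\eta_N\rangle_{\bbZ^d} + \widetilde\Delta\}$ and an \emph{undershoot} case $\{\langle \bbX_N,\eta\rangle < -\tfrac{s}{N^d}\langle h^\omega_\Sigma,\eta_N\rangle_{\bbZ^d} - \widetilde\Delta\}$. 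On the disconnection-type event $\bigcap_{z\in\cC}\{\inf_{x\in D_z}\xi^{\omega,z}_x \leq -s\}$ we can pick $m^*\in\cM$ with $m^*(z)$ realizing the infimum in each box, giving $Z^\omega_{m^*}\leq -s$. A direct algebraic check then yields $Z^\omega_{m^*,\beta,\rho_*}\leq -s-\beta\widetilde\Delta$ in the overshoot case and $Z^\omega_{m^*,-\beta,-\rho_*}\leq -s-\beta\widetilde\Delta$ in the undershoot case --- the key point being that the shift $\rho_*$ is engineered so that $-s\rho_* -\beta\cdot(-\tfrac{s}{N^d}\langle h^\omega_\Sigma,\eta_N\rangle_{\bbZ^d})=0$. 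This produces the inclusion
$$
\mbox{event}\;\subseteq\;\{\inf_{m}Z^\omega_{m,\beta,\rho_*}\leq -s-\beta\widetilde\Delta\}\,\cup\,\{\inf_{m}Z^\omega_{m,-\beta,-\rho_*}\leq -s-\beta\widetilde\Delta\}.
$$

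Next I would apply Borell--TIS to each term. For the variance, the same special choice of $\rho_*$ forces a quadratic cancellation in~\eqref{eq:alphaKLbetarho}: writing $\rho_*/\beta = \tfrac{1}{N^d}\langle\eta_N,h^\omega_\Sigma\rangle_{\bbZ^d}$ one gets
$$
\widetilde{\alpha}_{K,L,\beta,\rho_*} \;=\; (1+\rho_*)^2 - 2\rho_*(1+\rho_*) + \cR_{K,L,\beta,\rho_*} \;=\; 1-\rho_*^2 + \cR_{K,L,\beta,\rho_*},
$$
and an identical identity in the $(-\beta,-\rho_*)$ case. Combined with~\eqref{eq:VarZmBound}, the bounds $|\Sigma|, \capa^\omega(\Sigma) = O(N^{d-2})$ (the former from $|\Sigma|\leq |\cC| L_0^d$ and $L_0=o(N)$, the latter from~\eqref{eq:QuenchedBoxCapacityEstimate} together with $\Sigma\subseteq B(0,2MN)$), and the fact that $(KL_0)^d/N^d\to 0$ for fixed $K$, yield $\sup_m \text{Var}^\omega(Z^\omega_{m,\pm\beta,\pm\rho_*})\leq (\varepsilon(K)+o_N(1))/\capa^\omega(\Sigma) + \beta^2 c_{10}(\eta)/N^{d-2}$ with $\varepsilon(K)\to 1$ as $K\to\infty$. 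For the expectation, centredness of $\langle\bbX_N,\eta\rangle$ gives $|\bbE^\omega[\inf_m Z^\omega_{m,\pm\beta,\pm\rho_*}]|=(1+|\rho_*|)|\bbE^\omega[Z^\omega]|$, which by~\eqref{eq:ExpInfimumBound} is bounded by $\tfrac{c}{K^{c_{13}}}\sqrt{|\cC|/\capa^\omega(\Sigma)}$ provided $|\rho_*|<1$; this last condition holds on choosing $\beta\in(0,c_{16}(\eta))$ with $c_{16}(\eta)^{-1}$ an upper bound on $\|\eta\|_{\infty}\,\mathrm{vol}(\mathrm{supp}\,\eta)$, since $h^\omega_\Sigma\leq 1$.

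Combining the two Borell--TIS estimates by a union bound, multiplying numerator and denominator by $\capa^\omega(\Sigma)$, taking the logarithm, dividing by $N^{d-2}$, passing to the supremum over $\kappa\in\cK^{\mu,\omega}_N$ and to the $\limsup$ in $N$ (noting the $\log 2$ is absorbed) produces the bound~\eqref{eq:GaussianBoundBTIS_with_repulsionClaim}. The main conceptual hurdle is \emph{finding} $\rho_*$: it must simultaneously generate the extra $-\beta\widetilde\Delta$ shift on the event side --- by exactly absorbing the deviation threshold of $\langle\bbX_N,\eta\rangle$ against the approximating profile $\tfrac{1}{N^d}\langle\cH^{\omega,\overline\alpha-s}_\Sigma,\eta_N\rangle_{\bbZ^d}$ --- and on the variance side eliminate the leading linear-in-$\rho$ cross term $-2\beta(1+\rho)\tfrac{1}{N^d}\langle\eta_N,h^\omega_\Sigma\rangle_{\bbZ^d}$ that would otherwise spoil the quadratic cost rate. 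Once this structural tuning is spotted, the sign split, Borell--TIS, and control of the polynomial errors are routine applications of Theorem~\ref{thm:ZmMainBound}.
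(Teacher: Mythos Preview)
Your proposal is correct and follows essentially the same route as the paper: the tilting choice $\rho_* = \beta N^{-d}\langle\eta_N, h^\omega_\Sigma\rangle_{\bbZ^d}$, the inclusion into $\{\inf_m Z^\omega_{m,\beta,\rho_*} \leq -s-\beta\widetilde\Delta\}$, Borell--TIS, and the variance control via Theorem~\ref{thm:ZmMainBound} all match (the paper phrases your sign split as ``replace $\eta$ by $-\eta$'', which is the same thing). One small correction: your claim $|\Sigma| = O(N^{d-2})$ is not generally true --- what you actually need is the weaker $|\Sigma|/N^d\to 0$, which does follow from the counting bound $|\cC|\leq c\sqrt{\gamma_N}N^{d-2}/\log N$ combined with $L_0^d\sim(\gamma_N^{-1}N\log N)^{d/(d-1)}$ and the growth condition~\eqref{eq:gamma_N} on $\gamma_N$.
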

\begin{proof}
By replacing $-\eta$ by $\eta$, the claim will follow if we show that
\begin{equation}
\label{eq:Step1ProofBTIS_with_repulsion}
\begin{split}
&\limsup_{N \rightarrow \infty}  \sup_{\kappa \in \cK^{\mu,\omega}_N} \frac{1}{N^{d-2}} \log \bbP^\omega\big[ \{ \langle \bbX_N, \eta \rangle -  \tfrac{1}{N^d}\langle  \cH^{\omega,\overline{\alpha}- s}_\Sigma, \eta_N \rangle_{\bbZ^d}  \geq \widetilde{\Delta} \} \\ &\quad\qquad \cap \bigcap_{z \in \cC} \{\inf_{x \in D_z} \xi^{\omega,z}_x \leq -s \} \big] \\
& \quad \leq - \liminf_{N \rightarrow \infty}  \inf_{\kappa \in \cK^{\mu,\omega}_N} \frac{1}{2} \Big(s + \beta \widetilde{\Delta} - \tfrac{c}{K^{c_{13}}} \sqrt{\tfrac{|\cC|}{\capa^\omega(\Sigma)}} \Big)_+^2 \tfrac{N^{2-d} \capa^\omega(\Sigma)}{\varepsilon(K) + \beta^2c_{10}(\eta)N^{2-d}\capa^\omega(\Sigma) }.
\end{split}
\end{equation}
Note that the event under the probability in~\eqref{eq:Step1ProofBTIS_with_repulsion} is contained in the event $\{\inf_{m \in \cM} Z^\omega_{m,\beta,\rho_N} \leq -s - \beta \widetilde{\Delta}\}$, where $\rho_N = \beta \frac{1}{N^d} \langle \eta_N, h^\omega_\Sigma \rangle_{\bbZ^d}$, and $Z^\omega_{m,\beta,\rho}$ is defined in~\eqref{eq:ZmDefinition}. We used the fact that $ \sup_{\kappa \in \cK_N} |\rho_N| \leq \beta c(\eta)$ (using that $h^\omega_\Sigma \in [0,1]$), so we can choose $\beta < \frac{1}{c(\eta)} = c_{16}(\eta)$ to ensure that $|\rho_N| < 1$. Note furthermore that
\begin{equation}
\label{eq:InfOfShiftedGaussianField}
\bbE^\omega\Big[ \inf_{m \in \cM} Z^\omega_{m,\beta,\rho_N} \Big] = (1+\rho_N)\bbE^\omega[Z^\omega].
\end{equation}
By the Borell-TIS inequality we obtain 
\begin{equation}
\label{eq:UpperBoundBTISforZmbetarho}
\begin{split}
\bbP^\omega&\Big[\inf_{m \in \cM} Z^\omega_{m,\beta,\rho_N}  \leq -s - \beta\widetilde{\Delta} \Big] \\
& \leq \exp\Big\{-\frac{1}{2\sigma_{\omega,\beta,\rho_N}^2} \Big( s + \beta \widetilde{\Delta} - | \bbE^\omega[\inf_{m \in \cM} Z^\omega_{m,\beta,\rho_N}]| \Big)_+^2 \Big\}, \\
& \text{where}\ \ \sigma_{\omega,\beta,\rho_N}^2  = \sup_{m \in \cM} \text{Var}^\omega(Z^\omega_{m,\beta,\rho_N}).
\end{split}
\end{equation}
We can therefore apply Theorem~\ref{thm:ZmMainBound} to obtain 
\begin{equation}
\label{eq:VarianceUniformBound}
\begin{split}
&\varlimsup_{N\to\infty}  \sup_{\kappa \in \cK^{\mu,\omega}_N} \sup_{m \in \cM} N^{d-2} \sigma^2_{m,\beta,\rho_N}  \leq \varlimsup_{N\to\infty} \sup_{\kappa \in \cK^{\mu,\omega}_N} \frac{N^{d-2}}{\capa^\omega(\Sigma)} \widetilde{\alpha}_{K,L_0,\beta,\rho_N} + \beta^2c_{10}(\eta), \\
& \widetilde{\alpha}_{K,L_0,\beta,\rho_N}  \leq 1 +4U(K,L_0)+ c_{11}(\eta) \beta \left(U(K,L_0)+  \tfrac{(KL_0)^d}{N^d}\big(|\cC| + \capa^\omega(\Sigma)\big)\right).
\end{split}
\end{equation}
A simple counting argument (see for instance (3.31) and (3.32) of~\cite{chiarini2019entropic}) shows that 
\begin{equation}
\label{eq:SmallnessVolumeNumber}
|\cC| \leq c\sqrt{\gamma_N} \frac{N^{d-2}}{\log N}, \quad \text{ implying }\quad  \lim_{N \rightarrow \infty} \sup_{\kappa \in \cK_N} \frac{L_0^d |\cC|}{N^d} = 0.
\end{equation}
Also, for every $\kappa \in \cK_N$, it holds that $\capa^\omega(\Sigma) \leq c N^{d-2}$, so
\begin{equation}
\limsup_{N \rightarrow \infty} \sup_{\kappa \in \cK_N}\frac{L_0^d\capa^\omega(\Sigma)}{N^d} \leq \limsup_{N \rightarrow \infty} c\frac{(\gamma_N^{-1} N \log N)^{\frac{d}{d-1}}}{N^2} = 0.
\end{equation}
We are thus able to define 
\begin{equation}
\varepsilon(K) =1+ 4\widetilde{U}(K)+ \beta c_{11}(\eta) \widetilde{U}(K),
\end{equation}  
with $\widetilde{U}(K) = \limsup_L U(K,L)$ and $\varepsilon(K) \rightarrow 1$ as $K \rightarrow \infty$. The claim~\eqref{eq:Step1ProofBTIS_with_repulsion} now follows by inserting~\eqref{eq:ExpInfimumBound},~\eqref{eq:InfOfShiftedGaussianField} and~\eqref{eq:VarianceUniformBound} into~\eqref{eq:UpperBoundBTISforZmbetarho}. 
\end{proof}

\noindent\textbf{Step 4: Application of the (quenched) discrete solidification results.}
We now apply the capacity bounds from Section~\ref{sec:Solidification} to control uniformly in $\kappa \in \cK_N$ the capacity of $\Sigma$ (see~\eqref{eq:DefOfSets}). This will allow us to control the right-hand sides of~\eqref{eq:UpperBoundBTIS_onlyDisc} and~\eqref{eq:Step1ProofBTIS_with_repulsion}. We also give a bound for the capacity of $\Sigma$, uniformly in $\kappa \in \cK_N \setminus \cK^{\mu,\omega}_N$. Furthermore, we perform a major step in showing the smallness of the approximation error term $\overline{\Delta}^\omega_N(\mu)$ (see~\eqref{eq:DeltaOmegaDef}), using that $\kappa \in \cK^{\mu,\omega}_N$.
\vspace{\baselineskip}

We now show that $\Sigma$ is a ``porous interface'' for the segmentation $U_0$ (both associated with $\kappa \in \cK_N$ or $\cK^{\mu,\omega}_N$) in the sense of~\eqref{eq:ClassOfPorousInterfaces} and~\eqref{eq:U0l_def}. Let us therefore consider a closed Lipschitz domain $A' \subseteq \mathring{A}$ with non-empty interior. For large $N$ and all $\kappa \in \cK_N$, on $\cD^\omega_{N,\kappa}$ we have that $\{z \in A_N \, : \, d_\infty(\{z\},\partial A_N) \geq \widehat{L}_0+L_0 +1 \}$ does not intersect $U_1$, see also Lemma 4.3 in~\cite{nitzschner2017solidification} for a similar argument. We therefore find a sequence $\ell_{\ast,N} = \ell_\ast(A_N,A'_N) \geq 0$ such that, for large $N$ and $\kappa \in \cK_N$, 
\begin{equation}
d_\infty(A'_N, U_1) \geq 2^{\ell_{\ast,N}}, \qquad \text{(so $U_{0} \in \mathcal{U}_{\ell_\ast,A'_N}$)},
\end{equation}
and moreover $2^{\ell_{\ast,N}} \geq c_{17}N$. A simple projection argument together with the (quenched) estimates~\eqref{eq:QuenchedBoxCapacityEstimate} shows that for large $N$, $\kappa \in \cK_N$, and $x \in \widetilde{\cS}$, we have $\capa^\omega \big(\bigcup_{z \in \widetilde{\cC}_x} B_z \big) \geq c(K) \widehat{L}_0^{d-2}$. We can then conclude that 
\begin{equation}
P^\omega_x[H_\Sigma < \tau_{10 \widehat{L}_0}] \geq c(K), \qquad \text{for all }x \in \partial U_0. 
\end{equation}
In other words, we see that $\Sigma \in \cS^\omega_{U_0, 10\widehat{L}_0, c(K)}$ and furthermore $a_N = \frac{10\widehat{L}_0}{2^{\ell_{\ast,N}}} \rightarrow 0$ as $N \rightarrow \infty$. This brings us into the framework of Corollary~\ref{cor:CapacityDirichletSolidification}. We obtain that for fixed $K$ large enough, one has 
\begin{equation}
\label{eq:SolidificationCapacityLowerBoundStep4}
\liminf_{N \rightarrow \infty} \inf_{\kappa \in \cK_N} \frac{1}{N^{d-2}} \capa^\omega(\Sigma) \geq \liminf_{N\to\infty} \frac{1}{N^{d-2}} \capa^\omega(A'_N).
\end{equation}
Since $\mathring{A}' \neq \emptyset$, we see that by~\eqref{eq:QuenchedBoxCapacityEstimate}, $\capa^\omega(A'_N) \geq cN^{d-2}$, and combining~\eqref{eq:SolidificationCapacityLowerBoundStep4} and~\eqref{eq:SmallnessVolumeNumber}, we obtain
\begin{equation}
\label{eq:ExpectationTermBTISVanishes}
\lim_{N \rightarrow \infty} \sup_{\kappa \in \cK_N} \frac{| \cC|}{\capa^\omega(\Sigma)} = 0. 
\end{equation}
Combining~\eqref{eq:SolidificationCapacityLowerBoundStep4},~\eqref{eq:ExpectationTermBTISVanishes} and~\eqref{eq:UpperBoundBTIS_onlyDisc} we find that (upon taking $\liminf_{K}$)
\begin{equation}
\label{eq:UpperBoundOnlyDisc_beforeHom}
\limsup_{N\to\infty} \frac{1}{N^{d-2}} \log \bbP^\omega[\cD^\alpha_N] \leq -  \frac{a^2}{2} \liminf_{N\to \infty} \frac{\capa^\omega(A'_N)}{N^{d-2}}. 
\end{equation}
This concludes the discussion of the solidification estimates for upper bounds on the probability of the disconnection event $\cD^\alpha_N$ alone. We now turn to the more delicate case involving the intersection of the deviation event $\{|\langle \bbX_N, \eta \rangle - \langle \mathscr{H}^\alpha_{\mathring{A}}, \eta \rangle | > \Delta \}$ with $\cD^\alpha_N$. Recall that in~\eqref{eq:UpperBoundEntropicRepulsionSplitted} we devised a splitting in ``good'' configurations $\kappa \in  \cK^{\mu,\omega}_N$ and ``bad'' configurations $\kappa \in \cK_N \setminus  \cK^{\mu,\omega}_N$, for which we derive separate bounds. We start by discussing the second member of the maximum in~\eqref{eq:UpperBoundEntropicRepulsionSplitted}, corresponding to the ``bad'' configurations. By~\eqref{eq:badboxes} and the arguments leading up to~\eqref{eq:UpperBoundBTIS_onlyDisc}, we find that (recall that $\widetilde{U}(K) \geq 0$),
\begin{equation}
\label{eq:BadConfigurationsUpperBound}
\begin{split}
& \varlimsup_N  \sup_{\kappa \in \cK_N \setminus \cK^{\mu,\omega}_N}  \frac{1}{N^{d-2}} \log \bbP^\omega[\cD^\omega_{N,\kappa}] \\
 & \leq - \varliminf_N \frac{1}{N^{d-2}} \inf_{\kappa \in \cK_N \setminus \cK^{\mu,\omega}_N} \frac{1}{2} \Big\{ \Big(a - \tfrac{c}{K^{c_{13}}} \sqrt{\tfrac{|\cC|}{\capa^\omega(\Sigma)}} \Big)_+^2 \frac{\capa^\omega(\Sigma)}{1+\widetilde{U}(K)} \Big\} \\
& \stackrel{\eqref{eq:ExpectationTermBTISVanishes}}{\leq} - \varliminf_N \frac{a^2}{2}  \frac{ \capa^\omega(A_N')}{N^{d-2}(1+ \widetilde{U}(K))} - \varliminf_N \frac{a^2}{2} \frac{\inf_{\kappa \in \cK_N \setminus \cK^{\mu,\omega}_N} (\capa^\omega(\Sigma) - \capa^\omega(A_N'))}{N^{d-2}(1+ \widetilde{U}(K))} \\
& \stackrel{\eqref{eq:Capacity_and_Dirichlet},\eqref{eq:Dirichlet_Form_Bound}}{\leq} - \varliminf_N \frac{a^2}{2} \frac{\capa^\omega(A_N')}{N^{d-2}(1+\widetilde{U}(K))} - \varliminf_N \frac{a^2}{2} \frac{\inf_{\kappa \in \cK_N \setminus \cK^{\mu,\omega}_N}  \cE^\omega(h^\omega_{A_N'} - h^\omega_{\Sigma})}{N^{d-2}(1+\widetilde{U}(K))} \\
& \stackrel{\eqref{eq:GoodConfigurationsDef}}{\leq} - \varliminf_N \frac{a^2}{2} \frac{\capa^\omega(A_N')}{N^{d-2}(1+\widetilde{U}(K))} -  \frac{a^2}{2(1+\widetilde{U}(K))} \mu .
\end{split}
\end{equation}
This concludes the application of the solidification estimate to the second member of the maximum in~\eqref{eq:UpperBoundEntropicRepulsionSplitted}. We now discuss the first member in this maximum.

As a first step, we want to argue that $\overline{\Delta}^\omega_N(\mu)$ from~\eqref{eq:DeltaOmegaDef} can be controlled by a term which becomes small due to homogenization and a constant depending on $\mu$ and $\eta$. More specifically, it holds that
\begin{equation}
\begin{split}
\label{eq:SplittingDelta_Hom_Solid_Part}
\overline{\Delta}^\omega_N(\mu) & \leq  | \langle \mathscr{H}^\alpha_{\mathring{A}},\eta \rangle - \tfrac{1}{N^d} \langle \cH^{\omega,\overline{\alpha}-a}_{A_N'},\eta_N\rangle_{\bbZ^d} | \\
& +  \tfrac{1}{N^d} \sup_{\kappa \in  \cK^{\mu,\omega}_N} | \langle \cH^{\omega,\overline{\alpha}-a}_\Sigma,\eta_N \rangle_{\bbZ^d} - \langle\cH^{\omega,\overline{\alpha}-a}_{A_N'},\eta_N \rangle_{\bbZ^d} |,
\end{split}
\end{equation}
and we now show that 
\begin{equation}
\label{eq:ClosenessOfL2SolidificationPart}
\limsup_{N \rightarrow \infty} \frac{1}{N^d} \sup_{\kappa \in  \cK^{\mu,\omega}_N} \left\vert \langle \cH^{\omega,\overline{\alpha}-a}_\Sigma,\eta_N \rangle_{\bbZ^d} - \langle\cH^{\omega,\overline{\alpha}-a}_{A_N'},\eta_N \rangle_{\bbZ^d} \right\vert \leq \overline{\alpha} c_{18}(\eta)\sqrt{\mu}.
\end{equation}
To that end, we insert the definition of $\cH^{\omega,\overline{\alpha}-a}_\Sigma$ and $\cH^{\omega,\overline{\alpha}-a}_{A_N'}$ (see~\eqref{eq:DiscreteShiftedHarmonicPotential}) and use~\eqref{eq:EnergyDirichletFormBound} (applied with $h = \eta_N^+$ and $h = \eta_N^-$) to obtain that for $\kappa \in \cK^{\mu,\omega}_N$
\begin{equation}
\label{eq:ClosenessOfL2SolidificationPartProof}
\begin{split}
\Big\vert \langle \cH^{\omega,\overline{\alpha}-a}_\Sigma,\eta_N \rangle_{\bbZ^d} &- \langle \cH^{\omega,\overline{\alpha}-a}_{A_N'},\eta_N \rangle_{\bbZ^d} \Big\vert  \leq 2\overline{\alpha} (W^\omega(\eta^+_N) \vee W^\omega(\eta^-_N))^{\frac{1}{2}} \cE^\omega(h^\omega_{\Sigma} - h^\omega_{A_N'})^{\frac{1}{2}} \\
& \stackrel{\eqref{eq:GoodConfigurationsDef}}{\leq} 2\overline{\alpha} \|\eta\|_\infty (c'(\eta) N^{d+2})^{\frac{1}{2}} (\mu N^{d-2})^{\frac{1}{2}} = \overline{\alpha} c_{18}(\eta)\sqrt{\mu}N^d,
\end{split}
\end{equation}
where we used the quenched Green function estimate~\eqref{eq:QuenchedGFEstimate} and the standard bound 
\begin{equation}
\sum_{x,y \in B(0,N)} \frac{1}{|x-y|^{d-2} \vee 1} \leq cN^2
\end{equation}
(and the support of $\eta^{\pm}_N$ is contained in a ball $B(0,c'(\eta)N)$). Upon taking the supremum over $\kappa \in \cK^{\mu,\omega}_N$ and dividing by $N^d$, the claim~\eqref{eq:ClosenessOfL2SolidificationPart} follows readily from~\eqref{eq:ClosenessOfL2SolidificationPartProof}. 

As a second step, we apply the quenched capacity estimate of Corollary~\ref{cor:CapacityDirichletSolidification} also to the right-hand side of~\eqref{eq:GaussianBoundBTIS_with_repulsionClaim} (noting that the function $(t,x) \mapsto \frac{x}{t+\beta^2 c_{10}(\eta)x}$ for $t,x > 0$ is increasing in $x$ and jointly continuous in $t$ and $x$) to obtain that for every $s, \widetilde{\Delta}, \mu > 0$, $\beta \in (0,c_{16}(\eta))$, one has
\begin{equation}
\label{eq:UpperBoundGoodCaseAlmostFinished}
\begin{split}
\limsup_{N \rightarrow \infty} \sup_{\kappa \in \cK^{\mu,\omega}_N} &\frac{1}{N^{d-2}}\log \bbP^\omega\big[ \{  | \langle \bbX_N, \eta \rangle -   \tfrac{1}{N^d}\langle  \cH^{\omega,\overline{\alpha}- s}_\Sigma, \eta_N \rangle_{\bbZ^d} | > \widetilde{\Delta} \} \\ &\qquad\cap \bigcap_{z \in \cC} \{\inf_{x \in D_z} \xi^{\omega,z}_x \leq -s \} \big] \\
&  \leq -\frac{1}{2} \big(s + \beta \widetilde{\Delta}  \big)^2 \liminf_{N \rightarrow \infty}   \frac{ N^{2-d} \capa^\omega(A_N') }{\varepsilon(K) + \beta^2c_{10}(\eta)N^{2-d} \capa^\omega(A_N') }.
\end{split}
\end{equation}
To finish the argument, we will need to pair this result with the smallness of $\overline{\Delta}^\omega_N(\mu)$ from~\eqref{eq:ClosenessOfL2SolidificationPart} for which the $L^2$-convergence of $h^\omega_{A_N'}$ to $\mathscr{h}_{A'}$ will be pivotal. This argument will be given in the final step.

\vspace{\baselineskip}
\noindent\textbf{Step 5: Homogenization.} In this step we finalize the proof of both Theorem~\ref{thm:UpperBound} and~\ref{thm:EntropicRepulsion}. We start with the upper bound on $\cD^\alpha_N$. By combining Corollary~\ref{cor:Capacity_convergence_NoKilling} with~\eqref{eq:UpperBoundOnlyDisc_beforeHom}, we find that 
\begin{equation}
\label{eq:HomogenizationCapacityApplSection7}
\limsup_{N \rightarrow \infty} \frac{1}{N^{d-2}} \log \bbP^\omega[\cD^\alpha_N] \leq - \frac{a^2}{2} \capa^\homo(A'). 
\end{equation}
The claim~\eqref{eq:QuenchedDiscUpperBound} now follows upon letting $a \uparrow \overline{\alpha} - \alpha$ and $A' \uparrow \mathring{A}$, using the convergence of capacities, see Theorem 2.1.1 in~\cite{fukushima2010dirichlet}.

We now turn to the more intricate situation of Theorem~\ref{thm:EntropicRepulsion}. As a first step, we see that 
\begin{equation}
\label{eq:ErrorTermFinalBound}
\begin{split}
\limsup_{N \rightarrow \infty}  |\langle \mathscr{H}^\alpha_{\mathring{A}}, \eta \rangle &- \tfrac{1}{N^d} \langle \cH^{\omega,\overline{\alpha}-a}_{A_N'},\eta_N \rangle_{\bbZ^d}|  \leq |\langle\mathscr{H}^\alpha_{\mathring{A}}, \eta \rangle - \langle\mathscr{H}^{\overline{\alpha}-a}_{A'}, \eta \rangle | \\
& + \limsup_{N \rightarrow \infty} |\langle\mathscr{H}^{\overline{\alpha}-a}_{A'}, \eta \rangle - \tfrac{1}{N^d} \langle \cH^{\omega,\overline{\alpha}-a}_{A_N'},\eta_N \rangle_{\bbZ^d}| \\
& \stackrel{\eqref{eq:convergence_minimizer}}{\leq} c(\eta) (\overline{\alpha}-\alpha - a) + |\langle \eta, \mathscr{H}^\alpha_{\mathring{A}} - \mathscr{H}^\alpha_{A'} \rangle|.
\end{split}
\end{equation}
We choose $\mu > 0$ such that $\overline{\alpha}c_{18}(\eta)\sqrt{\mu} < \frac{\Delta}{6}$, $a$ sufficiently close to $\overline{\alpha}-\alpha$ such that $c(\eta)(\overline{\alpha}-\alpha-a) < \frac{\Delta}{6}$ and $A' \subseteq \mathring{A}$ such that $|\langle \eta, \mathscr{H}^\alpha_{\mathring{A}} - \mathscr{H}^\alpha_{A'} \rangle| < \frac{\Delta}{6}$. Upon combining the previous display with~\eqref{eq:SplittingDelta_Hom_Solid_Part} and~\eqref{eq:ClosenessOfL2SolidificationPart} we therefore obtain for large enough $N$
\begin{equation}
\label{eq:ErrorSmallerHalfDelta}
\overline{\Delta}^\omega_N(\mu) \leq \overline{\alpha} c_{18}(\eta)\sqrt{\mu} + c(\eta) (\overline{\alpha}-\alpha - a) + |\langle \eta, \mathscr{H}^\alpha_{\mathring{A}} - \mathscr{H}^\alpha_{A'} \rangle| \leq \frac{\Delta}{2},
\end{equation}
By~\eqref{eq:UpperBoundDiscretizedForBTIS} and~\eqref{eq:UpperBoundGoodCaseAlmostFinished}, we thus obtain for every $\mu > 0$ small enough and $s = a$ that 
\begin{equation}
\label{eq:GoodConfigurationsUpperBound}
\begin{split}
\limsup_{N \rightarrow \infty} & \sup_{\kappa \in \cK^{\mu,\omega}_N} \frac{1}{N^{d-2}}\log\bbP^\omega[ | \langle \bbX_N, \eta \rangle - \langle \mathscr{H}^\alpha_{\mathring{A}},\eta \rangle| > \Delta \,;\, \cD^{\omega}_{N,\kappa}] \\
&  \stackrel{\eqref{eq:ErrorSmallerHalfDelta}}{\leq} \limsup_{N \rightarrow \infty} \sup_{\kappa \in \cK^{\mu,\omega}_N}  \frac{1}{N^{d-2}}\log \bbP^\omega\big[ \{ | \langle \bbX_N, \eta \rangle -  \tfrac{1}{N^d}\langle  \cH^{\omega,\overline{\alpha}- a}_\Sigma, \eta_N \rangle_{\bbZ^d} | > \tfrac{\Delta}{2} \} \\
& \qquad \qquad \qquad \cap \bigcap_{z \in \cC} \{\inf_{x \in D_z} \xi^{\omega,z}_x \leq -a \} \big] \\
&  \stackrel{\eqref{eq:UpperBoundGoodCaseAlmostFinished}}{\leq} -\frac{1}{2} \big(a + \beta \tfrac{\Delta}{2}  \big)^2 \liminf_{N \rightarrow \infty}   \frac{ N^{2-d} \capa^\omega(A_N') }{\varepsilon(K) + \beta^2c_{10}(\eta)N^{2-d} \capa^\omega(A_N') }.
\end{split}
\end{equation}

We thus find by inserting the results~\eqref{eq:BadConfigurationsUpperBound} and~\eqref{eq:GoodConfigurationsUpperBound} into~\eqref{eq:UpperBoundEntropicRepulsionSplitted} that, for $\beta\in (0,c_{16}(\eta))$ and $\mu > 0$ small enough, it holds that (upon taking $K \rightarrow \infty$)
\begin{equation}
\label{eq:AlmostFinalUpperBound}
\begin{split}
& \limsup_{N \rightarrow \infty}  \frac{1}{N^{d-2}} \log \bbP^\omega[ | \langle \bbX_N, \eta \rangle - \langle \mathscr{H}^\alpha_{\mathring{A}},\eta \rangle| > \Delta \,;\, \cD^\alpha_N] \\
 & \leq \Big( -\frac{1}{2} \big(a + \beta \tfrac{\Delta}{2}  \big)^2 \liminf_{N \rightarrow \infty}   \frac{ N^{2-d} \capa^\omega(A_N') }{1 + \beta^2c_{10}(\eta)N^{2-d} \capa^\omega(A_N')}  \Big) \\
& \vee \Big(-\frac{a^2}{2}\liminf_{N \rightarrow \infty}  \frac{\capa^\omega(A_N')}{N^{d-2}} -  \frac{a^2}{2} \mu \Big) \\
& = - \frac{1}{2} \Big( \big(a + \beta \tfrac{\Delta}{2}  \big)^2 \frac{  \capa^\homo(A')}{1+\beta^2 c_{10}(\eta)  \capa^\homo(A')} \Big) \wedge \Big(a^2( \capa^\homo(A') + \mu  )\Big),
\end{split}
\end{equation}
using again Corollary~\ref{cor:Capacity_convergence_NoKilling} in the final step. We let $a \uparrow \overline{\alpha}- \alpha$, $A' \uparrow \mathring{A}$ and see that the left-hand side in~\eqref{eq:AlmostFinalUpperBound} can be bounded from above by
\begin{equation}
\label{eq:AlmostFinalUpperBound2}
\begin{split}
 - \frac{1}{2} \Big( \big(\overline{\alpha}-\alpha + \beta \tfrac{\Delta}{2}  \big)^2 \frac{  \capa^\homo(\mathring{A})}{1+\beta^2 c_{10}(\eta)  \capa^\homo(\mathring{A})} \Big) \wedge \Big((\overline{\alpha}-\alpha)^2( \capa^\homo(\mathring{A}) + \mu  ) \Big).
\end{split}
\end{equation}
 Setting $\mu = \beta^2$ and choosing $\beta \in (0,c_{16}(\eta))$ small enough, the expression in~\eqref{eq:AlmostFinalUpperBound2} can be made smaller than $-\frac{1}{2} (\overline{\alpha}-\alpha)^2\capa^\homo(\mathring{A}) - c(\Delta,\alpha,\eta)$. 
\end{proof}
\textbf{Acknowledgements.} The authors wish to thank Alain-Sol Sznitman for helpful discussions and valuable comments at various stages of this project. The authors also wish to thank Mathias Sch\"affner for helpful discussions about the $\Gamma$-convergence result in~\cite{neukamm2017stochastic}. Moreover, the
authors thank the anonymous referees for their careful revision of the article and for valuable suggestions.

\appendix

\section{Proof of Theorem~\ref{thm:Resonance}}

In this appendix we prove Theorem~\ref{thm:Resonance}. The proof proceeds as the one in~\cite{nitzschner2017solidification} using ``$I$-families'' with the respective modifications. 
We introduce these $I$-families in the discrete case and sketch the proof, focusing on the part where Proposition~\ref{prop_multiscale_descent} is applied. Let us first prove the maximality property~\eqref{eq:maximality}. Note that for $U_0 \in \cU_{\ell_\ast, A}$, $x \in A$, one has $U_0 - x \in \cU_{\ell_\ast, \{ 0\}}$ and $\text{Res}(U_0 - x,I,J,L, \ell_\ast) =\text{Res}(U_0 ,I,J,L, \ell_\ast) - x$. For a given $\omega \in \Omega_\lambda$, one has
\begin{equation}
P^\omega_x[H_{\text{Res}} = \infty] = P^{\tau_x\omega}_0[H_{\text{Res} - x} = \infty] \leq \Phi_{J,I,L},
\end{equation}
so the maximality follows. We turn to (discrete) $I$-families. Let $I, J \geq 1$ and $L \geq L(J)$ be fixed, $\ell_\ast \geq 0$ $(I,J,L)$-compatible, $\omega \in \Omega_\lambda$ and $U_0 \in \cU_{\ell_\ast, \{ 0 \} }$ (we refer to~\eqref{eq:LJdef} and~\eqref{eq:CompatibilityCond} for the respective definitions). Recall also the definition of $\ell_{\text{min}}(\cdot)$ from~\eqref{eq:MinScaleDefinition}. An \emph{$I$-family} consists of stopping times $(S_i)_{i = 0}^I$, a random finite subset $\cL \subseteq (J+1) L \bbN \cap [\ell_{\min}((200J)^{-1}) +LJ, \infty)$, and integer valued random variables $\widehat{\ell}_i$, $1 \leq i \leq I$, such that

\begin{equation}\label{eq:Definition_I_Family}
\left\{ \begin{array}{rl}
{\rm (i)} & \mbox{$0 \leq S_0 \leq S_1 \leq ... \leq S_I$, $P^\omega_0$-a.s. finite stopping times,}
\\[1.5ex]
{\rm (ii)} & \mbox{$\cL$ is an $\cF_{S_0}$-measurable finite subset of} \\
 &\mbox{ $\cL \subseteq (J+1) L \bbN \cap [\ell_{\min}((200J)^{-1}) +LJ, \infty)$, and $|\cL| \geq I$},
\\[1.5ex]
{\rm (iii)} &\mbox{$\widehat{\ell}_i$, $1 \leq i \leq I$ are $\cF_{S_i}$-measurable, pairwise distinct and $\cL$-valued},
\\[1.5ex]
{\rm (iv)} & \mbox{$P^\omega_0$-a.s., $\sigma_{\widehat{\ell}_i }(X_{S_i}) \in [\tfrac{1}{2} - \tfrac{1}{2^{\ell_{\min}((200J)^{-1}) } }, \tfrac{1}{2} + \tfrac{1}{2^{\ell_{\min}((200J)^{-1}) } }]$, $1 \leq i \leq I.$}  
\end{array}\right.
\end{equation}

The ``canonical'' $I$-family as defined in (2.12) of~\cite{nitzschner2017solidification} also exists in the discrete case, if we replace the conditions $\sigma_\ell(X_{S_i}) = \tfrac{1}{2}$ by  $\sigma_{\ell_i }(X_{S_i}) \in [\tfrac{1}{2} - \tfrac{1}{2^{\ell_{\min}((200J)^{-1}) } }, \tfrac{1}{2} + \tfrac{1}{2^{\ell_{\min}((200J)^{-1}) } }]$. Given a general $I$-family as above, we also define for $1 \leq i \leq I$ the stopping times
\begin{equation}
T_i = \inf \{ s \geq S_i \, : \, |X_s - X_{S_i}|_\infty \geq 2 \cdot 2^{\widehat{\ell}_i} \},
\end{equation}
and ``intermediate labels'' and ``labels''
\begin{equation}
\cL_{\text{int}} = \{ \ell - jL \, : \, \ell \in \cL, 1 \leq j \leq J \}, \qquad \cL_\ast = \cL \cup \cL_{\text{int}}.
\end{equation}
Finally, we will need for $1 \leq k \leq J$ the $(\cL_\ast,k)$-resonance set 
\begin{equation}
\text{Res}_{(\cL_\ast,k)} = \bigg\{ x \in \bbZ^d \, : \, \sum_{\ell \in \cL_\ast }  \mathbbm{1}_{ \{ \widetilde{\sigma}_\ell(x) \in [\widetilde{\alpha}, 1 - \widetilde{\alpha} ] \} }  \geq k \bigg\},
\end{equation}
and the quantity
\begin{equation}
\Gamma^{\omega,(J)}_{k}(I) = \sup P^\omega_0[\inf \{ s \geq S_0 \, : \, X_s \in \text{Res}_{(\cL_\ast,k)}  \} > \max_{1 \leq i \leq I} T_i ],
\end{equation}
for $1 \leq k \leq J$, $I \geq 1$ (with the supremum over all $I$-families) and $\Gamma^{\omega, (J)}_k(I) =1$ whenever $I \leq 0$. The following discrete analogue of Lemma 2.2 in~\cite{nitzschner2017solidification} is the main ingredient of the proof of Theorem~\ref{thm:Resonance}. 
\begin{lem}
\label{thm:ResonanceLemma}
For $\omega \in \Omega_\lambda$, one has
\begin{equation}
\label{eq:ResonanceLemma_part1}
\Gamma^{\omega,(J)}_1(I) = 0, \qquad \text{for all } I \geq 1 \\
\end{equation}
and for $1 \leq k < J$, $I \geq 1$, $\Delta = \lfloor \sqrt{I} \rfloor$,
\begin{equation}
\label{eq:MainLemmaResonance}
\Gamma^{\omega, (J)}_{k+1}(I) \leq (1 - c_7(J))^{\sqrt{I} - 1} + I^{1 + \tfrac{k-1}{2}} \Gamma^{\omega, (J)}_k( \Delta - k + 1).
\end{equation}
\end{lem}
\begin{proof}
We only sketch the proof. The first part follows by noting that $P^\omega_0$-a.s., $\sigma_{\widehat{\ell}_1}(X_{S_1}) \in [\tfrac{1}{2} - \tfrac{1}{2^{\ell_{\min}((200J)^{-1}) } }, \tfrac{1}{2} + \tfrac{1}{2^{\ell_{\min}((200J)^{-1}) } }]$, and since $J \geq 1$, $2^{-\ell_{\min}(1/(200J)) } \leq \tfrac{1}{1600}$, hence $U_1$ and $U_0$ have relative volumes in $B(X_{S_1}, 2^{\widehat{\ell}_1})$ at least $\tfrac{799}{1600}$ and at most $\tfrac{801}{1600}$, or in other words, $\widetilde{\sigma}_{\widehat{\ell}_1}(X_{S_1}) \in [\widetilde{\alpha}, 1- \widetilde{\alpha}]$ and $\Gamma^{\omega,(J)}_1(I) = 0$ is immediate since $\inf \{s \geq S_0\,: X_s \in \text{Res}_{(\cL_\ast,1)} \} \leq S_1 \leq \max_{1 \leq i \leq I} T_i$, $P^\omega_0$-a.s., proving~\eqref{eq:ResonanceLemma_part1}.  

We set $m_\Delta = \lfloor \tfrac{I - 1}{\Delta} \rfloor$, such that $i_\Delta = 1 + m_\Delta \Delta \leq I < 1 + (m_\Delta + 1)\Delta$. For $I \geq 2$, we have 
\begin{equation}
 P^\omega_0[\inf \{ s \geq S_0 :  X_s \in \text{Res}_{(\cL_\ast,k+1)}  \} > \max_{1 \leq i \leq I} T_i ] \leq a_1^\omega + a_2^\omega,
\end{equation}
where 
\begin{align}
a_1^\omega & = P^\omega_0[T_i < S_{i + \Delta} \text{ for all } 1 \leq i \leq I - \Delta , \inf \{ s \geq S_0  :  X_s \in \text{Res}_{(\cL_\ast,k+1)}  \} > \max_{1 \leq i \leq I} T_i ], \\
a_2^\omega & = P^\omega_0[T_i \geq S_{i + \Delta} \text{ for some } 1 \leq i \leq I - \Delta , \inf \{ s \geq S_0 \, : \, X_s \in \text{Res}_{(\cL_\ast,k+1)}  \} > \max_{1 \leq i \leq I} T_i ].
\end{align}
For $a_2^\omega$, one has the bound
\begin{equation}
\label{eq:MainBounda_2}
a_2^\omega \leq I^{1 + \frac{k-1}{2}} \Gamma^{\omega, (J) }_k(\Delta - k + 1).
\end{equation}
Its proof proceeds exactly as in the Brownian case, see (2.29)--(2.33) of~\cite{nitzschner2017solidification} and is thus omitted. For the bound on $a_1^\omega$, note that one has
\begin{equation}
\label{eq:MainBounda_1_step1}
\begin{split}
a_1^\omega & \leq E^\omega_0\big[S_1 < T_1  < ... < S_{i_\Delta} < T_{i_\Delta} < \inf \{s \geq S_0\, : \, X_s \in \text{Res}_{(\cL_\ast,k+1)} \}] \\
& \leq E^\omega_0\big[S_1 < T_1  < ... < S_{i_\Delta} < \inf \{s \geq S_0\, : \, X_s \in \text{Res}_{(\cL_\ast,k+1)} \}, \\
& \widetilde{P}^\omega_{X_{S_{i_\Delta}}}[ \inf \{s \geq 0\, : \, |\widetilde{X}_s - \widetilde{X}_0|_\infty \geq 2 \cdot 2^{\widehat{\ell}_{i_\Delta} } \} < \inf \{ s \geq 0\, : \, \widetilde{X}_s \in \text{Res}_{(\cL_\ast,k+1)}  \} ] \big],
\end{split}
\end{equation}
having used the strong Markov property at time $S_{i_\Delta}$ for the second bound, and where $(\widetilde{X}_\cdot)$ denotes the canonical process which behaves as a random walk among conductances $\omega$, starting from $X_{S_{i_\Delta}}$ under $\widetilde{P}^\omega_{X_{S_{i_\Delta}}}$, and $\cL_\ast$ and $\widehat{\ell}_{i_\Delta}$ are not integrated under $\widetilde{P}^\omega_{X_{S_{i_\Delta}}}$. 
 
We use now Proposition~\ref{prop_multiscale_descent}: Choose $x = X_{S_{i_\Delta}}$ and recall that $\widehat{\ell}_{i_\Delta} - LJ \geq \ell_{\text{min}}((200J)^{-1})$ (by~\eqref{eq:Definition_I_Family}, (ii)) as well as $|\sigma_{\ell_{i_\Delta}}(X_{S_{i_\Delta}}) -\frac{1}{2}| \leq 2^{-\ell_{\text{min}}((200J)^{-1}) }$, see~\eqref{eq:Definition_I_Family}, (iv).  Since $k + 1 \leq J$, we have on an event that has $\widetilde{P}_{X_{S_{i_\Delta}}}^\omega$-probability bigger or equal to $c_7(J)$ that $\widetilde{X}_{\gamma_J} \in \text{Res}_{(\cL_\ast,k+1)}$, but $\sup \{ |\widetilde{X}_s - \widetilde{X}_0| \, : \, 0 \leq s \leq \gamma_J  \} \leq \tfrac{3}{2} \cdot 2^{\widehat{\ell}_{i_\Delta}}$, so on this event, the event within $\widetilde{P}^\omega_{X_{S_{i_\Delta}}}$ in the last line of~\eqref{eq:MainBounda_1_step1} \textit{does not} occur. We obtain that the expression in the last line of~\eqref{eq:MainBounda_1_step1} is bounded above by
\begin{equation}
\label{eq:MainBounda_1_final}
\begin{split}
a_1^\omega & \leq E^\omega_0\big[S_1 < T_1  < ... < T_{i_\Delta - \Delta} < \inf \{s \geq 0\, : \, X_s \in \text{Res}_{(\cL_\ast,k+1)} \}] (1-c_7(J)) \\
& \stackrel{\text{(induction)}}{\leq} (1 - c_7(J))^{m_\Delta + 1} \leq (1 - c_7(J))^{\sqrt{I} - 1},
\end{split}
\end{equation}
using in the last step that $m_\Delta > \frac{I - 1}{\sqrt{I}} \geq \sqrt{I} -1$. By combining the bounds~\eqref{eq:MainBounda_2} and~\eqref{eq:MainBounda_1_final}, we obtain:
\begin{equation}
 P^\omega_0[\inf \{ s \geq S_0  : X_s \in \text{Res}_{(\cL_\ast,k+1)}  \} > \max_{1 \leq i \leq I} T_i ] \leq (1 - c_7(J))^{\sqrt{I} - 1} + I^{1 + \tfrac{k-1}{2}} \Gamma^{\omega, (J)}_k( \Delta - k + 1)
\end{equation}
Finally, we take the supremum over all $I$-families, which yields~\eqref{eq:MainLemmaResonance} in the case where $I \geq 2$. For $I = 1$, the claim of~\eqref{eq:MainLemmaResonance} is true, since the right-hand side is bigger or equal to $1$. 
\end{proof}

We now turn to the proof of~\eqref{eq:maximality} of Theorem~\ref{thm:Resonance}. Similar to (2.34) of~\cite{nitzschner2017solidification}, we set
\begin{equation}
\widetilde{\Gamma}^{(J)}_k(I) = \begin{cases}
\sup \limits_{\ell_\ast}  \sup \limits_{U_0 \in \cU_{\ell_\ast, \{0 \}} }\sup \limits_{\omega \in \Omega_\lambda} \Gamma^{\omega, (J)}_k(I), & \text{ for }1 \leq k \leq J \text{ and } I \geq 1, \\
1, & \text{ for } 1\leq k \leq J \text{ and } I \leq 0.
\end{cases}
\end{equation}
where in the first case, the supremum in $\ell_\ast$ is over all $(I,J,L)$-compatible $\ell_\ast \geq 0$. Using the ``canonical'' $I$-family, one has that
\begin{equation}
\Phi_{J,I,L} \leq \widetilde{\Gamma}^{(J)}_J(I).
\end{equation}
Using~\eqref{eq:ResonanceLemma_part1} and~\eqref{eq:MainLemmaResonance} of Lemma~\ref{thm:ResonanceLemma}, we receive upon taking the suprema over $\omega \in \Omega_\lambda$, $U_0 \in \cU_{\ell_\ast,\{ 0\}}$ and $(I,J,L)$-compatible $\ell_\ast \geq 0$:
\begin{equation}
\begin{split}
&\widetilde{\Gamma}^{(J)}_1(I) = 0, \text{ for } I \geq 1, \\
& \widetilde{\Gamma}^{(J)}_{k+1}(I) \leq (1 - c_7(J))^{\sqrt{I} - 1} + I^{1 + \tfrac{k-1}{2}} \widetilde{\Gamma}^{(J)}_k( \Delta - k + 1), \text{ for }1 \leq k \leq J, I \geq 1.
\end{split}
\end{equation} 
The proof of~\eqref{eq:maximality} now follows by induction on $k$, in exactly the same way as (2.37)--(2.38) of~\cite{nitzschner2017solidification}.

%

%
%



\end{document}